\def\cS{{\mathcal S}}   \def\cN{{\mathcal N}}  \def\cK{{\mathcal K}}        \def\cH{{\mathcal N}}   \def\sN{{\mathfrak N}}
\def\sH{{\mathfrak H}}  \def\dC{{\mathbb C}}   \def\dN{{\mathbb N}}         \def\dR{{\mathbb R}}
\def\dom{{\rm dom\,}}   \def\mul{{\rm mul\,}}  \def\ker{{\rm ker\,}}        \def\ran{{\rm ran\,}}
\def\cdom{{\rm \overline{dom}\,}}              \def\cspan{{\rm \overline{span}\, }}     \def\cran{{\rm \overline{ran}\,}}
\def\clos{{\rm clos\,}} \def\sgn{{\rm sgn\,}}
     \def\IM{{\rm Im\,}}
\def\Log{{\rm Log\,}}
\def\supp{{\rm supp\,}}
\def\wt#1{{{\widetilde #1} }}
\def\wh#1{{{\widehat #1} }}
\def\cmr{{\dC \setminus \dR}}
\def\uphar{{\upharpoonright\,}}
\DeclareMathOperator{\hplus}{\, \widehat + \,}
\newtheorem{theorem}{Theorem}[section]
\newtheorem{proposition}[theorem]{Proposition}
\newtheorem{corollary}[theorem]{Corollary}
\newtheorem{lemma}[theorem]{Lemma}
\theoremstyle{definition}
\newtheorem{definition}[theorem]{Definition}
\newtheorem{example}[theorem]{Example}
\newtheorem{remark}[theorem]{Remark}
\numberwithin{equation}{section}
\title[Products of generalized Nevanlinna functions]{Products of generalized Nevanlinna functions with symmetric rational functions}
\author{S. Hassi}
\author{H. L. Wietsma}
\date{}
\subjclass{Primary 30E20, 47A45, 47B25; Secondary 47A06, 47A10, 47B50}
\keywords{Generalized Nevanlinna function, Hilbert space, Pontryagin space,
selfadjoint operator, boundary triplet, Weyl function, realization}
\address{Department of Mathematics and Statistics\\
University of Vaasa\\
P.O. Box 700, FI-65101 Vaasa\\
Finland}
\email{sha@uwasa.fi; rwietsma@uwasa.fi}
\begin{document}

\begin{abstract}
New classes of generalized Nevanlinna functions, which under
multiplication with an arbitrary fixed symmetric rational function
remain generalized Nevanlinna functions, are introduced.
Characterizations for these classes of functions are established by
connecting the canonical factorizations of the product function and
the original generalized Nevanlinna function in a constructive
manner. Also a detailed functional analytic treatment of these
classes of functions is carried out by investigating the connection
between the realizations of the product function and the original
function. The operator theoretic treatment of these realizations is
based on the notions of rigged spaces, boundary triplets, and
associated Weyl functions.
\end{abstract}

\maketitle


\section{Introduction}
In the fifties M.G.~Kre\u{\i}n introduced the classes of Stieltjes
and inverse Stieltjes functions, denoted here by $\cS$ and
$\cS^{-1}$, as subclasses of the class of Nevanlinna functions.
These functions were introduced in connection with investigations on
the theory of the generalized resolvents and the theory of spectral
functions of strings. M.G.~Kre\u{\i}n showed that these classes have
a simple characterization:
\[ f(z) \in \cS \quad \Longleftrightarrow \quad f(z) , zf(z) \in \cN_0,\]
with a similar characterization for the class of inverse Stieltjes
functions. Here $\cN$ denotes the class of (ordinary) Nevanlinna
functions. In the seventies the class of Nevanlinna functions was
generalized by M.G.~Kre\u{\i}n and H. Langer to a class of
generalized Nevanlinna functions with $\kappa$ negative squares,
denoted by $\cN_\kappa$; see \cite{KL73,KL75}. One specific subclass
of $\cN_\kappa$, which was initially introduced for solving
indefinite analogues of the Stieltjes moment problem, is the class
$\cN_\kappa^+$ defined as
\[
 f(z) \in \cN_\kappa^+ \quad \Longleftrightarrow \quad f(z) \in \cN_\kappa, zf(z) \in \cN_0,
\]
see \cite{KL75}. This class extends the class of Stieltjes functions; it is
known to be connected with the theory of spectral functions of a generalized
string, see \cite{LW98}.

Further generalizations of the classes of Stieltjes (and inverse
Stieltjes) classes are due to V.A. Derkach and M.M. Malamud:
\[ f(z) \in \cS_\sH^{+\kappa}(\alpha,\beta) \quad \Longleftrightarrow \quad f(z) \in \cN, \frac{z-\beta}{z-\alpha}f(z) \in \cN_\kappa,\]
where $-\infty \leq \alpha < \beta < \infty$ (if $\alpha =-\infty$,
then $z+ \infty$ should be interpreted as being one). These classes
were introduced to describe the selfadjoint extensions of a
symmetric operator which have spectral gaps in their spectra, see
\cite{DM91}. V.A. Derkach extended the
Stieltjes (and inverse Stieltjes) classes also in a different direction:
\[ f(z) \in \cN_k^{+\kappa} \Longleftrightarrow \quad f(z) \in \cN_k, zf(z) \in \cN_\kappa, \]
see \cite{Der95}. All the preceding classes of complex functions can be seen as
special cases of the classes $\cN_\kappa^{\tilde \kappa}(r)$, which
will be investigated in the present paper. The definition of the
class $\cN_\kappa^{\tilde \kappa}(r)$ involves an arbitrary
symmetric rational function $r$ and two indices $\kappa,{\tilde
\kappa}$, which are used to describe the number of negative squares
of the Nevanlinna kernels associated with the functions $Q$ and
$rQ$:
\begin{equation*}
\cN_\kappa^{\tilde \kappa}(r) = \{\, Q \in \cN_\kappa: rQ \in \cN_{\tilde{\kappa}} \,\}.
\end{equation*}
All the above mentioned special cases involve a simple symmetric
rational function $r$ of degree one; namely either $r$ or $1/r$ has
the form $\frac{z-b}{z-a}$ or $z-b$, $a,b \in \dR$.

The aim of this paper is to describe the characteristic properties
of functions in the classes $\cN_\kappa^{\tilde \kappa}(r)$ and to
investigate relevant functional analytic connections they have in
the area of operator and spectral theory. The applicability of these
functions is covering topics, which occur in the
literature for some elementary rational functions $r$. The results
extend various previously known facts, which has been
established for some special cases of the classes
$\cN_\kappa^{\tilde \kappa}(r)$.

The methods used in the present paper differ from those appearing in
the above mentioned papers in some special cases; the approach is
constructive and a basic tool here is the canonical factorization of
generalized Nevanlinna functions, see \cite{DLLS00} (cf.
also\cite{DHS99} or Proposition~\ref{CharNK} below). Although from a
general point of view the framework involves notions from indefinite
inner product spaces, the approach used here reduces the main
problems to analogous problems for the classes $\cN_0^{0}(\wt r)$,
where no indefiniteness occurs. Here $\wt r$ can be taken to be a
symmetric rational function with real zeros and poles of order at
most two (see Theorem~\ref{Newth}).

In this paper systematically track is kept off the connections
between the canonical factorization of the functions $Q$ and $rQ$
(see Theorems~\ref{ResM1E}~and~\ref{ResTot}). This gives a basis for
the investigations concerning the realizations for the functions $Q$
and $rQ$ and the functional analytic connections between these
realizations. The key observation in establishing realizations for
the functions $Q$ and $rQ$ is based on some local integrability
properties of these functions, involving so-called Kac-Donoghue
classes of Nevanlinna functions (see
Propositions~\ref{charclassspectral} and~\ref{vorige}). This
motivates a study of certain local versions of rigged Hilbert spaces
associated with selfadjoint relations (see in particular
Sections~\ref{sec52},~\ref{secext}). The functional analytic
connection is first made precise in the abstract case (see
Theorems~\ref{eerstestap},~\ref{laatstestap} and
Proposition~\ref{DHmoldel}) and thereafter it is made more
accessible by means of (the minimal) $L^2(d\sigma)$-models
(Theorem~\ref{l2finfin}).

The contents of the paper are now outlined. In Section~\ref{sec2}
some preliminary results on (generalized) Nevanlinna functions and
their realization are stated. In Section~\ref{sec3} Kac-Donoghue
classes of Nevanlinna functions are introduced and Nevanlinna
functions with spectral gaps are characterized. Using these results
the classes $\cN_\kappa^{\wt \kappa}(r)$ are characterized in
Section~\ref{sec4}. Local versions of rigged spaces are introduced
in Section~\ref{sec5}, and they are used to obtain realizations for
functions in the Kac-Donoghue classes. Finally, Section~\ref{sec6}
uses all the preparations to connect the realizations of the
functions $Q$ and $rQ$ when $Q \in \cN_\kappa^{\wt \kappa}(r)$.

\section{Preliminary results}\label{sec2}

Some facts on scalar (generalized) Nevanlinna functions and linear
relations are recalled, and a realization for Nevanlinna
functions by boundary triplets is given.

\subsection{Nevanlinna functions.}
Recall that a (scalar) complex function $f$ is called \textit{symmetric} if $f(\overline{z}) = \overline{f(z)}$. A symmetric function $Q$ belongs to the class of \textit{(ordinary) Nevanlinna functions}, i.e. $Q \in \cN$, if $Q$ is holomorphic on $\cmr$ and $\IM(Q(z))/\IM(z) \ge 0$ for all $z \in \cmr$.
Equivalently, $Q$ is a Nevanlinna function if and only if there
exists $\alpha \in \dR$, $\beta >0$, and a nonnegative Borel measure
$d\sigma$ such that $Q$ has the following integral representation:
\begin{equation}\label{Nevfullr}
Q(z) = \alpha + \beta z + \int_{\dR \setminus \rho(Q)}\left(\frac{1}{t-z} - \frac{t}{1+t^2}\right)d\sigma(t), \quad \int_{\dR}\frac{d\sigma(t)}{1+t^2} <\infty,
\end{equation}
where $\rho(Q)$ denoted the \textit{domain of holomorphy} of $Q$. The set $\sigma(Q)$ is
defined as
\[
\sigma(Q) = \dR \setminus \rho (Q), \quad \text{if } \beta =0; \quad
\sigma(Q) = (\dR \setminus \rho(Q)) \cup \{\infty\}, \quad \text{if } \beta >0.
\]
Here $\beta>0$ is to be interpreted as a point mass at $\infty$. In
particular, if $\sigma_p(Q)$ denotes the point spectrum of $Q$, then
$\infty \in \sigma_p(Q)$ if and only if $\beta>0$. If $Q$ is
holomorphic on an interval $(c,d) \subset \dR$, then it follows from
\eqref{Nevfullr} that for $\lambda\in(c,d)$,
\begin{equation}\label{increasing}
Q'(\lambda) = \beta + \int_{(-\infty,c]\cup[d,\infty)} \frac{d\sigma(t)}{(t-\lambda)^2}.
\end{equation}
In particular, if $Q$ is not a constant function, then it is
strictly increasing on the interval $(c,d)$; such an interval is
called a \textit{spectral gap} of $d\sigma$.

The characterizing objects in the integral representation of a
Nevanlinna function are its \textit{spectral measure}\footnote{The
spectral measures of Nevanlinna functions will always be assumed to
be normalized in the standard manner: $\sigma(t) =
(\sigma(t+)+\sigma(t-))/2$ and $\sigma(0)=0$.} $d\sigma$ and
$\beta$. The spectral measure $d\sigma$ of a Nevanlinna function $Q$
can be recovered from the integral representation by means of the
generalized Stieltjes inversion formula: if $\varphi$ is real-valued
on $[c,d]\subset \dR$ and holomorphic on an open neighborhood of
$[c,d]$ in $\dC$, then
\begin{equation}\label{genStieltjesinv}
\begin{split}
&\lim_{\epsilon \downarrow 0}
  \int_{[c,d]} \IM \left(\varphi(\lambda+i\epsilon)Q(\lambda+i\epsilon)\right)\frac{d\lambda}{\pi} =
 \int_{[c,d]} \varphi(t)\left(\mathbf{1}_{(c,d)}+\frac{\mathbf{1}_{\{c\}}+ \mathbf{1}_{\{d\}}}{2}\right)d\sigma(t).
\end{split}
\end{equation}
This formula for sign varying functions $\varphi$ is easily derived from \cite[(S1.2.6)]{KK74}. In particular, since the spectral measure of Nevanlinna functions is nonnegative, \eqref{genStieltjesinv} yields the following statement.

\begin{lemma}\label{genStieltjesinvApplied}
Let $Q_0$ and $Q_1$ be Nevanlinna functions and assume that there
exists functions $\varphi_0$ and $\varphi_1$ holomorphic on a neighborhood
$D$ of $[c,d]$ in $\dC$, which are real-valued, nonzero and have
opposite, but fixed, sign on $[c,d]$, such that
\begin{equation*}\label{genStieltjesinvApplied1}
\varphi_0(z)Q_1(z) = \varphi_1(z)Q_1(z), \quad z \in D\setminus \dR.
\end{equation*}
Then $(c,d) \subset \rho(Q_0)\cap\rho(Q_1)$.
\end{lemma}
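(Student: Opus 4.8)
The plan is to deduce the statement from the generalized Stieltjes inversion formula~\eqref{genStieltjesinv} together with the nonnegativity of the spectral measures of Nevanlinna functions. Write $d\sigma_0$ and $d\sigma_1$ for the spectral measures of $Q_0$ and $Q_1$ in their representations~\eqref{Nevfullr}, and, relabelling the pair if necessary, assume that $\varphi_0>0$ and $\varphi_1<0$ on $[c,d]$; since $[c,d]$ is compact and $\varphi_0,\varphi_1$ are continuous and nonvanishing there, there is a $\delta>0$ with $\varphi_0\ge\delta$ and $\varphi_1\le-\delta$ on $[c,d]$. Now fix an arbitrary closed subinterval $[c',d']\subset(c,d)$. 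Since $\varphi_0$ and $\varphi_1$ are holomorphic on the neighbourhood $D$ of $[c',d']$ and real-valued on $[c',d']$, formula~\eqref{genStieltjesinv} applies to the pairs $(\varphi_0,Q_0)$ and $(\varphi_1,Q_1)$ on $[c',d']$.

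The next step is to feed in the identity $\varphi_0(z)Q_0(z)=\varphi_1(z)Q_1(z)$, $z\in D\setminus\dR$. It gives $\IM\bigl(\varphi_0(\lambda+i\epsilon)Q_0(\lambda+i\epsilon)\bigr)=\IM\bigl(\varphi_1(\lambda+i\epsilon)Q_1(\lambda+i\epsilon)\bigr)$ for all sufficiently small $\epsilon>0$, so the left-hand sides of the two instances of~\eqref{genStieltjesinv} coincide, and hence so do the right-hand sides:
\[
\int_{[c',d']}\varphi_0(t)\,w(t)\,d\sigma_0(t)=\int_{[c',d']}\varphi_1(t)\,w(t)\,d\sigma_1(t),\qquad w:=\mathbf 1_{(c',d')}+\tfrac12\bigl(\mathbf 1_{\{c'\}}+\mathbf 1_{\{d'\}}\bigr).
\]
As $w\ge0$ and $d\sigma_0,d\sigma_1\ge0$, the left-hand side is at least $\delta\int_{[c',d']}w\,d\sigma_0\ge0$, while the right-hand side is at most $-\delta\int_{[c',d']}w\,d\sigma_1\le0$. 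Hence both sides vanish, and since $\delta>0$ this forces $\int_{[c',d']}w\,d\sigma_0=\int_{[c',d']}w\,d\sigma_1=0$; in particular $\sigma_0\bigl((c',d')\bigr)=\sigma_1\bigl((c',d')\bigr)=0$. Letting $[c',d']$ exhaust $(c,d)$ gives $\sigma_0\bigl((c,d)\bigr)=\sigma_1\bigl((c,d)\bigr)=0$.

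It remains to pass from the absence of spectral mass on $(c,d)$ to holomorphy there. In~\eqref{Nevfullr} the support of $d\sigma_j$ is now contained in $(-\infty,c]\cup[d,\infty)$, so for $z$ ranging over a small complex neighbourhood of a given point of $(c,d)$ the function $t\mapsto(t-z)^{-1}-t(1+t^2)^{-1}$ and each of its $z$-derivatives are bounded, uniformly in $t$ in that support, by a $d\sigma_j$-integrable function; differentiating under the integral sign then shows that $Q_j$ extends holomorphically across $(c,d)$, so $(c,d)\subset\rho(Q_0)\cap\rho(Q_1)$. I do not expect a genuine obstacle: the only points that need care are verifying the hypotheses of~\eqref{genStieltjesinv} on the subinterval $[c',d']$ (holomorphy on a neighbourhood of $[c',d']$, real-valuedness on $[c',d']$) and making sure the endpoint weights in $w$ do not spoil the sign argument, which they do not since $w\ge0$.
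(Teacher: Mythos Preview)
Your proof is correct and follows exactly the route the paper indicates: the paper does not spell out a proof but simply remarks that the lemma is a consequence of the generalized Stieltjes inversion formula~\eqref{genStieltjesinv} together with nonnegativity of the spectral measures, and you have carried this out in full. The passage through subintervals $[c',d']\subset(c,d)$ is slightly more than necessary (applying~\eqref{genStieltjesinv} directly on $[c,d]$ already gives $\sigma_j((c,d))=0$ by the same sign argument), but it does no harm.
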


The next lemma contains some further information about the local
behavior of Nevanlinna functions on the real line: these facts can
easily be deduced from the integral representations of Nevanlinna
functions; see e.g. \cite{Don74,KK74}.
\begin{lemma}\label{pointlimit}
Let $c \in \dR$ and let $Q \in \cN$ have the integral representation
\eqref{Nevfullr}. Then
\[
 \lim_{z \widehat{\to} c} (c-z)Q(z) = \int_\dR \mathbf{1}_{\{c\}}\, d\sigma(t)
 \quad \textrm{and} \quad \lim_{z \widehat{\to} \infty} \frac{Q(z)}{z} = \beta.
\]
In particular, both limits are finite and nonnegative. Moreover,
\[
 \lim_{z \widehat{\to} c} \frac{Q(z)}{(z-c)} =
 \left\{ \begin{array}{cl} \beta + \int_{\dR}\frac{d\sigma(t)}{(t-c)^2}<\infty,& \; \textrm{if}
 \quad \lim_{z \widehat{\to} c} Q(z)=0 \textrm{ and } \int_{\dR} \frac{d\sigma(t)}{(t-c)^2} < \infty;\\
  \infty,& \; \textrm{otherwise}; \end{array} \right.
\]
and
\[
 \lim_{z \widehat{\to} \infty} z Q(z) =
 \left\{ \begin{array}{cl} -\int_{\dR}d\sigma(t), & \; \textrm{if}
 \quad \beta=0, \textrm{ } \lim_{z \widehat{\to} \infty} Q(z)=0 \textrm{ and } \int_{\dR} d\sigma(t) < \infty;\\
  \infty,& \; \textrm{otherwise}. \end{array} \right.
\]
\end{lemma}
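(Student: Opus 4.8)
The plan is to insert the integral representation \eqref{Nevfullr} into each of the four expressions and to pass to the limit by dominated, monotone, or Fatou convergence. The only analytic ingredient needed is a Stolz-angle estimate for the nontangential approach: there is $c_\varepsilon>0$ with $|t-z|\ge c_\varepsilon(|t-c|+|z-c|)$ for all $t\in\dR$ whenever $z\widehat{\to}c$, and $|t-z|\ge c_\varepsilon(|t|+|z|)$ whenever $z\widehat{\to}\infty$; both follow at once on writing $z$ in polar form about the relevant point and using that $\arg$ stays in a fixed compact subinterval of $(0,\pi)$. Besides this I shall use two elementary rewritings of \eqref{Nevfullr},
\begin{equation*}
(c-z)\Bigl(\frac{1}{t-z}-\frac{t}{1+t^2}\Bigr)=\frac{(c-z)(1+tz)}{(t-z)(1+t^2)},\qquad \frac{1}{z}\Bigl(\frac{1}{t-z}-\frac{t}{1+t^2}\Bigr)=\frac{1+tz}{z(t-z)(1+t^2)},
\end{equation*}
together with the identity $\frac{Q(z)-L}{z-c}=\beta+\int_\dR \frac{d\sigma(t)}{(t-z)(t-c)}$, valid for $z\notin\dR$ whenever $\int_\dR \frac{d\sigma(t)}{(t-c)^2}<\infty$, where $L:=\alpha+\beta c+\int_\dR\bigl(\frac{1}{t-c}-\frac{t}{1+t^2}\bigr)d\sigma(t)$ is then finite.

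For the first limit the terms $(c-z)\alpha$ and $(c-z)\beta z$ tend to $0$, and by the first identity the integral term is $\int_\dR \frac{(c-z)(1+tz)}{(t-z)(1+t^2)}\,d\sigma(t)$, whose integrand equals $\frac{1+cz}{1+c^2}\to 1$ at $t=c$ and tends to $0$ pointwise for $t\ne c$; thus the point mass at $c$ contributes $\sigma(\{c\})=\int_\dR\mathbf 1_{\{c\}}\,d\sigma$, and it remains to show that the part of the integral over $\dR\setminus\{c\}$ vanishes. For this one splits at $|t-c|=\delta$: over $|t-c|\ge\delta$ the integrand is bounded by $|c-z|$ times a fixed $d\sigma$-integrable function (using $|t-z|\ge\tfrac12|t-c|$ once $|z-c|<\delta/2$, the boundedness of $\frac{1+|t||z|}{1+t^2}$ for $z$ near $c$, and its decay for large $|t|$), so this part tends to $0$; over $0<|t-c|<\delta$ the integrand is bounded by a constant, so this part is at most a constant times $\sigma((c-\delta,c+\delta)\setminus\{c\})$, which is as small as we like by continuity of $\sigma$ from above. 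The second limit is proved the same way but without any splitting: by the second identity the relevant integrand is $\frac{1+tz}{z(t-z)(1+t^2)}\to 0$ pointwise, and the Stolz estimate at $\infty$ bounds it by $C/(1+t^2)$ uniformly for $|z|\ge 1$, so dominated convergence gives $z^{-1}Q(z)\to\beta$.

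For the third limit, first assume $\lim_{z\widehat{\to}c}Q(z)=0$ and $\int_\dR \frac{d\sigma(t)}{(t-c)^2}<\infty$. The Stolz estimate gives $1/(|t-z|\,|t-c|)\le C_\varepsilon/(t-c)^2$, so dominated convergence in the identity $\frac{Q(z)-L}{z-c}=\beta+\int_\dR \frac{d\sigma(t)}{(t-z)(t-c)}$ yields $\frac{Q(z)-L}{z-c}\to\beta+\int_\dR \frac{d\sigma(t)}{(t-c)^2}$; in particular $Q(z)\to L$, so the hypothesis forces $L=0$ and hence $\frac{Q(z)}{z-c}\to\beta+\int_\dR \frac{d\sigma(t)}{(t-c)^2}$. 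In the complementary case either $\int_\dR \frac{d\sigma(t)}{(t-c)^2}=\infty$, and then one uses $\frac{\IM Q(z)}{\IM z}=\beta+\int_\dR \frac{d\sigma(t)}{|t-z|^2}\to\infty$ (Fatou) together with $|z-c|\le \IM z/\sin\varepsilon$ to get $|Q(z)/(z-c)|\ge\sin\varepsilon\cdot\IM Q(z)/\IM z\to\infty$; or $\int_\dR \frac{d\sigma(t)}{(t-c)^2}<\infty$ but $L\ne0$, and then $Q(z)\to L$ as above, so $\frac{Q(z)}{z-c}=\frac{Q(z)-L}{z-c}+\frac{L}{z-c}$ with a convergent first summand and $|L/(z-c)|\to\infty$.

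The fourth limit reduces to the third via the substitution $z=-1/w$, which maps $\cmr$ onto itself and a nontangential approach to $\infty$ onto a nontangential approach to $0$; with $\wt Q(w):=Q(-1/w)\in\cN$ one has $zQ(z)=-\wt Q(w)/w$, hence $\lim_{z\widehat{\to}\infty}zQ(z)=-\lim_{w\widehat{\to}0}\wt Q(w)/w$. Since the spectral measure $d\wt\sigma$ of $\wt Q$ is, away from $0$, the image of $t^{-2}d\sigma(t)$ under $t\mapsto-1/t$, while the coefficient $\beta$ of $Q$ becomes a point mass of $d\wt\sigma$ at $0$, one gets $\int_\dR \frac{d\wt\sigma(s)}{s^2}<\infty$ exactly when $\beta=0$ and $\sigma(\dR)<\infty$, and then $\wt\beta+\int_\dR \frac{d\wt\sigma(s)}{s^2}=\sigma(\dR)$ and $\lim_{w\widehat{\to}0}\wt Q(w)=\lim_{z\widehat{\to}\infty}Q(z)$, so the case distinction and the value $-\int_\dR d\sigma$ follow directly from the third limit applied to $\wt Q$ at $c=0$. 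I expect the real obstacle to be not the algebra but the legitimacy of the limit passages: the pointwise bounds on the integrands are only constants, hence not $d\sigma$-integrable when $\sigma$ is infinite, so one must either isolate the mass of $\sigma$ near the relevant point (as in the first limit) or exploit the sharp Stolz bounds to produce integrable majorants; and the reduction in the fourth limit needs the standard but somewhat delicate bookkeeping of how the spectral data transform under $z\mapsto-1/w$.
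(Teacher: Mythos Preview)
The paper does not actually prove this lemma; it merely states that the facts ``can easily be deduced from the integral representations of Nevanlinna functions'' and cites Donoghue and Kac--Kre\u{\i}n. Your proposal therefore supplies what the paper omits, and your argument is correct.

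A few comments on the execution. The first two limits are handled cleanly; the $\delta$-splitting near $c$ is exactly the right device to separate the point mass from the continuous part, and the Stolz-angle bound $|t-z|\ge c_\varepsilon(|t|+|z|)$ does give the uniform majorant $C/(1+t^2)$ for the second limit. For the third limit your case analysis is complete: when $\int d\sigma/(t-c)^2<\infty$ you correctly observe that $L$ is well-defined (since this integrability condition implies $\int_{|t-c|<1}|t-c|^{-1}d\sigma<\infty$ by Cauchy--Schwarz against the locally finite measure), and the Fatou argument in the divergent case is sound because $|z-c|\le \IM z/\sin\varepsilon$ in the Stolz angle.

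For the fourth limit your reduction via $z=-1/w$ is correct, and the bookkeeping you flag as ``delicate'' checks out: $\wt\sigma(\{0\})=\lim_{w\wh\to 0}(-w)\wt Q(w)=\lim_{z\wh\to\infty}Q(z)/z=\beta$, while away from $0$ the pushforward of $t^{-2}d\sigma(t)$ under $t\mapsto -1/t$ gives $\int_{\{s\ne 0\}} s^{-2}\,d\wt\sigma(s)=\sigma(\dR\setminus\{0\})$, and $\wt\beta=\sigma(\{0\})$; summing yields $\wt\beta+\int s^{-2}\,d\wt\sigma=\sigma(\dR)$ exactly as you claim. That said, a direct argument is no harder and avoids the transformation altogether: if $\beta>0$ then $|zQ(z)|=|z|^2\cdot|Q(z)/z|\to\infty$; if $\beta=0$ and $\sigma(\dR)=\infty$ then Fatou applied to $|z|^2\int|t-z|^{-2}d\sigma$ (with pointwise limit $1$) gives $|zQ(z)|\ge c_\varepsilon\,\IM z\cdot|z|\int|t-z|^{-2}d\sigma\to\infty$; and if $\beta=0$, $\sigma(\dR)<\infty$ one rewrites $Q(z)=\alpha'+\int(t-z)^{-1}d\sigma$ and proceeds by dominated convergence. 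Either route is acceptable.
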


\begin{remark}\label{interpretationlimit}
The notation $z \widehat{\to} c$ stands for the non-tangential limit
from the upper (or lower) half-plane, if $c \in \dR$, or a sectorial
limit (with $|\arg (z)-\pi/2|\ge \alpha>0$) if $c=\infty$. In
particular, $\lim_{z\wh \to c} f(z) = \infty$ indicates that
$\lim_{z\wh \to c} |f(z)| = + \infty$. If the limit of $f(x)$ (with
$x\in\dR$) exists as an improper limit the notation $\lim_{x \to c}
f(x) = + \infty$ or $\lim_{x  \to c} f(x) = - \infty$ is used.
\end{remark}

\subsection{Generalized Nevanlinna functions}\label{secON}
A symmetric function $Q$ belongs to the class of \textit{generalized Nevanlinna functions}, denoted by $\cN_\kappa$ for $\kappa \in \dN$, if $Q$ is meromorphic on $\cmr$ and its kernel $(Q(z)-\overline{Q(w)})/(z-\bar{w})$ has $\kappa$ negative squares for $z,w \in \rho(Q)$; see
\cite{KL71,KL73}. In particular, $\cN=\cN_0$ is the class of
(ordinary) Nevanlinna functions. For a generalized Nevanlinna
function $Q\ne 0$ define $\pi_\alpha$ with $\alpha \in \dR
\cup\{\infty\}$ to be the largest nonnegative integer such that
\begin{equation}\label{GZNT}
 -\infty < \lim_{z\widehat{\to}\alpha}\frac{Q(z)}{(z-\alpha)^{2\pi_\alpha - 1}} \leq 0, \quad
0 \leq \lim_{z\widehat{\to}\infty}z^{2\pi_\infty - 1}Q(z) < \infty.
\end{equation}
Likewise, define $\kappa_\beta$ with $\beta \in \dR \cup\{\infty\}$ to
be the smallest nonnegative integer such that
\begin{equation}\label{GPNT}
-\infty < \lim_{z\widehat{\to} \beta} (z-\beta)^{2\kappa_\beta + 1}
Q(z) \leq 0, \quad
0 \leq \lim_{z\widehat{\to}\infty}\frac{Q(z)}{z^{2\kappa_\infty +1}} < \infty.
\end{equation}
If $\pi_\alpha>0$ ($\kappa_\beta>0$) then $\alpha$ ($\beta$) is said
to be a \textit{generalized zero (pole) of non-positive type} (GZNT)
(or (GPNT)) of $Q$ with multiplicity $\pi_{\alpha_i}$
($\kappa_\beta$, respectively).  Let $\{\alpha_i\}_{i=1}^m$ be the
collection of all zeros in $\dC_+$ and all generalized zeros of
non-positive type (GZNT) in $\dR$ of $Q$ with multiplicities
$\pi_{\alpha_i}$, $1\leq i\leq m$. Similarly, let
$\{\beta_i\}_{i=1}^n$ be the collection of all poles in $\dC_+$ and
generalized poles of non-positive type (GPNT) in $\dR$ of $Q$ with
multiplicities $\kappa_{\beta_i}$, $1\leq i \leq n$. The following
result characterizes the class of generalized Nevanlinna functions;
see \cite{DLLS00} (cf. also \cite{DHS99}).

\begin{proposition}\label{CharNK} $Q \in \cN_\kappa$ if and only if there exists a unique $Q_0 \in \cN$ such that
\[
 Q(z) = \frac{\prod_{i=1}^m (z-\alpha_i)^{\pi_{\alpha_i}}(z-\overline{\alpha}_i)^{\pi_{\alpha_i}}}
 {\prod_{i=1}^{n} (z-\beta_i)^{\kappa_{\beta_i}}(z-\overline{\beta}_i)^{\kappa_{\beta_i}}} \, Q_0(z),
\]
where $\kappa=\max\{\sum_{i=1}^{m}\pi_{\alpha_i},\sum_{i=1}^{n}\kappa_{\beta_i}\}$.
\end{proposition}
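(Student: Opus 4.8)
The plan is to connect $Q$ with the candidate function $Q_0:=Q/r$, where $r$ is the symmetric rational function appearing in the statement, by successively removing elementary symmetric quadratic factors — $(z-a)(z-\bar a)$ for $a\in\dC_+$, and $(z-a)^2$ for $a\in\dR$. Two inputs from Kre\u{\i}n--Langer theory (\cite{KL71,KL73}) would be used: \emph{(i)} for $Q\in\cN_\kappa$, $Q\not\equiv0$, the non-real poles, non-real zeros, and generalized poles and zeros of non-positive type are finite in number — including a possible one at $\infty$ — and $\kappa$ equals the total multiplicity of the non-real poles together with the generalized poles of non-positive type (counting $\infty$), and equally the total multiplicity of the non-real zeros together with the generalized zeros of non-positive type (counting $\infty$); \emph{(ii)} multiplying or dividing a generalized Nevanlinna function by such a quadratic factor again produces a generalized Nevanlinna function and alters the exceptional data in the expected local way (a numerator factor $(z-a)(z-\bar a)$ annihilates a generalized zero of non-positive type at $a$ of multiplicity $\ge1$, lowering its multiplicity by one, and otherwise creates an ordinary zero there; dually for a denominator factor at a generalized pole), which one reads off from Lemma~\ref{pointlimit} and \eqref{GZNT}, \eqref{GPNT} together with a rank-one comparison of the corresponding reproducing kernels.

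For necessity, take $Q\in\cN_\kappa$, $Q\not\equiv0$, with data $\{(\alpha_i,\pi_{\alpha_i})\}_{i=1}^m$ and $\{(\beta_i,\kappa_{\beta_i})\}_{i=1}^n$ as in the statement, finite by (i); set $P(z)=\prod_i(z-\alpha_i)^{\pi_{\alpha_i}}$, $R(z)=\prod_i(z-\beta_i)^{\kappa_{\beta_i}}$, and $Q_0(z)=\big(R(z)\overline{R(\bar z)}/P(z)\overline{P(\bar z)}\big)Q(z)$, so $Q=rQ_0$. Multiplying $Q$ by $R(z)\overline{R(\bar z)}$ removes, factor by factor, every generalized pole of non-positive type and every non-real pole of $Q$ — by \eqref{GPNT} and Lemma~\ref{pointlimit} each factor $(z-\beta_i)(z-\bar\beta_i)$ lowers the associated multiplicity by one, leaving at worst an ordinary $\cN$-type pole or zero there — and dividing by $P(z)\overline{P(\bar z)}$ does the analogous thing for the generalized zeros of non-positive type and the non-real zeros, via \eqref{GZNT}; since these factors are finite and nonzero at every other exceptional point, no new exceptional point is created at a finite location, and applying Lemma~\ref{pointlimit} at $\infty$ shows that the degree imbalance of $R\overline R/P\overline P$ against $Q$ exactly cancels any generalized pole or zero of $Q$ at $\infty$. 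Hence $Q_0$ has no non-real poles or zeros and no generalized poles or zeros of non-positive type whatsoever, so by (i), with all these multiplicities zero, $Q_0\in\cN_0=\cN$. For the index, (i) gives $\kappa=\kappa_\infty+\sum_{i=1}^n\kappa_{\beta_i}=\pi_\infty+\sum_{i=1}^m\pi_{\alpha_i}$, where $\pi_\infty,\kappa_\infty$ denote the multiplicities at $\infty$; since $Q$ cannot have both a generalized zero and a generalized pole of non-positive type at $\infty$, at least one of $\pi_\infty,\kappa_\infty$ vanishes, and therefore $\kappa=\max\{\sum_{i=1}^m\pi_{\alpha_i},\sum_{i=1}^n\kappa_{\beta_i}\}$. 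Finally $Q_0$ is unique: the data $(\alpha_i,\pi_{\alpha_i})$, $(\beta_i,\kappa_{\beta_i})$, hence $r$ and hence $Q_0=Q/r$, are determined by $Q$ alone through the limits in \eqref{GZNT} and \eqref{GPNT}.

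For sufficiency one runs the construction forward: a Nevanlinna function $Q_0$ has no generalized poles or zeros of non-positive type anywhere — its real zeros and poles are simple and ordinary by \eqref{increasing} and \eqref{Nevfullr}, and the conditions \eqref{GZNT}, \eqref{GPNT} are likewise trivial at $\infty$ — so by (ii), multiplying $Q_0$ by $P(z)\overline{P(\bar z)}$ and dividing by $R(z)\overline{R(\bar z)}$ yields a generalized Nevanlinna function $Q=rQ_0$ whose exceptional data are precisely the $\alpha_i,\beta_i$ with the prescribed multiplicities, and whose index is then $\max\{\sum_i\pi_{\alpha_i},\sum_i\kappa_{\beta_i}\}$ by the same computation as above. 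The main obstacle is the accounting at $\infty$: each quadratic factor shifts the degree of $r$ at infinity by two, so removing or adjoining a finite exceptional point can simultaneously destroy or create a generalized pole or zero at $\infty$, and it is exactly the competition between the numerator degree $2\sum_i\pi_{\alpha_i}$ and the denominator degree $2\sum_i\kappa_{\beta_i}$ — equivalently, the impossibility of $\infty$ being simultaneously a generalized zero and a generalized pole — that produces the maximum in the index formula; pinning this down, and confirming thereby that the $Q_0$ built in the necessity part has index exactly $0$, is the technical core of the argument.
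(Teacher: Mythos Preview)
The paper does not prove Proposition~\ref{CharNK}; it is quoted from the literature with the attribution ``see \cite{DLLS00} (cf.\ also \cite{DHS99})'' and used as a black box throughout. So there is no in-paper proof to compare against.

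That said, your outline follows the strategy of \cite{DLLS00}: peel off the symmetric quadratic factors one by one and track the effect on the negative-square count via a rank-one perturbation of the Nevanlinna kernel. The point you flag as ``the technical core'' --- that each multiplication or division by $(z-a)(z-\bar a)$ changes the number of negative squares by exactly one in the right direction --- is indeed the substance of the argument, and you have not actually carried it out; you have asserted it as input (ii). In \cite{DLLS00} this step is done by writing the kernel of $(z-a)(z-\bar a)Q(z)$ as the kernel of $Q$ multiplied by a positive scalar plus a rank-one term, and then invoking a negative-square counting lemma. Your invocation of ``a rank-one comparison of the corresponding reproducing kernels'' names the right mechanism but does not execute it, so as written your proposal is a correct plan rather than a proof. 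If you want a self-contained argument you must supply that kernel computation; otherwise, citing \cite{DLLS00} directly --- as the paper does --- is the honest route.
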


The unique factorization $\phi Q_0$, $Q_0 \in \cN$, of any generalized Nevanlinna function $Q$ provided by the above proposition is called the \textit{canonical factorization} of $Q$. Moreover, note that $\pi_\infty + \sum_{i=1}^{m}\pi_{\alpha_i} = \kappa_\infty + \sum_{i=1}^{n}\kappa_{\beta_i}$. In particular, a generalized Nevanlinna function is an ordinary Nevanlinna function if it does not have any GZNT's or GPNT's in $\dC\cup \{\infty\}$.

The canonical factorization allows an easy proof of the next
composition result.

\begin{lemma}\label{transqw}
Let $Q \in \cN_{\kappa}$ and let $\tau \in \cN$ be a rational
Nevanlinna function of degree $k$. Then $Q \circ \tau \in \cN_{k
\kappa}$ and, furthermore, the canonical factorizations of $Q$ and
$Q \circ \tau$ are connected by
\[
 Q=\phi Q_0,\quad Q \circ \tau =(\phi \circ \tau )(Q_0 \circ \tau).
\]
\end{lemma}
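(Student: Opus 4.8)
The plan is to exploit Proposition~\ref{CharNK} together with the elementary observation that composition with a rational Nevanlinna function $\tau$ of degree $k$ turns each factor $(z-\alpha)$ into a rational function of degree $k$ whose zeros and poles are again located in the right places. First I would write the canonical factorization $Q=\phi Q_0$ with $Q_0\in\cN$ and $\phi$ the rational factor appearing in Proposition~\ref{CharNK}. Then $Q\circ\tau=(\phi\circ\tau)(Q_0\circ\tau)$ purely formally, so the whole issue is to understand the two factors on the right: that $Q_0\circ\tau\in\cN$, and that $\phi\circ\tau$ is exactly the rational factor in the canonical factorization of $Q\circ\tau$.

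For the first factor, $Q_0\circ\tau\in\cN$ is immediate since the composition of two ordinary Nevanlinna functions is again an ordinary Nevanlinna function (it maps $\dC_+$ to $\overline{\dC_+}$, and symmetry is preserved); here one uses that $\tau\in\cN$. For the second factor I would analyze $\phi\circ\tau$. Each linear block $(z-\alpha_i)(z-\overline\alpha_i)$ (or $(z-\alpha_i)$ when $\alpha_i=\infty$, read as $1$ in the numerator / handled via the point at infinity) becomes, after substituting $\tau$, a rational function whose zeros are the $\tau$-preimages of $\alpha_i$, counted with multiplicity, and likewise the denominator blocks produce poles at the $\tau$-preimages of the $\beta_j$. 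Since $\tau$ has degree $k$ and is a Nevanlinna function, it is a real rational function mapping $\overline\dR\cup\{\infty\}$ onto itself $k$-to-one; hence the preimage of a real point consists of real points (counted with multiplicity $k$ in total), and the preimage of a non-real $\alpha_i\in\dC_+$ consists of points, half in $\dC_+$ and half in $\dC_-$ by symmetry, again $k$ in total. Thus $\phi\circ\tau$ is again a symmetric rational function of the form required in Proposition~\ref{CharNK}, with total numerator multiplicity $k\sum\pi_{\alpha_i}$ and total denominator multiplicity $k\sum\kappa_{\beta_j}$.

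The remaining point — and this is the main obstacle — is to verify that the zeros and poles produced by $\phi\circ\tau$ are \emph{exactly} the generalized zeros/poles of non-positive type of $Q\circ\tau$ with the correct multiplicities, i.e. that no cancellation or type change occurs when passing through $\tau$, and that $Q_0\circ\tau$ itself contributes no further GZNT's or GPNT's. For this I would check the defining limit conditions \eqref{GZNT}, \eqref{GPNT} directly: near a preimage $\gamma$ of a real generalized zero $\alpha_i$ of $Q$ one has $\tau(z)-\alpha_i\sim c(z-\gamma)^{\ell}$ for the local multiplicity $\ell$ with $c$ real of definite sign, and substituting into the limit $\lim_{z\wh\to\gamma}(Q\circ\tau)(z)/(z-\gamma)^{2\pi-1}$ reduces it, via Lemma~\ref{pointlimit} applied to $Q$ at $\alpha_i$, to the corresponding limit for $Q$ at $\alpha_i$ multiplied by a positive constant; the sign and finiteness conditions are therefore inherited. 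The analogous computation at preimages of poles, and at $\infty$ (using the behaviour $\tau(z)/z\to\beta\ge 0$ or $\tau(z)\to\tau(\infty)\in\dR$), handles the GPNT's and the point at infinity. Since $Q_0$ satisfies none of the non-positive-type limit conditions, neither does $Q_0\circ\tau$ at any point. Collecting these, $\phi\circ\tau$ is precisely the canonical rational factor of $Q\circ\tau$, $Q_0\circ\tau\in\cN$ is the Nevanlinna part, and the negative-square count is $\max\{k\sum\pi_{\alpha_i},\,k\sum\kappa_{\beta_j}\}=k\kappa$, which gives $Q\circ\tau\in\cN_{k\kappa}$ and the asserted factorization identity.
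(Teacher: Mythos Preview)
Your proposal is correct, but the paper takes a shorter route. Both arguments start the same way: write $Q=\phi Q_0$ and note $Q_0\circ\tau\in\cN$ since a composition of Nevanlinna functions is again Nevanlinna. The difference is in how one handles $\phi\circ\tau$.

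You track the preimages of each $\alpha_i,\beta_j$ individually and verify the GZNT/GPNT limit conditions \eqref{GZNT}, \eqref{GPNT} at each preimage, using that $\tau'>0$ on the real line so the sign of the relevant quotient is preserved. This is entirely valid and makes explicit exactly why the multiplicities transfer. (One minor imprecision: for $\alpha_i\in\dC_+$ the $k$ preimages all lie in $\dC_+$, not ``half in $\dC_+$ and half in $\dC_-$''; what you presumably mean is that the zeros of the \emph{block} $(\tau-\alpha_i)(\tau-\bar\alpha_i)$ split evenly, which is true by the symmetry $\tau^\sharp=\tau$.)

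The paper bypasses this pointwise verification with a single structural observation: $\phi$ is \emph{nonnegative on the real line} (it is a product of factors $(z-\alpha_i)(z-\overline{\alpha_i})$ and their reciprocals), and $\tau$ maps $\dR\cup\{\infty\}$ into $\dR\cup\{\infty\}$, so $\phi\circ\tau$ is automatically symmetric and nonnegative on $\dR$. Combined with the degree formula $\deg(\phi\circ\tau)=\deg\phi\cdot\deg\tau$ for compositions of rational functions, this is already enough to invoke Proposition~\ref{CharNK}: any nonnegative symmetric rational function times an element of $\cN$ is in the right $\cN_\kappa$ with that product as its canonical factorization. So the paper gets the canonical factorization and the index $k\kappa$ in one stroke, without ever writing down a limit at a preimage.

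What your approach buys is transparency: you see concretely that each GZNT $\alpha_i$ of $Q$ spawns $k$ GZNTs of $Q\circ\tau$ with the same multiplicity, and likewise for GPNTs. What the paper's approach buys is economy: three lines instead of a page, at the cost of leaning more heavily on the ``if'' direction of Proposition~\ref{CharNK} (that any such factorization is automatically canonical).
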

\begin{proof}
For rational functions $r_1$ and $r_2$ the degree of the composition
$r_1 \circ r_2$ is given by $\deg(r_1\circ r_2) = \deg(r_1)
\deg(r_2)$, see \cite{Wa64}. Let $Q=\phi Q_0$, $Q_0 \in \cN$, be the
canonical factorization of $Q$ in Proposition~\ref{CharNK}. Then
$\phi \circ \tau$ is a rational function of degree $k\kappa$. Since
$\tau$ and $\phi$ are symmetric, so is $\phi \circ \tau$. Moreover,
this composition is nonnegative on the real line, because $\phi$ is
nonnegative on the real line and $\tau$ maps $\dR \cup\{\infty\}$
into $\dR \cup\{\infty\}$. Since $Q_0 \circ \tau \in \cN$, the
statement follows from Proposition~\ref{CharNK}.
\end{proof}

\subsection{Symmetric and selfadjoint relations.}
Let $\sH_i$ be a Hilbert space with inner product $(\cdot,\cdot)_i$, $i=1,2$.
Then $H$ is called a \textit{(linear) relation} from $\sH_1$ to $\sH_2$ if its
graph is a subspace of $\sH_1 \times \sH_2$. In particular, $H$ is
closed if and only if its graph is closed (as a subset of $\sH_1
\times \sH_2$); in what follows $H$ is usually identified with its
graph. The symbols $\dom H$, $\ran H$, $\ker H$, and $\mul H$ stand
for the domain, range, kernel, and the multi-valued part of $A$,
respectively.

The adjoint $H^{*}$ of $H$ is defined by
\begin{equation}\label{A*}
 H^{*} = \{ \{f,f'\} \in \sH_2\times\sH_1: (f',g)_1=(f,g')_2, \;\; \forall\{g,g'\}\in H\},
\end{equation}
A relation $H$ in the Hilbert space $\sH$ (i.e., relation from $\sH$ to $\sH$) is
said to be \textit{symmetric} or \textit{selfadjoint} if $H\subset
H^{*}$ or $H = H^{*}$, respectively. For a relation $H$ in
$\sH$ the \textit{eigenspaces} of $H$ are denoted
by
\[
\sN_\lambda(H)=\ker (H-\lambda) \quad \textrm{and} \quad \widehat{\sN}_\lambda(H)=\{\{f_\lambda,\lambda f_\lambda\}: f_\lambda \in \sN_\lambda(H)\}, \quad \lambda \in \dC.
\]
Recall that for a symmetric relation $S$ in $\sH$, $n_+(S) = \dim \sN_{\overline{\lambda}}(S^*)$ and $n_-(S) = \dim \sN_{\lambda}(S^*)$, $\lambda \in \dC_+$ denote its \textit{deficiency indices}. If $S$ has equal defect numbers, then $S$ allows selfadjoint extensions. If $A$ is a selfadjoint extension of $S$, then
\begin{equation}\label{vonneu2}
S^* = A \hplus \wh \sN_\lambda(S^*), \quad \lambda \in \rho(A).
\end{equation}
Here $\hplus$ indicates the componentwise sum (linear span) of the subspaces.

The multi-valued part of a selfadjoint relation $A$ reduces the
relation:
\begin{equation}\label{oppartred}
A = A_o \oplus \{\{0\}\times \mul A\}.
\end{equation}
Here $A_o=P_\infty A\upharpoonright_{\ran P_\infty}$, where $P_\infty$ stands for the orthogonal projector onto $\cdom
A=\sH\ominus \mul A$, the so-called operator part of $A$ is a selfadjoint operator in $\cdom A$. Using the above decomposition define $|A|^\alpha$,
$\alpha > 0$, as
\begin{equation}\label{defabsval}
|A|^\alpha = |A_o|^\alpha\oplus \{\{0\}\times \mul A\},
\end{equation}
where $|A_o|$ is the modulus of $A_o$, and set $|A|^{\alpha} =
|A^{-1}|^{-\alpha}$ for $\alpha < 0$. Then $|A|^\alpha$ is a
selfadjoint relation in $\sH$.

\subsection{Realizations of Nevanlinna functions as Weyl functions.}
Boundary triplets for symmetric operators in Hilbert spaces, which were introduced in
\cite{Bruk,Kochu}, can be used to realize Nevanlinna functions. Here the definition of a boundary triplet is given for symmetric
relations with defect numbers $n_+(S)=n_-(S)=1$ together with a definition for its associated $\gamma$-field and Weyl function; see \cite{DM91,DM95}.

\begin{definition}\label{defbndscalles}
Let $S$ be a closed symmetric relation in a Hilbert space
$\{\sH,(\cdot,\cdot)\}$. Then $\{\dC, \Gamma_0, \Gamma_1\}$ is a
\emph{boundary triplet} for $S^*$ if
\begin{enumerate}
\def\labelenumi{\rm (\roman{enumi})}
\item  the mappings $\Gamma_0,\Gamma_1:\, S^*\rightarrow\dC$ satisfy the abstract Green's identity:
\[
(f',g)-(f,g') = ({\Gamma}_1\{f,f'\}, {\Gamma}_0\{g,g'\})- ({\Gamma}_0\{f,f'\},{\Gamma}_1\{g,g'\})
\]
for all $\{f,f'\},\{g,g'\}\in S^*$;
\item $\Gamma :\, S^* \to \dC\times\dC$, $\{f,f'\} \mapsto \{\Gamma_0 \{f,f'\}, \Gamma_1 \{f,f'\}\}$
is surjective.
\end{enumerate}
With $A_0 := \ker \Gamma$, the \textit{$\gamma$-field $\gamma_\lambda$ and the Weyl function $M(\lambda)$} associated with
$\{\dC, \Gamma_0, \Gamma_1\}$ are the vector function and
scalar function defined by
\begin{equation*}
\gamma_\lambda=\pi_1(\Gamma_0\!\upharpoonright\!\widehat\sN_\lambda(S^*))^{-1},\quad M(\lambda) =\Gamma_1(\Gamma_0\!\upharpoonright\!\widehat\sN_\lambda(S^*))^{-1},\quad \lambda\in\rho(A_0).
\end{equation*}
\end{definition}
Here $A_0 := \ker \Gamma_0$ and $A_1 :=\ker \Gamma_1$ are selfadjoint extension of $S$, and $\pi_1$ denotes the orthogonal projection in $\sH\oplus\sH$ onto $\sH\times \{0\}$. The $\gamma$-field and the Weyl function satisfy the formulas
\begin{equation}\label{chargamma}
\gamma_\lambda = (I + (\lambda-\mu)(A_0-\lambda)^{-1})\gamma_\mu, \quad \lambda, \mu\in \rho(A_0),
\end{equation}
and
\begin{equation}\label{charM}
M(\lambda) - M(\mu)^*  = (\lambda-\bar{\mu})\gamma_\mu^*\gamma_\lambda,\quad \lambda,\mu\in \rho(A_0);
\end{equation}
see \cite{DM91}. This shows that $\gamma$ is a holomorphic
vector-function on $\rho(A_0)$ and that $M(\lambda)$ is a Nevanlinna
function. In particular, if $M$ is not a constant function, then
$0\in\rho(\IM M(\lambda))$ for all $\lambda\in\cmr$. Nevanlinna
functions which have this additional property are called
\textit{uniformly strict}. Notice that $S$ can be recovered from any
of its selfadjoint extensions $A$ by means of the
$\gamma$-field via
\begin{equation}\label{recoversym}
S = \{ \{f,f'\} \in A: (f'-\overline{\lambda}f,\gamma_\lambda)=0 \}, \quad \lambda \in \rho(A);
\end{equation}
see \cite{HLS95}. In the terminology of boundary triplets one can
formulate the following realization result for uniformly strict
Nevanlinna functions; cf. \cite[Theorem~1.1]{DM91}.

\begin{theorem}\label{unifstreal}
Let $Q$ be a uniformly strict scalar (i.e., nonconstant) Nevanlinna
function. Then there exist a closed symmetric relation $S$ in a
Hilbert space and a boundary triplet $\{\dC,\Gamma_0,\Gamma_1\}$ for
$S^*$ such that $Q$ is the corresponding Weyl function.

Conversely, let $S$ be a closed symmetric relation in a Hilbert
space and let $\{\dC,\Gamma_0,\Gamma_1\}$ be a boundary triplet for
$S^*$, then the Weyl function associated with the boundary triplet
is a uniformly strict scalar Nevanlinna function.
\end{theorem}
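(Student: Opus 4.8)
The plan is to prove the two implications separately. The converse direction is essentially immediate from the formulas already recorded: if $\{\dC,\Gamma_0,\Gamma_1\}$ is a boundary triplet for $S^*$ with $\gamma$-field $\gamma_\lambda$ and Weyl function $M$, then \eqref{chargamma} gives holomorphy of $\gamma$ on $\rho(A_0)\supset\cmr$ and \eqref{charM} gives that $M$ is a ($\dC$-valued, hence scalar) Nevanlinna function, while putting $\mu=\lambda$ in \eqref{charM} yields $\IM M(\lambda)=\IM\lambda\cdot\|\gamma_\lambda\|^{2}$ for $\lambda\in\cmr$. I would then observe that $\gamma_\lambda\ne 0$: since $n_\pm(S)=1$, the space $\wh\sN_\lambda(S^*)$ is one-dimensional for $\lambda\in\cmr$, the map $\Gamma_0\uphar\wh\sN_\lambda(S^*)$ is injective (because $\ker\Gamma_0=A_0$ and $A_0\cap\wh\sN_\lambda(S^*)=\{0\}$ for $\lambda\in\rho(A_0)$) hence bijective onto $\dC$, and $\pi_1$ is injective on $\wh\sN_\lambda(S^*)$. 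Consequently $\|\gamma_\lambda\|^{2}>0$, so $0\in\rho(\IM M(\lambda))$ for all $\lambda\in\cmr$; that is, $M$ is uniformly strict, and in particular nonconstant.

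For the direct statement I would fix a uniformly strict (hence nonconstant) $Q\in\cN$ with integral representation \eqref{Nevfullr} determined by $\alpha\in\dR$, $\beta\ge 0$, and a spectral measure $d\sigma$, and build a model. Let $\sH=L^{2}(d\mu)$, where $\mu$ is the Borel measure on $\dR\cup\{\infty\}$ with $d\mu\uphar\dR=d\sigma$ and $\mu(\{\infty\})=\beta$ (the point $\infty$ being dropped when $\beta=0$), and let $A_0$ be the selfadjoint relation in $\sH$ that acts as multiplication by the independent variable on $\dR$ with $\mul A_0=\dC\cdot\mathbf 1_{\{\infty\}}$ when $\beta>0$; then $\rho(A_0)\supset\cmr$ and $(A_0-\lambda)^{-1}$ is multiplication by $(t-\lambda)^{-1}$ on $\dR$ (and $0$ at $\infty$). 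Put $\gamma_\lambda(t)=(t-\lambda)^{-1}$ for $t\in\dR$ and $\gamma_\lambda(\infty)=1$; membership $\gamma_\lambda\in\sH$ is guaranteed by the normalization $\int(1+t^{2})^{-1}d\sigma<\infty$, and short computations give the resolvent identity \eqref{chargamma} for the pair $(A_0,\gamma_\lambda)$ together with $(\gamma_\lambda,\gamma_\mu)=(Q(\lambda)-\overline{Q(\mu)})/(\lambda-\overline\mu)$, the latter obtained from \eqref{Nevfullr} by partial fractions.

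Next I would set $S=\{\{f,f'\}\in A_0:(f'-\overline{\lambda_0}f,\gamma_{\lambda_0})=0\}$ for a fixed $\lambda_0\in\dC_+$ (cf.\ \eqref{recoversym}). Standard arguments then show that $S$ is closed and symmetric, that the linear functional $\{f_0,f_0'\}\mapsto(f_0'-\overline{\lambda_0}f_0,\gamma_{\lambda_0})$ does not vanish identically on $A_0$ (since $\sigma\not\equiv 0$ or $\beta>0$, as $Q$ is nonconstant), hence $S$ has codimension one in $A_0$, and that $\wh\sN_{\lambda_0}(S^*)=\mathrm{span}\{\{\gamma_{\lambda_0},\lambda_0\gamma_{\lambda_0}\}\}$, which is nonzero because $\|\gamma_{\lambda_0}\|^{2}=\IM Q(\lambda_0)/\IM\lambda_0>0$; thus $n_\pm(S)=1$ and $S^*=A_0\hplus\wh\sN_{\lambda_0}(S^*)$ by \eqref{vonneu2}. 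Using the resulting unique decomposition $\{f,f'\}=\{f_0,f_0'\}+c\{\gamma_{\lambda_0},\lambda_0\gamma_{\lambda_0}\}$ with $\{f_0,f_0'\}\in A_0$ and $c\in\dC$, I would define $\Gamma_0\{f,f'\}=c$ and $\Gamma_1\{f,f'\}=(f_0'-\overline{\lambda_0}f_0,\gamma_{\lambda_0})+Q(\lambda_0)c$. A direct computation using selfadjointness of $A_0$, the resolvent identity, and $(\lambda_0-\overline{\lambda_0})\|\gamma_{\lambda_0}\|^{2}=Q(\lambda_0)-\overline{Q(\lambda_0)}$ verifies Green's identity in Definition~\ref{defbndscalles}(i) on $S^*$; surjectivity of $\{f,f'\}\mapsto\{\Gamma_0\{f,f'\},\Gamma_1\{f,f'\}\}$ holds because $c$ is arbitrary and, for each $c$, the quantity $(f_0'-\overline{\lambda_0}f_0,\gamma_{\lambda_0})=\int_\dR f_0\,d\sigma+\beta f_0'(\infty)$ already exhausts $\dC$ as $\{f_0,f_0'\}$ runs over $A_0$. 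Finally, $\ker\Gamma_0=A_0$ by construction, and by \eqref{chargamma} the element $\{\gamma_\lambda,\lambda\gamma_\lambda\}$ has $c=1$ in the above decomposition, so $\Gamma_0\{\gamma_\lambda,\lambda\gamma_\lambda\}=1$ and a short simplification using the explicit formula for $(\gamma_\lambda,\gamma_\mu)$ gives $\Gamma_1\{\gamma_\lambda,\lambda\gamma_\lambda\}=Q(\lambda)$; hence the $\gamma$-field and Weyl function of $\{\dC,\Gamma_0,\Gamma_1\}$ are $\gamma_\lambda$ and $Q$, as required.

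The step I expect to be the main obstacle is the bookkeeping in the case $\beta>0$, where $A_0$ has a nontrivial multivalued part (equivalently $Q$ has linear growth, so $\infty$ behaves as a generalized pole) and the $\infty$-coordinate must be carried consistently through all the inner-product identities — most delicately in the verification of Green's identity and in the computation $M=Q$. By contrast, the remaining ingredients — membership $\gamma_\lambda\in\sH$, the resolvent identity \eqref{chargamma}, the partial-fraction identity for $(\gamma_\lambda,\gamma_\mu)$, and the structural facts about $S$ and $S^*$ — are routine.
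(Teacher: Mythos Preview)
Your argument is correct. Note, however, that the paper does not supply its own proof of this theorem: it is quoted as a known result with a reference to \cite[Theorem~1.1]{DM91}, so there is no in-paper proof to compare against.

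That said, your approach is precisely the $L^2(d\sigma)$-model construction that the paper invokes later (see Section~\ref{secL2} and the references \cite{DM95,HLS95} given there), and your handling of the case $\beta>0$ via a point mass at $\infty$ with $\mul A_0=\dC\cdot\mathbf 1_{\{\infty\}}$ matches the paper's convention in \eqref{Aasmult}. The converse direction is exactly as the paper indicates in the paragraph preceding the theorem: formulas \eqref{chargamma}--\eqref{charM} give holomorphy and the Nevanlinna property, and your observation that $\gamma_\lambda\neq 0$ (via injectivity of $\Gamma_0\uphar\wh\sN_\lambda(S^*)$ and of $\pi_1$ on $\wh\sN_\lambda(S^*)$) is the right way to extract uniform strictness. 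Your verification of Green's identity and of $M(\lambda)=Q(\lambda)$ via the decomposition $\{\gamma_\lambda,\lambda\gamma_\lambda\}=\{f_0,f_0'\}+\{\gamma_{\lambda_0},\lambda_0\gamma_{\lambda_0}\}$ with $f_0=(\lambda-\lambda_0)(A_0-\lambda)^{-1}\gamma_{\lambda_0}$ goes through cleanly, including at the $\infty$-coordinate, so the anticipated obstacle is in fact only bookkeeping.
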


The realization in Theorem~\ref{unifstreal} is unique up to unitary
equivalence under the minimality condition
\begin{equation}\label{minimal}
 \sH=\cspan\{\gamma_\lambda:\, \lambda\in \cmr \},
\end{equation}
in which case $\rho(A_0)=\rho(Q)$. If the realization is not minimal,
then $\rho(A_0)$ may be a proper subset of $\rho(Q)$; see
\eqref{chargamma}, \eqref{charM}, and e.g. \eqref{Qfd} below.

\begin{remark}\label{Qfunctions}
It is a consequence of \eqref{chargamma}, \eqref{charM}, and
Theorem~\ref{unifstreal} that for every Nevanlinna function $Q$
there exists a selfadjoint relation $A$ in a Hilbert space
$\{\sH,(\cdot,\cdot)\}$ and an element $v \in \sH$, such that
\begin{equation}\label{Qfd}
Q(\lambda) = Q(\lambda_0)^* + (\lambda-\overline{\lambda_0})((I+(\lambda-\lambda_0)(A-\lambda)^{-1})v,v),
 \quad \lambda,\lambda_0 \in \rho(A).
\end{equation}
This type of realization of Nevanlinna functions, as so-called
\textit{$Q$-functions}, was developed by M.G. Kre\u{\i}n and H.
Langer (see e.g. \cite{KL71,KL73}). In this case the $\gamma$-field
associated to the $Q$ function is defined as
\begin{equation}\label{defgamwithbt}
\gamma_\lambda = (I+(\lambda-\lambda_0)(A-\lambda)^{-1})v, \quad \lambda \in \rho(A).
\end{equation}
An application of the resolvent identity shows that $\gamma_\lambda$
satisfies \eqref{chargamma}. This discussion shows that each
Nevanlinna function $Q$ can be realized by means of a selfadjoint
relation $A$ in a Hilbert space $\{\sH,(\cdot,\cdot)\}$ and a
function $\gamma_\lambda$ which satisfies \eqref{defgamwithbt}; in
what follows, such a realization is called a
\textit{$\{A,\gamma_\lambda\}$-realization of $Q$}.
\end{remark}

For a Nevanlinna function $Q$ and $\xi \in \dR$, define the
transform
\begin{equation}\label{defQxi}
 Q_\xi(\lambda) = -Q((\lambda-\xi)^{-1}), \quad \xi \in \dR.
\end{equation}
Clearly, $Q_\xi$ is a Nevanlinna function (cf. Lemma~\ref{transqw}).
The following result gives a connection between the realizations of
$Q$ and $Q_\xi$; see \cite[Lemma~2.4]{HL06}.

\begin{lemma}\label{Lugerres}
Let $Q \in \cN$ have the representation
\[ Q(z) = Q(z_0)^* + (z - \overline{z}_0)\left(\left(I+(z-z_0)(A - z)^{-1}\right)v,v\right),\]
for $z,z_0 \in \rho(A)$, where $A$ is a selfadjoint relation in the
Hilbert space $\{\sH,(\cdot,\cdot)\}$ and $v \in \sH$. Then $Q_\xi$, see \eqref{defQxi},  has the representation
\[ Q_\xi(\lambda) = Q_\xi(\lambda_0)^* + (\lambda - \overline{\lambda}_0)\left(\left(I+(\lambda-\lambda_0)((A^{-1} + \xi) - \lambda)^{-1}\right)\wt v,\wt v\right),\]
where $z= (\lambda-\xi)^{-1}$, $z_0 = (\lambda_0 -\xi)^{-1}$, and
$\wt v = (\lambda_0 - \xi)^{-1} v$.
\end{lemma}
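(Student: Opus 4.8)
The plan is to transfer the given $\{A,\gamma_\lambda\}$-realization of $Q$ through the Möbius-type substitution $z=(\lambda-\xi)^{-1}$ by keeping careful track of the resolvent. First I would introduce the selfadjoint relation $B = A^{-1}+\xi$, so that $B-\lambda = A^{-1}+\xi-\lambda$, and express $(B-\lambda)^{-1}$ in terms of $(A-z)^{-1}$ with $z=(\lambda-\xi)^{-1}$. Since $A^{-1}-(\lambda-\xi) = A^{-1}-z^{-1}$, a straightforward manipulation (using the algebraic identity for relations $A^{-1}-z^{-1} = -z^{-1}A^{-1}(A-z)$, valid on the appropriate domains) gives
\begin{equation*}
((A^{-1}+\xi)-\lambda)^{-1} = -z^{-1} - z^{-2}(A-z)^{-1},
\end{equation*}
the standard formula relating the resolvents of $A$ and $A^{-1}$. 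I would verify this identity at the level of graphs of relations, which handles the multivalued part of $A$ cleanly.

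Next I would substitute this into the claimed representation for $Q_\xi$. With $\wt v = (\lambda_0-\xi)^{-1}v = z_0 v$ and $z_0=(\lambda_0-\xi)^{-1}$, and writing $\gamma^{Q}_z = (I+(z-z_0)(A-z)^{-1})v$ for the $\gamma$-field of the original realization, the goal is to show that the $\gamma$-field of the new realization, namely $(I+(\lambda-\lambda_0)(B-\lambda)^{-1})\wt v$, equals $(\lambda_0-\xi)^{-1}$ times $\gamma^{Q}_z$ up to the scalar factor coming from the substitution, i.e. that it is proportional to $\gamma^Q_{(\lambda-\xi)^{-1}}$. Plugging in the resolvent formula and simplifying the scalars $z,z_0$ in terms of $\lambda,\lambda_0,\xi$ is the computational heart of the argument; the factors of $z^{-1},z^{-2}$ combine with $\lambda-\lambda_0 = z^{-1}-z_0^{-1} = (z_0-z)/(zz_0)$ to produce exactly the right prefactor. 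One then checks that $(\lambda-\overline{\lambda}_0)(\cdot\,,\,\cdot)$ with these new $\gamma$-vectors reproduces $Q((z)^{-1})$'s difference quotient, hence $-Q(z) = Q_\xi(\lambda)$, matching $Q_\xi$ as defined in \eqref{defQxi} up to the normalization at $\lambda_0$.

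Finally I would tidy up the constant term: the identity \eqref{Qfd} determines $Q$ only up to specifying its value at the base point, so it suffices to check that both sides of the asserted formula for $Q_\xi$ agree at $\lambda=\lambda_0$ (where the right side is $Q_\xi(\lambda_0)^*$ trivially, and the left side equals $Q_\xi(\lambda_0)$, which is real only in the limiting symmetric sense — more precisely one uses that the realization formula is an identity in $\lambda$ for each fixed $\lambda_0$ once the $\gamma$-field transformation and the resolvent identity are in place). I expect the main obstacle to be bookkeeping rather than conceptual: correctly handling the resolvent identity for $A$ versus $A^{-1}$ at the level of relations (including the possibility $0\in\sigma(A)$ or $\mul A\ne\{0\}$), and making sure the scalar prefactors from $z=(\lambda-\xi)^{-1}$ are collected without sign or inversion errors. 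Since $Q_\xi(\lambda) = -Q((\lambda-\xi)^{-1})$ is manifestly Nevanlinna by Lemma~\ref{transqw}, no analyticity or sign-definiteness needs to be re-established; the content is purely the explicit realization, so once the resolvent substitution is carried out the statement follows by comparing terms.
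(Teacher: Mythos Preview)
Your approach is exactly the natural one, and in fact the paper does not prove this lemma at all but simply cites \cite[Lemma~2.4]{HL06}; the direct resolvent computation you outline is presumably what appears there. The strategy --- rewrite $(B-\lambda)^{-1}$ with $B=A^{-1}+\xi$ in terms of $(A-z)^{-1}$, then push the substitution through the $\gamma$-field and the $Q$-function identity --- is sound and complete in principle.

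There is, however, a concrete inversion slip in your displayed resolvent identity. With $z=(\lambda-\xi)^{-1}$ one has $B-\lambda=A^{-1}-z^{-1}$, and the correct formula (checked at the level of graphs, or already for scalars) is
\[
 (A^{-1}-z^{-1})^{-1}=-zI-z^{2}(A-z)^{-1},
\]
not $-z^{-1}-z^{-2}(A-z)^{-1}$ as you wrote. With the corrected formula the computation runs cleanly: using $\lambda-\lambda_0=(z_0-z)/(zz_0)$ one finds
\[
 \bigl(I+(\lambda-\lambda_0)(B-\lambda)^{-1}\bigr)\wt v
 = z\bigl(I+(z-z_0)(A-z)^{-1}\bigr)v
 = z\,\gamma^{Q}_{z},
\]
which is precisely the relation $(\lambda-\xi)\gamma^{\xi}_\lambda=\gamma_{(\lambda-\xi)^{-1}}$ recorded after the lemma. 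Then $(\lambda-\bar\lambda_0)\,z\,\bar z_0=-(z-\bar z_0)$ yields
\[
 (\lambda-\bar\lambda_0)\bigl(\gamma^{\xi}_\lambda,\wt v\bigr)
 = -(z-\bar z_0)\bigl(\gamma^{Q}_z,v\bigr)
 = -\bigl(Q(z)-Q(z_0)^*\bigr),
\]
and since $Q_\xi(\lambda_0)^*=-Q(z_0)^*$ the constant terms cancel to give $Q_\xi(\lambda)=-Q(z)$ as required. So the only issue is the bookkeeping error you yourself anticipated; once the exponents on $z$ are fixed, your argument is complete.
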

If the $\gamma$-fields associated to $Q$ and $Q_\xi$ are denoted by
$\gamma_\lambda$ and $\gamma_\lambda^\xi$, respectively, then it
follows from Lemma~\ref{Lugerres} that they are connected by
\[
(\lambda - \xi) \gamma^\xi_\lambda = \gamma_{(\lambda -\xi)^{-1}}.
\]
Furthermore, Lemma~\ref{Lugerres} shows that if
$\{A,\gamma_\lambda\}$ is a realization for $Q$, see
Remark~\ref{Qfunctions}, then $Q_\xi$ is realized by $\{A^{-1} +
\xi, \gamma^\xi_\lambda\}$.

\section{Nevanlinna functions having spectral gaps}\label{sec3}
Characteristic properties of Nevanlinna functions having gaps in their spectral measure are studied. This
involves use of local variants of so-called Kac-Donoghue subclasses and the limit values of $Q$ at the endpoints of the spectral gaps.

\subsection{Kac-Donoghue classes of Nevanlinna functions.}
A Nevanlinna function $Q$ is said to belong to the \textit{Kac class}
$\cN(\infty,1)$, see \cite{Ka56}, or to the \textit{Kac-Donogue class $\cN(\xi,1)$ with $\xi \in \dR$}, cf. \cite{HSSW08}, if
\[ 
 \int_{[1,\infty)} \,\frac{\IM Q(iy)}{y}\,dy < \infty \quad \textrm{or} \quad \int_{(0,1]}\frac{\IM Q(\xi+iy)}{y} dy < \infty,
\]
respectively. The classes $\cN(\xi,1)$, $\xi \in \dR$, are connected
to the Kac class $\cN(\infty,1)$ by means of transformation $Q_\xi$,
$\xi \in \dR$, in \eqref{defQxi}. In fact, the identities
\[ \begin{split}
 \int_0^1 \frac{\IM Q_\xi(\xi+iy)}{y} dy &=
 \int_0^1 \frac{-\IM  Q(\frac{1}{iy})}{y} dy =
 \int_1^\infty \frac{-\IM Q\left(-it\right)}{t} dt
  = \int_1^\infty \frac{\IM Q\left(it\right)}{t} dt, \end{split} \]
show that if $Q \in \cN$ and $Q_\xi$ is defined by \eqref{defQxi}, then
\begin{equation}\label{transsets}
Q\in \cN(\infty,1) \quad \textrm{if and only if} \quad Q_\xi \in \cN(\xi,1);
\end{equation}
cf. \cite[Proposition~3.3]{HSSW08}. It should be also noted that if $Q$ has the
integral representation \eqref{Nevfullr}, then the spectral measure $d\sigma_\xi$
of $Q_\xi$ is given by
\[ 
d\sigma_\xi (t) = -(t-\xi)^2d\sigma(1/(t-\xi)), \quad t\in \dR \setminus \{\xi\}, \quad \sigma_\xi(\xi) = \beta;
\]
see \cite[Proposition 3.2]{HSSW08}. Therefore the classes $\cN(\xi,1)$ can also be characterized via the spectral
measures; for the case $\xi=\infty$, see \cite[Theorem~S1.3.1]{KK74}, \cite[Proposition 2.2]{HLS95} and for
the case $\xi\in\dR$, see \cite[Lemma 3.5]{HSSW08}.
\begin{proposition}\label{charclassspectral}
Let $Q \in \cN$ with the integral representation \eqref{Nevfullr}.
Then $Q \in \cN(\xi,1)$ if and only if
\[ \int_{|t-\xi|<1} \frac{d\sigma(t)}{|t-\xi|} < \infty, \quad  \xi \in \dR, \quad \textrm{or} \quad
\int_{\dR} \frac{d\sigma(t)}{|t| +1} < \infty  \,\, \textrm{and} \,\, \beta=0, \quad \xi = \infty.\]
If either of the above equivalent conditions holds, then  $\lim_{z \wh \to \xi} Q(z) \in \dR$.
\end{proposition}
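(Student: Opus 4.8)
The plan is to reduce everything to the known case $\xi=\infty$ via the transform $Q_\xi$ and the facts already assembled in the excerpt, so that only the $\xi=\infty$ case needs a self-contained argument. First I would treat $\xi=\infty$ directly: starting from the integral representation \eqref{Nevfullr}, compute $\IM Q(iy) = \beta y + \int_\dR \frac{y}{t^2+y^2}\,d\sigma(t)$, divide by $y$, and integrate over $[1,\infty)$. Using Tonelli's theorem to interchange the order of integration, the inner integral $\int_1^\infty \frac{d\sigma(t)}{t^2+y^2}\,dy$ is comparable (up to universal constants) to $\frac{1}{|t|+1}$, while the $\beta y$ term contributes $\beta\int_1^\infty dy = \infty$ unless $\beta=0$. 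This yields the claimed equivalence $Q\in\cN(\infty,1)\iff \beta=0$ and $\int_\dR \frac{d\sigma(t)}{|t|+1}<\infty$.

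Next, for the finiteness of the limit at $\xi=\infty$: assuming $\beta=0$ and $\int_\dR\frac{d\sigma(t)}{|t|+1}<\infty$, the integrand in \eqref{Nevfullr} is $\frac{1}{t-z}-\frac{t}{1+t^2}$; I would show $\lim_{z\wh\to\infty}\big(\frac{1}{t-z}-\frac{t}{1+t^2}\big) = -\frac{t}{1+t^2}$ pointwise and dominate it (uniformly for $z$ in a fixed sector away from $\dR$) by a constant multiple of $\frac{1}{|t|+1}$, which is $d\sigma$-integrable by hypothesis. Dominated convergence then gives $\lim_{z\wh\to\infty}Q(z) = \alpha - \int_\dR \frac{t}{1+t^2}\,d\sigma(t)\in\dR$. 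The domination estimate is the one genuinely technical point: one must check that for $t\in\dR$ and $z$ in the sector $|\arg z - \pi/2|\ge\alpha>0$ with $|z|$ bounded below, $\big|\frac{1}{t-z}\big|\le \frac{C}{|t|+1}$ uniformly; this follows because $|t-z|$ is bounded below by a constant multiple of both $|z|$ and $|t|+1$ on that sector once $|z|\ge 1$, say, which is a routine geometric fact about sectors but must be stated.

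For the case $\xi\in\dR$ I would not repeat the analysis but instead invoke \eqref{transsets} together with the explicit description of $d\sigma_\xi$ quoted just before the proposition: $d\sigma_\xi(t) = -(t-\xi)^2 d\sigma(1/(t-\xi))$ and $\sigma_\xi(\xi)=\beta$. The change of variables $t = \xi + 1/s$ transforms $\int_{|s-\xi|<1}\frac{d\sigma(s)}{|s-\xi|}$ into $\int_{|t-\xi|>1}\frac{d\sigma_\xi(t)}{|t-\xi|+1}$ up to the boundary mass at $\xi$; combined with the fact that $Q_\xi$ has point mass $\beta$ at $\xi$ corresponding to the $\beta$-term of $Q$, the condition "$Q_\xi$ has zero $\beta$-coefficient and $\int\frac{d\sigma_\xi}{|t|+1}<\infty$" translates exactly into "$\int_{|t-\xi|<1}\frac{d\sigma(t)}{|t-\xi|}<\infty$". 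Applying \eqref{transsets} then closes the equivalence for $\xi\in\dR$. Finally, $\lim_{z\wh\to\xi}Q(z)\in\dR$ follows from $\lim_{\lambda\wh\to\infty}Q_\xi(\lambda)\in\dR$ (the $\xi=\infty$ case already proved) together with the relation $Q_\xi(\lambda) = -Q((\lambda-\xi)^{-1})$ and the fact that $z\wh\to\xi$ corresponds to $\lambda\wh\to\infty$ under $z=(\lambda-\xi)^{-1}$.

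The main obstacle is bookkeeping rather than depth: getting the change-of-variables correspondence between the measures $d\sigma$ and $d\sigma_\xi$ to line up precisely with the two halves of the Kac-Donoghue integral condition (local behavior near $\xi$ versus behavior near $\infty$), and making sure the $\beta$-coefficient / point-mass-at-$\xi$ correspondence is tracked correctly. The analytic content — the Tonelli interchange and the dominated-convergence argument for the limit — is short and standard once the sector estimate on $|t-z|$ is recorded.
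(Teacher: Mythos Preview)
Your argument for $\xi=\infty$ (Tonelli for the equivalence, dominated convergence for the limit) is correct and standard; the paper itself does not write out a proof but cites \cite{KK74,HLS95} for $\xi=\infty$ and \cite{HSSW08} for $\xi\in\dR$, and the paragraph preceding the proposition is set up precisely to enable a reduction of the kind you propose.

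However, your reduction for finite $\xi$ uses \eqref{transsets} in the wrong direction, and the reversed implication is actually false. As stated, \eqref{transsets} says $Q\in\cN(\infty,1)\iff Q_\xi\in\cN(\xi,1)$, because $z=1/(\lambda-\xi)$ sends $\lambda\to\xi$ to $z\to\infty$; you are using it as $Q\in\cN(\xi,1)\iff Q_\xi\in\cN(\infty,1)$. Counterexample: with $\xi=1$ and $Q(z)=-1/z$ one has $d\sigma=\delta_0$, so $\int_{|t-1|<1}|t-1|^{-1}d\sigma(t)<\infty$ and indeed $\int_0^1\IM Q(1+iy)\,dy/y=\pi/4$, hence $Q\in\cN(1,1)$; yet $Q_1(\lambda)=-Q(1/(\lambda-1))=\lambda-1$ has $\beta$-coefficient $1$, so $Q_1\notin\cN(\infty,1)$. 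The same direction error breaks your deduction of $\lim_{z\wh\to\xi}Q(z)\in\dR$.

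Two easy fixes. Either apply \eqref{transsets} to the \emph{inverse} transform: write $Q=P_\xi$ with $P(z)=-Q(\xi+1/z)\in\cN$, so $Q\in\cN(\xi,1)\iff P\in\cN(\infty,1)$, and carry the measure bookkeeping for $d\sigma_P$ (not $d\sigma_\xi$) back to $d\sigma$. Or, more simply, drop the transform and run Tonelli directly at the finite point:
\[
\int_0^1 \frac{\IM Q(\xi+iy)}{y}\,dy = \beta + \int_\dR\frac{\arctan(1/|t-\xi|)}{|t-\xi|}\,d\sigma(t),
\]
where the integrand is comparable to $|t-\xi|^{-1}$ on $|t-\xi|<1$ and dominated by $(t-\xi)^{-2}$ (hence $d\sigma$-integrable via $\int d\sigma/(1+t^2)<\infty$) on $|t-\xi|\ge1$. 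The limit then follows by dominated convergence on the combined kernel $(1+tz)/\bigl((t-z)(1+t^2)\bigr)$, using the Stolz-angle estimate $|t-z|\ge c|t-\xi|$ near $\xi$ and $|t-z|\ge c|t|$ for large $|t|$.
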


If $\xi$ is the endpoint of an interval contained in $\rho (Q)$,
then the class $\cN(\xi,1)$ can be characterized by means of the
limit of the function at the point $\xi$. For this the following lemma,
which will also be used in later subsections, is stated.

\begin{lemma}\label{limvalue}
Let $Q \in \cN$ have the integral representation \eqref{Nevfullr}.
Then the following statements hold:
\begin{enumerate}
\def\labelenumi {\rm (\roman{enumi})}
\item if $(-\infty,c)\subset \rho(Q)$, then with $x\in(-\infty,c)$ and $\Delta_c = [c,\infty)$
\[
\lim_{x \to -\infty} Q(x) = \left\{ \begin{array}{cl} \alpha - \int_{\Delta_c}\frac{td\sigma(t)}{1+t^2} \in \dR,& \; \textrm{if} \quad \int_{\Delta_c} \frac{d\sigma(t)}{1+|t|} < \infty \quad \textrm{and}\quad \beta=0  ;\\ -\infty,& \; \textrm{otherwise}; \end{array} \right. \]
\item if $(c_-,c_+) \subset \rho(Q)$, then with $x\in (c_-,c_+)$ and $\Delta_{c} = \dR\setminus (c_-,c_+)$
\[\lim_{x\to c_\pm} \; Q(x) = \left\{ \begin{array}{cl} \alpha + \beta c_\pm + \int_{\Delta_{c}}\frac{(1+tc_\pm)d\sigma(t)}{(t-c_\pm)(1+t^2)} \in \dR,& \; \textrm{if} \quad \int_{|t-c_\pm|\leq1}\frac{d\sigma(t)}{|t-c_\pm|} < \infty;\\ \pm\infty,& \; \textrm{otherwise}. \end{array} \right. \]
\end{enumerate}
\end{lemma}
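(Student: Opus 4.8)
The plan is to argue directly from the integral representation \eqref{Nevfullr}. Since in both parts the open interval in question lies in $\rho(Q)$, the closed set $\dR\setminus\rho(Q)$ is contained in $\Delta_c$, so $d\sigma$ is carried by $\Delta_c$ and \eqref{Nevfullr} becomes $Q(z)=\alpha+\beta z+\int_{\Delta_c}\bigl(\tfrac1{t-z}-\tfrac t{1+t^2}\bigr)\,d\sigma(t)$. By \eqref{increasing} the function $Q$ is nondecreasing on the gap, so the one-sided real limits in the statement exist in $[-\infty,\infty]$ (cf.\ Remark~\ref{interpretationlimit}); the task is to evaluate them and to prove the stated dichotomy. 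I would also record the identity $\tfrac1{t-\lambda}-\tfrac t{1+t^2}=\tfrac{1+t\lambda}{(t-\lambda)(1+t^2)}$, which exhibits the candidate finite values as the formal evaluations of the above integral at the endpoint.

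For (ii) the key device is localisation near the relevant endpoint. Fixing $0<\delta<\min(1,c_+-c_-)$, I would split $\Delta_c=B\cup B'$ with $B=[c_+,c_++\delta]$, on which $\sigma$ is finite and $t/(1+t^2)$ is bounded, and $B'=\Delta_c\setminus B$, on which $|t-c_+|\ge\delta$. On $B'$, for $x$ in a fixed neighbourhood of $c_+$ the integrand $\tfrac{1+tx}{(t-x)(1+t^2)}$ is bounded by a fixed $d\sigma$-integrable function — it is $O\bigl((1+t^2)^{-1}\bigr)$ as $|t|\to\infty$ uniformly in such $x$, and bounded on bounded subsets of $B'$ — and it converges pointwise as $x\to c_+$, so dominated convergence yields convergence of the $B'$-integral to a finite limit. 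On $B$, for $x<c_+$ one has $0<\tfrac1{t-x}\uparrow\tfrac1{t-c_+}$ as $x\uparrow c_+$, so monotone convergence gives $\int_B\tfrac1{t-x}\,d\sigma\to\int_B\tfrac{d\sigma}{t-c_+}\in[0,\infty]$, the limit being finite exactly when $\int_{|t-c_+|\le1}\tfrac{d\sigma}{|t-c_+|}<\infty$ (the band $\delta\le|t-c_+|\le1$ contributes only a finite amount since $\sigma$ is finite there). Collecting the finite contributions of $\beta c_+$, of $-\int_B\tfrac t{1+t^2}\,d\sigma$, and of the $B'$-limit then yields the asserted value of $\lim_{x\uparrow c_+}Q(x)$ in the convergent case and $+\infty$ otherwise. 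The limit at $c_-$ is symmetric: now $\tfrac1{t-x}<0$ on the bounded neighbourhood $[c_--\delta,c_-]$ and $-\tfrac1{t-x}\uparrow\tfrac1{|t-c_-|}$ as $x\downarrow c_-$, producing the asserted finite value when $\int_{|t-c_-|\le1}\tfrac{d\sigma}{|t-c_-|}<\infty$ and $-\infty$ otherwise.

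For (i) the distinguished point is $\infty$ and $\beta$ plays a role. If $\beta>0$, then \eqref{increasing} gives $Q'(x)\ge\beta$ for $x\in(-\infty,c)$, hence $Q(x)\le Q(x_0)-\beta(x_0-x)\to-\infty$. If $\beta=0$ and $\int_{\Delta_c}\tfrac{d\sigma}{1+|t|}<\infty$, then for all sufficiently negative $x$ one has $0<\tfrac1{t-x}\le\tfrac1{1+|t|}$ on $\Delta_c=[c,\infty)$, so dominated convergence gives $\int_{\Delta_c}\tfrac1{t-x}\,d\sigma\to0$, while $t/(1+t^2)$ is then absolutely $d\sigma$-integrable over $\Delta_c$; hence $Q(x)\to\alpha-\int_{\Delta_c}\tfrac{t\,d\sigma(t)}{1+t^2}$. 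If $\beta=0$ and $\int_{\Delta_c}\tfrac{d\sigma}{1+|t|}=\infty$, I would truncate: split $\Delta_c=[c,T]\cup(T,\infty)$ with $T$ large; for $x$ sufficiently negative (depending on $T$) one has $1+tx<0$ on $(T,\infty)$, so that piece contributes a negative amount, whereas the integral over $[c,T]$ tends to $-\int_{[c,T]}\tfrac t{1+t^2}\,d\sigma$, which tends to $-\infty$ as $T\to\infty$; hence $Q(x)\to-\infty$.

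The main obstacle I anticipate is the non-one-signedness of the integrand: $\int_{\Delta_c}\tfrac t{1+t^2}\,d\sigma$ need not converge absolutely on an unbounded piece of $\Delta_c$, so \eqref{Nevfullr} cannot be split into two separate integrals globally. The remedy is precisely the localisation above — a bounded neighbourhood of a finite endpoint, or a truncation $[c,T]$ near $\infty$ — on which the $\tfrac1{t-x}$ term is monotone in $x$ and monotone convergence delivers the sharp dichotomy; this is also the structural reason why the integrability condition in the lemma is stated locally near the endpoint. A minor bookkeeping point is the possible point mass $\sigma(\{c_\pm\})$, which is exactly what makes $\int_B\tfrac{d\sigma}{|t-c_\pm|}$ infinite when it is present.
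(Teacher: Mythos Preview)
Your argument is correct in all parts; the localisation/truncation strategy together with dominated and monotone convergence does the job, and you have handled the sign issues and the $\beta>0$ case carefully.

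The paper's proof is shorter because it avoids the splitting altogether. The point you flag as the ``main obstacle'' --- that $t/(1+t^2)$ need not be $d\sigma$-integrable, so one cannot separate the two terms in the integrand --- is bypassed by observing that the \emph{combined} integrand
\[
x\;\longmapsto\;\frac{1}{t-x}-\frac{t}{1+t^2}=\frac{1+tx}{(t-x)(1+t^2)}
\]
is monotone in $x$ for each fixed $t\in\Delta_c$ (its $x$-derivative is $(t-x)^{-2}>0$; this is exactly the content of \eqref{increasing}). Since the integrand is absolutely $d\sigma$-integrable at any single reference point $x_0$ in the gap (being $O((1+t^2)^{-1})$ at infinity), the monotone convergence theorem applies directly to the full integral as $x$ moves to the endpoint, yielding the limit $\alpha-\int_{\Delta_c}\tfrac{t}{1+t^2}\,d\sigma(t)$ in one stroke, finite or $-\infty$ according to the stated integrability condition. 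The $\beta>0$ case is disposed of via Lemma~\ref{pointlimit} rather than your derivative bound, and part~(ii) is declared analogous. So both proofs rest on the same monotone-convergence idea; yours localises first and then applies it to a piece, whereas the paper exploits monotonicity of the full integrand to avoid any decomposition.
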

\begin{proof}
By monotonicity, see \eqref{increasing}, the limit in (i) exists in
$\dR \cup \{-\infty\}$. Furthermore, it follows from
Lemma~\ref{pointlimit} that for the limit in (i) to be finite it is
necessary that $\beta =0$.  If $\beta=0$, then the monotone
convergence theorem implies that
\[ \lim_{x \to -\infty} Q(x) = \alpha - \int_{[c,\infty)}\frac{td\sigma(t)}{1+t^2} \]
for $x\in (-\infty,c)$.  This limit is finite if and only if
$d\sigma$ satisfies the integrability condition in (i). The other
statement can be proven with similar arguments.
\end{proof}
Combining Lemma~\ref{limvalue} with Proposition~\ref{charclassspectral} yields the following result.

\begin{corollary}\label{limitaschar}
Let $Q \in \cN$, let $\xi \in \dR \cup \{\infty\}$ and assume that
there exists $c \in \dR$ such that $(\xi,c)$ or $(c,\xi)$ belongs to
$\rho(Q)$, if $\xi \in \dR$, or that $(-\infty,c)$ or $(c,\infty)$ belongs to $\rho(Q)$ if $\xi =\infty$.
Then
\[ Q \in \cN(\xi,1) \quad \textrm{if and only if} \quad \lim_{z \wh \to \xi} Q(z) \in \dR.\]
\end{corollary}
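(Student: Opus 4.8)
The plan is to obtain the corollary by combining Proposition~\ref{charclassspectral} with Lemma~\ref{limvalue}, the hypothesis on $\xi$ serving only to link the non-tangential limit $\lim_{z\wh\to\xi}Q(z)$ with the one-sided limit of $Q$ along a real interval. First note that the implication $Q\in\cN(\xi,1)\Rightarrow\lim_{z\wh\to\xi}Q(z)\in\dR$ is exactly the last assertion of Proposition~\ref{charclassspectral} and needs no assumption on $\xi$, so only the converse carries content. If $Q$ is constant the converse is trivial; so assume $Q$ nonconstant. Then by \eqref{increasing} $Q$ is strictly increasing on the interval $I\subset\rho(Q)$ (one of $(\xi,c)$, $(c,\xi)$, $(-\infty,c)$, $(c,\infty)$) named in the hypothesis, and hence the one-sided limit $q:=\lim_{x\to\xi,\,x\in I}Q(x)$ exists in $\dR\cup\{+\infty\}$ (if $\xi=\sup I$) or in $\dR\cup\{-\infty\}$ (if $\xi=\inf I$).

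Since $I\subset\rho(Q)$, the spectral measure $d\sigma$ of $Q$ carries no mass on $I$, and therefore the integrability conditions appearing in Lemma~\ref{limvalue}(i)--(ii) and those appearing in Proposition~\ref{charclassspectral} are the same: the discrepancies between $|t-\xi|<1$ and $|t-\xi|\le1$, and between $\int_{\Delta_c}$ and $\int_{\dR}$, only concern finite pieces of mass. Consequently Lemma~\ref{limvalue} gives that $q\in\dR$ if and only if $d\sigma$ satisfies the Kac--Donoghue integrability condition at $\xi$, which by Proposition~\ref{charclassspectral} holds if and only if $Q\in\cN(\xi,1)$. Thus the whole statement reduces to showing that $q$ and $\lim_{z\wh\to\xi}Q(z)$ are finite --- and then equal --- precisely for the same $Q$; equivalently, that $Q\notin\cN(\xi,1)$ forces $\lim_{z\wh\to\xi}|Q(z)|=\infty$.

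For $\xi=\infty$ this is immediate, since the sector defining $z\wh\to\infty$ contains the real half-line underlying $I$, so $\lim_{z\wh\to\infty}Q(z)$ must coincide with $q$; and when $Q\notin\cN(\infty,1)$, Lemma~\ref{limvalue}(i) (applied after a reflection if $I=(c,\infty)$) gives $q=\pm\infty$. For $\xi\in\dR$ I would argue by contraposition. After replacing $Q(z)$ by the Nevanlinna function $-Q(2\xi-z)$ if necessary, one may assume $I=(c,\xi)$, so $d\sigma$ vanishes on $(c,\xi)$ and all of its mass in a small one-sided neighbourhood of $\xi$ lies in $[\xi,\xi+\varepsilon)$. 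If $Q\notin\cN(\xi,1)$, then by Proposition~\ref{charclassspectral} either $\sigma(\{\xi\})>0$ --- in which case Lemma~\ref{pointlimit} gives $(\xi-z)Q(z)\to\sigma(\{\xi\})\ne0$, hence $\lim_{z\wh\to\xi}|Q(z)|=\infty$ --- or else $\int_{(\xi,\xi+\varepsilon)}d\sigma(t)/(t-\xi)=\infty$ for every $\varepsilon>0$. In the latter case I would evaluate $\RE Q$ along a fixed ray $z=\xi+re^{i\theta_0}$ with $\theta_0\in(\pi/2,\pi)$, which is a legitimate non-tangential approach to $\xi$ from $\dC_+$. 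Then $\RE(t-z)=(t-\xi)+r|\cos\theta_0|\ge0$ for every $t$ in the support of $d\sigma$ near $\xi$, so Fatou's lemma applied to the nonnegative functions $\RE\frac{1}{t-z}$ yields $\liminf_{r\downarrow0}\int_{[\xi,\xi+\varepsilon)}\RE\frac{1}{t-z}\,d\sigma(t)\ge\int_{[\xi,\xi+\varepsilon)}\frac{d\sigma(t)}{t-\xi}=\infty$, while the remaining terms of the integral representation \eqref{Nevfullr} of $Q$ stay bounded along this ray by dominated convergence (using the Stolz-angle estimate $|t-z|\ge\sin(\min(\theta_0,\pi-\theta_0))\,|t-\xi|$ and $\int_{\dR}d\sigma(t)/(1+t^2)<\infty$). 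Hence $\RE Q(z)\to+\infty$ along the ray, so $\lim_{z\wh\to\xi}Q(z)=\infty\notin\dR$, the required contradiction.

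The main obstacle is precisely this last step: transferring the monotone boundary value of $Q$ taken along the real interval $I$ to the non-tangential value $\lim_{z\wh\to\xi}Q(z)$. This is the only place where one genuinely needs the integral representation of $Q$ and, for finite $\xi$, the one-sidedness of the spectral mass near $\xi$ forced by the gap --- without a gap a finite one-sided real limit need not coincide with the non-tangential limit. Everything else is bookkeeping between Lemma~\ref{limvalue} and Proposition~\ref{charclassspectral}.
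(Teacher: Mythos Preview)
Your proof is correct and follows the paper's indicated route of combining Proposition~\ref{charclassspectral} with Lemma~\ref{limvalue}; the paper states the corollary as an immediate consequence of these two results and supplies no further argument. You have correctly isolated the one step that is not entirely automatic --- that for finite $\xi$ the finiteness of the non-tangential limit forces the finiteness of the one-sided real limit along the gap --- and your Fatou argument along a fixed ray in the Stolz region fills in precisely what the paper leaves implicit.
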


\subsection{Nevanlinna functions holomorphic on (the complement of) a compact interval}\label{sec23}
Nevanlinna functions with a gap in their spectrum are
characterized by means of their limits at the endpoints of this gap.
Two cases are considered: the case where $\rho(Q)$
contains a bounded interval (finite spectral gap) and the case where
$\rho(Q)$ contains the complement of a compact interval.

\begin{proposition}\label{CharQGap1}
Let $Q \in \cN$ have the integral representation \eqref{Nevfullr}.
Then the following statements are equivalent for $c,d\in\dR$ with
$c<d$:
\begin{enumerate}
\def\labelenumi {\rm (\roman{enumi})}
\item $(c,d) \subset \rho(Q)$ and $\lim_{x \uparrow d} Q(x) \in \dR$;
\item the integral representation of $Q$ is given by
\[
Q(z) = \eta + \frac{z-d}{z-c}\left(\beta z - \gamma + \int_{\dR\setminus (c,d]}\left(\frac{1}{t-z}-\frac{t}{1+t^2}\right)d\hat{\sigma}(t)\right),
\]
where $\gamma, \eta \in \dR$,
$d\hat{\sigma}(t)=\frac{(t-c)d\sigma(t)}{(t-d)}$, and
$\int_{\dR\setminus (c,d]}\frac{d\hat{\sigma}(t)}{1+t^2}<
\infty$;
\item $\exists \widetilde{Q} \in \cN$ and $\tilde{\eta} \in \dR$ such that $Q(z)=\tilde{\eta} + \frac{z-d}{z-c}\widetilde{Q}(z)$.
\end{enumerate}
In particular, in (ii) $\eta = \lim_{x \uparrow d} Q(x)$ and in
(iii) $\tilde{\eta}\ge \eta$.
\end{proposition}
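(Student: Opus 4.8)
The plan is to prove the cyclic chain of implications (i) $\Rightarrow$ (ii) $\Rightarrow$ (iii) $\Rightarrow$ (i), and then establish the two equality/inequality statements at the end. The heart of the matter is the step (i) $\Rightarrow$ (ii), where one must produce the claimed integral representation from the hypothesis that $Q$ is holomorphic on $(c,d)$ with a finite limit at $d$. My approach here is to start from the integral representation \eqref{Nevfullr} of $Q$ and multiply and divide by the rational factor $\frac{z-c}{z-d}$. Writing $\frac{z-d}{z-c}\cdot\frac{t-c}{t-d}$ and doing partial-fraction bookkeeping in $t$ and $z$, the kernel $\frac{1}{t-z}-\frac{t}{1+t^2}$ should transform, after collecting the leftover rational-in-$z$ terms into a constant $\eta$ and a linear-coefficient adjustment, into the kernel $\frac{1}{t-z}-\frac{t}{1+t^2}$ integrated against $d\hat\sigma(t)=\frac{(t-c)}{(t-d)}d\sigma(t)$ over $\dR\setminus(c,d]$; the point mass at $\infty$ (the $\beta z$ term) survives multiplied by $\frac{z-d}{z-c}$ as written. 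Since $(c,d)\subset\rho(Q)$, the measure $d\sigma$ is supported off $(c,d)$, so on $\dR\setminus(c,d]$ the Radon–Nikodym factor $\frac{t-c}{t-d}$ is nonnegative except near $t=d$ from the right, where it is nonnegative as well once $t>d$; one checks it is nonnegative on all of $\dR\setminus(c,d]$. The finiteness $\int\frac{d\hat\sigma(t)}{1+t^2}<\infty$ away from $d$ is automatic from $\int\frac{d\sigma(t)}{1+t^2}<\infty$, and the key extra input is that near $t=d$ the factor $\frac{1}{t-d}$ is integrable against $d\sigma$: this is exactly what the hypothesis $\lim_{x\uparrow d}Q(x)\in\dR$ buys us, via Lemma~\ref{limvalue}(ii) (or Corollary~\ref{limitaschar}), which says this limit is finite iff $\int_{|t-d|\le1}\frac{d\sigma(t)}{|t-d|}<\infty$.

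For (ii) $\Rightarrow$ (iii), I would simply read off from the representation in (ii) that $Q(z)=\eta+\frac{z-d}{z-c}\widetilde Q(z)$ with $\widetilde Q(z)=\beta z-\gamma+\int_{\dR\setminus(c,d]}\bigl(\frac{1}{t-z}-\frac{t}{1+t^2}\bigr)d\hat\sigma(t)$, and verify that $\widetilde Q\in\cN$: it is symmetric, holomorphic off $\dR$, has nonnegative linear coefficient $\beta$ and a nonnegative measure $d\hat\sigma$ with the required finiteness, so it is a Nevanlinna function by the integral representation \eqref{Nevfullr}. Take $\tilde\eta=\eta$.

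For (iii) $\Rightarrow$ (i): from $Q(z)=\tilde\eta+\frac{z-d}{z-c}\widetilde Q(z)$ with $\widetilde Q\in\cN$, one sees $Q$ is holomorphic wherever $\widetilde Q$ is and $z\ne c$; in particular $Q$ is holomorphic on $(c,d)$ provided $\widetilde Q$ is, which I would obtain by noting that on $(c,d)$ both $Q$ and the factor $\frac{z-d}{z-c}$ are real; a potential pole of $\widetilde Q$ inside $(c,d)$ would force a pole of $Q$ there. Actually a cleaner route is Lemma~\ref{genStieltjesinvApplied}: writing $(z-c)Q(z)=\tilde\eta(z-c)+(z-d)\widetilde Q(z)$, the two sides exhibit $Q$ and $\widetilde Q$ multiplied by rational functions of opposite fixed sign on $(c,d)$ — after shrinking to a subinterval if necessary to control signs — which forces $(c,d)\subset\rho(Q)\cap\rho(\widetilde Q)$. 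Once holomorphy on $(c,d)$ is known, $\lim_{x\uparrow d}Q(x)=\tilde\eta+\lim_{x\uparrow d}\frac{x-d}{x-c}\widetilde Q(x)=\tilde\eta$ if $\widetilde Q$ has a finite limit at $d$, or more carefully one invokes Lemma~\ref{limvalue}(ii) applied to $\widetilde Q$ (holomorphic on $(c,d)$, hence also on a neighborhood of $d$ from the left) together with the vanishing factor $(x-d)$ to conclude the limit of $Q$ at $d$ is the finite number $\tilde\eta$.

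Finally, for the concluding assertions: in (ii), evaluating $\lim_{x\uparrow d}Q(x)$ from the explicit formula, the factor $\frac{z-d}{z-c}$ kills everything except a possibly infinite contribution, and since $d\hat\sigma$ has finite $\frac{1}{|t-d|}$-mass near $d$ by construction, Lemma~\ref{pointlimit} gives $\lim_{z\wh\to d}(z-d)\widetilde Q(z)=0$, so $\lim_{x\uparrow d}Q(x)=\eta$. For the inequality $\tilde\eta\ge\eta$ in (iii): here one uses that if $Q(z)=\tilde\eta+\frac{z-d}{z-c}\widetilde Q(z)$ then putting this in the canonical form of (ii) forces $\tilde\eta=\eta+\lim_{z\wh\to d}(z-d)\cdot(\text{leftover})$, where the leftover is the non-negative point mass of the larger measure $d\hat\sigma$ at $d$ arising from $\widetilde Q$ relative to the minimal one; since point masses are nonnegative and $\frac{z-d}{z-c}$ times a point-mass term at $d$ contributes a nonnegative constant in the limit, $\tilde\eta-\eta\ge0$. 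I expect the main obstacle to be the bookkeeping in (i) $\Rightarrow$ (ii): tracking exactly which finite real constants $\alpha,\gamma,\eta$ absorb the various $\frac{t}{1+t^2}$-renormalization mismatches when the kernel is multiplied by $\frac{(z-d)(t-c)}{(z-c)(t-d)}$, and confirming that no hidden divergence appears at $t=c$ (harmless, since $d\sigma$ puts no mass on $(c,d)$ and the factor $t-c$ vanishes rather than blows up there) or at $t=d$ (controlled precisely by the hypothesis). The sign-checking for Lemma~\ref{genStieltjesinvApplied} in (iii) $\Rightarrow$ (i) is a secondary but non-trivial point, handled by passing to a subinterval of $(c,d)$ on which $z-c>0$ and $z-d<0$ have constant opposite signs.
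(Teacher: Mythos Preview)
Your approach is essentially the paper's: the cycle (i)$\Rightarrow$(ii)$\Rightarrow$(iii)$\Rightarrow$(i), with Lemma~\ref{limvalue}(ii) supplying the integrability $\int_{|t-d|\le1}\frac{d\sigma(t)}{|t-d|}<\infty$, a partial-fraction rewrite of the kernel for (i)$\Rightarrow$(ii), and Lemma~\ref{genStieltjesinvApplied} for the gap in (iii)$\Rightarrow$(i). Two small corrections are worth noting. First, in (iii)$\Rightarrow$(i) the finiteness of $\lim_{x\uparrow d}Q(x)$ comes from Lemma~\ref{pointlimit}, not Lemma~\ref{limvalue}(ii): one always has $\lim_{z\wh\to d}(d-z)\widetilde Q(z)=\int_{\{d\}}d\tilde\sigma\in[0,\infty)$, so $\lim_{x\uparrow d}Q(x)=\tilde\eta-\frac{1}{d-c}\int_{\{d\}}d\tilde\sigma\in\dR$, with no assumption on the limit of $\widetilde Q$ itself. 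Second, for $\eta=\lim_{x\uparrow d}Q(x)$ in (ii), your claim that $d\hat\sigma$ has finite $\frac{1}{|t-d|}$-mass near $d$ is generally false (the density $\frac{t-c}{t-d}$ already uses up the integrability of $\frac{d\sigma}{|t-d|}$); the correct and simpler observation, as in the paper, is that $d\hat\sigma$ is supported on $\dR\setminus(c,d]$ and hence carries no point mass at $d$, whence $\lim_{z\wh\to d}(z-d)\widetilde Q(z)=0$ by Lemma~\ref{pointlimit}. The same computation gives the inequality $\tilde\eta\ge\eta$ directly.
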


\begin{proof}
(i) $\Rightarrow$ (ii) If $(c,d) \subset \rho(Q)$, then $Q$ has the
integral representation \eqref{Nevfullr} with $\Delta=\dR\setminus
(c,d)$. Moreover, by Lemma~\ref{limvalue}~(ii) the assumption in
(i) implies that $\int_{|t-d|\leq 1} |t-d|^{-1} d\sigma(t) <
\infty.$ Therefore $Q$ can be rewritten as
\begin{equation}\label{rlrlrlr}
\begin{split}
Q(z) &= \eta + Q(z)- \eta =
 \eta + (z-d)\left( \beta + \int_{\dR\setminus (c,d]}\frac{d\sigma(t)}{(t-z)(t-d)}\right),
\end{split}
\end{equation}
where $\eta := \lim_{x \uparrow d} Q(x) \in \dR$, see
Lemma~\ref{limvalue}. Observe that
\begin{equation}\label{klklkl}
\begin{split} &(z-c)\int_{\dR\setminus(c,d]} \frac{d\sigma(t)}{(t-z)(t-d)} = \int_{\dR\setminus (c,d]} \frac{[(z-t)+ (t-c)]d\sigma(t)}{(t-z)(t-d)} \\ =& \int_{\dR\setminus (c,d]} \left(\frac{1}{t-z}-\frac{t}{1+t^2}\right)\frac{(t-c)d\sigma(t)}{(t-d)}-\int_{\dR\setminus (c,d]} \frac{(1+tc)d\sigma(t)}{(1+t^2)(t-d)}, \end{split}
\end{equation}
where the integral terms converge (for $z \in \rho(Q)$) as a
consequence of the stated integrability condition that $d\sigma$
satisfies. Substituting \eqref{klklkl} into \eqref{rlrlrlr} yields
the integral representation in (ii).

(ii) $\Rightarrow$ (iii) This is evident.

(iii) $\Rightarrow$ (i) This follows from
Lemma~\ref{genStieltjesinvApplied} and Lemma~\ref{pointlimit}.

As to the last statement, observe that the measure $d\hat\sigma(t)$
in (ii) does not involve a point mass at $x=d$, in which case $\eta
= \lim_{x \uparrow d} Q(x)$ by Lemma~\ref{pointlimit}. On the other
hand, if $Q$ is as in (iii) then by applying Lemma~\ref{pointlimit}
to the function $\wt Q$ one obtains $\lim_{x \uparrow d}
Q(x)=\tilde{\eta}+\lim_{z \wh\to d} (z-d)\wt Q(z)/(d-c) \le
\tilde\eta$.
\end{proof}

If (iii) in Proposition~\ref{CharQGap1} holds for some
$\tilde{\eta}\ge \eta$, then it holds also for $\eta = \lim_{x
\uparrow d} Q(x)$. Moreover, if (iii) holds for some
$\tilde{\eta}\ge \eta$, then it holds for every $\tilde{\eta}\ge
\eta$.

Next the case that $\rho(Q)\cap\dR$ contains the complement of a
compact interval is characterized. The proof for this statement is
similar to the proof of Proposition~\ref{CharQGap1} and is therefore
omitted.

\begin{proposition}\label{CharQBound1}
Let $Q \in \cN$ have the integral representation \eqref{Nevfullr}.
Then the following statements are equivalent for $c,d\in\dR$ with
$c<d$:
\begin{enumerate}
\def\labelenumi {\rm (\roman{enumi})}
\item $(-\infty,c)\cup(d,\infty) \subset {\rho}(Q)$, $\beta=0$, and $\lim_{x \downarrow d} Q(x)  \in \dR$;
\item the integral representation of $Q$ is given by
\[ Q(z) = \eta + \frac{d-z}{z-c}\left( -\gamma+\int_{[c,d)}\frac{d\hat{\sigma}(t)}{t-z} \right), \]
where $\gamma, \eta\in \dR$,
$d\hat{\sigma}(t)=\frac{(t-c)d\sigma(t)}{(d-t)}$, and
$\int_{[c,d)}d\hat{\sigma}(t)< \infty$;
\item $\exists \widetilde{Q} \in \cN$ and $\tilde{\eta} \in \dR$ such that
$Q(z) =\tilde{\eta} + \frac{d-z}{z-c}\widetilde{Q}(z)$.
\end{enumerate}
In particular, in (ii) $\eta = \lim_{x \downarrow d} Q(x)$ and in
(iii) $\tilde{\eta}\le \eta$.
\end{proposition}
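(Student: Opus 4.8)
The plan is to imitate the proof of Proposition~\ref{CharQGap1} and run the cycle (i) $\Rightarrow$ (ii) $\Rightarrow$ (iii) $\Rightarrow$ (i); the only structural differences are that the rational factor is now $\frac{d-z}{z-c}=-\frac{z-d}{z-c}$ and that the set on which $Q$ is required to be holomorphic is the union of the two \emph{unbounded} intervals $(-\infty,c)$ and $(d,\infty)$ rather than a single bounded interval, and both of these affect only the bookkeeping.

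For (i) $\Rightarrow$ (ii): since $(-\infty,c)\cup(d,\infty)\subset\rho(Q)$ and $\beta=0$, the spectral measure $d\sigma$ in \eqref{Nevfullr} is supported in $[c,d]$. First I would run the monotone convergence argument from the proof of Lemma~\ref{limvalue} as $x\downarrow d$ to see that the finiteness of $\lim_{x\downarrow d}Q(x)$ forces $\sigma(\{d\})=0$ and $\int_{[c,d)}(d-t)^{-1}\,d\sigma(t)<\infty$; put $\eta:=\lim_{x\downarrow d}Q(x)\in\dR$ and $\gamma:=\int_{[c,d)}(d-t)^{-1}\,d\sigma(t)\in\dR$. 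Writing $Q(z)=\eta+(Q(z)-\eta)$ and using $\frac{1}{t-z}-\frac{1}{t-d}=\frac{z-d}{(t-z)(t-d)}$ (as in \eqref{rlrlrlr}) gives $Q(z)-\eta=(z-d)\int_{[c,d)}\frac{d\sigma(t)}{(t-z)(t-d)}$. Then, multiplying and dividing by $z-c$ and splitting $z-c=(z-t)+(t-c)$ in the integrand as in \eqref{klklkl}, the second factor becomes $-\gamma+\int_{[c,d)}\frac{d\hat\sigma(t)}{t-z}$ with $d\hat\sigma(t)=\frac{(t-c)\,d\sigma(t)}{d-t}$; here no $\frac{t}{1+t^2}$ regularization is needed because $0\le t-c\le d-c$ on $[c,d)$ gives $\int_{[c,d)}d\hat\sigma\le(d-c)\gamma<\infty$. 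This is the representation in (ii), with $\eta=\lim_{x\downarrow d}Q(x)$.

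The implication (ii) $\Rightarrow$ (iii) is then immediate, taking $\tilde\eta:=\eta$ and $\widetilde Q(z):=-\gamma+\int_{[c,d)}\frac{d\hat\sigma(t)}{t-z}$, which is a Nevanlinna function because $d\hat\sigma$ is a finite nonnegative measure (subtracting the finite constant $\int\frac{t}{1+t^2}\,d\hat\sigma(t)$ puts it into the form \eqref{Nevfullr} with vanishing linear term). For (iii) $\Rightarrow$ (i), I would set $Q_0:=Q-\tilde\eta\in\cN$ and $Q_1:=\widetilde Q\in\cN$, so that $(z-c)Q_0(z)=(d-z)Q_1(z)$, and apply Lemma~\ref{genStieltjesinvApplied} with $\varphi_0(z)=z-c$, $\varphi_1(z)=d-z$ — these are entire, real on $\dR$, and of opposite fixed sign on every compact subinterval of $(-\infty,c)$ (where $\varphi_0<0<\varphi_1$) and of $(d,\infty)$ (where $\varphi_1<0<\varphi_0$) — and combine the conclusions over such subintervals to get $(-\infty,c)\cup(d,\infty)\subset\rho(Q)\cap\rho(\widetilde Q)$. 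That $\beta=0$ follows from Lemma~\ref{pointlimit}: dividing the identity by $z(z-c)$ and letting $z \wh \to \infty$ yields $Q(z)/z\to-\tilde\beta$, where $\tilde\beta\ge0$ is the linear coefficient of $\widetilde Q$, while $Q(z)/z\to\beta\ge0$, forcing $\beta=\tilde\beta=0$. Finally $(d,\infty)\subset\rho(\widetilde Q)$ and Lemma~\ref{pointlimit} give $m:=\lim_{z \wh \to d}(d-z)\widetilde Q(z)\in[0,\infty)$, whence $\lim_{x\downarrow d}Q(x)=\tilde\eta+\lim_{x\downarrow d}\frac{(d-x)\widetilde Q(x)}{x-c}=\tilde\eta+\frac{m}{d-c}\in\dR$; this also gives $\tilde\eta\le\eta$, and is consistent with (ii) since there $d\hat\sigma$ has no mass at $d$, i.e. $m=0$.

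There is no real obstacle here — which is why the authors omit the proof — since everything is parallel to Proposition~\ref{CharQGap1}; the two points that require a little care are the sign bookkeeping caused by $\frac{d-z}{z-c}=-\frac{z-d}{z-c}$, and the fact that, the spectrum-free set being a union of two unbounded intervals, Lemma~\ref{genStieltjesinvApplied} must be applied on compact subintervals of $(-\infty,c)$ and of $(d,\infty)$ separately and the results combined, rather than on one bounded interval as in Proposition~\ref{CharQGap1}.
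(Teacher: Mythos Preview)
Your proposal is correct and follows exactly the approach the paper intends: the authors explicitly omit this proof as ``similar to the proof of Proposition~\ref{CharQGap1}'', and you have carried out precisely that adaptation, with the two adjustments you flag (the sign in $\frac{d-z}{z-c}$ and applying Lemma~\ref{genStieltjesinvApplied} on compact subintervals of each unbounded component) being the only genuine modifications needed.
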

Note that Proposition~\ref{CharQBound1} can be used to obtain a characterization of the class
$\cS[a,b]$ introduced in \cite{KNapp}. The particular integral representation \cite[Theorem~A.7]{KNapp} can obtained directly from
the integral representation in Proposition~\ref{CharQBound1} (cf. \eqref{rlrlrlr}):
\[
 Q(z) = (d-z)\left(\frac{\eta}{d-z}+\int_{[c,d)}\frac{d\sigma(t)}{(t-z)(d-t)}\right)
 =(d-z)\int_{[c,d]}\frac{d\tilde{\sigma}(t)}{t-z}.
\]
Propositions~\ref{CharQGap1} and \ref{CharQBound1} imply the
following useful result.

\begin{corollary}\label{Npres3456}
Let $Q \in \cN$ have the integral representation \eqref{Nevfullr}
and let $c,d\in\dR$ with $c<d$. Then
\begin{enumerate}
\def\labelenumi {\rm (\roman{enumi})}
\item $\frac{z-c}{z-d}Q(z) \in \cN$ if and only if $(c,d) \subset \rho(Q)$ and $-\infty<\lim_{x \uparrow d} Q(x) \leq 0$;
\item $\frac{z-d}{z-c}Q(z) \in \cN$ if and only if $(c,d) \subset \rho(Q)$ and $0\leq \lim_{x \downarrow c} Q(x) <\infty$;
\item $\frac{z-c}{d-z}Q(z) \in \cN$ if and only if $(-\infty,c)\cup(d,\infty)\subset \rho(Q)$, $\beta=0$, and $0 \leq \lim_{x \downarrow d} Q(x) < \infty$;
\item $\frac{z-d}{c-z}Q(z) \in \cN$ if and only if $(-\infty,c)\cup(d,\infty)\subset \rho(Q)$, $\beta=0$, and $-\infty<\lim_{x \uparrow c} Q(x) \leq 0$.
\end{enumerate}
\end{corollary}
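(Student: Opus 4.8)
The plan is to deduce all four equivalences directly from Propositions~\ref{CharQGap1} and~\ref{CharQBound1}, with the help of a few elementary observations. For $c<d$ the M\"obius functions $\frac{z-d}{z-c}$ and $\frac{z-c}{d-z}$ belong to $\cN$, while $\frac{z-c}{z-d}$ and $\frac{d-z}{z-c}$ map $\dC_+$ into $\dC_-$; in each case this is read off from the sign of the determinant of the associated $2\times2$ matrix. Moreover $\cN$ is closed under addition and under multiplication by a nonnegative real constant, and the map $T\colon Q(z)\mapsto -Q(-z)$ is an involution of $\cN$ (for $z\in\dC_+$ one has $-z\in\dC_-$, so $\IM(-Q(-z))\ge 0$) which satisfies $\rho(TQ)=-\rho(Q)$, has the same coefficient $\beta$ as $Q$ (its spectral measure being the reflection of that of $Q$), and obeys $\lim_{x\to a^\pm}(TQ)(x)=-\lim_{y\to(-a)^\mp}Q(y)$.

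I would first prove (i). For the direct implication, assume $\frac{z-c}{z-d}Q(z)\in\cN$ and call it $\wt Q$; then $Q(z)=0+\frac{z-d}{z-c}\wt Q(z)$, which is precisely statement~(iii) of Proposition~\ref{CharQGap1} with $\tilde\eta=0$. Hence $(c,d)\subset\rho(Q)$, $\lim_{x\uparrow d}Q(x)\in\dR$, and the concluding assertion of that proposition gives $\lim_{x\uparrow d}Q(x)\le\tilde\eta=0$. For the converse, if $(c,d)\subset\rho(Q)$ and $-\infty<\eta:=\lim_{x\uparrow d}Q(x)\le 0$, then Proposition~\ref{CharQGap1}\,(ii) represents $Q$ as $Q(z)=\eta+\frac{z-d}{z-c}\wt Q(z)$ with $\wt Q$ the Nevanlinna function exhibited there by its integral representation, whence $\frac{z-c}{z-d}Q(z)=\eta\,\frac{z-c}{z-d}+\wt Q(z)$ is a sum of two functions in $\cN$ (since $\eta\le 0$ and $\frac{z-c}{z-d}$ maps $\dC_+$ into $\dC_-$), so it lies in $\cN$. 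Statement~(iii) is proved in exactly the same way from Proposition~\ref{CharQBound1}, now with the factor $\frac{d-z}{z-c}$ and using that $\frac{z-c}{d-z}\in\cN$.

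Statements (ii) and (iv) are then obtained by applying (i) and (iii) to $TQ\in\cN$ on the interval $(-d,-c)$ (note $-d<-c$). A routine substitution $z\mapsto -z$ shows that $\frac{z-d}{z-c}Q(z)=T\big(\frac{z+d}{z+c}(TQ)(z)\big)$ and $\frac{z-d}{c-z}Q(z)=T\big(\frac{z+d}{-c-z}(TQ)(z)\big)$, where on the right $T$ is applied to the indicated function of $z$; since $T$ preserves $\cN$, the factor $\frac{z+d}{z+c}$ (resp.\ $\frac{z+d}{-c-z}$) is exactly the one occurring in statement~(i) (resp.~(iii)) for the function $TQ$ on the interval $(-d,-c)$. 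Thus (i) for $TQ$ yields a criterion for $\frac{z-d}{z-c}Q(z)\in\cN$ and (iii) for $TQ$ a criterion for $\frac{z-d}{c-z}Q(z)\in\cN$; translating the endpoint data by means of $\rho(TQ)=-\rho(Q)$, $\beta_{TQ}=\beta_Q$, and $\lim_{x\to a^\pm}(TQ)(x)=-\lim_{y\to(-a)^\mp}Q(y)$ turns these criteria into statements (ii) and (iv) for $Q$, respectively (passing from $TQ$ at $-c$ or $-d$ to $Q$ at $c$ or $d$ both reverses the direction of approach and changes the sign of the limit).

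I do not anticipate a genuine obstacle; the argument is essentially bookkeeping built on the two cited propositions. The steps demanding the most care are the sign and direction conventions under $T$ and the correct identification of the half-plane into which each of the M\"obius factors $\frac{z-c}{z-d}$, $\frac{z-d}{z-c}$, $\frac{z-c}{d-z}$, $\frac{d-z}{z-c}$ maps $\dC_+$; the only substantive analytic inputs -- in particular the fact that the explicit integrals appearing in part~(ii) of Propositions~\ref{CharQGap1} and~\ref{CharQBound1} define Nevanlinna functions -- are already contained in those propositions.
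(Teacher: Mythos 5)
Your proof is correct and follows the paper's route for (i) and (iii): both directions are read off from Propositions~\ref{CharQGap1} and~\ref{CharQBound1} (item (iii) with $\tilde\eta=0$ for one direction, item (ii) plus the sign of $\eta$ for the other), exactly as in the paper. The only divergence is in how (ii) and (iv) are obtained: the paper passes from (i) to (ii) ``by taking inverses,'' i.e.\ via the involution $Q\mapsto -1/Q$, whereas you use the reflection $Q(z)\mapsto -Q(-z)$ applied on the interval $(-d,-c)$. Both work, and your sign/endpoint bookkeeping under the reflection checks out; your variant even sidesteps a small point the paper's one-line remark leaves implicit, namely that $\rho(-1/Q)$ differs from $\rho(Q)$ at the zeros of $Q$ in $(c,d)$, whereas $\rho(-Q(-\cdot))=-\rho(Q)$ exactly. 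The reflection is also the symmetry the paper itself invokes in Section~\ref{sec24}, so this is a natural and equally economical alternative.
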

\begin{proof}
(i) Assume that $(c,d) \subset \rho(Q)$ and $-\infty<\lim_{x
\uparrow d} Q(x) \leq 0$. Then it follows from
Proposition~\ref{CharQGap1} that
\[
 \frac{z-c}{z-d}Q(z)=\frac{z-c}{z-d} \eta + \widetilde{Q}(z),
\]
where $\wt Q \in \cN$ and $\eta = \lim_{x \uparrow d} Q(x)$. Since
$\frac{z-c}{z-d} \eta \in \cN$ if $\eta \le 0$, one concludes that
$\frac{z-c}{z-d}Q(z) \in \cN$. Conversely, if $\frac{z-c}{z-d}Q(z)
\in \cN$, then item (iii) in Proposition~\ref{CharQGap1} holds with
$\wt \eta =0$. Hence item (i) together with the last statement in
Proposition~\ref{CharQGap1} shows that $(c,d) \subset \rho(Q)$ and
$\eta=\lim_{x \uparrow d} Q(x) \in \dR$ with $\eta\le \wt \eta =0$.

The equivalence in (ii) is obtained easily from the equivalence in
(i) by passing to the inverses. Finally, (iii) and (iv) follow from
Proposition~\ref{CharQBound1} in the same manner as (i) and (ii)
follow from Proposition~\ref{CharQGap1}.
\end{proof}

\begin{remark}\label{HolCont}
If $Q_1(z):=\frac{z-c}{z-d}Q(z)\in\cN$, then by
Lemma~\ref{pointlimit} $\lim_{x \downarrow c} Q_1(x)>0$ if and
only if $c\in\sigma_p(Q)$. Furthermore, $Q_1$ admits a holomorphic
continuation to the point $c\in\dR$ if and only if either $c\in
\rho(Q)$, or $c\in\sigma_p(Q)$ is a separated pole of $Q$, i.e.,
$\sigma(Q)\cap(c-\varepsilon,c+\varepsilon)=\{c\}$ for some
$\varepsilon>0$.

Similarly, by Lemma~\ref{pointlimit} $\lim_{x \uparrow  d}
Q_1(x)<\infty$ if and only if $\lim_{x \uparrow d} Q(x)=0$ and
$\int_{\dR} d\sigma(t)/(t-d)^2 < \infty$. Moreover, $Q_1$
admits a holomorphic continuation to the point $d\in\dR$ if and only
if $d\in \rho(Q)$ and $\lim_{x \uparrow  d} Q(x)=0$.
\end{remark}

\subsection{Nevanlinna functions holomorphic on a semibounded interval}\label{sec24}
In this section Nevanlinna functions having a semibounded interval
as their spectral gap are investigated. The statements below are
formulated explicitly only in case that an interval of the form
$(-\infty,c)$ belongs to $\rho(Q)$. Using the equivalence $Q(z) \in
\cN$ if and only if $-Q(-z) \in \cN$, these results are easily
modified to the case $(c,\infty)\subset\rho(Q)$.

\begin{proposition}\label{CharQflr1}
Let $Q\in \cN$ have the integral representation \eqref{Nevfullr}.
Then with $c\in\dR$ the following statements are equivalent:
\begin{enumerate}
\def\labelenumi {\rm (\roman{enumi})}
\item $(-\infty,c) \subset \rho(Q)$ and $\lim_{x \downarrow -\infty} \left(Q(x) - \beta x\right) \in \dR $;
\item the integral representation of $Q$ is given by
    \[ Q(z) = \eta + \beta z + \int_{[c,\infty)} \frac{d\sigma(t)}{t-z}, \]
    where $\eta \in \dR$ and $\int_{[c,\infty)}\frac{d\sigma(t)}{1+|t|}<\infty$;
\item $\exists \widetilde{Q} \in \cN$ and $\tilde{\eta} \in \dR$ such that $Q(z) =  \tilde{\eta} + \beta z + \frac{\widetilde{Q}(z)}{z-c}$.
\end{enumerate}
In particular, in (ii) $\eta = \lim_{x \downarrow -\infty}
\left(Q(x) - \beta x\right)$ and in (iii) $\tilde{\eta}\le \eta$.
\end{proposition}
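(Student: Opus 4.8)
The plan is to mimic the scheme of Propositions~\ref{CharQGap1} and~\ref{CharQBound1}: prove (i)$\Rightarrow$(ii) by an explicit rewriting of the integral representation, deduce (iii) at once from (ii), close the loop (iii)$\Rightarrow$(i) via Lemma~\ref{genStieltjesinvApplied} and Lemma~\ref{pointlimit}, and finally extract the two ``In particular'' assertions by comparing point masses. The crucial analytic input is again a local integrability statement at an endpoint, but now the endpoint is $-\infty$, so Lemma~\ref{limvalue}~(i) plays the role that part~(ii) of that lemma played before.

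First I would prove (i)$\Rightarrow$(ii). Assume $(-\infty,c)\subset\rho(Q)$; then $Q$ has the representation \eqref{Nevfullr} with the measure supported on $\Delta_c=[c,\infty)$, and one may write $\frac{1}{t-z}-\frac{t}{1+t^2}=\frac{1}{t-z}-\frac{1}{t}+\left(\frac{1}{t}-\frac{t}{1+t^2}\right)=\frac{z}{t(t-z)}+\frac{1}{t(1+t^2)}$ for $t\in\Delta_c$ (note $0\notin\Delta_c$, so division by $t$ is harmless). By Lemma~\ref{limvalue}~(i) the hypothesis $\lim_{x\to-\infty}(Q(x)-\beta x)\in\dR$ forces $\beta=0$ is \emph{not} needed here (the $\beta x$ has been subtracted); rather it forces $\int_{\Delta_c}\frac{d\sigma(t)}{1+|t|}<\infty$, which is exactly the integrability condition claimed in (ii) and which makes the rearranged integrals converge. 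Collecting the $z$-independent terms into a real constant and using $\lim_{x\to-\infty}(Q(x)-\beta x)=\alpha-\int_{\Delta_c}\frac{d\sigma(t)}{t(1+t^2)}$ (again Lemma~\ref{limvalue}~(i), after subtracting $\beta x$ and applying monotone convergence to $\frac{z}{t(t-z)}\to 0$) identifies this constant as $\eta=\lim_{x\to-\infty}(Q(x)-\beta x)$, giving the representation in (ii) with $\eta\in\dR$ and $\int_{[c,\infty)}\frac{d\sigma(t)}{1+|t|}<\infty$.

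The implication (ii)$\Rightarrow$(iii) is immediate: set $\widetilde Q(z)=(z-c)\int_{[c,\infty)}\frac{d\sigma(t)}{t-z}$ with $\widetilde\eta=\eta$; one checks $\widetilde Q\in\cN$ directly from its kernel (or, as in \eqref{klklkl}, by rewriting $(z-c)\int\frac{d\sigma(t)}{t-z}=\int\left(\frac{1}{t-z}-\frac{t}{1+t^2}\right)(t-c)\,d\sigma(t)+(\text{real const})$, whose spectral measure $(t-c)\,d\sigma(t)$ on $[c,\infty)$ is nonnegative and satisfies the finiteness requirement because $\int\frac{(t-c)\,d\sigma(t)}{1+t^2}<\infty$). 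For (iii)$\Rightarrow$(i): given $Q(z)=\widetilde\eta+\beta z+\frac{\widetilde Q(z)}{z-c}$ with $\widetilde Q\in\cN$, the function $\frac{\widetilde Q(z)}{z-c}=\frac{1}{z-c}\widetilde Q(z)$ satisfies the hypothesis of Lemma~\ref{genStieltjesinvApplied} with $\varphi_0(z)=z-c$ on any compact subinterval of $(-\infty,c)$ (where $z-c$ has a fixed sign and $\widetilde Q$ is compared with $(z-c)Q(z)-(z-c)(\widetilde\eta+\beta z)$), so $(-\infty,c)\subset\rho(Q)$; and applying Lemma~\ref{pointlimit} to $\widetilde Q$ gives $\lim_{z\wh\to\infty}\widetilde Q(z)/z=\beta_{\widetilde Q}\ge 0$ finite and $\lim_{x\to-\infty}\widetilde Q(x)/(x-c)$ exists in $\dR\cup\{+\infty,-\infty\}$; combined with monotonicity of $Q-\beta x$ on $(-\infty,c)$ this yields $\lim_{x\to-\infty}(Q(x)-\beta x)\in\dR$. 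Finally, for the ``In particular'' claims: in (ii) the measure $d\sigma$ on $[c,\infty)$ carries no point mass at any $x<c$ (its support is in $[c,\infty)$), hence $\lim_{x\to-\infty}(Q(x)-\beta x)=\eta$ by the monotone-convergence computation above; and in (iii), applying Lemma~\ref{pointlimit} to $\widetilde Q$ gives $\lim_{x\to-\infty}(Q(x)-\beta x)=\widetilde\eta+\lim_{z\wh\to\infty}\frac{\widetilde Q(z)/z}{1-c/z}\cdot\frac{x}{x-c}$—more precisely $\lim_{x\to-\infty}\frac{\widetilde Q(x)}{x-c}=\lim_{z\wh\to\infty}\frac{\widetilde Q(z)}{z}\ge 0$, whence $\eta=\widetilde\eta+(\text{nonnegative})\ge\widetilde\eta$, i.e. $\widetilde\eta\le\eta$.

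The main obstacle, as in the companion propositions, is bookkeeping the passage between the two integral representations while keeping track of which terms are finite: one must be careful that the splitting $\frac{1}{t-z}-\frac{t}{1+t^2}=\frac{z}{t(t-z)}+\frac{1}{t(1+t^2)}$ is legitimate (it is, since the support avoids $0$) and that the auxiliary constant integrals $\int_{[c,\infty)}\frac{d\sigma(t)}{t(1+t^2)}$ and $\int_{[c,\infty)}\frac{(t-c)\,d\sigma(t)}{1+t^2}$ converge under the stated hypothesis—both follow from $\int_{[c,\infty)}\frac{d\sigma(t)}{1+|t|}<\infty$ together with the original constraint $\int\frac{d\sigma(t)}{1+t^2}<\infty$. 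The sign comparisons in the two ``In particular'' statements are then routine applications of Lemma~\ref{pointlimit} exactly as in the proof of Proposition~\ref{CharQGap1}.
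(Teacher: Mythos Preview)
Your overall strategy matches the paper's: (i)$\Rightarrow$(ii) via Lemma~\ref{limvalue}(i), (ii)$\Rightarrow$(iii) by multiplying the Cauchy integral by $(z-c)$, and (iii)$\Rightarrow$(i) via Lemma~\ref{genStieltjesinvApplied} and Lemma~\ref{pointlimit}. However, there is a genuine error in your (i)$\Rightarrow$(ii) step: the claim ``$0\notin\Delta_c$'' is false whenever $c\le 0$, so your splitting $\frac{1}{t-z}-\frac{t}{1+t^2}=\frac{z}{t(t-z)}+\frac{1}{t(1+t^2)}$ is not available in general, and the constant $\int_{[c,\infty)}\frac{d\sigma(t)}{t(1+t^2)}$ you later use need not make sense. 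The paper avoids this by a simpler route: once Lemma~\ref{limvalue}(i) (applied to $Q-\beta z\in\cN$) yields $\int_{[c,\infty)}\frac{d\sigma(t)}{1+|t|}<\infty$, the integral $\int_{[c,\infty)}\frac{t\,d\sigma(t)}{1+t^2}$ converges absolutely on its own, so one may directly split
\[
\int_{[c,\infty)}\left(\frac{1}{t-z}-\frac{t}{1+t^2}\right)d\sigma(t)
=\int_{[c,\infty)}\frac{d\sigma(t)}{t-z}-\int_{[c,\infty)}\frac{t\,d\sigma(t)}{1+t^2}
\]
and absorb the second (constant) integral into $\eta=\alpha-\int_{[c,\infty)}\frac{t\,d\sigma(t)}{1+t^2}$. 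No division by $t$ is needed.

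A smaller remark on (iii)$\Rightarrow$(i): to invoke Lemma~\ref{genStieltjesinvApplied} you need two Nevanlinna functions, and $Q(z)-\tilde\eta-\beta z$ is one only because the $\beta$ in (iii) is, by hypothesis, the same $\beta$ from the integral representation \eqref{Nevfullr} of $Q$ (so that $Q-\beta z\in\cN$); you should say this explicitly rather than leaving the reader to parse the comparison with ``$(z-c)Q(z)-(z-c)(\tilde\eta+\beta z)$''. Your limit computation $\lim_{x\to-\infty}\frac{\widetilde Q(x)}{x-c}=\lim_{z\wh\to\infty}\frac{\widetilde Q(z)}{z}\in[0,\infty)$ via Lemma~\ref{pointlimit} is correct and immediately gives both $\lim_{x\to-\infty}(Q(x)-\beta x)\in\dR$ and $\tilde\eta\le\eta$; this is exactly what the paper does.
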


\begin{proof}
(i) $\Rightarrow$ (ii) If $(-\infty,c) \subset \rho(Q)$, then $Q$
has the representation \eqref{Nevfullr} with $\Delta=[c,\infty)$. If
the limit in (i) is finite, then Lemma~\ref{limvalue} implies that
$\int_{[c,\infty)}\frac{d\sigma(t)}{1+|t|}<\infty$. Therefore $Q$
has the representation given in (ii).

(ii) $\Rightarrow$ (iii) This implication follows from
\[ \begin{split}
&(z-c)\int_{[c,\infty)} \frac{d\sigma(t)}{t-z} = \int_{[c,\infty)} \frac{(z-t + t-c)d\sigma(t)}{t-z} \\
= & \int_{[c,\infty)} \left(\frac{1}{t-z} - \frac{t}{1+t^2}\right)(t-c)d\sigma(t) - \int_{[c,\infty)} \left(1-\frac{t(t-c)}{1+t^2}\right)d\sigma(t)
\end{split} \]
where the integrals converge as consequence of the assumptions; in
particular one has $\int_{[c,\infty)}(t-c)d\sigma(t)/(1+t^2)<
\infty$.

(iii) $\Rightarrow$ (i) If (iii) holds, then $(-\infty,c) \subset
\rho(Q)$ by Lemma~\ref{genStieltjesinvApplied} and by
Lemma~\ref{pointlimit}
\[
 \lim_{x \downarrow -\infty} \left(Q(x) - \beta x\right)
 = \lim_{x \downarrow -\infty} \left( \tilde{\eta}+\frac{\widetilde{Q}(x)}{x-c} \right)  \in \dR.
\]

The last statement is clear by monotone convergence on
$(-\infty,c)$, see \eqref{increasing}.
\end{proof}

Next consider the case that the limit of $Q$ at the finite endpoint
of the semibounded spectral gap is finite.

\begin{proposition}\label{CharQflr2}
Let $Q \in \cN$ have the integral representation \eqref{Nevfullr}.
Then with $c\in\dR$ the following statements are equivalent:
\begin{enumerate}
\def\labelenumi {\rm (\roman{enumi})}
\item $(-\infty,c) \subset \rho(Q)$ and $\lim_{x \uparrow c} Q(x) \in \dR$;
\item the integral representation of $Q$ is given by
    \[ Q(z) = \eta + (z-c)\left(\beta + \int_{(c,\infty)} \frac{d\hat{\sigma}(t)}{t-z}\right), \]
    where $\eta \in \dR$, $d\hat{\sigma}(t)=\frac{d\sigma(t)}{t-c}$, and
 $\int_{(c,\infty)}\frac{d\hat{\sigma}(t)}{1+|t|}<\infty$;
\item $\exists \widetilde{Q} \in \cN$ and $\tilde{\eta} \in \dR$ such that $Q(z) = \tilde{\eta} + (z-c)\widetilde{Q}(z)$.
\end{enumerate}
In particular, in (ii) $\eta = \lim_{x \uparrow c} Q(x)$ and in
(iii) $\tilde{\eta}\ge \eta$.
\end{proposition}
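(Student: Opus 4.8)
The plan is to follow the proof of Proposition~\ref{CharQGap1} almost verbatim, viewing the present statement as the degenerate case in which the left endpoint of the spectral gap is sent to $-\infty$; concretely, the rational factor $\frac{z-d}{z-c}$ occurring there is replaced by the affine factor $z-c$.

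For the implication (i) $\Rightarrow$ (ii) I would first note that $(-\infty,c)\subset\rho(Q)$ places the support of $d\sigma$ in $[c,\infty)$, and then invoke Lemma~\ref{limvalue}~(ii) on a bounded subinterval $(c_-,c)\subset\rho(Q)$: the finiteness of $\lim_{x\uparrow c}Q(x)$ forces $\int_{|t-c|\le1}d\sigma(t)/|t-c|<\infty$, which here reads $\int_{(c,c+1]}d\sigma(t)/(t-c)<\infty$ and rules out an atom of $d\sigma$ at $c$. Setting $\eta:=\lim_{x\uparrow c}Q(x)$, I would subtract this value from \eqref{Nevfullr} (using $\frac{1}{t-c}-\frac{t}{1+t^2}=\frac{1+tc}{(t-c)(1+t^2)}$ to express $\eta$, see Lemma~\ref{limvalue}~(ii)) and telescope via $\frac{1}{t-z}-\frac{1}{t-c}=\frac{z-c}{(t-z)(t-c)}$ to reach
\[
Q(z)=\eta+(z-c)\left(\beta+\int_{(c,\infty)}\frac{d\sigma(t)}{(t-z)(t-c)}\right),
\]
which is (ii) with $d\hat\sigma(t)=d\sigma(t)/(t-c)$; the integrability $\int_{(c,\infty)}d\hat\sigma(t)/(1+|t|)<\infty$ is then verified by splitting the integral at $t=c+1$, the near-$c$ part being controlled by the bound just obtained and the near-$\infty$ part by $\int d\sigma(t)/(1+t^2)<\infty$ together with the extra decay factor $1/(t-c)$. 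The implication (ii) $\Rightarrow$ (iii) is immediate with $\wt\eta:=\eta$ and $\wt Q(z):=\beta+\int_{(c,\infty)}d\hat\sigma(t)/(t-z)$; one only checks $\wt Q\in\cN$, which holds because $\int d\hat\sigma(t)/(1+|t|)<\infty$ makes $\int t\,d\hat\sigma(t)/(1+t^2)$ a finite real constant, so $\wt Q$ differs from a function of the form \eqref{Nevfullr} only by that constant.

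For (iii) $\Rightarrow$ (i) I would rewrite the identity as $Q(z)-\wt\eta=(z-c)\wt Q(z)$ on $\cmr$ and apply Lemma~\ref{genStieltjesinvApplied} with $Q_0:=Q-\wt\eta\in\cN$, $Q_1:=\wt Q\in\cN$, $\varphi_0\equiv1$, $\varphi_1(z):=z-c$: on every compact interval $[a,b]\subset(-\infty,c)$ these multipliers are real, nonzero and of opposite constant sign, whence $(a,b)\subset\rho(Q)\cap\rho(\wt Q)$, and letting $[a,b]$ exhaust $(-\infty,c)$ yields $(-\infty,c)\subset\rho(Q)\cap\rho(\wt Q)$. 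As $\wt Q$ is then holomorphic on $(-\infty,c)$, Lemma~\ref{pointlimit} shows that $\lim_{x\uparrow c}(c-x)\wt Q(x)$ exists and equals the mass $m\ge 0$ of the spectral measure of $\wt Q$ at $c$, which is finite; hence $\lim_{x\uparrow c}Q(x)=\wt\eta-m\in\dR$, giving (i). The two supplementary assertions then fall out: in (ii) the measure $d\hat\sigma$ carries no atom at $c$, so the displayed formula (or Lemma~\ref{pointlimit} applied to $\wt Q$) gives $\lim_{x\uparrow c}Q(x)=\eta$; in (iii) the relation $\lim_{x\uparrow c}Q(x)=\wt\eta-m$ with $m\ge 0$ gives $\eta\le\wt\eta$.

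The step I expect to require the most care is (iii) $\Rightarrow$ (i): one must exclude any spectrum of $Q$, equivalently of $\wt Q$, in the interval $(-\infty,c)$, and this is exactly what the sign-variation argument packaged in Lemma~\ref{genStieltjesinvApplied} delivers. Apart from that, the only point needing attention is the exclusion of an atom of $d\sigma$ at $c$ in (i) $\Rightarrow$ (ii), read off directly from the integrability condition furnished by Lemma~\ref{limvalue}; everything else is bookkeeping with the integral representation \eqref{Nevfullr}.
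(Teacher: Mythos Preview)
Your proposal is correct and follows essentially the same route as the paper's proof: the paper also reduces (i)~$\Rightarrow$~(ii) to the computation \eqref{rlrlrlr} from Proposition~\ref{CharQGap1}, declares (ii)~$\Rightarrow$~(iii) evident, and for (iii)~$\Rightarrow$~(i) invokes Lemma~\ref{genStieltjesinvApplied} for the spectral gap together with Lemma~\ref{pointlimit} to evaluate $\lim_{x\uparrow c}Q(x)=\tilde\eta-\int\mathbf{1}_{\{c\}}\,d\tilde\sigma$, which simultaneously yields $\tilde\eta\ge\eta$. Your version is simply more explicit about the integrability bookkeeping and the exhaustion by compact subintervals.
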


\begin{proof}
(i) $\Rightarrow$ (ii) For this implication see the proof of the
implication form (i) to (ii) in Proposition~\ref{CharQGap1},
especially \eqref{rlrlrlr}.

(ii) $\Rightarrow$ (iii) This implication is evident.

(iii) $\Rightarrow$ (i) If (iii) holds, then by
Lemma~\ref{genStieltjesinvApplied} $(-\infty,c) \subset \rho(Q)$ and
by Lemma~\ref{pointlimit} applied to the spectral measure
$d\tilde{\sigma}$ of $\widetilde{Q}$, one obtains
\[
 \lim_{x \uparrow c} Q(x) = \tilde{\eta} + \lim_{x \uparrow c} (x-c)\widetilde{Q}(x)
 = \tilde{\eta}-\int_{[c,\infty)} \mathbf{1}_{\{c\}} \, d\tilde{\sigma}(t) \in \dR.
\]

Finally, Lemma~\ref{pointlimit} applied to the integral representation in part (ii)
gives the stated limit expression for $\eta$.
\end{proof}

\begin{remark}
Recall that the class of Stieltjes (inverse Stieltjes) functions
consists of those Nevanlinna functions which are holomorphic and
nonnegative (nonpositive) on $\dR_-$. These functions were
introduced and characterized by Kre\u{\i}n, see \cite{KK74} and the
references therein. In fact, since Nevanlinna functions $Q$ are
nondecreasing on $\rho(Q)\cap \dR$, it follows that the class of
Stieltjes (inverse Stieltjes) functions consists of those Nevanlinna
functions which are holomorphic on $\dR_-$ and satisfy $0 \leq
\lim_{x \downarrow -\infty} Q(x)< \infty$ ($-\infty < \lim_{x
\uparrow c} Q(x) \leq 0$). Therefore, Proposition~\ref{CharQflr1}
and Proposition~\ref{CharQflr2} with $c=0$ and $\eta \geq 0$ or
$\eta \leq 0$, respectively, contain, in particular, a
characterization for the classes of Stieltjes and inverse Stieltjes
functions.
\end{remark}

The previous two propositions give rise an analogue of Corollary~\ref{Npres3456}.

\begin{corollary}\label{Npres12}
Let $Q\in \cN$ have the integral representation \eqref{Nevfullr}.
Then with $c\in\dR$ the following statements are equivalent:
\begin{enumerate}
\def\labelenumi {\rm (\roman{enumi})}
\item $(z-c)Q(z) \in \cN$ if and only if $(-\infty,c) \subset \rho(Q)$ and $0\le \lim_{x \downarrow -\infty} Q(x) <\infty$;
\item $\frac{1}{z-c}Q(z) \in \cN$ if and only if $(-\infty,c) \subset \rho(Q)$ and $-\infty < \lim_{x \uparrow c} Q(x) \leq 0 $.
\end{enumerate}
\end{corollary}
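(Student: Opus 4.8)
The plan is to deduce Corollary~\ref{Npres12} directly from Propositions~\ref{CharQflr1} and~\ref{CharQflr2}, essentially copying the argument that derived Corollary~\ref{Npres3456} from Propositions~\ref{CharQGap1} and~\ref{CharQBound1}. The only structural difference is that here the relevant rational factor is $z-c$ (resp.\ $1/(z-c)$) rather than a quotient of two linear factors; this is why the relevant input is Proposition~\ref{CharQflr2} (for (i)) and Proposition~\ref{CharQflr1} (for (ii)), where such factors appear in condition~(iii).

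For part~(i): suppose $(-\infty,c)\subset\rho(Q)$ and $0\le\lim_{x\downarrow-\infty}Q(x)<\infty$. Since $Q$ is nondecreasing on $(-\infty,c)$ by \eqref{increasing}, finiteness of that limit forces $\beta=0$ (by Lemma~\ref{pointlimit}, $\lim_{z\wh\to\infty}Q(z)/z=\beta$, but on the semibounded gap $Q$ stays below $\lim_{x\uparrow c}Q(x)$ only if... ) — more simply, the hypothesis $(z-c)Q(z)\in\cN$ in the reverse direction and the asymptotics will pin this down, so I will argue both implications via the characterization. First I would apply Proposition~\ref{CharQflr2} to the function $\wt Q(z):=(z-c)Q(z)$ — wait, that is the wrong normalization. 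Instead: if $(z-c)Q(z)\in\cN$, write $P(z):=(z-c)Q(z)$; then $P\in\cN$ and $Q(z)=P(z)/(z-c)$, so Proposition~\ref{CharQflr1}~(iii) holds for $Q$ with $\beta=0$, $\wt Q=P$, $\wt\eta=0$. Hence by Proposition~\ref{CharQflr1}~(i), $(-\infty,c)\subset\rho(Q)$, and moreover $\lim_{x\downarrow-\infty}Q(x)=\lim_{x\downarrow-\infty}P(x)/(x-c)\ge 0$ since $P\in\cN$ is nondecreasing and $P(x)/(x-c)\to$ a finite nonnegative value by Lemma~\ref{pointlimit} (applied to $P$; note $P(x)/(x-c)$ is exactly the limit controlled there). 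Conversely, if $(-\infty,c)\subset\rho(Q)$ and $0\le\lim_{x\downarrow-\infty}Q(x)<\infty$, then in particular $\lim_{x\downarrow-\infty}(Q(x)-\beta x)\in\dR$ forces $\beta=0$; Proposition~\ref{CharQflr1} then gives $Q(z)=\wt\eta+\wt Q(z)/(z-c)$ with $\wt Q\in\cN$, $\wt\eta\le\eta=\lim_{x\downarrow-\infty}Q(x)$, and one may take $\wt\eta=0$ since the representation holds for every $\wt\eta\le\eta$ and $0\le\eta$ by hypothesis; thus $(z-c)Q(z)=(z-c)\cdot 0+\wt Q(z)=\wt Q(z)\in\cN$... modulo the linear term $(z-c)\wt\eta$, which lies in $\cN$ precisely when $\wt\eta\ge 0$, so choosing $\wt\eta=0$ (legitimate since $0\le\eta$) yields $(z-c)Q(z)\in\cN$.

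For part~(ii): pass to inverses, exactly as in the proof of Corollary~\ref{Npres3456}~(ii). Equivalently, apply Proposition~\ref{CharQflr2} directly: if $\frac{1}{z-c}Q(z)\in\cN$, set $R(z):=Q(z)/(z-c)$, so $Q(z)=(z-c)R(z)$, which is condition~(iii) of Proposition~\ref{CharQflr2} with $\wt\eta=0$; hence $(-\infty,c)\subset\rho(Q)$ and $\lim_{x\uparrow c}Q(x)=\lim_{x\uparrow c}(x-c)R(x)\le 0$ by Lemma~\ref{pointlimit} applied to $R\in\cN$ (the limit equals $-\int\mathbf{1}_{\{c\}}\,d\sigma_R\le 0$), and it is finite. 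Conversely, if $(-\infty,c)\subset\rho(Q)$ and $-\infty<\lim_{x\uparrow c}Q(x)\le 0$, Proposition~\ref{CharQflr2} yields $Q(z)=\wt\eta+(z-c)\wt Q(z)$ with $\wt Q\in\cN$ and $\wt\eta\ge\eta=\lim_{x\uparrow c}Q(x)$; taking $\wt\eta=0$ is allowed since the representation persists for all $\wt\eta\ge\eta$ and $\eta\le 0$, whence $\frac{1}{z-c}Q(z)=\frac{\wt\eta}{z-c}+\wt Q(z)=\wt Q(z)\in\cN$ (here $\frac{\wt\eta}{z-c}\in\cN$ needs $\wt\eta\le 0$, so $\wt\eta=0$ works since $0\ge\eta$; but one must also keep $\wt Q\in\cN$, which is unaffected).

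The main obstacle is bookkeeping the constant terms: Propositions~\ref{CharQflr1} and~\ref{CharQflr2} produce representations with a free parameter $\wt\eta$ ranging over a half-line, and one must check carefully that the specific value $\wt\eta=0$ is admissible under the sign hypothesis on $\eta$, and that with this choice the ``leftover'' term ($(z-c)\wt\eta$ or $\wt\eta/(z-c)$) has the correct sign to remain in $\cN$. The rest is a routine transcription of the pattern already established for Corollary~\ref{Npres3456}.
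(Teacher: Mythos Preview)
Your proposal is correct and follows essentially the same route the paper intends: deduce part~(i) from Proposition~\ref{CharQflr1} and part~(ii) from Proposition~\ref{CharQflr2}, exactly parallel to how Corollary~\ref{Npres3456} was obtained from Propositions~\ref{CharQGap1} and~\ref{CharQBound1}. The cleanest way to handle the constant (as in the paper's proof of Corollary~\ref{Npres3456}) is to use the specific value $\tilde\eta=\eta$ furnished by part~(ii) of each proposition rather than arguing that $\tilde\eta=0$ is admissible: for~(i) one gets $(z-c)Q(z)=(z-c)\eta+\wt Q(z)$ with $(z-c)\eta\in\cN$ since $\eta\ge 0$, and for~(ii) one gets $Q(z)/(z-c)=\eta/(z-c)+\wt Q(z)$ with $\eta/(z-c)\in\cN$ since $\eta\le 0$; this avoids the bookkeeping detour in your write-up.
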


\section{Classes of generalized Nevanlinna functions associated with symmetric rational functions}\label{sec4}
The characterizations of Nevanlinna functions holomorphic on some
interval of the real line in Section~\ref{sec23} and \ref{sec24}
show that the product of these Nevanlinna functions with certain
(simple) symmetric rational functions are again Nevanlinna
functions. In this section results of that type are generalized to
the class of generalized Nevanlinna functions. For this purpose the
following definition is introduced.

\begin{definition}
Let $r$ be a nonconstant symmetric rational function ($\deg r>0$),
and let $\kappa$ and $\tilde\kappa$ be nonnegative integers. Then
the class $\cN_\kappa^{\tilde \kappa}(r)$ of generalized Nevanlinna
functions is defined by the formula
\[ 
 \cN_\kappa^{\tilde \kappa}(r) = \{\, Q \in \cN_\kappa: rQ \in \cN_{\tilde{\kappa}} \,\}.
\]
\end{definition}

If a generalized Nevanlinna function $Q$ belongs to
$\cN_\kappa^{\tilde \kappa}(r)$ for some symmetric rational function
$r$, whose range meets the negative real line, then $Q$ necessarily
has gaps in its spectrum.

\begin{lemma}\label{rcreatesgaps}
Let $Q \in \cN_\kappa^{\tilde \kappa}(r)$ for a symmetric rational
function $r$ and let $\phi_0 Q_0$, $Q_0 \in \cN$, be the canonical
factorization of $Q$. Then $0\leq r \leq \infty$ on $\sigma(Q_0)$
except for finitely many poles of $Q_0$.
\end{lemma}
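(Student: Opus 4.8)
The plan is to analyze what it means for $rQ$ to be a generalized Nevanlinna function in terms of the canonical factorization of $Q$. First I would recall that by Proposition~\ref{CharNK} we may write $Q = \phi_0 Q_0$ with $Q_0 \in \cN$ and $\phi_0$ a symmetric rational function that is nonnegative on $\dR\cup\{\infty\}$. Multiplying by $r$ gives $rQ = (r\phi_0) Q_0$, so $rQ$ is meromorphic on $\cmr$ with $rQ \in \cN_{\tilde\kappa}$ by hypothesis. Now $r\phi_0$ is again a symmetric rational function, but it need not be sign-definite on the real line; the sign of $r\phi_0$ on $\dR$ agrees with the sign of $r$ off the (finitely many) zeros and poles of $\phi_0$. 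The point of the statement is to control this sign on $\sigma(Q_0)$.

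The key step is a local argument at a point $\lambda_0 \in \sigma(Q_0)\cap\dR$ where $r$ is holomorphic, $r(\lambda_0) < 0$, and $\phi_0$ is holomorphic and nonzero (so $\phi_0(\lambda_0) > 0$). In a punctured neighborhood of such a point, $r\phi_0$ is real, holomorphic, and strictly negative. I would then invoke the generalized Stieltjes inversion formula and, more directly, Lemma~\ref{genStieltjesinvApplied}: since $rQ = (r\phi_0)Q_0$ is a generalized Nevanlinna function, on a small interval around $\lambda_0$ it differs from a genuine Nevanlinna function only by multiplication by a rational factor that is holomorphic and nonzero there (the GZNT/GPNT factors of $rQ$ can accumulate only finitely often), so locally $rQ$ is, up to a positive holomorphic factor, a Nevanlinna function. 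Comparing $rQ$ with $Q_0$ via the factor $r\phi_0$, which has fixed negative sign near $\lambda_0$, Lemma~\ref{genStieltjesinvApplied} forces $\lambda_0 \in \rho(Q_0)$, i.e. $\lambda_0 \notin \sigma(Q_0)$. The same reasoning applied at $\lambda_0 = \infty$ (using the transform $z\mapsto 1/z$ or the limit conditions in Lemma~\ref{pointlimit}) handles the point at infinity, showing $\beta = 0$ for $Q_0$ when $r(\infty) < 0$ unless $\infty$ is among the finitely many exceptional points.

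Collecting these local conclusions: at every point of $\sigma(Q_0)$ that is not one of the finitely many zeros or poles of $\phi_0$ — equivalently, outside the finitely many GZNT's and GPNT's of $Q$ together with $\infty$ when $\beta_{Q_0}>0$ — the value (or limiting sign) of $r$ cannot be strictly negative, hence $0 \le r \le \infty$ there. Since $\phi_0$ has only finitely many zeros and poles, and the poles of $Q_0$ among these are finite in number, the exceptional set is a finite set of poles of $Q_0$, which is exactly the assertion.

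I expect the main obstacle to be the bookkeeping at the \emph{poles} of $Q_0$ and at $\infty$: at a pole of $Q_0$ the function $Q_0$ itself blows up, so the naive ``compare signs of Nevanlinna functions'' argument needs the local reduction of $rQ$ to a Nevanlinna function (stripping off the local GPNT factor of $rQ$) to be carried out carefully, and one must check that the rational factor relating $rQ$ (after stripping) to $Q_0$ still has fixed sign near the point. A secondary subtlety is that zeros of $r$ could in principle cancel poles of $\phi_0$ or poles of $Q_0$, but these cancellations only occur at finitely many points and hence are absorbed into the stated finite exceptional set; one should phrase the inequality as ``$0 \le r \le \infty$'' precisely so that points where $r$ or $r\phi_0$ has a zero or pole cause no trouble.
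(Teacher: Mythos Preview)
Your approach is correct and is essentially the same as the paper's: both reduce to Lemma~\ref{genStieltjesinvApplied} by comparing $Q_0$ with the Nevanlinna part of $rQ$ through a rational factor of fixed negative sign on intervals where $r<0$. The one simplification you are missing is that the ``local stripping'' you worry about can be done globally in one stroke: write the canonical factorization $rQ=\phi_1 Q_1$ with $Q_1\in\cN$, so that $r\phi_0 Q_0=\phi_1 Q_1$ everywhere; on any subinterval of $\{r<0\}$ avoiding the finitely many zeros and poles of $\phi_0$ and $\phi_1$, the factors $r\phi_0$ and $\phi_1$ are holomorphic, nonzero, and of opposite sign, and Lemma~\ref{genStieltjesinvApplied} immediately gives that this subinterval lies in $\rho(Q_0)\cap\rho(Q_1)$. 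This handles poles of $Q_0$ and the point at $\infty$ automatically, without the separate case analysis you anticipate.
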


\begin{proof}
Let $\phi_1 Q_1$, $Q_1 \in \cN$, be the canonical
factorization of $rQ$. Then $r \phi_0
Q_0=\phi_1 Q_1$. The rational factors $\phi_0$
and $\phi_1$ admit only finitely many zeros and poles, and
between them $\phi_0(x),\phi_1(x)>0$. Hence one concludes
from Lemma~\ref{genStieltjesinvApplied} that there can exist only
finitely many separated poles (hence also zeros) of $Q_0$ and
$Q_1$ on each open interval where $r <0$.
\end{proof}

The main results in this section give characterizations for
functions belonging to the classes $\cN_\kappa^{\tilde \kappa}(r)$
and also identify their canonical factorizations. In particular, in
Theorem~\ref{ResTot} an inverse statement to
Lemma~\ref{rcreatesgaps} will be proved. The approach is
constructive and, in particular, the connection between the
canonical factorizations of the functions $Q$ and $rQ$, $Q \in
\cN_\kappa^{\wt \kappa}(r)$, is made explicit. From these results
one immediately gets, for instance, factorized integral
representations for the functions $Q$ and $rQ$, from which also
Kre\u{\i}n-Langer type integral representations for functions in
$\cN_\kappa^{\tilde \kappa}(r)$ can be obtained along the lines of
\cite[Corollary 3.5]{DHS99}.

\subsection{Factorization of symmetric rational functions}
An arbitrary symmetric rational function $r$ is a generalized
Nevanlinna function. In fact, the Nevanlinna kernel for $r$ can
be expressed explicitly by means of a Bezoutian; for details see
e.g. \cite{DH03}. The aim of this subsection is to derive in simple
terms the canonical factorization in Proposition~\ref{CharNK} for
symmetric rational functions. This factorization is used to simplify
later considerations.

If $r$ is a symmetric rational function, then it admits a factorization of the form
\begin{equation}\label{symratsimrat}
r(z) = \gamma \frac{\prod_{i=1}^{n_1} (z-\alpha_i)^{\mu_i}(z-\overline{\alpha}_i)^{\mu_i}}{\prod_{i=1}^{n_2} (z-\beta_i)^{\nu_i}(z-\overline{\beta}_i)^{\nu_i}}\frac{\prod_{i=1}^{m_1} (z-a_i)}{\prod_{i=1}^{m_2} (z-b_i)} ,
\end{equation}
where $\alpha_i,\beta_i \in \dC$, $m_1,m_2,n_1,n_2,\mu_i,\nu_i \in
\dN$ and $a_i, b_i, \gamma \in \dR$. The factorization
\eqref{symratsimrat} is unique if no cancelations occur; this is
assumed in the rest of this subsection. The canonical factorization
for $r$ can be obtained from \eqref{symratsimrat} by describing the
canonical factorization for the \textit{simple symmetric rational
function}
\begin{equation}\label{simrat}
s(z)=\gamma \frac{\prod_{i=1}^{m_1}(z-a_i)}{\prod_{i=1}^{m_2}(z-b_i)}, \quad a_i, b_i, \gamma \in \dR;
\end{equation}
here a rational function is called simple if all its zeros and poles
are real and of order one. Clearly, only a zero or pole  of $s(z)$
can be its GZNT or GPNT, respectively. Next observe that for any
zero $a \in \dR$ of $s(z)$ the multiplicity $\pi_a(s)$ is given by
\begin{equation}\label{not1}
\pi_a(s) = \left\{ \begin{array}{rl} 0,& \textrm{if \;\;} 0 < \lim_{z\widehat{\to} a}\{\frac{s(z)}{z-a}\}\leq \infty
   \quad \Leftrightarrow \quad s^\prime(a)>0; \\
 1,&\textrm{if \;\;} -\infty < \lim_{z\widehat{\to} a}\{\frac{s(z)}{z-a}\}\leq 0 \quad \Leftrightarrow \quad s^\prime(a)<0, \end{array} \right.
\end{equation}
and, similarly, for any pole $b \in \dR$ of $s(z)$ the multiplicity $\kappa_b(s)$ is given by
\begin{equation}\label{not2}
 \kappa_b(s) = \left\{ \begin{array}{rl} 0,& \textrm{if \;\;} -\infty < \lim_{z\widehat{\to} b}\{(z-b)s(z)\}\leq 0
  \quad \Leftrightarrow \quad (-1/s)^\prime(b)>0;\\
 1,& \textrm{if \;\;} 0 < \lim_{z\widehat{\to} b}\{(z-b)s(z)\}\leq \infty
 \quad \Leftrightarrow \quad (-1/s)^\prime(b)<0, \end{array} \right.
\end{equation}
see Remark~\ref{interpretationlimit}. Note that, since no
cancelations occur in \eqref{simrat}, the limits
\begin{equation}\label{not03a}
\lim_{z\widehat{\to} a} \frac{s(z)}{z-a} \quad \textrm{and} \quad \lim_{z\widehat{\to} b}(z-b)s(z)
\end{equation}
are actually finite and nonzero for every zero $a \in \dR$ and every
pole $b \in \dR$ of $s$. The multiplicities $\pi_{a_i}$ and
$\kappa_{b_i}$ only depend on the sign of $\gamma$ and the location
of the poles and zeros of $s$ in \eqref{simrat}. Therefore,
associate with $s$ and $c \in \dR$ the integer $\eta_c(s)$ by
\begin{equation}\label{not03}
 \eta_c(s) = \{\,\textrm{number of poles and zeros of $s$ greater than $c$}\,\}.
\end{equation}
Then the canonical factorization of $s$ can be stated in an explicit
form as follows.

\begin{lemma}\label{rrfNev0}
Let $s$ be a simple symmetric rational function of the form
\eqref{simrat} with $\gamma\neq 0$ and $a_i\neq b_j$, and let
$\eta_c(s)$ be defined by \eqref{not03}. Then the canonical
factorization for $s \in \cN_\kappa$, $\kappa \in \dN$, is given by
$s=\psi s_0$, where $s_0 \in \cN$ and $\psi$ are given by
\[
 s_0(z) = \gamma \frac{\prod_{i=1}^{m_1} (z-a_i)^{1-2\pi_{a_i}(s)}}{\prod_{i=1}^{m_2} (z-b_i)^{1-2\kappa_{b_i}(s)}}
\quad \textrm{and} \quad \psi(z) =  \frac{\prod_{i=1}^{m_1}
(z-a_i)^{2\pi_{a_i}(s)}}{\prod_{i=1}^{m_2}
(z-b_i)^{2\kappa_{b_i}(s)}}.
\]
Here $\pi_{a_i}(s)$, $\kappa_{b_i}(s)$, and
$\kappa=\max\{\sum_{i=1}^{m_1}\pi_{a_i}(s),\sum_{i=1}^{m_2}\kappa_{b_i}(s)\}$
are determined by
\begin{equation}\label{not4}
 \pi_{a_i}(s) = \frac{1 - (-1)^{\eta_{a_i}(s)}\sgn(\gamma)}{2}\quad \textrm{and} \quad
 \kappa_{b_i}(s) = \frac{1 + (-1)^{\eta_{b_i}(s)}\sgn(\gamma)}{2}.
\end{equation}
\end{lemma}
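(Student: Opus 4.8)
The plan is to verify the formula \eqref{not4} for the multiplicities by a direct sign-chasing argument using the characterizations \eqref{not1} and \eqref{not2}, and then to read off the claimed canonical factorization $s=\psi s_0$ from Proposition~\ref{CharNK}. First I would observe that, since no cancelations occur in \eqref{simrat}, the only candidates for GZNT's and GPNT's of $s$ are its real zeros $a_i$ and real poles $b_i$, each of order one; by \eqref{not03a} the relevant limits are finite and nonzero, so each $\pi_{a_i}(s)$ and $\kappa_{b_i}(s)$ is either $0$ or $1$, exactly as in \eqref{not1}, \eqref{not2}. Thus the whole problem reduces to determining the sign of $s'(a_i)$ (respectively of $(-1/s)'(b_i)$).

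The key step is the sign computation. Fix a zero $a=a_i$ of $s$. Writing $s(z)=(z-a)\,g(z)$ with $g(a)\ne 0$, one has $s'(a)=g(a)$, so $\pi_a(s)=1$ precisely when $g(a)<0$. Now $g(a)=\lim_{z\to a}s(z)/(z-a)$ is, up to the fixed factor $\gamma$, a product of terms $(a-a_j)$ over the other zeros and $(a-b_j)^{-1}$ over the poles; its sign is $\sgn(\gamma)$ times $(-1)^N$, where $N$ is the number of zeros and poles of $s$ that are strictly greater than $a$ — since each such factor $a-a_j$ or $a-b_j$ contributes one minus sign and each factor with $a_j<a$ or $b_j<a$ is positive. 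That count $N$ is exactly $\eta_a(s)$ of \eqref{not03}. Hence $g(a)<0 \Longleftrightarrow (-1)^{\eta_a(s)}\sgn(\gamma)=-1$, which is equivalent to $\pi_a(s)=\tfrac12\big(1-(-1)^{\eta_a(s)}\sgn(\gamma)\big)$, the first formula in \eqref{not4}. For a pole $b=b_i$ one applies the same reasoning to $-1/s$, whose zeros are the poles of $s$ and whose poles are the zeros of $s$ while its leading coefficient is $-1/\gamma$; the number of zeros and poles of $-1/s$ exceeding $b$ is the same $\eta_b(s)$, so $\kappa_b(s)=\tfrac12\big(1+(-1)^{\eta_b(s)}\sgn(\gamma)\big)$.

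Having established \eqref{not4}, I would then assemble the canonical factorization. By Proposition~\ref{CharNK}, $s=\phi s_0$ with $\phi(z)=\prod_i(z-a_i)^{2\pi_{a_i}(s)}\big/\prod_i(z-b_i)^{2\kappa_{b_i}(s)}$ (there are no non-real zeros or poles, and the complex-conjugate factors in the statement of Proposition~\ref{CharNK} simply contribute $(z-a_i)^{2\pi_{a_i}(s)}$ since $a_i\in\dR$); dividing \eqref{simrat} by this $\phi$ leaves exactly $s_0(z)=\gamma\prod_i(z-a_i)^{1-2\pi_{a_i}(s)}\big/\prod_i(z-b_i)^{1-2\kappa_{b_i}(s)}$, and $\psi=\phi$. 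It remains to check that this $s_0$ really lies in $\cN$: because each exponent $1-2\pi_{a_i}(s)$ and $1-2\kappa_{b_i}(s)$ equals $\pm1$, $s_0$ is again a simple symmetric rational function, and by construction it has no GZNT's or GPNT's (its zeros and poles are precisely those $a_i$ with $\pi_{a_i}(s)=0$ and those $b_i$ with $\kappa_{b_i}(s)=0$, and at those points the relevant limit in \eqref{not1} or \eqref{not2} has the sign forcing multiplicity $0$), so $s_0\in\cN_0$ by the uniqueness in Proposition~\ref{CharNK}. Finally $\kappa$ is read off from Proposition~\ref{CharNK} as $\max\{\sum_i\pi_{a_i}(s),\sum_i\kappa_{b_i}(s)\}$. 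The only genuinely delicate point is the bookkeeping of signs in the second paragraph — in particular making sure that a zero/pole lying to the \emph{left} of the point under consideration contributes a positive factor while one to the \emph{right} contributes a negative factor, so that the exponent of $-1$ is genuinely $\eta_c(s)$ and not its complement; everything else is routine.
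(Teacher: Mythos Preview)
Your approach is essentially the same as the paper's: compute the signs of the limits in \eqref{not03a} by counting how many zeros and poles of $s$ lie to the right of the point in question (this gives \eqref{not4}), then assemble $\psi$ and $s_0$ and check $s_0\in\cN$. The sign-chasing paragraph is correct and matches the paper's argument verbatim in spirit.

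There is one slip in your final paragraph. The zeros and poles of $s_0$ are \emph{not} only those $a_i$ with $\pi_{a_i}(s)=0$ and $b_i$ with $\kappa_{b_i}(s)=0$. When $\pi_{a_i}(s)=1$ the exponent $1-2\pi_{a_i}(s)=-1$ turns $a_i$ into a \emph{pole} of $s_0$; likewise each $b_i$ with $\kappa_{b_i}(s)=1$ becomes a \emph{zero} of $s_0$. Your parenthetical therefore checks only half of the zeros and poles of $s_0$. The paper handles this by observing that the passage from $s$ to $s_0$ does not alter the limits in \eqref{not03a}: for instance, at an $a_i$ with $\pi_{a_i}(s)=1$ one has $\lim_{z\to a_i}(z-a_i)s_0(z)=\lim_{z\to a_i}s(z)/(z-a_i)<0$ (up to a positive factor from the rest of $\psi$), so $\kappa_{a_i}(s_0)=0$; similarly for the $b_i$ that become zeros. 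With these additional checks your verification that $s_0$ has no GZNT's or GPNT's is complete. Alternatively, since the paper records just before the lemma that every symmetric rational function lies in some $\cN_\kappa$, you may bypass the direct check entirely and invoke the uniqueness in Proposition~\ref{CharNK} once the multiplicities $\pi_{a_i}(s),\kappa_{b_i}(s)$ are known---but then the faulty parenthetical should simply be deleted.
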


\begin{proof}
Since $a_i\neq b_j$, the limits in \eqref{not03a} are finite and the
signs of these limits are given by
$(-1)^{\eta_{a_i}(s)}\sgn(\gamma)$ and
$(-1)^{\eta_{b_i}(s)}\sgn(\gamma)$, respectively. This gives the
expressions \eqref{not4} for $\pi_{a_i}(s)$ and $\kappa_{b_i}(s)$ in
\eqref{not1} and \eqref{not2}, respectively. If $\pi_{a_i}(s)=0$ and
$\kappa_{b_i}(s)=0$, then the corresponding factors $(z-a_i)$ and
$1/(z-b_i)$ are included in $s_0$. If $\pi_{a_i}(s)=1$
($\kappa_{b_i}(s)=1$), then $(z-a_i)^2$ (resp. $1/(z-b_i)^2$) is
included as a factor in $\psi$, while $1/(z-a_i)$ (resp. $(z-b_i)$)
becomes a factor for $s_0$. This gives the stated formulas for $s_0$
and $\psi$. Observe, that the new terms do not change $s$ and the
limits in \eqref{not03a}. In terms of $s_0$ this means that
\[ 
\lim_{z \widehat{\to} c}\frac{s_0(z)}{z-c} > 0, \quad
\lim_{z\widehat{\to} d} (z-d)s_0(z) < 0
\]
for every finite zero $c$ and every finite pole $d$ of $s_0$.
Clearly, the above conditions alone imply that $s_0\in
\cN$.
\end{proof}

The proof of Lemma~\ref{rrfNev0} can be used to obtain an
independent and constructive proof for the existence of the
canonical factorization for simple symmetric rational functions $s$
of the form \eqref{simrat}. Combining the factorization of $s$ in
Lemma~\ref{rrfNev0} with the factorization of $r$ in
\eqref{symratsimrat} the existence of the canonical factorization
for an arbitrary symmetric rational function $r$ is obtained. To
express that factorization along the lines of Lemma~\ref{rrfNev0},
define $\eta_c(r)$ for $c \in \dR$ by
\begin{equation}\label{not03c}
 \eta_c(r) = \{\,\textrm{number of poles and zeros of odd order of $r$ greater than $c$}\,\}.
\end{equation}

\begin{proposition}\label{rrfNev}
Let $r$ be a symmetric rational function with the (unique)
factorization $r=\phi s$ as in \eqref{symratsimrat} and let $s=\psi
s_0$, $s_0 \in \cN$, be the canonical factorization of $s$ given by
Lemma~\ref{rrfNev0}. Then $r \in \cN_\kappa$, $\kappa \in \dN$, and
the canonical factorization of $r$ is given by $r=\phi_0
r_0$, $r_0 \in \cN$, where
\[ \phi_0(z)=\phi(z)\psi(z)
 \quad \textrm{and} \quad
 r_0(z)=s_0(z).
\]
In particular, if $\mu(a)$ ($\nu(b)$) denotes the order of $a$ ($b$)
as a zero (pole) of $r$, then for every nonreal zero $\alpha_i$ and
nonreal pole $\beta_i$ of $r$ one has
$\pi_{\alpha_i}(r)=\mu(\alpha_i)$ and
$\kappa_{\beta_i}(r)=\nu(\beta_i)$. For every real zero $c_i$ and
pole $d_i$ of $r$ of even order one has $\pi_{c_i}(r)=\mu(c_i)/2$
and $\kappa_{d_i}(r)=\nu(d_i)/2$, while for real zeros $a_i$ and
poles $b_i$ of $r$ of odd order one has
\begin{equation}\label{not5}
 \pi_{a_i}(r) = \frac{\mu(a_i) - (-1)^{\eta_{a_i}(r)}\sgn(\gamma)}{2}
 \quad \textrm{and} \quad
 \kappa_{b_i}(r) = \frac{\nu(b_i) + (-1)^{\eta_{b_i}(r)}\sgn(\gamma)}{2}.
\end{equation}
\end{proposition}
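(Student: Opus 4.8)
The strategy is to reduce the factorization of $r$ to the already established factorization of the simple symmetric rational function $s$ in Lemma~\ref{rrfNev0}, using the decomposition $r=\phi s$ from \eqref{symratsimrat}. First I would observe that $\phi$ is itself a symmetric rational function whose numerator and denominator are products of factors $(z-\alpha_i)(z-\overline{\alpha}_i)$ (with $\alpha_i\in\dC$), so $\phi$ is already of the form $\prod(z-\alpha)^\pi(z-\overline{\alpha})^\pi/\prod(z-\beta)^\kappa(z-\overline{\beta})^\kappa$ appearing in Proposition~\ref{CharNK}; in particular $\phi$ is nonnegative on $\dR\cup\{\infty\}$ and contributes no $\cN$-factor (i.e. its ``$Q_0$'' is the constant $1$). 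Then, using the canonical factorization $s=\psi s_0$ from Lemma~\ref{rrfNev0}, one has $r=\phi s=(\phi\psi)s_0$. The product $\phi_0:=\phi\psi$ is symmetric, rational, and nonnegative on $\dR\cup\{\infty\}$ (both $\phi$ and $\psi$ are perfect squares of symmetric rational functions, or at least nonnegative there), and $s_0\in\cN$ by Lemma~\ref{rrfNev0}; hence $\phi_0 s_0$ is a presentation of $r$ of exactly the shape required by Proposition~\ref{CharNK}. Since the canonical factorization is unique (Proposition~\ref{CharNK}), this must be \emph{the} canonical factorization, giving $r\in\cN_\kappa$ with $\phi_0=\phi\psi$ and $r_0=s_0$.

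For the multiplicity statements I would argue by comparing the zero/pole orders on both sides of $r=\phi_0 r_0=\phi\psi s_0$. The uniqueness in Proposition~\ref{CharNK} forces the rational prefactor $\phi_0$ to collect precisely all GZNT's and GPNT's of $r$. For a nonreal zero $\alpha_i$ of $r$ of order $\mu(\alpha_i)$: since $r_0=s_0$ has only real zeros and poles, the entire factor $(z-\alpha_i)^{\mu(\alpha_i)}(z-\overline{\alpha_i})^{\mu(\alpha_i)}$ sits inside $\phi_0$, so $\pi_{\alpha_i}(r)=\mu(\alpha_i)$; likewise $\kappa_{\beta_i}(r)=\nu(\beta_i)$ for nonreal poles. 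For a real zero $c_i$ of $r$ of even order, in \eqref{symratsimrat} this zero appears in the $\phi$-part as $(z-c_i)^{\mu(c_i)/2}(z-\overline{c_i})^{\mu(c_i)/2}=(z-c_i)^{\mu(c_i)}$ — here $c_i$ plays the role of one of the nonreal $\alpha$'s with $\alpha_i=\overline{\alpha_i}$ — and none of it enters $s$, so $\pi_{c_i}(r)=\mu(c_i)/2$; symmetrically $\kappa_{d_i}(r)=\nu(d_i)/2$ for real poles of even order. Finally, for a real zero $a_i$ of $r$ of odd order: it contributes one factor $(z-a_i)$ to the simple part $s$ in \eqref{simrat} and $\frac{\mu(a_i)-1}{2}$ squared pairs $(z-a_i)^2$ to $\phi$; from Lemma~\ref{rrfNev0}, the $s$-contribution adds $\pi_{a_i}(s)$ further to $\phi_0$ via $\psi$, where $\pi_{a_i}(s)=\frac{1-(-1)^{\eta_{a_i}(s)}\sgn(\gamma)}{2}$. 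Since zeros/poles of $r$ of even order do not change the sign of the relevant limits and hence $\eta_{a_i}(s)=\eta_{a_i}(r)$ as defined in \eqref{not03c}, adding $\frac{\mu(a_i)-1}{2}$ from $\phi$ to $\pi_{a_i}(s)$ gives $\pi_{a_i}(r)=\frac{\mu(a_i)-1}{2}+\frac{1-(-1)^{\eta_{a_i}(r)}\sgn(\gamma)}{2}=\frac{\mu(a_i)-(-1)^{\eta_{a_i}(r)}\sgn(\gamma)}{2}$, which is \eqref{not5}; the computation for $\kappa_{b_i}(r)$ is entirely parallel.

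The routine point requiring some care is the bookkeeping of signs: one must check that the limits in \eqref{not03a}, computed for $s$, have the same signs as the corresponding GZNT/GPNT limits \eqref{GZNT}, \eqref{GPNT} computed for $r$ itself — i.e. that multiplying $s$ by the manifestly positive (on $\dR\setminus\{$its zeros/poles$\}$) rational function $\phi$ does not flip the sign of $\lim_{z\widehat\to a}r(z)/(z-a)^{2\pi_a-1}$ relative to $\lim_{z\widehat\to a}s(z)/(z-a)$. This is immediate once one notes that near a real zero $a_i$ of odd order $\mu(a_i)$ one has $r(z)=(z-a_i)^{\mu(a_i)-1}\phi_{\neq a_i}(z)\,s(z)$ with $(z-a_i)^{\mu(a_i)-1}>0$ and $\phi_{\neq a_i}(a_i)>0$, so the relevant limit for $r$ is a positive multiple of that for $s$; and correspondingly at the level of the definitions $\eqref{GZNT}$ with $2\pi_{a_i}(r)-1=\mu(a_i)-1+(2\pi_{a_i}(s)-1)$. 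The main conceptual obstacle — verifying that the counting function $\eta_c$ in \eqref{not03c} for $r$ and \eqref{not03} for $s$ agree at the odd-order real zeros and poles of $r$ — reduces to the observation that even-order real zeros/poles of $r$, and the nonreal ones, contribute squared factors and therefore do not alter the sign of any of the limits in \eqref{not03a}, hence do not count in $\eta_c$; this is exactly why \eqref{not03c} is phrased with ``odd order.''
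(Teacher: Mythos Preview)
Your proof is correct and follows essentially the same approach as the paper: combine the decomposition $r=\phi s$ from \eqref{symratsimrat} with the canonical factorization $s=\psi s_0$ from Lemma~\ref{rrfNev0}, and then read off the multiplicities using that $\eta_{c}(s)=\eta_c(r)$ for odd-order real zeros and poles $c$ of $r$. The paper's own proof is a two-line sketch of exactly this argument; you have simply written out the details, including the verification that multiplying $s$ by the nonnegative factor $\phi$ does not disturb the signs of the relevant limits.
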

\begin{proof}
The factorization for $r$ is obtained by combining
\eqref{symratsimrat} with Lemma~\ref{rrfNev0}. The formulas for
$\pi_{a_i}(r)$ and $\kappa_{b_i}(r)$ and obtained from
\eqref{symratsimrat} and \eqref{not4}, since clearly
$(-1)^{\eta_c(r)}=(-1)^{\eta_{c}(s)}$ if $c$ is a zero or pole of
$r$ of odd order.
\end{proof}

In view of \eqref{not5} and Lemma~\ref{rrfNev0} the poles and zeros
of $r_0=s_0$ originate only from odd order real zeros or odd order
real poles of $r$. Moreover, it is easy to decide from the canonical
factorization $s=\psi s_0$ in Lemma~\ref{rrfNev0} that the poles and
zeros of $s_0$ have the interlacing property, since in view of
\eqref{not03} the signs of the limits in \eqref{not03a} are
alternating for consecutive poles and zeros of $s$.

\subsection{The case of simple rational functions of degree one}
To describe the class $\cN_{\kappa}^{\wt \kappa}(r)$ for simple
symmetric rational functions $r$ of degree one, first some
observations are made on the product of a general symmetric rational
function with a Nevanlinna function.

Let $r$ be an arbitrary symmetric rational function and let $Q_0 \in
\cN$, $Q_0\ne 0$. Then it is clear from Sections~\ref{sec23}
and~\ref{sec24} that the product $rQ_0$ is not in general a
Nevanlinna function; the product $rQ_0$ need not
even belong to the class of generalized Nevanlinna functions.
However, if $rQ_0\in\cN_\wt\kappa$ for some $\wt\kappa\in \dN$, then
the multiplicities of generalized zeros and poles of nonpositive
type of $rQ_0$ are well defined and given by
\eqref{GZNT} and \eqref{GPNT}, respectively. In the case of a symmetric rational function $r$ it follows
from Lemma~\ref{pointlimit} that for any simple zero $a \in \dR$ of $r$ one
has
\begin{equation}\label{not1Q}
\pi_a(rQ_0) = \left\{
 \begin{array}{rl}
 0,& \textrm{if \;\;} 0 < \lim_{z\widehat{\to} a}\{\frac{r(z)Q_0(z)}{z-a}\}\leq \infty;\\
 1,& \textrm{if \;\;} -\infty < \lim_{z\widehat{\to}a}\{\frac{r(z)Q_0(z)}{z-a}\}\leq 0,
\end{array} \right.
\end{equation}
and similarly for any simple pole $b \in \dR$ of $r$ one has
\begin{equation}\label{not2Q}
\kappa_b(rQ_0) = \left\{ \begin{array}{rl}
 0,& \textrm{if \;\;} -\infty < \lim_{z\widehat{\to} b}\{(z-b)r(z)Q_0(z)\}\leq 0;\\
 1,&  \textrm{if \;\;} 0 < \lim_{z\widehat{\to} b}\{(z-b)r(z)Q_0(z)\}\leq \infty.
 \end{array} \right.
\end{equation}
In fact, if $r$ is a symmetric rational function and
$rQ_0\in\cN_{\wt\kappa}$ then \eqref{not1Q} and \eqref{not2Q} imply
that for every zero $a$ and pole $b$ of $r$ of order one the limits
$\lim_{z \wh\to a} Q_0(z)$ and $\lim_{z \wh\to b}Q_0(z)$ exist in
$\dR\cup\{\pm\infty\}$; see Section~\ref{secON}. Furthermore, in
this case $r$ changes its sign at $a$ and $b$, which implies the
existence of the usual (improper) limits $\lim_{x \to a} Q_0(x)$ and
$\lim_{x \to b}Q_0(x)$ for $x \in \rho(Q_0) \cap \{y \in \dR :
r(y)<0\}$; see Lemma~\ref{rcreatesgaps}. Observe, that the signs of
$\frac{r(x)}{x-a}$ and $(x-b)r(x)$ remain constant around $a$ and
$b$, respectively. Consequently, the multiplicities $\pi_a(rQ_0)$
and $\kappa_b(rQ_0)$ can be determined easily from the limit values
of $Q_0$ at $a$ and $b$.

The following theorem gives a characterization for the class
$\cN_\kappa^{\widetilde{\kappa}}(s)$ in the case of a simple
symmetric rational function $s$ of degree one.

\begin{theorem}\label{ResM1E}
Let $Q \in \cN_\kappa$ have the canonical factorization $Q=\phi
Q_0$, $Q_0 \in \cN$, and let $s$ be a symmetric rational function of
degree one as in \eqref{simrat}. Then the following statements are
equivalent:
\begin{enumerate}
\def\labelenumi {\rm (\roman{enumi})}
\item $Q \in \cN_\kappa^{\widetilde{\kappa}}(s)$ for some $\widetilde{\kappa}\in\dN$;
\item $0\leq s \leq \infty$ on $\sigma(Q)$ except for finitely many poles of $Q$;
\item  $0\leq s \leq \infty$ on $\sigma(Q_0)$ except for finitely many poles of $Q_0$.
\end{enumerate}
Furthermore, if any of the above equivalent statements holds, then
$sQ_0\in\cN_{\kappa_0}$ for some $\kappa_0\in\dN$ and the canonical
factorization of $sQ$ is given by $\widetilde{\phi}\widetilde{Q}_0$,
$\widetilde{Q}_0 \in \cN$, where
\[ 
 \widetilde{\phi} = \phi\psi,\quad
 \widetilde{Q}_0=\frac{s}{\psi}\,Q_0,\quad
 \psi(z)=\frac{(z-a)^{2\pi_a(sQ_0)}
 \prod_{i=1}^{n_1}(z-\alpha_i)^{2}}{(z-b)^{2\kappa_b(sQ_0)}\prod_{i=1}^{n_2}(z-\beta_i)^2}.
\]
Here the factors $(z-a)$ and $(z-b)$ with $\pi_a(sQ_0)$ and
$\kappa_b(sQ_0)$ as in \eqref{not1Q} and \eqref{not2Q} appear only
for the finite zero $a$ and pole $b$ of $s$. Moreover,
$\alpha_i\in\dR$, $1\leq i \leq n_1$, are the zeros of $Q_0$ which
satisfy $-\infty < s(\alpha_i) < 0$ and $\beta_i\in\dR$, $1\leq i
\leq n_2$, are the poles of $Q_0$ which satisfy $-\infty< s(\beta_i)
< 0$.
\end{theorem}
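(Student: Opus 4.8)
The plan is to prove the chain (i)$\Rightarrow$(iii), (ii)$\Leftrightarrow$(iii), (iii)$\Rightarrow$(i), reading off the canonical factorization of $sQ$ in the last step. The implication (i)$\Rightarrow$(iii) is exactly Lemma~\ref{rcreatesgaps} applied with $r=s$. For (ii)$\Leftrightarrow$(iii) I would use that in $Q=\phi Q_0$ the rational factor $\phi$ is nonnegative on $\dR$ and has only finitely many zeros and poles; hence $\rho(Q)$ and $\rho(Q_0)$, and therefore $\sigma(Q)$ and $\sigma(Q_0)$, differ only on the finite set formed by the zeros and poles of $\phi$ together with $\{\infty\}$. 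Since a point of $\sigma(Q)$ at which $s<0$ is necessarily a pole of $Q$ (and likewise with $Q$ replaced by $Q_0$), and since the requirement ``$0\le s\le\infty$ except for finitely many poles'' is insensitive to modifying the tested set by a finite set, (ii) and (iii) are equivalent.

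The substance is (iii)$\Rightarrow$(i) together with the factorization formula. Here I would first reduce to $\phi=1$: writing $sQ=\phi\,(sQ_0)$, it suffices to show that $sQ_0\in\cN_{\kappa_0}$ with canonical factorization $\psi\,\widetilde Q_0$, where $\widetilde Q_0=(s/\psi)Q_0\in\cN$, and then put $\widetilde\phi=\phi\psi$; indeed $\phi\psi$ is again a product of factors of the canonical-factorization shape of Proposition~\ref{CharNK}, so $sQ=\widetilde\phi\,\widetilde Q_0\in\cN_{\widetilde\kappa}$, and this is its canonical factorization once one verifies that the orders of the real zeros and poles of $\widetilde\phi$ match the multiplicities of the GZNT's and GPNT's of $sQ$ at $a$, $b$, the $\alpha_i$ and the $\beta_i$. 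Under (iii) (and assuming $Q_0$ nonconstant, the other case being trivial) the zeros $\alpha_i$ and poles $\beta_i$ of $Q_0$ lying in the open set $\{x\in\dR:s(x)<0\}$ are finite in number: for the poles this is immediate from (iii), while for the zeros one uses that $\{s<0\}\cap\rho(Q_0)$ is a union of finitely many intervals on each of which $Q_0$ is strictly increasing, cf.~\eqref{increasing}, hence has at most one zero. The integers $\pi_a(sQ_0),\kappa_b(sQ_0)\in\{0,1\}$ are then well defined via the limits in \eqref{not1Q} and \eqref{not2Q}, which exist by Lemma~\ref{pointlimit}, so $\psi$ is a well-defined nonnegative symmetric rational function.

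It remains to prove $\widetilde Q_0=(s/\psi)Q_0\in\cN$. The idea is to write $s/\psi$ as a product of elementary symmetric rational factors of the types occurring in Corollaries~\ref{Npres3456} and~\ref{Npres12} (namely $\tfrac{z-c}{z-d}$, $\tfrac{z-c}{d-z}$, $z-c$, $\tfrac{1}{z-c}$ and degree-two products of these) and to apply those corollaries successively, verifying at each step the holomorphy and the sign-of-limit conditions from (iii). The ``bad arc'' $\{s<0\}$ has endpoints the zero $a$ and the pole $b$ of $s$ (one of which may be $\infty$) and is, according to $\sgn\gamma$ and the finiteness of $a,b$, either a bounded interval or the complement of a compact interval; it is processed from its endpoints inward, using the interlacing of the $\alpha_i$ and $\beta_i$. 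One may reduce the number of cases by first normalizing $s$ to $s(z)=\pm z$ through a transform $Q\mapsto Q_\xi$ as in Lemma~\ref{Lugerres} (see also Lemma~\ref{transqw}), which respects canonical factorizations. Once $\widetilde Q_0\in\cN$ is established, the conclusions of the theorem follow as indicated above.

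The main obstacle is precisely this last step. Dividing $Q_0$ by $(z-\alpha_i)^2$ at a simple zero $\alpha_i$ naively creates a real pole, so that the intermediate functions genuinely fail to be Nevanlinna unless the elementary factors are grouped and ordered correctly; it is exactly the sign information in (iii) --- $s(\alpha_i)<0$, $s(\beta_i)<0$, and the nonpositivity (respectively nonnegativity) of the corresponding limits at $a$ and $b$ --- that legitimizes each application of Corollary~\ref{Npres3456} or~\ref{Npres12} and forces the resulting real poles of $\widetilde Q_0$ to carry positive mass. Setting up the attendant case analysis --- position of $\{s<0\}$ relative to $\infty$, the sign of $\gamma$, and coincidences of $a$ or $b$ with poles or zeros of $Q_0$ --- is where the real work lies; the remaining bookkeeping, namely pinning down the exact values of $\pi_a(sQ_0)$ and $\kappa_b(sQ_0)$ and the ``no cancellation'' verification identifying $\widetilde\phi\,\widetilde Q_0$ as the canonical factorization, is routine once the right elementary steps are in place.
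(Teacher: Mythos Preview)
Your proposal is correct and follows essentially the same approach as the paper: (i)$\Rightarrow$(iii) via Lemma~\ref{rcreatesgaps}, (ii)$\Leftrightarrow$(iii) from the fact that $\sigma(Q)$ and $\sigma(Q_0)$ differ only at the finitely many zeros and poles of $\phi$, and (iii)$\Rightarrow$(i) by reducing to $\phi=1$ and then peeling off the interlacing zeros $\alpha_i$ and poles $\beta_i$ of $Q_0$ in $\{s<0\}$ via successive applications of Corollaries~\ref{Npres3456} and~\ref{Npres12}, with a M\"obius reduction (Lemma~\ref{transqw}, not Lemma~\ref{Lugerres}) to cut down the sign and finiteness cases for $a,b,\gamma$. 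The paper organizes the last step slightly differently by first isolating, as a separate lemma (Lemma~\ref{ResM1}), the clean case in which $Q_0$ has no poles on $\{s<0\}$, and then reducing the general case to it by dividing out the factors $s_i(z)=(z-\alpha_i)/(z-\beta_i)$; this is exactly the ``grouping and ordering'' you flag as the crux, made explicit.
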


For the proof of Theorem~\ref{ResM1E} the following lemma will be
used. The lemma itself is proved by making use of
Lemma~\ref{transqw}, which allows to prove the statement via
rational functions $s$ of specific form.

\begin{lemma}\label{ResM1}
Let $Q=\phi Q_0$, $s$, $\pi_a(sQ_0)$, and $\kappa_b(sQ_0)$ be as in
Theorem~\ref{ResM1E}. Then $0\leq s\leq \infty$ on $\sigma(Q_0)$
implies that $Q \in \cN^{\widetilde{\kappa}}_\kappa(s)$ for some
$\widetilde{\kappa}\in\dN$.

In this case $sQ_0\in\cN_{\kappa_0}$ for some $\kappa_0\in\dN$ and
the canonical factorization of $sQ \in \cN_{\widetilde{\kappa}}$ is
given by $sQ=\widetilde{\phi}\widetilde{Q}_0$, $\widetilde{Q}_0 \in
\cN$, where
\[
 \widetilde{\phi} = \phi \psi, \quad \widetilde{Q}_0 = \frac{s}{\psi}\,Q_0,
 \quad
 \psi(z)=\frac{(z-a)^{2\pi_{a}(sQ_0)}(z-\alpha)^{2{j_\alpha}}}{(z-b)^{2\kappa_{b}(sQ_0)}}.
\]
Here $j_\alpha=1$ if $\alpha\in\dR$ is a zero of $Q_0$ for which
$-\infty<s(\alpha) < 0$ (there is at most one such zero $\alpha$ of
$Q_0$), and $j_\alpha=0$ otherwise.
\end{lemma}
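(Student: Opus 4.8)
\emph{Plan.} The idea is to apply Lemma~\ref{transqw} to reduce to a normal form for $s$ and then to quote the one-interval characterizations of Section~\ref{sec24}.

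\emph{Reduction.} A simple symmetric rational function $s$ of degree one has exactly one zero $a_0$ and one pole $b_0$ in $\dR\cup\{\infty\}$. Choose a rational Nevanlinna function $\tau$ of degree one (i.e.\ an increasing M\"obius transformation of $\dR\cup\{\infty\}$) with $\tau(0)=a_0$ and $\tau(\infty)=b_0$. Then $s\circ\tau$ is a symmetric rational function of degree one with zero at $0$ and pole at $\infty$, hence $s\circ\tau(\lambda)=c\lambda$ for some $c\in\dR\setminus\{0\}$. By Lemma~\ref{transqw}, $R\mapsto R\circ\tau$ maps $\cN_\nu$ bijectively onto $\cN_\nu$ for every $\nu\in\dN$ and carries the canonical factorization $\phi R_0$ of $R$ to the canonical factorization $(\phi\circ\tau)(R_0\circ\tau)$ of $R\circ\tau$; moreover $(sQ)\circ\tau=(s\circ\tau)(Q\circ\tau)=c\lambda\,(Q\circ\tau)$ and $\sigma(Q_0\circ\tau)=\tau^{-1}(\sigma(Q_0))$. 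Hence proving the lemma for $Q,s$ is equivalent to proving it for $Q\circ\tau$ and $s\circ\tau(\lambda)=c\lambda$ (the hypothesis, the conclusion, the index $\widetilde\kappa$, the exceptional zero $\alpha$, and the formulas for $\psi,\widetilde\phi,\widetilde Q_0$ all transform under $\tau$ in the evident way). So we may assume $s(z)=cz$; I treat $c>0$, the case $c<0$ being analogous via the variants of Section~\ref{sec24} for functions holomorphic on an interval $(c,\infty)$ (see the remark opening that section).

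\emph{The reduced case $s(z)=cz$, $c>0$.} Here the hypothesis $0\le s\le\infty$ on $\sigma(Q_0)$ reads $(-\infty,0)\subset\rho(Q_0)$, and since $Q_0\in\cN$ is increasing on $(-\infty,0)$ exactly one of the following holds: (a) $0\le\lim_{x\downarrow-\infty}Q_0(x)<\infty$; (b) $Q_0$ has a (unique, by strict monotonicity) zero $\alpha\in(-\infty,0)$; (c) $Q_0<0$ on $(-\infty,0)$ with $\lim_{x\uparrow0}Q_0(x)\in(-\infty,0]$. In case (a), Corollary~\ref{Npres12}(i) gives $czQ_0\in\cN$, so $\kappa_0=0$; since $Q_0>0$ on $(-\infty,0)$ one has $\lim_{z\wh\to0}(czQ_0(z))/z>0$, whence $\pi_0(czQ_0)=0$, there is no exceptional zero, $\psi\equiv1$, $\widetilde\phi=\phi$, $\widetilde Q_0=czQ_0$. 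In case (b), $(-\infty,\alpha)\subset\rho(Q_0)$ and $\lim_{x\uparrow\alpha}Q_0(x)=Q_0(\alpha)=0$, so Corollary~\ref{Npres12}(ii) (with $c=\alpha$) yields $\hat Q_0:=Q_0/(z-\alpha)\in\cN$; then $\hat Q_0>0$ on $(-\infty,0)$, so $z\hat Q_0\in\cN$ by Corollary~\ref{Npres12}(i), and a second application of Corollary~\ref{Npres12}(ii) at $c=\alpha$ (now $\lim_{x\uparrow\alpha}x\hat Q_0(x)=\alpha\hat Q_0(\alpha)<0$) gives $\widetilde Q_0:=c\,z\hat Q_0/(z-\alpha)\in\cN$ with $czQ_0=(z-\alpha)^2\widetilde Q_0$; here $\pi_0(czQ_0)=0$, $j_\alpha=1$, so $\psi=(z-\alpha)^2$. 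In case (c), Corollary~\ref{Npres12}(ii) with $c=0$ gives $\widetilde Q_0:=cQ_0/z\in\cN$ with $czQ_0=z^2\widetilde Q_0$, and one checks from \eqref{not1Q} that $\pi_0(czQ_0)=1$; there is no exceptional zero, so $\psi=z^2$. In each case $\widetilde Q_0=(s/\psi)Q_0\in\cN$ as displayed, $\widetilde\phi:=\phi\psi$ is a symmetric rational function nonnegative on $\dR$, and $czQ=\widetilde\phi\,\widetilde Q_0$.

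\emph{Canonicity and the main obstacle.} It remains to see that $\widetilde\phi\,\widetilde Q_0$ is the \emph{canonical} factorization of $czQ$. For this one evaluates the limits \eqref{GZNT}--\eqref{GPNT} for $czQ$ and checks, handling separately whether $0$ and $\alpha$ are regular points, poles, or accumulation points of $\sigma(Q_0)$ and tracking the behaviour at $\infty$, that the zeros in $\dC_+$ and the GZNT/GPNT in $\dR$ of $czQ$ are precisely those recorded by $\phi$ together with the new ones recorded by $\psi$ (so in particular no cancellation occurs between $\psi$ and $\phi$); the uniqueness in Proposition~\ref{CharNK} then identifies $czQ=(\phi\psi)\widetilde Q_0\in\cN_{\widetilde\kappa}$ as the canonical factorization, and transporting back via $\tau$ finishes the proof. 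The routine part is the reduction and the three appeals to Corollary~\ref{Npres12}; the delicate part is exactly this bookkeeping of generalized zeros and poles of non-positive type, showing that multiplication by $cz$ creates only the finitely many new ones collected in $\psi$ and disturbs neither the factor $\phi$ nor the Nevanlinna property of $\widetilde Q_0$.
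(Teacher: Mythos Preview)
Your argument is correct and follows essentially the same route as the paper's proof: a M\"obius reduction via Lemma~\ref{transqw}, followed by the three-case analysis (sign of $Q_0$ on the negative interval, presence of a zero) handled through Corollary~\ref{Npres12}. The paper differs only cosmetically: it first passes from $s$ to $1/s$ to assume $s\in\cN$, treats the case $\gamma>0$ directly for both forms $\gamma(z-a)/(z-b)$ and $\gamma(z-a)$ (using Corollaries~\ref{Npres3456} and~\ref{Npres12}), and invokes the M\"obius trick only afterwards to reduce $\gamma<0$ to $\gamma>0$; your single reduction to $s(z)=cz$ is a bit cleaner and lets you work entirely with Corollary~\ref{Npres12}.

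One caveat on your final paragraph: the claim that ``no cancellation occurs between $\psi$ and $\phi$'' is not generally true (take $Q_0(z)=z+1$, $\phi(z)=(z+1)^{-2}$, $s(z)=z$: here $\alpha=-1$ is simultaneously the exceptional zero of $Q_0$ and a GPNT of $Q$, and $\phi\psi\equiv1$). What actually makes the canonicity step work---in both your argument and the paper's---is the uniqueness in Proposition~\ref{CharNK}: once you have $sQ=(\phi\psi)\widetilde Q_0$ with $\widetilde Q_0\in\cN$ and $\phi\psi$ nonnegative rational, the factorization is forced to be the canonical one (after reducing $\phi\psi$), since any two such factorizations must share the same Nevanlinna factor. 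So the ``bookkeeping'' you flag as delicate is in fact unnecessary; the paper simply cites Proposition~\ref{CharNK} at this point without further comment.
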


\begin{proof}
Observe that if $sQ \in \cN_{\wt\kappa}$, then $-1/(sQ) =
(1/s)\cdot(-1/Q)\in \cN_{\wt\kappa}$. Hence by considering either
$s$ or $1/s$ we can restrict ourselves, without loss of generality,
to the case that $s \in \cN$. In this case $s$ takes the form
\[
 s(z) = \gamma \frac{z-a}{z-b}, \,\, \gamma(b-a)<0;
 \quad s(z) = \gamma (z-a), \,\, \gamma >0;
 \quad  \textrm{or}\quad s(z) = \frac{\gamma}{z-b}, \,\, \gamma <0.
\]

\textit{Case $\gamma >0$:} Then $s$ is given by
\[ s(z) = \gamma \frac{z-a}{z-b}, \,\, b<a; \quad  \textrm{or} \quad s(z) = \gamma (z-a).\]
Define $b =-\infty$ in the second case, so that in both cases $b < a$.

The assumption $0\leq s \leq \infty$ on $\sigma(Q_0)$ implies that
$(b,a)\subset \rho(Q_0)$. Moreover, since $Q_0 \in \cN$,
monotonicity of $Q_0$ (see \eqref{increasing}) implies that there
exists at most one zero $\alpha\in(b,a)$ of $Q_0$ and, furthermore,
the limits $\lim_{z\widehat{\to} a} Q_0(z)$ and
$\lim_{z\widehat{\to} b}Q_0(z)$ exist in $\dR\cup\{\pm\infty\}$, see
Lemma~\ref{limvalue}.

Next observe that by Corollary~\ref{Npres3456} ($b$ finite) and
\ref{Npres12} ($b$ infinite) $s Q_0 \in \cN$ if and only if $0 \leq
\gamma \lim_{z\widehat{\to} b} Q_0(z) < \infty$, in which case
$\kappa_b(sQ_0)=\pi_a(sQ_0)=j_\alpha=0$. Hence the statements hold
in this case with $\psi = 1$.

If $-\infty \leq \gamma \lim_{z\widehat{\to} b} Q_0(z) < 0$ and
$Q_0$ does not change its sign between $a$ and $b$, so that
$j_\alpha=0$ and $\gamma \lim_{z\widehat{\to} a} Q_0(z) \leq 0$,
then by Corollary~\ref{Npres3456} ($b$ finite) and \ref{Npres12}
($b$ infinite) $(1/s)Q_0\in \cN$. Since $sQ_0=\left(s\right)^2 (1/s)
Q_0$, Proposition~\ref{CharNK} shows that the statements hold in

this case with $\pi_a(sQ_0)=\kappa_b(sQ_0)=1$, $j_\alpha=0$,
$\widetilde{Q}_0= \gamma^2 (1/s) Q_0$ and
\begin{equation}\label{fac2}
\psi(z)=\frac{(z-a)^2}{(z-b)^2}, \quad b \in \dR, \quad \textrm{or} \quad \psi(z)=(z-a)^2, \quad b=\infty.
\end{equation}

If $-\infty \leq \gamma \lim_{z\widehat{\to} b} Q_0(z)<0$ and $Q_0$ changes its sign between $a$ and $b$, then there exists a (unique) zero $\alpha$ of $Q_0$ between $a$ and $b$ with $-\infty<s(\alpha)<0$. Hence, $0 < \gamma \lim_{z\widehat{\to} a} Q_0(z)\leq \infty$ and the sign of $Q_0$ is constant between $b$ and $\alpha$ and between $\alpha$ and $a$. Define $s_1(z) = \gamma \frac{z-b}{z-\alpha}$ if $b$ is finite and $s_1(z) = \gamma \frac{1}{z-\alpha}$ if $b$ is infinite and in both cases define $s_2(z)= \frac{z-a}{z-\alpha}$. Now Corollary~\ref{Npres3456} ($b$ finite) and \ref{Npres12} ($b$ infinite) imply that $s_1Q_0 \in \cN$. In addition, $s_1Q_0$ has a constant (positive) sign between $\alpha$ and $a$, in particular, $0 \leq \lim_{z\widehat{\to} \alpha} s_1(z)Q_0(z) < \infty$, see Lemma~\ref{pointlimit}. Thus, Corollary~\ref{Npres3456} shows that $s_2(s_1Q_0) \in \cN$. Define $\psi$ as
\begin{equation}\label{fac3}
\psi(z)= \frac{(z-\alpha)^2}{(z-b)^2}, \quad b \in \dR, \quad \textrm{or} \quad \psi(z)=(z-\alpha)^2, \quad b=\infty.
\end{equation}
Then $sQ_0 = \psi s_2s_1Q_0$ and, hence, Proposition~\ref{CharNK} shows that the
statements hold in this case with $\kappa_b(sQ_0)=j_\alpha=1$,
$\pi_a(sQ_0)=0$, $\wt Q_0 =s_2(s_1Q_0)$ and $\psi$ as in \eqref{fac3}.

\textit{Case $\gamma < 0$:} Using Lemma~\ref{transqw} the statement
can be reduced to the cases where $\gamma>0$. If $s(z) = \gamma
\frac{z-a}{z-b} \in \cN$, $\gamma <0$, then consider the functions
$s \circ \tau$ and $Q \circ \tau$ where $\tau(\lambda) = (a+b)/2 -
1/\lambda \in \cN$. Then
\[ (s \circ \tau)(\lambda) = -\gamma \frac{\lambda -
2/(b-a)}{\lambda - 2/(a-b)} \quad \textrm{and} \quad (Q\circ\tau)(\lambda)\in \cN_\kappa.\]
Clearly $\tau$ maps the interval with the endpoints $2/(b-a)$, $2/(a-b)$ to the complement of the interval between $a$, $b$ (in $\dR \cup\{\infty\}$).
Moreover, when transforming back to get the factorization for the initial function $rQ$ from
\eqref{fac2}, \eqref{fac3} note that
\[
 \frac{\left(\lambda- 2/(b-a)\right)^2}{\left(\lambda- 2/(a-b)\right)^2}
 =\frac{\left(z-a\right)^2}{\left(z-b\right)^2}
\]
and
\[
 \frac{\left(\lambda-\alpha\right)^2}{\left(\lambda- 2/(a-b)\right)^2}
=\left\{
 \begin{array}{ll}
  \dfrac{\alpha^2(a-b)^2}{4}\,\dfrac{\left(z-(\frac{a+b}{2}-\frac{1}{\alpha})\right)^2}{(z-b)^2}, & 0<|\alpha|<\dfrac{2}{|b-a|},\\
 \dfrac{(a-b)^2}{4}\,\dfrac{1}{2(z-b)^2}, & \alpha=0.
  \end{array}
 \right.
\]
Here $\alpha=0$ corresponds to the GZNT of $sQ$ at $\infty$ (also to the zero of $Q_0$ at $\infty)$.

Similarly, the case $\gamma/(z-b) \in \cN$ can be reduced to the
proven case by composing it with the transformation $\tau(\lambda) =
(b-1) - 1/\lambda \in \cN$.
\end{proof}

\begin{proof}[Proof of Theorem~\ref{ResM1E}]
(i) $\Rightarrow$ (iii) This holds by Lemma~\ref{rcreatesgaps}.

(ii) $\Leftrightarrow$ (iii) The factorization $Q=\phi Q_0$ implies
that the sets $\sigma(Q)$ and $\sigma(Q_0)$ can differ from each
other only at the zeros or poles of the rational
function $\phi$; see also \eqref{genStieltjesinv}.

(iii) $\Rightarrow$ (i) Without loss of generality assume that
$\gamma > 0$; compare the proof of Lemma~\ref{ResM1}. Then by the
assumption the open interval where $s$ is negative contains finitely
many ($n_2$) separated poles $\beta_i\in\sigma_p(Q_0)$. Since
$Q_0\in\cN$, there are also finitely many ($n_1$) zeros $\alpha_i$
of $Q_0$, and the zeros $\alpha_i$ and the poles $\beta_i$ of $Q_0$
between $a$ and $b$ are interlacing, in particular, $|n_1-n_2|\leq
1$. By considering either $Q_0$ or $-1/Q_0 \in \cN$ one can assume
that the zeros and poles of $Q_0 \in \cN$ on the open interval
between $a$ and $b$ are ordered as follows: $(\alpha_0\leq) \beta_1
\leq \alpha_1 \leq \ldots \leq \beta_{n_2} \leq \alpha_{n_2}$ (here
$\alpha_0$ is excluded if $n_1=n_2$). In particular, if $n_2=0$ then
(i) follows immediately from Lemma~\ref{ResM1}.

Now assume that $n_1 \geq n_2\geq 1$. Because of the disjointness of
the intervals $[\beta_i,\alpha_i]$, $1\leq i \leq n_2$, define
$s_{i}(z):=\frac{z-\alpha_i}{z-\beta_i}$, $1\leq i \leq n_2$. Then
Corollary~\ref{Npres3456} shows that $Q_0/s_{i}\in\cN$. Furthermore,
by Remark~\ref{HolCont} this function admits a holomorphic
continuation to the endpoints of the interval $(\beta_i,\alpha_i)$
and $Q_0(x)/s_{i}(x)>0$ for all $x\in[\beta_i,\alpha_i]$, while
outside the interval $[\beta_i,\alpha_i]$ the values of $Q_0$ and
$Q_0/s_i$ have the same sign. Consequently, $
Q_1=Q_0/(\prod_{i=1}^{n_2} s_{i})\in \cN$ admits a holomorphic
continuation to the open interval where $s$ is negative, and on this
interval $Q_1(x)>0$ (with $x>\alpha_0$ if $n_1>n_2$). Furthermore,
\begin{equation}\label{facP1}
 s(z)Q(z)=s(z)\phi(z) Q_0(z)
 =\left(\prod_{i=1}^{n_2} s_{i}(z)\right)s(z)\phi(z) {Q}_1(z).
\end{equation}
Now $s$ and $Q_1$ satisfy the condition of Lemma~\ref{ResM1} and therefore
\begin{equation}\label{facP2}
s(z) {Q}_1(z) = \frac{(z-a)^{2\pi_a(sQ_1)}(z-\alpha_0)^{2\pi_{\alpha_0}}}{(z-b)^{2\kappa_b(sQ_1)}}Q_2(z),
\end{equation}
where $Q_2 \in \cN$ and $\pi_{\alpha_0}=|n_1-n_2|$. Here $Q_2$ has no zeros where $s$ is negative, and $\alpha_0$ is a pole of $Q_2$ only if $\pi_{\alpha_0}=1$. In particular, $Q_1$ and $Q_2$ have different signs on the interval where $s$ is negative. Applying Lemma~\ref{ResM1} ($n_2$ times) shows that
\begin{equation}\label{facP3}
\prod_{i=1}^{n_2} s_{i}(z) Q_2(z) = \left(\prod_{i=1}^{n_2} s_{i}(z)\right)^2 \frac{Q_2(z)}{ \prod_{i=1}^{n_2} s_{i}(z)} = \prod_{i=1}^{n_2} \left(s_{i}(z)\right)^2 Q_3(z),
\end{equation}
where $Q_3 \in \cN$. Now \eqref{facP3} together with \eqref{facP1}
and \eqref{facP2} implies (i). Then, equivalently,
$sQ_0\in\cN_{\kappa_0}$ for some $\kappa_0\in\dN$; see
\eqref{facP1}. Moreover, it is clear that $\pi_a(sQ_1)=\pi_a(sQ_0)$
and $\kappa_b(sQ_1)=\kappa_b(sQ_0)$, since $\prod_{i=1}^{n_2}
s_{i}(z)$ is positive if $z$ tends to the finite zero or pole of
$s$. This proves the canonical factorization
$sQ=\widetilde{\phi}\widetilde{Q}_0$ with $\widetilde{\phi}$ given
in the statement.
\end{proof}

Note that Theorem~\ref{ResM1E} contains the analogous results
without factorizations on the classes $\cN_\kappa^\pm(a,b)$ from
\cite{KL75} as well as the results (in the scalar case) on the
classes $\cS^{\pm \wt\kappa}(a,b)$ from \cite{DM91,DM97} and on the
classes $\wt \cN_k^{\pm \kappa}(\dC)$ from \cite{Der95}.

\begin{remark}\label{discclass}
Theorem~\ref{ResM1E} gives in particular a characterization for the
classes $\cN_0^{\wt \kappa}(s)$, when $s$ is a simple rational
function of degree one. For instance, if $a \in \dR \cup\{\infty\}$
and $b \in \dR \cup\{\infty\}$, $a\neq b$, denote the zero and pole
of $s$, then the class $\cN_0^{\wt \kappa}(s)$ consists of all
Nevanlinna functions $Q$ such that either $Q$ has $\wt \kappa$ zeros
(poles) where $s$ is negative (on $\dR \cup\{\infty\}$) and
$\pi_a(sQ)=0$ ($\kappa_b(sQ)=0$) or $Q$ has $\wt \kappa-1$ zeros
(poles) where $s$ is negative (on $\dR \cup\{\infty\}$) and
$\pi_a(sQ)=1$ ($\kappa_b(sQ)=1$).
\end{remark}

Let $s$ be a simple symmetric rational function of degree one and
let $Q \in \cN_\kappa^{\wt \kappa}(s)$ have the canonical
factorization $\phi Q_0$, $Q_0 \in \cN$. Then by
Theorem~\ref{ResM1E} all the zeros $\alpha_i$ and poles $\beta_i$ of
$Q_0$ which satisfy $-\infty< s(\alpha_i)<0$ or
$-\infty<s(\beta_i)<0$ become GZNT's and GPNT's of $sQ_0$ (with
multiplicity one), respectively, and the only other points which can
become a GZNT and a GPNT of $sQ_0$ (with multiplicity one) are the
pole and zero of $s$ (including $\infty$), respectively.
Consequently, only the zeros of $\phi$, the zero of $s$, and
$\alpha_i$ can become GZNT's of $sQ$ and only the poles of $\phi$,
the pole of $s$, and $\beta_i$ can become GPNT's of $sQ$.

Furthermore, by means of \eqref{GZNT} or \eqref{not1Q} it can be
decided whether the (finite) zero of $s$ becomes a GZNT of $sQ_0$
and, similarly, by means of \eqref{GPNT} or \eqref{not2Q} it can be
decided whether the (finite) pole of $s$ becomes a $GPNT$ of $sQ_0$.
To decide whether $\infty$ is a GZNT or GPNT of $sQ_0$ (and hence
possible a GZNT or GPNT of $sQ$) one can use \eqref{GZNT} and
\eqref{GPNT}: $\infty$ is a GZNT or GPNT of $sQ_0$ if and only if
\[ 0 \leq \lim_{z \wh \to \infty} \left\{ zs(z)Q_0(z) \right\} < \infty \quad \textrm{or} \quad
-\infty \leq \lim_{z \wh \to \infty} \left\{ \frac{s(z)Q_0(z)}{z} \right\} < 0. \]

\subsection{Characterization of the classes $\cN_\kappa^{\widetilde{\kappa}}(r)$}
The results obtained in the previous subsection are extended to obtain a characterization for the classes $\cN_\kappa^{\wt
\kappa}(r)$, when $r$ is an arbitrary symmetric rational function.

The following lemma is used to prove the main result via Theorem~\ref{ResM1E}.

\begin{lemma}\label{sprod}
Let $s$ with $n=\deg s\ge 1$ be a simple symmetric rational
function, whose poles and zeros (all of order one) are real and
interlacing. Then $s$ admits a factorization $s = \prod_{i=1}^n
s_i$, where $s_i$ are rational functions of degree one, such that
\begin{equation*}\label{sprod0}
 D^-_i \cap D^-_j =\emptyset, \quad i \neq j, \quad \text{where}\quad
 D^-_i = \clos \{\,x \in \dR:\, s_i(x) \leq 0\,\}.
\end{equation*}
\end{lemma}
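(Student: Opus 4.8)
The plan is to induct on $n=\deg s$, building the factors $s_i$ one at a time by peeling off a single zero–pole pair from one end of the interlacing configuration. Write the real zeros and poles of $s$ in increasing order; by the interlacing hypothesis they alternate. Consider the leftmost two consecutive critical points $x_1<x_2$ of $s$ (one a zero, the other a pole). I would set $s_1(z)$ to be the degree-one symmetric rational function having exactly $x_1$ as its zero and $x_2$ as its pole (or $x_1$ as its pole and $x_2$ as its zero, matching the types in $s$), normalized so that $s/s_1$ has no zero or pole at $x_1$ or $x_2$; concretely $s_1(z)=c\,\frac{z-x_1}{z-x_2}$ (or its reciprocal form) with the constant $c$ chosen so that $s/s_1$ remains a simple symmetric rational function of degree $n-1$. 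The key point is that for this elementary factor the set $D_1^-=\clos\{x\in\dR: s_1(x)\le 0\}$ is exactly the closed interval $[x_1,x_2]$ (when $s_1$ has a zero at $x_1$ and pole at $x_2$, $s_1$ is negative precisely between them; the reciprocal case is symmetric, giving the complement of an open interval, but one may always arrange by choice of sign of $c$ — or by reindexing the peeled pair — that $D_1^-$ is the bounded interval $[x_1,x_2]$).

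Next I would check that $s/s_1$ again has real, interlacing zeros and poles, all of order one: removing $x_1$ and $x_2$ from the alternating list leaves an alternating list, and no cancellation with the remaining factors occurs because $x_1,x_2$ were chosen as an adjacent pair at the extreme left. By the induction hypothesis $s/s_1=\prod_{i=2}^n s_i$ with the sets $D_i^-$, $2\le i\le n$, pairwise disjoint. It then remains to verify $D_1^-\cap D_i^-=\emptyset$ for $i\ge 2$. Since $s_1$ was peeled from the left, the other critical points $x_3,x_4,\dots$ all lie to the right of $x_2$, and each $D_i^-$ ($i\ge2$) is built from critical points among $\{x_3,\dots\}$; thus each $D_i^-$ is contained in $[x_3,\infty)$ (or, in the unbounded reciprocal cases, in a region disjoint from $[x_1,x_2]$), which is disjoint from $[x_1,x_2]=D_1^-$ because $x_2<x_3$ strictly (the zeros and poles are distinct, as $a_i\neq b_j$ in \eqref{simrat}).

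The base case $n=1$ is trivial: $s=s_1$ and there is nothing to check. The main obstacle I anticipate is bookkeeping around the point $\infty$ and the reciprocal-type elementary factors: when $s$ has an odd number of critical points, or when the "leftmost pair" effectively wraps through $\infty$ (i.e. $s$ has a net zero or pole at infinity), the elementary factor $s_i$ may be of the form $c(z-x)$ or $c/(z-x)$ rather than $c\frac{z-x_1}{z-x_2}$, and its sign set $D_i^-$ is then a half-line rather than a bounded interval. One must argue that even in that case the interlacing/ordering still forces disjointness — the half-line attached to the extreme critical point meets the real axis only on the far side, away from all the other $D_j^-$. I would handle this by treating $s$ as a symmetric rational function on $\dR\cup\{\infty\}$ from the outset, so that "interlacing" is interpreted cyclically and $\infty$ is just one more critical point; then the peeling argument is uniform and the sign sets $D_i^-$ are arcs of $\dR\cup\{\infty\}$ with pairwise disjoint interiors, which is exactly the claim once one passes back to $\dR$.
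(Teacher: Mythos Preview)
Your inductive step contains a genuine gap. You assert that after peeling off $s_1$ with $D_1^-=[x_1,x_2]$, each remaining $D_i^-$ ($i\ge 2$) is contained in $[x_3,\infty)$ ``(or, in the unbounded reciprocal cases, in a region disjoint from $[x_1,x_2]$)''. But the induction hypothesis gives you only pairwise disjointness of $D_2^-,\dots,D_n^-$ among themselves; it says nothing about where they sit relative to $[x_1,x_2]$. And the parenthetical claim is simply false in general.

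Take $s(z)=-\dfrac{(z-1)(z-3)}{(z-2)(z-4)}$, so $n=2$ and the ordered critical points are $1<2<3<4$. Peeling the leftmost adjacent pair gives $s_1(z)=c_1\frac{z-1}{z-2}$; to get $D_1^-=[1,2]$ you must take $c_1>0$. Then $s_2=s/s_1=-\frac{1}{c_1}\cdot\frac{z-3}{z-4}$ has negative leading coefficient, hence $D_2^-=(-\infty,3]\cup[4,\infty)\supset[1,2]$. The sets are not disjoint. The obstruction is global: when $\gamma<0$ and $l_1=l_2$, the product of the leading coefficients of the $s_i$ must be negative, so at least one factor has an unbounded $D_i^-$, and if you insist on peeling adjacent pairs from the left you force that unbounded set to swallow the earlier bounded ones.

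The paper avoids this by abandoning induction in favour of an explicit global construction that depends on the sign of $\gamma$. When $\gamma>0$ (and, say, $a_1<b_1<\cdots<a_{l_1}<b_{l_1}$) it pairs each $a_i$ with the adjacent $b_i$, exactly as you do. But when $\gamma<0$ it pairs $a_1$ with $b_{l_1}$ in the factor carrying $\gamma$, so that the one unbounded $D_1^-=(-\infty,a_1]\cup[b_{l_1},\infty)$ sits outside all the other critical points, and then pairs $a_i$ with $b_{i-1}$ for $i\ge 2$. In the example above this gives $s_1=-\frac{z-1}{z-4}$, $s_2=\frac{z-3}{z-2}$, with $D_1^-=(-\infty,1]\cup[4,\infty)$ and $D_2^-=[2,3]$, disjoint. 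The point is that the $D_i^-$ are forced to be exactly the closures of the maximal intervals (arcs on $\dR\cup\{\infty\}$) where $s$ itself is negative, and the correct pairing of zeros with poles reads those arcs off directly.

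Your closing remark about working cyclically on $\dR\cup\{\infty\}$ is the right instinct---it is essentially what the paper's case split encodes---but as written it contradicts your ``always peel the leftmost adjacent pair'' recipe and is not developed into an argument. To make induction work you would need either a stronger hypothesis (e.g.\ that the $D_i^-$ coincide with the negative arcs of the current $s$) or to choose at each step the pair bounding an arc on which $s$ is actually negative, not just the leftmost pair.
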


\begin{proof}
Let the numbers of real zeros and poles of $s$ be $l_1$ and $l_2$,
respectively. By the interlacing property one has $|l_1-l_2| \leq
1$. Moreover, by considering either $s$ or $1/s$ one can assume
without loss of generality that the zeros $a_i$ and poles $b_i$ of
$s$ have the ordering $({b}_0<)\, a_1 < b_1 < \ldots < a_{l_1} <
b_{l_1}$, where $b_0$ is excluded if $l_1=l_2$. Then $s$ can be
factorized as follows:
\[
 s(z) = \gamma \prod_{i=1}^{l_1} \frac{z- a_i}{z- b_i}, \,\, l_1 = l_2;
 \quad \text{or} \quad s(z) = \gamma \frac{1}{z- b_0}
 \prod_{i=1}^{l_1} \frac{z- a_i}{z- b_i}, \,\, l_2 = l_1+1;
\]
where $\gamma=\lim_{z\to\infty} s(z)$, if $l_1 = l_2$, and
$\gamma=\lim_{z\to\infty} zs(z)$, if $l_2 = l_1+1$. In the first
case, define
\[
 \left\{
 \begin{array}{l}
 s_1(z) = \gamma \dfrac{z- a_1}{z- b_{l_1}} \quad\text{and}\quad
 s_i(z) = \dfrac{z- a_i}{z- b_{i-1}}, \quad 2\leq i \leq l_1, \quad \text{if } \gamma<0;\\[4mm]
 s_1(z) = \gamma \dfrac{z- a_1}{z- b_1} \quad\text{and}\quad
 s_i(z) = \dfrac{z- a_i}{z- b_{i}}, \quad 2\leq i \leq l_1, \quad \text{if } \gamma>0.
 \end{array} \right.
 \]
In the second case, define
\[
 \left\{
 \begin{array}{l}
 s_i(z) = \dfrac{z- a_i}{z- b_{i-1}},\quad 1\leq i \leq l_1,
 \quad\text{and}\quad s_{l_2}(z) = \dfrac{\gamma}{z-b_{l_1}}, \quad \text{if } \gamma<0;\\[4mm]
 s_i(z) = \dfrac{z- a_i}{z- b_i},\quad 1\leq i \leq l_1,
 \quad\text{and}\quad s_{l_2}(z) = \dfrac{\gamma}{z-b_0}, \quad \text{if } \gamma>0.
 \end{array} \right.
\]
Then clearly $s = \prod_{i=1}^{n} s_i$ (with $n=l_2$) and it is easy
to check that in each case the factors $s_i$ satisfy the stated properties.
\end{proof}

\begin{theorem}\label{ResTot}
Let $Q \in \cN_\kappa$, $Q\neq 0$, have the canonical factorization
$Q=\phi Q_0$, $Q_0 \in \cN$, and let $r$ be a symmetric rational
function. Then the following statements are equivalent:
\begin{enumerate}
\def\labelenumi {\rm (\roman{enumi})}
\item $Q \in \cN_\kappa^{\widetilde{\kappa}}(r)$ for some $\widetilde{\kappa}\in\dN$;
\item $0\leq r \leq \infty$ on $\sigma(Q)$ except for finitely many poles of $Q$;
\item $0\leq r \leq \infty$ on $\sigma(Q_0)$ except for finitely many poles of $Q_0$.
\end{enumerate}
Furthermore, if $r=\psi s_0$ is the canonical factorization of $r$,
then $s_0Q_0\in\cN_{\kappa_0}$ for some $\kappa_0\in\dN$ and the
canonical factorization of $rQ$ is given by
$\widetilde{\phi}\widetilde{Q}_0$, $\widetilde{Q}_0 \in \cN$, where
\begin{equation*}\label{facPoU}
 \widetilde{\phi} = \phi\psi \wt \psi,\quad
 \widetilde{Q}_0=\frac{s_0}{\wt \psi}\,Q_0,\quad
 \wt \psi(z) =\frac{\prod_{i=1}^{m_1}(z-a_i)^{2\pi_{a_i}(s_0 Q_0)}\prod_{i=1}^{n_1} (z-\alpha_i)^2}
 {\prod_{i=1}^{m_2}(z-b_i)^{2\kappa_{b_i}(s_0 Q_0)}\prod_{i=1}^{n_2} (z-\beta_i)^2}.
\end{equation*}
Here $\pi_{a_i}(s_0Q_0)$ and $\kappa_{b_i}(s_0Q_0)$ are as in
\eqref{not1Q} and \eqref{not2Q} for the finite zeros $a_i$ and poles
$b_i$ of $s_0$, respectively. Moreover, $\alpha_i\in\dR$, $1\leq i
\leq n_1$, are the zeros of $Q_0$ which satisfy $-\infty<
s_0(\alpha_i) < 0$ and $\beta_i\in\dR$, $1\leq i \leq n_2$, are the
poles of $Q_0$ which satisfy $-\infty< s_0(\beta_i) < 0$.
\end{theorem}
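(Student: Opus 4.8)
The plan is to reduce the statement to the degree-one case already settled in Theorem~\ref{ResM1E}. The equivalences are the easy part: (i)$\Rightarrow$(iii) is exactly Lemma~\ref{rcreatesgaps}, and (ii)$\Leftrightarrow$(iii) holds because the canonical factorization $Q=\phi Q_0$ forces $\sigma(Q)$ and $\sigma(Q_0)$ to differ only at the finitely many zeros and poles of $\phi$ (cf.\ \eqref{genStieltjesinv}). So the substance is the implication (iii)$\Rightarrow$(i) together with the asserted form of the canonical factorization of $rQ$.

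First I would strip off the nonnegative rational factor of $r$. Writing the canonical factorization of $r$ as $r=\psi s_0$ with $s_0\in\cN$ (Proposition~\ref{rrfNev}), the factor $\psi$ is symmetric and nonnegative on $\dR$, while $s_0$ is a simple symmetric rational function whose real order-one zeros and poles interlace. Then $rQ=(\psi\phi)(s_0Q_0)$ with $\psi\phi$ symmetric and nonnegative on $\dR$, so by Proposition~\ref{CharNK} one has $rQ\in\cN_{\wt\kappa}$ for some $\wt\kappa$ precisely when $s_0Q_0\in\cN_{\kappa_0}$ for some $\kappa_0$, and in that case the canonical factorization of $rQ$ is that of $s_0Q_0$ with its rational part multiplied by $\psi\phi$. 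Since $\psi$ has only finitely many zeros and poles, hypothesis (iii) is equivalent to $0\le s_0\le\infty$ on $\sigma(Q_0)$ outside finitely many poles of $Q_0$. It therefore suffices to prove that this last condition forces $s_0Q_0\in\cN_{\kappa_0}$ and to compute the resulting factorization.

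The core step is an induction along a factorization of $s_0$. By Lemma~\ref{sprod} write $s_0=\prod_{i=1}^{n}s_i$ with each $s_i$ a simple symmetric rational function of degree one and with the negativity sets $D_i^-=\clos\{x\in\dR:s_i(x)\le0\}$ pairwise disjoint. I would then process the factors one at a time: assuming that $P_{i-1}:=(\prod_{j<i}s_j)Q$ lies in some $\cN$-class with canonical factorization $\phi_{i-1}Q_0^{(i-1)}$, one verifies that Theorem~\ref{ResM1E} applies to $s_i$ and $P_{i-1}$. This is where the disjointness of the $D_i^-$ is used: on $D_i^-$ each $s_j$ with $j<i$ is strictly positive and finite, and each square-correction factor $\psi_j$ produced at an earlier step has all its zeros and poles inside $D_j^-$, hence is positive and finite on $D_i^-$. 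Therefore $Q_0^{(i-1)}$ has the same sign and the same (finitely many, by Lemma~\ref{rcreatesgaps}) poles on $D_i^-$ as $Q_0$, and so $0\le s_i\le\infty$ on $\sigma(Q_0^{(i-1)})$ outside finitely many poles. Theorem~\ref{ResM1E} then yields $P_i:=s_iP_{i-1}\in\cN$ with canonical factorization $\phi_iQ_0^{(i)}$, where $\phi_i=\phi_{i-1}\psi_i$, $Q_0^{(i)}=(s_i/\psi_i)Q_0^{(i-1)}\in\cN$, and $\psi_i$ is the explicit square-correction from that theorem. After $n$ steps $s_0Q=P_n$ lies in an $\cN$-class, which proves (i); one reads off $\wt\phi=\psi\phi\prod_i\psi_i$ and $\wt Q_0=(s_0/\prod_i\psi_i)Q_0$.

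It remains to recognize $\prod_i\psi_i$ as the $\wt\psi$ of the statement. Since $Q_0^{(i-1)}$ and $Q_0$ agree up to strictly positive finite factors on $D_i^-$, the zeros and poles of $Q_0^{(i-1)}$ lying strictly inside $D_i^-$ are exactly those of $Q_0$ with $-\infty<s_i<0$ there; and by the disjointness of the $D_j^-$, a zero (pole) of $Q_0$ satisfies $-\infty<s_0<0$ precisely when it lies strictly inside exactly one $D_i^-$. Hence the disjoint union over $i$ of these points is the full list $\alpha_1,\dots,\alpha_{n_1}$ of zeros (resp.\ $\beta_1,\dots,\beta_{n_2}$ of poles) appearing in the statement. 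Finally, a given finite zero $a_i$ of $s_0$ is the zero of exactly one factor, say $s_k$, and near $a_i$ all the other factors relating $s_0Q_0$ to $s_kQ_0^{(k-1)}$ are positive and finite, so the sign of the defining limit in \eqref{not1Q} is the same for $s_0Q_0$ and $s_kQ_0^{(k-1)}$; thus $\pi_{a_i}(s_0Q_0)=\pi_{a_i}(s_kQ_0^{(k-1)})$, and similarly $\kappa_{b_i}(s_0Q_0)=\kappa_{b_i}(s_kQ_0^{(k-1)})$ for the finite poles. Assembling these identifications shows $\prod_i\psi_i=\wt\psi$, hence $\wt\phi=\phi\psi\wt\psi$ and $\wt Q_0=(s_0/\wt\psi)Q_0$, as claimed. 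I expect this bookkeeping to be the main obstacle: keeping the hypotheses of Theorem~\ref{ResM1E} intact along the induction (for which the disjointness of the $D_i^-$ is essential) and matching the accumulated corrections with the closed form of $\wt\psi$, including the degenerate cases in which $\infty$ is a zero or pole of $s_0$, whose effect is absorbed into the behavior of $\wt Q_0$ at infinity rather than appearing as an explicit rational factor.
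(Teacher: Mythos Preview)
Your proposal is correct and follows essentially the same route as the paper: reduce via $r=\psi s_0$ to the simple factor $s_0$, split $s_0=\prod_i s_i$ using Lemma~\ref{sprod}, and apply Theorem~\ref{ResM1E} inductively, using the disjointness of the $D_i^-$ to keep the hypotheses intact at each step. Your write-up is in fact more explicit than the paper's about why the disjointness of the $D_i^-$ guarantees that the intermediate Nevanlinna parts $Q_0^{(i-1)}$ have the same sign, poles, and zeros as $Q_0$ on $D_i^-$, and about why the accumulated corrections $\prod_i\psi_i$ match the closed form $\wt\psi$; the paper simply asserts that the induction goes through and that the factorization can be read off stepwise. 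One small slip: when you write ``$P_i:=s_iP_{i-1}\in\cN$'' you mean some $\cN_{\kappa_i}$, not $\cN_0$.
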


\begin{proof}
(i) $\Rightarrow$ (iii) Again this holds by
Lemma~\ref{rcreatesgaps}.

(ii) $\Leftrightarrow$ (iii) As in the proof of
Theorem~\ref{ResM1E}, this follows from $Q=\phi Q_0$.

(ii) $\Rightarrow$ (i) Let $r=\psi s_0$, $s_0 \in \cN$, be the
canonical factorization or $r$ given by Proposition~\ref{rrfNev}.
Then $s_0$ satisfies the assumptions in Lemma~\ref{sprod}. Hence,
there exists a factorization $s_0 = \prod_{i=1}^n s_i$, where $s_i$,
$1\leq i\leq n$, are simple rational functions of degree one
satisfying the properties \eqref{sprod0} in Lemma~\ref{sprod}. Now,
as a consequence of the properties \eqref{sprod0},
Theorem~\ref{ResM1E} can be applied inductively ($n$ times) to show
that $s_0 Q \in \cN_{\kappa_1}$ for some $\kappa_1 \in \dN$.
Consequently, $rQ = \psi s_0 Q \in \cN_{\wt\kappa}$ for some
$\wt\kappa \in \dN$ by Proposition~\ref{CharNK}. This proves (i).

Finally, since the rational functions $s_1,\dots,s_n$ satisfy the
properties \eqref{sprod0}, one can produce the canonical
factorization for the product $rQ$ stepwise by applying the
canonical factorization in Theorem~\ref{ResM1E} to the products
$\prod_{i=1}^{k}s_iQ_0$, $1\le k\le n$.
\end{proof}

In the canonical factorization of $rQ$ in Theorem~\ref{ResTot} there
can occur many cancelations in the products $\wt\phi=\phi\psi \wt
\psi$ and $s_0/\wt\psi$. The canonical factorization $r=\psi s_0$ of
$r$ does not take  into account the structure of $Q_0$, and hence
there can occur already cancelations in the product $\psi \wt \psi$.
It is easy to see that the canonical factorization of $rQ$ in
Theorem~\ref{ResTot} can be reformulated via an arbitrary (not
necessarily canonical) factorization $r=\psi s$ of $r$, where $\psi$
is a nonnegative rational function and $s$ is a simple symmetric
rational function. In this case the formulas for $\wt\phi$ and $\wt
Q_0$ appear as in Theorem~\ref{ResTot}, where the multiplicities
$\pi_{a_i}(sQ_0)$ and $\kappa_{b_i}(sQ_0)$ can still be calculated
as in \eqref{not1Q} and \eqref{not2Q} by the simplicity of $s$. In
fact, the factorization of $rQ$ in Theorem~\ref{ResTot} shows that
it is possible to factorize $r$ with respect to $Q_0$ as
$r=\psi^\prime s^\prime$ such that $\psi^\prime$ is nonnegative and
$s^\prime Q_0\in\cN$: take $\psi^\prime=\psi\wt\psi$ and
$s^\prime=s_0/\wt \psi$.

Theorem~\ref{ResTot} together with the discussion after
Remark~\ref{discclass} yields the following result.

\begin{proposition}\label{enigecand}
Let $Q \in \cN_\kappa^{\wt\kappa}(r)$ and let $\phi Q_0$, $Q_0 \in \cN$, be its canonical factorization. Then the only points which can become GZNT's and GPNT's of $rQ$ are
\begin{enumerate}
\def\labelenumi {\rm (\roman{enumi})}
\item the zeros and poles of $r$ in $\dR\cup\{\infty\}$;
\item the GZNT's and GPNT's of $Q$;
\item all the zeros $\alpha_i$ and the poles $\beta_i$ of $Q_0$ in $\dR\cup\{\infty\}$
for which $-\infty< r(\alpha_i)<0$ or $-\infty< r(\beta_i) <0$.
\end{enumerate}
In particular, if $Q \in \cN_0^{\wt\kappa}(r)$ then all the points
$\alpha_i$ and $\beta_i$ in (iii) become GZNT's and GPNT's of $rQ$,
respectively, and the only other possible GZNT's and GPNT's of $rQ$
are the zeros and poles of $r$, respectively.
\end{proposition}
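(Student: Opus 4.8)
The plan is to deduce the statement from the explicit canonical factorization of $rQ$ provided by Theorem~\ref{ResTot}, together with the degree-one discussion following Remark~\ref{discclass}. Write $r=\psi s_0$, $s_0\in\cN$, for the canonical factorization of $r$ (Proposition~\ref{rrfNev}), and let $rQ=\wt\phi\wt Q_0$, $\wt Q_0\in\cN$, be the canonical factorization of $rQ$, so that $\wt\phi=\phi\,\psi\,\wt\psi$ with $\wt\psi$ the rational function displayed in Theorem~\ref{ResTot}. Since $\wt\phi\wt Q_0$ is, by that theorem, the canonical factorization of $rQ$, Proposition~\ref{CharNK} shows that every GZNT (resp.\ GPNT) of $rQ$ in $\dR\cup\{\infty\}$ is a zero (resp.\ pole) of $\wt\phi$, hence a zero (resp.\ pole) of one of the three factors $\phi$, $\psi$, $\wt\psi$. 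Thus the first half of the statement reduces to locating the zeros and poles of these three factors.

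First I would treat $\phi$ and $\psi$. Applying Proposition~\ref{CharNK} to $Q=\phi Q_0$ identifies the zeros (poles) of $\phi$ as the zeros (poles) of $Q$ in $\dC_+$ together with the GZNT's (GPNT's) of $Q$ in $\dR\cup\{\infty\}$; this gives item (ii). By Proposition~\ref{rrfNev}, $\psi$ is a nonnegative rational function whose zeros and poles lie among the zeros and poles of $r$ in $\dR\cup\{\infty\}$, which is subsumed under item (i). For $\wt\psi$, the points $a_i,b_i$ occurring in it are the finite zeros and poles of the simple factor $s_0$, hence zeros and poles of $r$ (item (i) again), while the remaining points $\alpha_i,\beta_i$ are the real zeros and poles of $Q_0$ satisfying $-\infty<s_0(\alpha_i)<0$ and $-\infty<s_0(\beta_i)<0$, together with $\infty$ when present. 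For such an $\alpha_i$ I would use $r=\psi s_0$ and $\psi\ge 0$: either $\psi(\alpha_i)\in\{0,\infty\}$, so that $\alpha_i$ is a zero or pole of $\psi$ and hence of $r$ (item (i)), or $\psi(\alpha_i)\in(0,\infty)$, so that $r(\alpha_i)=\psi(\alpha_i)s_0(\alpha_i)\in(-\infty,0)$ and $\alpha_i$ belongs to item (iii); the argument for $\beta_i$ is identical. Since cancellations in the product $\phi\psi\wt\psi$ can only delete points, this proves the first half.

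For the \emph{in particular} statement, assume $\kappa=0$; then $\phi\equiv1$, item (ii) is empty, and $\wt\phi=\psi\wt\psi$. Fix an $\alpha_i$ as in item (iii), i.e.\ a real (or infinite) zero of $Q_0$ with $-\infty<r(\alpha_i)<0$. By the above this $\alpha_i$ is neither a zero nor a pole of $r$, hence neither a zero nor a pole of $\psi$, while it enters the numerator of $\wt\psi$ with the factor $(z-\alpha_i)^2$. Consequently $(z-\alpha_i)^2$ is not cancelled in $\psi\wt\psi=\wt\phi$ and survives into the reduced canonical rational factor of $rQ$, so $\alpha_i$ is indeed a GZNT of $rQ$, of multiplicity one; the argument for $\beta_i$ is dual. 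As item (ii) is now empty, the only remaining GZNT's or GPNT's of $rQ$ come from $\psi$ or from the $a_i,b_i$ of $\wt\psi$, hence are zeros or poles of $r$.

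I expect the main obstacle to be the non-cancellation claim in the last paragraph: one must invoke the precise assertion of Theorem~\ref{ResTot} that $\wt\phi\wt Q_0$ is already the \emph{canonical} factorization of $rQ$ (so that there are no further cancellations between $\wt\phi$ and the Nevanlinna factor $\wt Q_0$ at the points $\alpha_i$), and one must know that $-\infty<r(\alpha_i)<0$ forces $\alpha_i\in\rho(Q_0)$ and $\alpha_i\in\rho(r)$. A secondary point requiring care is the uniform treatment of $\infty$: whether $\infty$ is one of the $\alpha_i,\beta_i$ or a zero or pole of $r$ is decided by reading the limit conditions \eqref{GZNT} and \eqref{GPNT} for $s_0Q_0$ at infinity exactly as in the paragraph following Remark~\ref{discclass}. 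The remaining bookkeeping coincides with that already carried out in the proof of Theorem~\ref{ResTot}.
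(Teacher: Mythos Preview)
Your proposal is correct and follows essentially the same approach as the paper: the paper presents this proposition as an immediate consequence of Theorem~\ref{ResTot} together with the discussion after Remark~\ref{discclass}, and your argument is precisely a careful unpacking of the canonical factorization $\wt\phi=\phi\psi\wt\psi$ from Theorem~\ref{ResTot} to identify where the zeros and poles of $\wt\phi$ can lie. Your treatment of the \emph{in particular} clause, checking that the factor $(z-\alpha_i)^2$ is not cancelled in $\psi\wt\psi$ because $-\infty<r(\alpha_i)<0$ forces $\psi(\alpha_i)\in(0,\infty)$ and $\alpha_i\notin\{a_j,b_j,\beta_j\}$, is exactly the bookkeeping the paper leaves implicit.
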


Here is an example of the type of results which is obtained if
$\cN_\kappa$-function are multiplied with non-simple symmetric
rational functions. This corollary contains as a special case
\cite[Corollary 4.9]{KWW06}.

\begin{corollary}
Let $Q \in \cN_\kappa$ have the canonical factorization $\phi Q_0$,
$Q_0 \in \cN$, and let $r(z)=\gamma (z-a)^{2m+1}$, $a,\gamma \in
\dR$ and $m \in\dN$. Then $rQ \in \cN_{\widetilde{\kappa}}$ if and
only if the interval $(-\infty,a)$, $\gamma>0$, or $(a,\infty)$,
$\gamma<0$, contains only finitely many poles of $Q_0$.

Furthermore, $rQ_0\in\cN_{\kappa_0}$ for some $\kappa_0\in\dN$ and
the canonical factorization of $rQ$ is given by
$\widetilde{\phi}\widetilde{Q}_0$, $\widetilde{Q}_0 \in \cN$, where
\begin{equation*}\label{facP02}
 \widetilde{\phi}(z) = \frac{\phi(z)(z-a)^{2(\pi_a((z-a)Q_0(z)) + m) }\prod_{i=1}^{n_1}(z-\alpha_i)^{2}}{\prod_{i=1}^{n_2}(z-\beta_i)^2} \quad \textrm{and} \quad
 \widetilde{Q}_0=\frac{r}{\wt \phi}\,Q.
\end{equation*}
Here $\pi_a((z-a)Q_0(z))$ is as in \eqref{not1Q}. Moreover,
$\alpha_i$, $1\leq i \leq n_1$, are the zeros of $Q_0$ satisfying $\alpha_i \in (-\infty,a)$, $\gamma>0$, or $\alpha_i \in (a,\infty)$,
$\gamma<0$, and $\beta_i$, $1\leq i \leq n_2$,
are the poles of $Q_0$ satisfying $\beta_i \in (-\infty,a)$, $\gamma>0$, or $\beta_i \in (a,\infty)$,
$\gamma<0$.
\end{corollary}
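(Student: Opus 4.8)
This corollary is the special case of Theorem~\ref{ResTot} obtained by taking $r(z)=\gamma(z-a)^{2m+1}$, so the plan is to unwind the data of that theorem for this particular $r$. As preliminaries I would record that $r$ has a single real zero, at $a$ (of odd order $2m+1$), and a single pole, at $\infty$; that $r(t)<0$ precisely on $(-\infty,a)$ when $\gamma>0$ and precisely on $(a,\infty)$ when $\gamma<0$; and that a convenient factorization for the purposes of Theorem~\ref{ResTot} (see the remark following it) is $r=\psi\,s$ with $\psi(z)=(z-a)^{2m}\ge0$ and $s(z)=\gamma(z-a)$ a simple symmetric rational function, $\{s<0\}=\{r<0\}$. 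Equivalently one may use the genuine canonical factorization $r=\psi_0 s_0$, read off from \eqref{GZNT}--\eqref{GPNT}: $\psi_0(z)=(z-a)^{2m}$, $s_0(z)=\gamma(z-a)\in\cN$ if $\gamma>0$, and $\psi_0(z)=(z-a)^{2m+2}$, $s_0(z)=\gamma/(z-a)\in\cN$ if $\gamma<0$.

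For the equivalence I would invoke Theorem~\ref{ResTot} directly: $rQ\in\cN_{\widetilde\kappa}$ for some $\widetilde\kappa$ if and only if $0\le r\le\infty$ on $\sigma(Q_0)$ except for finitely many poles of $Q_0$. Since the set where $r$ is negative is exactly the half-line $(-\infty,a)$ or $(a,\infty)$, and since $\sigma(Q_0)$ intersected with an open real interval is finite only when it reduces to finitely many isolated poles of $Q_0$ --- a Nevanlinna function has no continuous spectrum in a gap of its spectrum --- condition~(iii) of Theorem~\ref{ResTot} for this $r$ says precisely that the relevant half-line contains only finitely many poles of $Q_0$, which is the first assertion.

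For the canonical factorization I would feed $\psi,s$ (or $\psi_0,s_0$) into the factorization part of Theorem~\ref{ResTot}. The zeros $\alpha_i$ and poles $\beta_i$ of $Q_0$ with $-\infty<s(\alpha_i)<0$, resp.\ $-\infty<s(\beta_i)<0$, are exactly the zeros, resp.\ poles, of $Q_0$ lying in the half-line $\{r<0\}$, so they coincide with the $\alpha_i,\beta_i$ of the statement. Theorem~\ref{ResTot} then gives $\widetilde\phi=\phi\,\psi\,\widetilde\psi$, where $\widetilde\psi$ is built from the factor $(z-a)$ attached to the finite zero (or pole) of $s$ together with $\prod_i(z-\alpha_i)^2/\prod_i(z-\beta_i)^2$; multiplying by $\psi$ and collecting the power of $(z-a)$ produces an exponent $2\big(m+\pi_a(sQ_0)\big)$, with $s(z)=\gamma(z-a)$ and $\pi_a(sQ_0)$ read off from \eqref{not1Q} via the value $\lim_{z\wh\to a}Q_0(z)$. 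When $\gamma>0$ this $\pi_a(sQ_0)$ equals $\pi_a((z-a)Q_0)$, giving the stated formula at once; when $\gamma<0$ it is cleanest to reduce to that case first, applying the substitution $Q(z)\mapsto-Q(-z)$, $r(z)\mapsto-\gamma(z+a)^{2m+1}$ (as is done elsewhere in the paper for such reflections) and transforming the resulting factorization back. That $\widetilde Q_0:=rQ/\widetilde\phi$ belongs to $\cN$ is then automatic from the uniqueness in Proposition~\ref{CharNK}.

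The one genuinely delicate step is the multiplicity bookkeeping at the single endpoint $a$, i.e.\ deciding whether $a$ is a generalized zero of nonpositive type of $sQ_0$: this depends both on $\sgn\gamma$ and on whether $\lim_{z\wh\to a}Q_0(z)$ is finite (and on its sign) or infinite, and it is exactly here that the reduction to $\gamma>0$ pays off, since it removes the dependence on $\sgn\gamma$ and lets \eqref{not1Q} be applied in its stated form. A secondary point needing a word of care is the behaviour at $\infty$ --- the role of the linear term $\beta z$ in the integral representation of $Q_0$ --- when translating condition~(iii) of Theorem~\ref{ResTot} into a statement about poles of $Q_0$ in a semibounded interval.
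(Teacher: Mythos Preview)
Your approach is correct and is exactly the intended one: the paper presents this corollary without proof, as a direct specialization of Theorem~\ref{ResTot} (together with the remark following it that one may use any factorization $r=\psi s$ with $\psi\ge 0$ and $s$ simple, and the discussion after Remark~\ref{discclass} on determining $\pi_a$, $\kappa_b$ at the endpoints). Your choice $\psi(z)=(z-a)^{2m}$, $s(z)=\gamma(z-a)$, the identification of $\{r<0\}$ with the relevant half-line, and the resulting formula $\widetilde\phi=\phi(z-a)^{2(m+\pi_a(sQ_0))}\prod(z-\alpha_i)^2/\prod(z-\beta_i)^2$ are all on target; the observation that $\pi_a(\gamma(z-a)Q_0)=\pi_a((z-a)Q_0)$ when $\gamma>0$, together with the reflection $z\mapsto -z$ to reduce $\gamma<0$ to that case, is a clean way to match the stated exponent.

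Two minor remarks. First, your worry about $\infty$ is unnecessary here: since $|r(z)|\to\infty$ as $z\to\infty$, the condition ``$0\le r\le\infty$ on $\sigma(Q_0)$'' is vacuously satisfied at $\infty$ regardless of whether $\beta>0$, so the translation of condition~(iii) in Theorem~\ref{ResTot} to ``finitely many poles of $Q_0$ on the half-line'' needs no extra care at infinity. Second, for $\gamma<0$ you could alternatively stay with the canonical factorization $\psi_0(z)=(z-a)^{2m+2}$, $s_0(z)=\gamma/(z-a)$, in which case Theorem~\ref{ResTot} produces the exponent $2(m+1-\kappa_a(s_0Q_0))$; one then checks via \eqref{not2Q} (and the reflection) that $1-\kappa_a(s_0Q_0)$ agrees with $\pi_a((z-a)Q_0)$ under the transformation, which is essentially the same computation you propose.
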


\subsection{Characterization of the class $\cN_0^{0}(r)$}
Assume that $Q \in \cN_\kappa^{\wt \kappa}(r)$ and let $Q=\phi Q_0$
and $\wt Q=\wt \phi \wt Q_0$ with $Q_0,\wt Q_0 \in \cN$ be the
(unique) canonical factorizations of $Q$ and $rQ$. Then
\begin{equation}\label{cnkkcn00}
\wt Q_0(z) = \frac{r(z)\phi(z)}{\wt \phi (z) }Q_0(z),
\end{equation}
i.e. $Q_0 \in \cN_0^0(r\phi/\wt \phi)$. This shows that of all the
classes $\cN_\kappa^{\wt \kappa}(r)$, the classes $\cN_0^0(r)$ play
a key role. The next theorem gives a characterization for these
classes.

\begin{theorem}\label{Newth}
Let $Q_0\in\cN$, $Q_0\neq 0$, and let $r$ ($\deg r>0$) be a
symmetric rational function. Then $Q_0 \in \cN_0^0(r)$ if and only
if
\begin{enumerate}
\def\labelenumi {\rm (\roman{enumi})}
\item $0\leq r \leq \infty$ on $\sigma(Q_0)$ and if $-\infty<r(x)<0$, $x\in\dR$, then $Q_0(x)\neq 0$;
\item every finite zero and pole $\gamma$ of $r$ is real and of order at most two:
\begin{enumerate}
\item[(a)] if $\gamma$ is a zero (pole) of $r$ of order two, then $\gamma$ is an isolated pole of $Q_0$
(an isolated zero of $Q_0$, i.e. $\gamma\in\rho(Q_0)$,
$Q_0(\gamma)=0$) and, respectively,
\[ -\infty< \lim_{z\wh \to \gamma} \frac{r(z)}{(z-\gamma)^2} <0 \quad \textrm{or} \quad -\infty< \lim_{z\wh \to \gamma} (z-\gamma)^2 r(z) <0;\]
\item[(b)] if $\gamma$ is a zero (pole) of $r$ of order one and
$\iota_\gamma = \lim_{z \wh \to \gamma}\sgn
\frac{r(z)}{z-\gamma}$ (or $\iota_\gamma = \lim_{z \wh \to
\gamma} \sgn((z-\gamma)r(z))$), then, respectively,
\[ 0< \lim_{z \wh \to \gamma} \iota_\gamma Q_0(z) \leq \infty \quad \textrm{or}\quad -\infty< \lim_{z \wh \to \gamma} \iota_\gamma Q_0(z)\leq 0. \]
\end{enumerate}
\end{enumerate}
\end{theorem}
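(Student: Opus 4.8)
The plan is to deduce Theorem~\ref{Newth} from Theorem~\ref{ResTot} and Proposition~\ref{enigecand} by pinning down exactly when $rQ_0$ has neither a generalized zero nor a generalized pole of non-positive type. Since $Q_0\in\cN$ has canonical factorization $\phi=1$, $Q_0$, the assertion $Q_0\in\cN_0^0(r)$ simply means $rQ_0\in\cN$, i.e.\ $rQ_0$ has no GZNT's, no GPNT's, and no zeros or poles in $\dC_+$. I would first observe that $rQ_0\in\cN$ forces all zeros and poles of $r$ to be real: a non-real zero of $r$ of order $\mu$ becomes a zero of $rQ_0$ in $\dC_+$ of order $\mu$ (because $Q_0\in\cN$ has no non-real zeros), whence $\kappa(rQ_0)\ge\mu\ge 1$ by Proposition~\ref{CharNK}, and dually for poles. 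Conversely, once $r$ has only real zeros and poles, Lemma~\ref{genStieltjesinvApplied} applied to $Q_0$ and $rQ_0$ with $\varphi_0\equiv1$, $\varphi_1=r$ shows that $rQ_0\in\cN$ already forces $0\le r\le\infty$ on $\sigma(Q_0)\cap\dR$. This yields the reality assertion in (ii) and the first clause of (i); the point $\infty$ is postponed to the last step.

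Assuming now $0\le r\le\infty$ on $\sigma(Q_0)$, Theorem~\ref{ResTot} (with $\phi=1$) gives $rQ_0\in\cN_{\tilde\kappa}$ for some $\tilde\kappa\in\dN$, and Proposition~\ref{enigecand} lists the candidates for its GZNT's and GPNT's: the real zeros and poles of $r$ in $\dR\cup\{\infty\}$, together with the zeros $\alpha_i$ (poles $\beta_i$) of $Q_0$ for which $-\infty<r(\alpha_i)<0$ (resp.\ $-\infty<r(\beta_i)<0$). A pole of $Q_0$ where $r<0$ lies in $\sigma(Q_0)$ and is already excluded by the first clause of (i); by the last sentence of Proposition~\ref{enigecand} a zero of $Q_0$ where $-\infty<r<0$ would necessarily become a GZNT of $rQ_0$, so the nonexistence of such zeros is precisely the second clause of (i). After imposing (i) the only remaining candidates are the finite real zeros and poles of $r$. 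For a finite real zero $\gamma$ of $r$ I would expand $rQ_0$ near $\gamma$: a nonconstant $Q_0\in\cN$ is strictly increasing on $\rho(Q_0)\cap\dR$ by \eqref{increasing}, so it has only simple zeros there, and its poles on $\dR$ are simple; hence near $\gamma$ either $Q_0$ is holomorphic with a finite limit (possibly $0$), or $Q_0$ has a simple pole. Writing $r(z)=(z-\gamma)^{p}\tilde r(z)$ with $\tilde r(\gamma)$ finite and nonzero, and using Lemma~\ref{pointlimit} together with the GZNT criterion \eqref{GZNT} (equivalently \eqref{not1Q}), a short case distinction gives: for $p\ge 3$ the product $rQ_0$ has a zero of order $\ge 2$ at $\gamma$ in every subcase, which a nonconstant Nevanlinna function cannot have; for $p=2$ the point $\gamma$ fails to be a GZNT of $rQ_0$ exactly when $Q_0$ has an isolated simple pole at $\gamma$ and $\lim_{z\wh\to\gamma}r(z)/(z-\gamma)^2<0$, i.e.\ (ii)(a); for $p=1$ it fails to be a GZNT exactly when $0<\lim_{z\wh\to\gamma}\iota_\gamma Q_0(z)\le\infty$, i.e.\ (ii)(b) (the holomorphic case producing a simple zero of $rQ_0$ with positive leading coefficient, the simple-pole case a finite nonzero value). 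A finite real pole $\gamma$ of $r$ is handled identically upon passing to $-1/(rQ_0)=(1/r)(-1/Q_0)$ with $-1/Q_0\in\cN$, and gives the pole versions in (ii)(a),(b). Reading the analysis in both directions establishes the equivalence at every finite point.

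It remains to control the point $\infty$, which may itself be a zero or pole of $r$ of arbitrary order and, once all finite GZNT's/GPNT's have been eliminated, a simultaneous GZNT and GPNT of $rQ_0$: from $\pi_\infty+\sum\pi_{\alpha_i}=\kappa_\infty+\sum\kappa_{\beta_i}$ one then has $\pi_\infty=\kappa_\infty=\tilde\kappa$, so that $\tilde\kappa=0$ is equivalent to $\infty$ not being a GZNT. I would dispose of this by a reduction to a finite point: choose a degree-one rational function $\tau\in\cN$ with $\tau(\infty)$ neither a zero nor a pole of $r$ and neither a zero nor a point mass of $Q_0$, and apply Lemma~\ref{transqw}; then $Q_0\circ\tau\in\cN_0$, $(rQ_0)\circ\tau=(r\circ\tau)(Q_0\circ\tau)$, $\deg(r\circ\tau)=\deg r$, the class $\cN_0^0(\cdot)$ is preserved, and the point at $\infty$ for $(r,Q_0)$ is carried to the finite real point $\tau^{-1}(\infty)$ for $(r\circ\tau,Q_0\circ\tau)$, at which $r\circ\tau$ now has only finite zeros and poles; so the situation falls under the finite analysis above. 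I expect the main obstacle to be precisely this handling of $\infty$: verifying that conditions (i) and (ii) behave correctly under the reduction — equivalently, that the finite sign conditions in (ii), together with the monotonicity of $Q_0$ and intermediate-value arguments, really do govern the behaviour of $rQ_0$ at $\infty$ — and, alongside it, carrying out the exact threshold computation at order-two zeros and poles of $r$, showing that a single simple pole (zero) of $Q_0$ is exactly what cancels the GZNT (GPNT) and that the surviving leading term carries the sign recorded by the stated limit, while nothing weaker than an isolated such pole (zero) suffices.
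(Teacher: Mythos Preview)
Your approach works and is broadly correct, but it differs from the paper's and carries an unnecessary complication at $\infty$. The paper's argument is more structural: for the forward direction it invokes the explicit canonical factorization $rQ_0=\wt\phi\,\wt Q_0$ from Theorem~\ref{ResTot} (here $\phi\equiv 1$, so $\wt\phi=\psi\wt\psi$), notes that $rQ_0\in\cN_0$ forces $\psi\wt\psi\equiv 1$, and then reads off (i) and (ii) by identifying which factors of $\psi$ (coming from the canonical factorization $r=\psi s_0$) can be cancelled by factors of $\wt\psi$ (coming from the simple zeros and poles of $s_0$ and of $Q_0$). Your route instead performs a direct local analysis at each finite zero or pole $\gamma$ of $r$ via the criteria \eqref{GZNT}, \eqref{GPNT} and Lemma~\ref{pointlimit}. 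This is more elementary and self-contained, though you should be a bit more careful than your sketch indicates with the dichotomy ``$Q_0$ is holomorphic with a finite limit, or has a simple pole'' near $\gamma$: that description presupposes $\gamma$ is isolated in $\sigma(Q_0)$, which for an even-order zero of $r$ with $\tilde r(\gamma)>0$ need not hold; there one must argue directly from Lemma~\ref{pointlimit} that $\lim_{z\wh\to\gamma}(z-\gamma)^{-1}r(z)Q_0(z)\le 0$.

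Your final step at $\infty$ rests on a misreading of Proposition~\ref{CharNK}. There $\kappa=\max\{\sum\pi_{\alpha_i},\sum\kappa_{\beta_j}\}$ with the sums taken only over $\alpha_i,\beta_j\in\dC_+\cup\dR$; the multiplicities $\pi_\infty,\kappa_\infty$ enter only through the balance identity and never contribute to $\kappa$. Hence once every \emph{finite} GZNT and GPNT of $rQ_0$ has been ruled out one has $\tilde\kappa=0$ immediately --- this is exactly why the paper's converse stops after checking $\pi_\gamma(rQ_0)=\kappa_\gamma(rQ_0)=0$ at each finite zero and pole $\gamma$ of $r$. The equation ``$\pi_\infty=\kappa_\infty=\tilde\kappa$'' you write inverts the logic: $\tilde\kappa=0$ is known first from the vanishing of the finite sums, and then $\pi_\infty=\kappa_\infty=0$ follows (e.g.\ because these two multiplicities cannot be simultaneously positive). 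Your M\"obius reduction via Lemma~\ref{transqw} would work if executed, but it is redundant.
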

\begin{proof}
If $Q_0 \in \cN_0^0(r)$, then (i) holds by Theorem~\ref{ResTot}; see
also Proposition~\ref{enigecand}.

Now let $r=\psi s$ be the canonical factorization of $r$ as in
Proposition~\ref{rrfNev}. Since $rQ_0\in\cN_0$, the factorization in
Theorem~\ref{ResTot} shows that $\psi\wt\psi \equiv 1$ and hence the
order of every finite zero and pole of $r$ is at most two.

If $\gamma$ is a zero (pole) of $r$ of order two, then in view of
$\psi\wt\psi \equiv 1$ the factorization in Theorem~\ref{ResTot}
shows that $Q_0$ should have a pole (zero) at $\gamma$ and that
$-\infty<s(\gamma)<0$; thus $\gamma$ is an isolated pole (zero) of
$Q_0$ by Lemma~\ref{rcreatesgaps}. This gives (ii)(a).

If $\gamma$ is a simple zero (or pole) of $r$, then the limit value
$\iota_\gamma$ as defined in the statement belongs to
$\dR\setminus\{0\}$. It follows from $rQ_0\in\cN_0$ that the
(improper) limit $\lim_{z \wh \to \gamma} \iota_\gamma Q_0(z)$
exists and satisfies the given inequalities (with $\gamma$ a zero or
pole of $r$, respectively); see Lemma~\ref{pointlimit}. Thus (ii)(b)
holds.

Conversely, if the condition (i) holds, then by Theorem~\ref{ResTot}
$rQ \in \cN_\kappa$ for some $\kappa \in \dN$. Moreover, if
$-\infty<r(x)<0$ then $x\in\rho(Q_0)$ by Lemma~\ref{rcreatesgaps}
and $Q_0(x)\ne 0$ by the assumption in (i). According to
Proposition~\ref{enigecand} only the zeros and poles of $r$ can
produce GZNT and GPNT for $rQ_0$. The assumptions in (ii)(a) and (ii)(b)
imply that if $\gamma$ is a zero (pole) of $r$, then
$\pi_\gamma(rQ_0)=0$ ($\kappa_\gamma(rQ_0)=0$); see \eqref{GZNT},
\eqref{GPNT}. Therefore, $rQ \in \cN$.
\end{proof}

Let $Q_0 \in \cN_0^0(r)$ for a symmetric rational function $r$. By
Theorem~\ref{Newth} $r$ can have only real zeros and poles of order
at most two. Clearly, such rational functions can be written as the
product of degree one symmetric rational functions. The following
results show how rational functions of degree one can be chosen in
such a way that the stepwise products with $Q_0$ also stays in the
class of Nevanlinna functions.

\begin{proposition}\label{CorN00a}
Let $Q_0 \in \cN_0^0(r)$. Then there exist rational functions $s_i$,
$1\leq i \leq n$, of degree one, such that
\[ \left(\prod_{i=1}^j s_i\right)Q_0 \in \cN, \quad 1\leq j \leq n,
 \quad \textrm{and} \quad r=\prod_{i=1}^n s_i,\]
and there are no cancelations between the factors $s_i$. Moreover,
the even order poles and zeros of $r$ are interlacing on intervals,
where $r$ is not positive.
\end{proposition}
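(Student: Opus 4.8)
The plan is to argue by induction on $n=\deg r$, peeling off one degree-one factor at a time. It suffices to prove: if $Q_0\in\cN_0^0(r)$ with $\deg r\ge 1$, then there is a symmetric rational function $s_1$ of degree one whose single zero lies at a zero of $r$ in $\dR\cup\{\infty\}$, whose single pole lies at a pole of $r$ in $\dR\cup\{\infty\}$, and with $s_1Q_0\in\cN$. Granting this, $r/s_1$ is a symmetric rational function of degree $n-1$ (placing the zero and pole of $s_1$ on a zero and a pole of $r$ is exactly what forces $\deg(r/s_1)=\deg r-1$), and $(r/s_1)(s_1Q_0)=rQ_0\in\cN_0$ with $s_1Q_0\in\cN_0$, so $s_1Q_0\in\cN_0^0(r/s_1)$ and the induction hypothesis yields $s_2,\dots,s_n$. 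Then $r=\prod_{i=1}^n s_i$, each partial product times $Q_0$ lies in $\cN$, and since $\deg(\prod_{i=1}^n s_i)=n=\deg r$ there are no cancelations among the $s_i$. (For $\deg r=1$ simply take $s_1=r$.)

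The structural input comes from Theorem~\ref{Newth}: $r$ has only real zeros and poles, each of order at most two, $0\le r\le\infty$ on $\sigma(Q_0)$, and $Q_0\ne 0$ wherever $r<0$. Since $\deg r\ge 1$, $r$ cannot be nonnegative on all of $\dR$: a double zero or double pole is ruled out by the strict negativity of the limits in Theorem~\ref{Newth}(ii)(a), a simple zero or pole would make $r$ negative on one side of it, and a rational function with no real zeros or poles is constant. Hence $\{r<0\}$ is a nonempty open set; by Lemma~\ref{rcreatesgaps} it lies in $\rho(Q_0)$, and on each component $Q_0$ is holomorphic, nonvanishing, and by \eqref{increasing} increasing, hence of constant sign. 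The only $r$-special points inside a component are the double zeros and double poles of $r$, which by Theorem~\ref{Newth}(ii)(a) are isolated poles, respectively isolated zeros, of $Q_0$; tracking the sign of $Q_0$ across such a point (it jumps from $+\infty$ to $-\infty$ at a double zero of $r$, and increases through $0$ at a double pole of $r$) shows that inside each component the double zeros and double poles of $r$ alternate, which is the final assertion of the proposition.

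To construct $s_1$, first normalize. Composing with a Möbius change of the variable — realized through Lemma~\ref{transqw} for those maps lying in $\cN$ and through the symmetry $Q(z)\leftrightarrow-Q(-z)$ otherwise — and, if helpful, interchanging $(r,Q_0)$ with $(1/r,-1/Q_0)$ (which again lies in $\cN_0^0(1/r)$), one may assume $\infty$ lies in a component of $\{r>0\}$, provided $\{r>0\}\ne\emptyset$. The degenerate case $\{r>0\}=\emptyset$ forces, via Theorem~\ref{Newth}, $r$ to be $-1$ times a ratio of squares of real linear polynomials and $Q_0$ to be a rational Nevanlinna function whose poles and zeros sit at zeros and poles of $r$; it is handled directly by peeling off factors of the form $\pm(z-\gamma)/(z-\delta)$. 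Otherwise every component of $\{r<0\}$ is a bounded interval with finite endpoints that are simple zeros or poles of $r$ and with all double zeros/poles of $r$ in its interior. Let $I=(\cdot,\delta)$ be the rightmost component and let $\mu$ be the rightmost double zero/pole of $r$ in $I$, or the left endpoint of $I$ if $I$ contains none; thus $(\mu,\delta)\subset\rho(Q_0)$ with $Q_0$ holomorphic, nonvanishing and of a single sign there. Reading off $\iota_\delta$ and, from Theorem~\ref{Newth}(ii)(b), the one-sided limit of $Q_0$ at $\delta$ (together with the pole/zero structure of $Q_0$ at $\mu$), one finds that the single sign of $Q_0$ on $(\mu,\delta)$ is compatible with exactly one configuration: one of $\mu,\delta$ is a zero and the other a pole of $r$. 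Setting $s_1=\frac{z-\delta}{z-\mu}$ or $s_1=\frac{z-\mu}{z-\delta}$ accordingly — so the zero of $s_1$ sits at a zero and its pole at a pole of $r$ — the relevant one of Corollary~\ref{Npres3456}(i)--(ii) applies, since its two hypotheses, $(\mu,\delta)\subset\rho(Q_0)$ and the required one-sided limit of $Q_0$, hold by construction; hence $s_1Q_0\in\cN$.

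The heart of the argument — and the only real obstacle — is the verification in the previous paragraph that, for each possible combination ($\delta$ a simple zero versus a simple pole of $r$; $\mu$ an interior double zero/pole versus the left endpoint of $I$), the sign of $Q_0$ on $(\mu,\delta)$ is forced in exactly the way that makes the Corollary applicable and places the zero and pole of $s_1$ on a zero and a pole of $r$; the remaining combinations are inconsistent because they would force $Q_0$ to vanish somewhere in $\{r<0\}$ or to change sign there, contradicting the structural facts above. Finally, once the inductive construction is complete, transporting the factorization back through the normalizing transformation preserves degrees and the absence of collisions, which finishes the proof.
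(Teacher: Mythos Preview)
Your inductive approach is correct and genuinely different from the paper's. The paper works interval-by-interval: it writes $r=\phi s$ with $\phi$ nonnegative (carrying all order-two zeros/poles) and $s$ simple, and on each maximal interval where $s<0$ it exhibits \emph{all} the degree-one factors at once in a specific three-phase order (first the interior interlacing pairs $\tilde s_i$, then the endpoint factors $s_1,s_2$, then the $\tilde s_i$ again; see \eqref{CorN00aprep}), before moving to the next interval. You instead peel off a single factor $s_1$ determined by the right endpoint $\delta$ and the rightmost interior special point $\mu$ of the rightmost negative component, and recurse on $\deg r$. Your route is conceptually lighter and sidesteps the ordering subtleties of \eqref{CorN00aprep}; the paper's route is more explicit and yields the full list of factors (and the interlacing statement) constructively, which is what the later realization results in Section~\ref{sec6} feed on. Two points in your write-up would benefit from being spelled out: the four-case check at $(\mu,\delta)$---which indeed goes through exactly as you outline, via Theorem~\ref{Newth}(ii)(a),(b) and monotonicity of $Q_0$, forcing $\{\mu,\delta\}$ to contain one zero and one pole of $r$ so that Corollary~\ref{Npres3456}(i) or (ii) applies---and the degenerate case $\{r>0\}=\emptyset$. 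There Theorem~\ref{Newth} forces all zeros/poles of $r$ to be double, hence $\sigma(Q_0)$ is finite and $Q_0$ is rational with its (interlacing) poles and zeros sitting exactly at the double zeros and double poles of $r$; the same peeling then works with the rightmost adjacent pole/zero pair of $Q_0$, so no separate machinery is needed.
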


\begin{proof}
Let $\phi s$ be a decomposition of $r$, where $s$ contains all the
simple zeros and poles of $r$ and, hence, $\phi$ is nonnegative, see
Theorem~\ref{Newth}. The rational factors $s_i$ of $r$ of degree one
will be constructed in two steps. First it is shown that on each
finite maximal interval $(a,b)$, where $s$ is negative a suitable
factorization can be defined involving all the poles and zeros of
$r$ contained in $[a,b]$. Then the desired factorization is obtained
inductively by considering all such negative intervals of $s$.

\emph{Step 1.} Let $(a,b)$ with $a,b\in\dR$ be a finite maximal
interval, where $s$ is negative. By the maximality assumption the
endpoints of the interval $(a,b)$ are zeros or poles of $s$, and
hence also of $r$, which by Theorem~\ref{Newth} must be of order
one. Let $r_{[a,b]}$ be the factor of $r$, which contains all the
finite zeros and poles of $r$ on the interval $[a,b]$. Decompose
$r_{[a,b]}=\phi_{[a,b]} s_{[a,b]}$, where $\phi_{[a,b]}$ is
nonnegative and $s_{[a,b]}$ is simple with $s_{[a,b]}(x)=s(x)<0$ for
$x\in(a,b)$; note that the only finite zeros and poles of
$s_{[a,b]}$ are $a$ and $b$. On the other hand, by
Theorem~\ref{Newth} (ii)(a) all the finite poles and zeros of
$\phi$, thus also of $\phi_{[a,b]}$, are of order two and they are
necessarily zeros and poles of $Q_0$ on the interval $(a,b)$.
Furthermore, by part (i) of Theorem~\ref{Newth} $Q_0$ cannot have
any other poles or zeros on the interval $(a,b)$, since if
$-\infty<r(x)<0$, then $x\in\rho(Q_0)$ and $Q_0(x)\neq 0$. It
follows that the ordered zeros $\alpha_j$ (poles of $\phi_{[a,b]}$)
and poles $\beta_j$ (zeros of $\phi_{[a,b]}$) of $Q_0\in\cN$ in
$(a,b)$ are interlacing:
\[ (\alpha_0 \leq) \beta_1 \leq \alpha_1 \leq \ldots \leq \beta_{n} \leq \alpha_{n} (\leq \beta_{n+1})\]
where $\alpha_0$ or $\beta_{n+1}$ need not occur (due to ordering).
Consequently, $\phi_{[a,b]}$ is given by
\[
 \phi_{[a,b]}(z)= \left(\frac{1}{(z-\alpha_0)^2}\right)\cdot\frac{\prod_{i=1}^n
  (z-\beta_i)^2}{\prod_{i=1}^n(z-\alpha_i)^2}\cdot \biggl((z-\beta_{n+1})^2 \biggr),
\]
where the first and last term in the brackets is excluded if,
respectively, $\alpha_0$ or $\beta_{n+1}$ does not exist.
Furthermore, by Theorem~\ref{Newth} (ii)(b) $a$ is a zero or pole of
$s_{[a,b]}$ if
\begin{equation}\label{coreq1}
-\infty \leq \lim_{z \wh \to a} Q_0(z) <0 \quad \textrm{or}\quad  0
\leq \lim_{z \wh \to a} Q_0(z) <\infty,
\end{equation}
respectively, and $b$ is a zero or pole of $s_{[a,b]}$ if
\begin{equation}\label{coreq2}
0< \lim_{z \wh \to b}Q_0(z) \leq \infty \quad \textrm{or}\quad
-\infty < \lim_{z \wh \to b} Q_0(z)\leq 0,
\end{equation}
respectively. Now, let $\wt s_i$ be defined by
\[ \wt s_i(z) = \frac{z-\beta_i}{z-\alpha_i}, \quad i=1,\ldots n, \quad \wt s_{n+1}(z)=1;
 \textrm{ or }  \wt s_i(z) = \frac{z-\beta_{i}}{z-\alpha_{i-1}}, \quad i=1,\ldots n+1,\]
if $\alpha_0$ or $\beta_{n+1}$ does not exist, respectively, if
$\alpha_0$ and $\beta_{n+1}$ both exist. Moreover, let $s_1$ and
$s_2$ be defined by
\[ s_1(z) =\frac{z-a}{z-b}, \,\, s_2(z) =1; \quad s_1(z) =\frac{z-b}{z-a}, \,\, s_2(z)=1;\]
\[ s_1(z) = \frac{z-a}{z-\alpha_0}, \,\, s_2(z)=\frac{z-b}{z-\alpha_0};
 \quad \textrm{or} \quad s_1(z) = \frac{z-\beta_{n+1}}{z-a}, \,\, s_2(z) = \frac{z-\beta_{n+1}}{z-b},\]
if $\alpha_0$ and $\beta_{n+1}$ do exist, if $\alpha_0$ and
$\beta_{n+1}$ do not exist, if only $\alpha_0$ exists, or if only
$\beta_{n+1}$ exists, respectively. Then by applying
Theorem~\ref{Newth} (or Corollary~\ref{Npres3456}) it is seen that
$\left(\prod_{i=1}^j \wt s_i\right) Q_0 \in \cN$ for all $1\leq j
\leq n+1$. In particular, $Q_1 := \left(\prod_{i=1}^{n+1} \wt
s_i\right) Q_0 \in \cN$ has at most one zero ($\alpha_0$) or one
pole ($\beta_{n+1}$) in $(a,b)$ (by construction $Q_1$ cannot have
both a zero and pole on $(a,b)$). The monotonicity property (see
\eqref{increasing}) together with \eqref{coreq1} and \eqref{coreq2}
implies that, if $\alpha_0$ is a zero of $Q_1$, then $a$ and $b$
both are zeros of $s$, if $\beta_{n+1}$ is a pole of $Q_1$, then $a$
and $b$ both are poles of $s$. If $Q_1$ neither has a pole nor a
zero on $(a,b)$, then $Q_1$ is either positive ($\alpha_0$ and
$\beta_{n+1}$ do not exist) or negative ($\alpha_0$ and
$\beta_{n+1}$ both exist) on $(a,b)$; in the first case $a$ is a
pole and $b$ is a zero of $s$, and in the second case $a$ is a zero
and $b$ is a pole of $s$. In all cases
$\left(\prod_{i=1}^ks_i\right)Q_1 \in \cN$, $1\leq k \leq 2$, by
Theorem~\ref{Newth} (or Corollary~\ref{Npres3456}) and, hence, $Q_2
:= \left(\prod_{i=1}^2 s_i\right) Q_1 \in \cN$. Finally, by
construction, for $x\in (a,b)$ one has $Q_2(x)<0$ ($Q_2(x)>0$) if
and only if $Q_1(x)>0$ ($Q_1(x)<0$) and $\alpha_0$ is a pole and
$\beta_{n+1}$ is a zero of $Q_2$ (when they exist). Since $Q_1$ and
$Q_2$ have opposite signs on the interval, it follows again from
Theorem~\ref{Newth} (or Corollary~\ref{Npres3456}) that $\left(
\prod_{i=1}^j \wt s_i\right)Q_2 \in \cN$, $1\leq j \leq n+1$.

As a conclusion, the above construction shows that $r_{[a,b]} =
(\prod_{i=1}^{n+1}\wt s_i^2)(\prod_{i=1}^2 s_i)$ and, furthermore,
that for all $1\leq j \leq n+1$ and $1\leq k \leq 2$, the products
\begin{equation}\label{CorN00aprep}
 \left(\prod_{i=1}^j \wt s_i\right) Q_0, \,\, \left(\prod_{i=1}^k s_i\right)
 \left(\prod_{i=1}^{n+1} \wt s_i\right) Q_0, \, \textrm{ and } \,
 \left(\prod_{i=1}^j \wt s_i\right)\left(\prod_{i=1}^2 s_i\right)
 \left(\prod_{i=1}^{n+1} \wt s_i\right) Q_0
\end{equation}
are Nevanlinna functions. In particular, $r_{[a,b]}Q_0\in\cN_0$.

\emph{Step 2.} The result \eqref{CorN00aprep} in Step 1 can be
applied to every maximal bounded interval on which $s$ is negative,
and if $s$ is negative on an interval $(-\infty,b)$, $(a,\infty)$ or
on $(-\infty,b)\cup (a,\infty)$, then it can be transformed to an
interval of the type $(a,b)$ by means of a rational function of
degree one using Lemma~\ref{transqw} (cf. the proof of
Lemma~\ref{ResM1}). Now proceed inductively by applying Step 1 to
each maximal interval where $s$ is negative. Note that the closures
of these intervals do not overlap, since $s(x)$ changes its sign,
when $x$ passes a pole or zero of $s$. After one interval $(a,b)$,
where $s$ is negative, is considered, the new functions to which
\eqref{CorN00aprep} in Step 1 is applied are $\wt
Q_0=r_{[a,b]}Q_0\in\cN_0$ and $r/r_{[a,b]}$, whose maximal negative
intervals coincide with those of $r$, apart from the interval
$(a,b)$, since $0\le r_{[a,b]}(x)\le\infty$ if $x\not\in[a,b]$ and
$-\infty\le r_{[a,b]}(x)\le0$ if $x\in[a,b]$. This gives the
factorization of $r$ as the product of $s_i$, without cancelations.
This complete the proof.
\end{proof}

The order of the rational functions $s_i$ in
Proposition~\ref{CorN00a} is essential; they cannot be reordered in
an arbitrary manner, since then some of the products may produce
functions which are not Nevanlinna functions; see
Example~\ref{exlog} below. In particular, in \eqref{CorN00aprep} the
factors $s_i$ and $\wt s_i$ can be reordered within the products
$\prod_{i=1}^j \wt s_i$ and $\prod_{i=1}^k s_i$, but in general it
is not possible to interchange factors between the three product
terms that appear \eqref{CorN00aprep}.

\begin{example}\label{exlog}
The function $\Log z$ (the principle branch of the logarithm) is a
Nevanlinna function; its integral representation is given by
\[ \Log (z) = \int_{(-\infty,0]} \left(\frac{1}{t-z} - \frac{t}{1+t^2}\right)dt,\]
see \cite[p. 27]{Don74}. Now consider the function $Q$ defined as
\[ Q(z) = \Log(z) + (2-z)^{-1} -1.\]
Then $Q$ has a zero at $1$ and a pole at $2$. Moreover, $Q$ has pole
also at $0$ and one further zero at a point $c_0>2$. Applying
Theorem~\ref{Newth} (or Corollary~\ref{Npres3456}) one concludes
that $r_1Q,r_2r_1Q,r_1r_2r_1Q \in \cN$, where
\[
 r_1(z) = \frac{z-2}{z-1} \quad \textrm{and} \quad
 r_2(z)= \frac{z-a}{z-b}, \quad
  a \in [0,1), b \in (2,c_0].
\]
In particular, $rQ\in\cN_0$ if
$r(z)=\frac{(z-2)^2}{(z-1)^2}\frac{z-a}{z-b}$ with $a$ and $b$ as
indicated. Note that $r_2Q \in \cN_1$: the only GZNT is at $z=1$ and
the only GPNT is at $z=2$; see Theorem~\ref{ResM1E}. Similarly
$r_1^2Q \in \cN_1$ with a GZNT at $z=2$ and a GPNT at $z=1$; see
Proposition~\ref{CharNK}. Note that if \eqref{CorN00aprep} is
applied to $Q\in\cN_0^0(r)$, it produces the factors $r_1$, $r_2$,
and $r_1$ in this order.

Note that one cannot use simpler factors of $r$ in this connection.
For instance, the function $(z-2)Q(z)$ cannot belong to the class
$\cN_\wt\kappa$ for any $\wt\kappa\in\dN$, since $x-2<0$ for $x<2$;
see Theorem~\ref{ResTot}. However, since $Q(c_0)=0$ and $Q(x)>0$ for
$x>c_0$, one concludes from Theorem~\ref{Newth} that
\[
 Q_c(z) = \frac{Q(z)}{c-z}\in\cN_0, \, c\ge c_0, \quad \textrm{and} \quad
 Q_2(z)= (2-z)Q_{c_0}(z)=\frac{z-2}{z-c_0}\,Q_0(z)\in\cN_0.
\]
\end{example}

Proposition~\ref{CorN00a} shows that the class $\cN_0^0(r)$ is
in particular interesting when $r$ is a symmetric rational function
which has only simple zeros and poles (including $\infty$). In that
case the following theorem holds; it generalizes the corollaries
established in Sections~\ref{sec23} and \ref{sec24}.
\begin{theorem}\label{productinNg}
Let $Q_0\in\cN$, $Q_0\neq 0$, and let $s$ be a symmetric rational
function, whose zeros and poles are real and simple (including
$\infty$). Then the following statements are equivalent:
\begin{enumerate}
\def\labelenumi {\rm (\roman{enumi})}
\item $Q_0 \in \cN_0^0(s)$;
\item
\begin{enumerate}
\item if $-\infty < s(x)<0$ then $Q_0$ is holomorphic at $x$ and $Q_0(x)\ne 0$;
\item $\pi_{a_i}(sQ_0)=0$ and $\kappa_{b_i}(sQ_0)=0$,
see \eqref{not1Q} and \eqref{not2Q}, for every finite zero $a_i$ and every finite pole $b_i$ of $s$;
\end{enumerate}
\item
\begin{enumerate}
\item if $(\alpha,\beta)$, $\alpha,\beta \in \dR \cup \{\pm \infty\}$, is a maximal interval
such that $-\infty <s(x) <0$ for all $x \in (\alpha,\beta)$, then
$Q_0$ is holomorphic and either $Q_0(x)>0$ or $Q_0(x)<0$ on
$(\alpha,\beta)$.
\item if $(\alpha,\beta)$ is a maximal interval as in (a) and, moreover,
if $0 < Q_0(x) < \infty$ for $\alpha < x < \beta$ and $\alpha\in\dR$
(or $\beta\in\dR$), then
$\alpha$ is a pole of $s$ ($\beta$ is a zero of $s$); if
$-\infty < Q_0(x) < 0$ for $\alpha < x < \beta$ and
$\alpha\in\dR$ (or $\beta\in\dR$), then $\alpha$ is a zero
of $s$ ($\beta$ is a pole of $s$).
\end{enumerate}
\item
\begin{enumerate}
\item ${\sigma}(Q_0) \cap \{\, x \in \dR\cup \{\infty\} : -\infty< s(x)<0\} =
\emptyset$;
\item for every pole $b$ of $s$
\[ 
-\infty< \lim_{z \wh \to b\in \dR} \left\{ (z-b)s(z)Q_0(z)\right\} \leq
0, \quad 0 \leq \lim_{z \wh \to b =\infty} \left\{ \frac{s(z)Q_0(z)}{z}\right\}
< \infty.
\]
\end{enumerate}
\end{enumerate}
\end{theorem}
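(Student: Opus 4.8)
The plan is to link the four statements by the chain (i) $\Leftrightarrow$ (ii) $\Leftrightarrow$ (iii) $\Leftrightarrow$ (iv), using Theorem~\ref{Newth} for the first equivalence and then the strict monotonicity of Nevanlinna functions on intervals of holomorphy (see \eqref{increasing}) together with the pointwise criteria \eqref{not1Q}, \eqref{not2Q} for the remaining two. For (i) $\Leftrightarrow$ (ii) I would simply specialize Theorem~\ref{Newth} to the present $s$: since every zero and pole of $s$ is of order one, condition (ii)(a) of Theorem~\ref{Newth} is vacuous and condition (ii)(b) of Theorem~\ref{Newth} applies to every finite zero $a_i$ and finite pole $b_i$ of $s$. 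For a finite simple zero $a$ the limit $\lim_{z\wh\to a}s(z)/(z-a)$ is finite and nonzero, so $s(z)Q_0(z)/(z-a)$ and $\iota_a Q_0(z)$ have limits of the same sign at $a$, where $\iota_a=\lim_{z\wh\to a}\sgn(s(z)/(z-a))$; hence $\pi_a(sQ_0)=0$ in the sense of \eqref{not1Q} is exactly the inequality in Theorem~\ref{Newth}~(ii)(b), and $\kappa_b(sQ_0)=0$ of \eqref{not2Q} matches the pole alternative. Condition (i) of Theorem~\ref{Newth} — $0\le s\le\infty$ on $\sigma(Q_0)$ with $Q_0(x)\ne 0$ whenever $-\infty<s(x)<0$ — unwinds into (ii)(a). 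A simple zero or pole of $s$ at $\infty$ requires no extra condition: if $Q_0$ is nonconstant it is strictly increasing on $\rho(Q_0)\cap\dR$, which forces the relevant limit of $sQ_0$ at $\infty$ in \eqref{GZNT}, \eqref{GPNT} into the admissible range (and constant $Q_0$ is checked directly).

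For (ii) $\Leftrightarrow$ (iii): if $(\alpha,\beta)$ is a maximal interval on which $s<0$, then by (ii)(a) $Q_0$ is holomorphic and nowhere zero there, hence — being continuous — of constant sign on $(\alpha,\beta)$; this is (iii)(a). For (iii)(b), by maximality a finite endpoint $\gamma\in\{\alpha,\beta\}$ is a simple zero or pole of $s$, and the sign of $s(z)/(z-\gamma)$, resp. $(z-\gamma)s(z)$, on the side of $\gamma$ lying in $(\alpha,\beta)$ is determined; evaluating the one-sided limit of $Q_0$ at $\gamma$ by Lemma~\ref{pointlimit} (or Lemma~\ref{limvalue}) and inserting it into the criterion of (ii)(b), one finds that $\gamma$ being a zero of $s$ forces $Q_0<0$ just inside $(\alpha,\beta)$, while $\gamma$ being a pole forces $Q_0>0$; combined with the constant sign this is the dichotomy in (iii)(b), and the argument reverses. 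Infinite endpoints contribute nothing, by the remark on $\infty$ above.

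For (iii) $\Leftrightarrow$ (iv): here (iv)(a) says that $\sigma(Q_0)$ avoids the set $\{\,x:\,-\infty<s(x)<0\,\}$, which is contained in (iii)(a), and (iv)(b) is $\kappa_b(sQ_0)=0$ (for $b$ finite or $b=\infty$) at every pole $b$ of $s$, so (iii) $\Rightarrow$ (iv) is immediate from (iii)(a), (iii)(b), \eqref{not2Q} and \eqref{GPNT}. The work is in (iv) $\Rightarrow$ (iii), since (iv) — unlike (ii) and (iii) — mentions neither the zeros of $s$ nor the non-vanishing of $Q_0$ on $\{\,-\infty<s<0\,\}$. Let $(\alpha,\beta)$ be a maximal interval with $s<0$; by (iv)(a) $Q_0$ has no poles on it. If $Q_0$ vanished inside $(\alpha,\beta)$, or had a pole at a finite endpoint that is a pole of $s$, then by strict monotonicity the sign of $Q_0$ near such an endpoint would make $\lim_{z\wh\to\gamma}(z-\gamma)s(z)Q_0(z)$ strictly positive, or of modulus $\infty$, contradicting (iv)(b) (similarly for a pole of $s$ at $\infty$, using Lemma~\ref{pointlimit}). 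Thus the pole-endpoints already pin the sign of $Q_0$ on $(\alpha,\beta)$ as in (iii)(b); the one remaining possibility, that both endpoints of some $(\alpha,\beta)$ are zeros of $s$ (so that $s>0$ immediately outside $[\alpha,\beta]$), is excluded because the poles of $s$ that must then lie further out, together with (iv)(b) and the monotonicity of $Q_0$, pin $Q_0\equiv 0$ — a short computation with the integral representation, of the type behind Corollary~\ref{Npres3456}. Hence (iv) yields (iii)(a) and (iii)(b).

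The step I expect to be the main obstacle is the implication (iv) $\Rightarrow$ (iii): because condition (iv) makes no reference to the zeros of $s$ or to the non-vanishing of $Q_0$, one is forced to argue globally — tracking the signs of $s$ and of $(z-\gamma)s(z)$ at all poles of $s$ and using the monotonicity of $Q_0$ to tie this local data together — and in particular one has to rule out the degenerate configurations of $s$ (two consecutive zeros, or by inversion two consecutive poles, with $s$ not positive between them) for which $\cN_0^0(s)$ is empty and all four conditions are unsatisfiable. The endpoint sign bookkeeping in (ii) $\Leftrightarrow$ (iii), though routine, is likewise delicate and must be carried out case by case (left vs. right endpoint, zero vs. pole of $s$, finite vs. infinite).
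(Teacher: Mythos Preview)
Your treatment of (i) $\Leftrightarrow$ (ii) and (ii) $\Leftrightarrow$ (iii) matches the paper's; the only substantive divergence is how the cycle is closed. The paper does not attempt (iv) $\Rightarrow$ (iii). Instead it proves (iv) $\Rightarrow$ (i) in one line: condition (iv)(a) together with Theorem~\ref{ResTot} gives $sQ_0\in\cN_{\kappa_0}$ for some $\kappa_0$; by Proposition~\ref{enigecand} the only possible GPNT's of $sQ_0$ are at poles of $s$ (there are no poles of $Q_0$ where $-\infty<s<0$, by (iv)(a)); and condition (iv)(b) says precisely that no pole of $s$ is a GPNT. Hence $sQ_0$ has no GPNT's, and a generalized Nevanlinna function without GPNT's is an ordinary Nevanlinna function (by the balance $\pi_\infty+\sum\pi_{\alpha_i}=\kappa_\infty+\sum\kappa_{\beta_i}$ recorded after Proposition~\ref{CharNK}). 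This sidesteps entirely the obstacle you flag.

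Your direct route (iv) $\Rightarrow$ (iii) has a real gap in the ``two zero endpoints'' case. You write that the poles of $s$ lying ``further out'', together with (iv)(b) and monotonicity of $Q_0$, force $Q_0\equiv 0$. But monotonicity of $Q_0$ is available only on intervals contained in $\rho(Q_0)$, and between a two-zero negative arc $(\alpha,\beta)$ and the nearest poles of $s$ lie intervals on which $s>0$, where $\sigma(Q_0)$ may well be supported; the inequalities coming from (iv)(b) at those distant poles cannot be chained across such intervals. A correct direct argument does exist, but it is not the one you sketch: one observes (by a parity count on $\dR\cup\{\infty\}$) that the number of maximal negative arcs of $s$ with two zero-endpoints equals the number with two pole-endpoints; on a two-pole negative arc (possibly passing through $\infty$) the constraints of (iv)(b) at both endpoints, combined with monotonicity of $Q_0$ \emph{on that arc}, squeeze $Q_0$ to a constant there and hence $Q_0\equiv 0$. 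So a two-zero arc is indeed incompatible with (iv) for $Q_0\ne 0$, but via a global combinatorial step and an argument on a different arc --- not via ``poles further out'' from the given one. The paper's (iv) $\Rightarrow$ (i) avoids all of this bookkeeping.
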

\begin{proof}
(i) $\Leftrightarrow$ (ii) This is clear by Theorem~\ref{Newth}.

(i),(ii) $\Rightarrow$ (iv) This is immediate, see also Lemma~\ref{pointlimit}.

(iv) $\Rightarrow$ (i) This implication follows from the fact that a generalized Nevanlinna
function is a Nevanlinna function if and only if it has no generalized poles.

(ii) $\Leftrightarrow$ (iii) The equivalence of (ii)(a) and (iii)(a)
is obvious. Clearly, every $a_i$ or $b_i$ as in (ii)(b) is the
endpoint of a unique maximal interval $(\alpha,\beta)$ where $s$ is
negative, and, conversely, every finite endpoint of a maximal
interval $(\alpha,\beta)$ is a zero $a_i$ or a pole $b_i$ of $s$.
Recall that by \eqref{not1Q}, \eqref{not2Q} $\pi_{a_i}(sQ_0)=0$ or
$\kappa_{b_i}(sQ_0)=0$ if and only if
\[ 0 < \lim_{z \wh \to a_i} \left\{\frac{s(z)Q_0(z)}{z-a_i}\right\}\le \infty
\quad \textrm{or} \quad -\infty < \lim_{z \wh \to b_i}
\left\{(z-b_i)s(z)Q_0(z)\right\} \leq 0,\] respectively. Hence, the
equivalence of (ii)(b) and (iii)(b) follows from the observation
that $\lim_{z \wh \to a_i}\frac{s(z)}{z-a_i} \in \dR$ ($\lim_{z \wh
\to b_i}(z-b_i)s(z) \in \dR$) is positive if and only if $s$ is
positive on a neighborhood to the right of $a_i$ ($b_i$) or,
equivalently, negative on a neighborhood to the left of $a_i$
($b_i$).
\end{proof}

Theorem~\ref{productinNg} (iii)(b) gives some further information
about the location of zeros and poles of $s$, because by simplicity
of $s$ the sign of $s(x)$ changes, when $x$ passes a pole or zero of
$s$; compare Proposition~\ref{CorN00a}.

\begin{remark}\label{interlacing}
Let $Q_0 \in \cN_0^0(s)$, $Q_0\neq 0$, with a simple symmetric
rational function $s$. Then the poles and zeros of $s$ satisfy the
following (interlacing type) property: if all the finite zeros and
poles are ordered on the real line, then next to (i.e. before or
after) each zero of $s$ there is at least one pole of $s$, and next
to each pole of $s$ there is at least one zero of $s$.
\end{remark}

The location property stated in Remark~\ref{interlacing} includes
the case where the poles and zeros of $s$ are interlacing. Hence,
Theorem~\ref{productinNg} generalizes in particular the class
$\cS(E_m)$ introduced in \cite[p. 396]{KNapp}, which in the present
notation would be $\cN_0^0(s)$ for a simple symmetric rational
function whose zeros and poles are interlacing (starting with a
finite pole).

If $-\infty<\lim_{x\to\pm\infty}s(x)<0$ then $Q_0$ is holomorphic
also at $\pm\infty$ and, moreover, $\lim_{x\to\pm\infty}Q_0(x)\neq
0$; see Lemmas~\ref{genStieltjesinvApplied},~\ref{pointlimit}.
Hence, in Theorem~\ref{productinNg} (ii) (a) one can also include
the point $x=\pm\infty$. Similarly if, for instance, $s(x)<0$ on
$(-\infty,\beta)$ and this interval is maximal in the sense that
$-\infty$ is either a pole or zero of $s$, then in
Theorem~\ref{productinNg} (iii) (b), $0 < Q_0(x) < \infty$ for
$-\infty < x < \beta$ implies that $-\infty$ is actually a pole of
$s$, while $-\infty < Q_0(x) < 0$ for $-\infty < x < \beta$ implies
that $-\infty$ is a zero of $s$.

Some further information on the class $\cN_0^0(r)$, now related to
the functions $Q_0$ and $rQ_0$, is given in the next result. This
information will be used for constructing models for functions
belonging to the class $\cN_\kappa^{\wt\kappa}(r)$.

\begin{proposition}\label{vorige}
Let $Q_0 \in \cN_0^0(r)$ and let $a_i \in \dR \cup\{\infty\}$,
$1\leq i \leq n_1$, and $b_i \in \dR \cup\{\infty\}$, $1\leq i \leq
n_2$, be some sets of zeros and poles of $r$. Then
\[
 Q_0 \in \bigcap_{i=1}^{n_2} \cN(b_i,1) \quad \textrm{and} \quad rQ \in \bigcap_{i=1}^{n_1} \cN(a_i,1).
\]
\end{proposition}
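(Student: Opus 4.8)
The plan is to reduce both membership statements to the characterization of Kac--Donoghue classes via limit values at endpoints of spectral gaps, namely Corollary~\ref{limitaschar}. The key point is that $Q_0 \in \cN_0^0(r)$ forces each pole $b_i$ of $r$ to be the endpoint of a spectral gap of $Q_0$ (in the relevant one-sided sense), and similarly each zero $a_i$ of $r$ to be the endpoint of a spectral gap of $rQ_0 = rQ$; once this is established, finiteness of the relevant limit value is exactly what condition (ii)(b) of Theorem~\ref{Newth} (or its reformulation through Theorem~\ref{productinNg}) delivers.

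First I would treat $Q_0 \in \cN(b_i,1)$ for a pole $b_i$ of $r$. By Theorem~\ref{Newth}, $b_i$ is real and a zero or pole of $r$ of order at most two. If $b_i$ is a pole of $r$ of order two, then $b_i \in \rho(Q_0)$ with $Q_0(b_i)=0$, so $Q_0$ is holomorphic in a neighborhood of $b_i$, whence trivially $Q_0 \in \cN(b_i,1)$ by Corollary~\ref{limitaschar} (the hypothesis ``$(\xi,c)$ or $(c,\xi)$ belongs to $\rho(Q)$'' is satisfied and the limit is $0 \in \dR$). If $b_i$ is a zero or pole of $r$ of order one, then by Lemma~\ref{rcreatesgaps} together with condition~(i) of Theorem~\ref{Newth}, one side of $b_i$ lies in $\rho(Q_0)$ (the side where $-\infty < r < 0$), so $b_i$ is the endpoint of a spectral gap of $Q_0$; condition~(ii)(b) of Theorem~\ref{Newth} asserts precisely that $\lim_{z \wh\to b_i}\iota_{b_i} Q_0(z)$ is finite, hence $\lim_{z\wh\to b_i} Q_0(z) \in \dR$, and Corollary~\ref{limitaschar} gives $Q_0 \in \cN(b_i,1)$. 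The case $b_i=\infty$ is analogous, using the version of Corollary~\ref{limitaschar} with $\xi=\infty$ together with the limit information encoded in Theorem~\ref{productinNg}~(iv)(b) or directly in the defining limits \eqref{GZNT}, \eqref{GPNT}.

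Next I would handle $rQ = rQ_0 \in \cN(a_i,1)$ for a zero $a_i$ of $r$. The symmetry of the situation is exploited: a zero of $r$ of order two is, in the factorization of $rQ_0$, associated with $a_i \in \rho(Q_0)$, $Q_0(a_i)=0$, and $-\infty< \lim_{z\wh\to a_i} r(z)/(z-a_i)^2 < 0$, so $rQ_0$ is holomorphic near $a_i$ with $(rQ_0)(a_i)=0 \in \dR$, giving $rQ_0 \in \cN(a_i,1)$. For a zero of $r$ of order one, $a_i$ again lies at the boundary of the set where $r<0$, hence is an endpoint of a spectral gap of $rQ_0 \in \cN$ (recall $rQ_0 \in \cN$ precisely because $Q_0 \in \cN_0^0(r)$), and the limit $\lim_{z\wh\to a_i}(z-a_i)^{?}\,r(z)Q_0(z)$ dictated by \eqref{not1Q} together with $\pi_{a_i}(rQ_0)=0$ shows $\lim_{z\wh\to a_i} rQ_0(z)\in\dR$; Corollary~\ref{limitaschar} then yields membership in $\cN(a_i,1)$. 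Again $a_i=\infty$ is treated by the $\xi=\infty$ version.

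The main obstacle I anticipate is the careful bookkeeping at $\infty$ and the verification, in each of the order-one cases, that the one-sided interval of holomorphy required by Corollary~\ref{limitaschar} is actually on the correct side of the point $b_i$ or $a_i$ — that is, confirming that the spectral gap abuts $b_i$ (resp.\ $a_i$) rather than lying on the opposite side. This is where Lemma~\ref{rcreatesgaps} and part~(i) of Theorem~\ref{Newth} must be invoked to pin down that $Q_0$ (resp.\ $rQ_0$) is holomorphic exactly on the side where the relevant rational factor is negative. Everything else is a direct appeal to Corollary~\ref{limitaschar} combined with the finite-limit conditions already proved in Theorem~\ref{Newth}; no new computation beyond matching these limit conditions is needed.
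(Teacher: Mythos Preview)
Your approach is essentially the same as the paper's: use the structural constraints from Theorem~\ref{Newth} to see that each pole $b_i$ (resp.\ zero $a_i$) of $r$ is an endpoint of a one-sided spectral gap of $Q_0$ (resp.\ $rQ_0$) at which the limit value is finite, and then invoke Corollary~\ref{limitaschar}. Two remarks are in order.

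First, there is a mix-up in your order-two zero case. Theorem~\ref{Newth}~(ii)(a) says that if $a_i$ is a \emph{zero} of $r$ of order two then $a_i$ is an isolated \emph{pole} of $Q_0$ (not an isolated zero with $a_i\in\rho(Q_0)$, $Q_0(a_i)=0$; that is the conclusion for an order-two \emph{pole} of $r$). Your stated intermediate condition is therefore wrong, although the conclusion that $rQ_0$ is holomorphic near $a_i$ with $(rQ_0)(a_i)=0$ survives: with $Q_0(z)\sim -m/(z-a_i)$ and $r(z)\sim c(z-a_i)^2$, $c<0$, one still gets $rQ_0(z)\to 0$.

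Second, the paper dispatches the entire second statement in one line via the duality
\[
 Q_0 \in \cN_0^0(r) \quad\Longleftrightarrow\quad rQ_0 \in \cN_0^0(1/r),
\]
so that the zeros $a_i$ of $r$ are exactly the poles of $1/r$, and the already-proved first part applied to $rQ_0$ and $1/r$ yields $rQ_0\in\bigcap_i \cN(a_i,1)$ without repeating the case analysis. This is cleaner than redoing the argument directly, and it also sidesteps the mix-up above.
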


\begin{proof}
If $b$ is a simple pole of $r$, then $\lim_{z\wh \to b}
(z-b)r(z)\in\dR\setminus\{0\}$, $b \in \dR$, and $\lim_{z \wh \to b}
\frac{r(z)}{z}\in\dR\setminus\{0\}$, $b=\infty$. Hence, if $Q_0 \in \cN_0^0(r)$,
then Theorem~\ref{productinNg} (iv)(b) shows that $\lim_{z \wh \to
b}Q_0(z)\in\dR$. Moreover, since $r(x)$ changes its sign when $x$
passes a simple pole of $r$, $Q_0$ is holomorphic on an interval
with a pole of $r$ as its endpoint; see Theorem~\ref{productinNg}
(ii)(a). Now, according to Corollary~\ref{limitaschar} $Q_0\in
\cN(b,1)$.

If $b \in \dR$ is a pole of $r$ of order greater than one, then its
order is two and by Theorem~\ref{Newth}~(ii)(a) $\beta$ is an isolated
zero of $Q_0$, in particular, $\beta\in\rho(Q_0)$. Hence, again
$Q_0\in \cN(b,1)$ by Corollary~\ref{limitaschar}.

The statement concerning the Nevanlinna function $rQ_0$ is now
obtained e.g. by means of the equivalence $Q_0 \in \cN_0^0(r)$ if
and only if $rQ_0 \in \cN_0^0(\frac{1}{r})$.
\end{proof}

Proposition~\ref{vorige} implies that the function $Q_0 \in
\cN_0^0(r)$ does not have poles at the poles of $b_i$ of $r$; see
Proposition~\ref{charclassspectral}. However, the function $rQ_0$
may have poles at these points. In fact, there is a pole of $rQ_0$
at $b_i$ when $b_i$ is a pole of order $2$ of $r$ or when $b_i$ is a pole of order $1$
of $r$ and $\lim_{z\wh\to b_i} Q_0(z)\neq 0$; cf. Lemma~\ref{pointlimit}.

\section{Realizations of a subclass of Nevanlinna
functions}\label{sec5}
In this section local versions of rigged Hilbert spaces are associated to a selfadjoint
relation. These rigged space are used to construct realizations for
functions in the Kac-Donoghue classes, cf. Proposition~\ref{vorige}.

\subsection{Rigged spaces}
Let $A$ be a selfadjoint relation in the Hilbert space
$\{\sH,(\cdot,\cdot)\}$. Define the inner products
$(\cdot,\cdot)_{\infty,\pm 1}$ via
\begin{equation}\label{def+}
\begin{split}
(f,g)_{\infty,+1} &= (f,g) + (|A_o|^{\frac{1}{2}}P_\infty f,|A_o|^{\frac{1}{2}}P_\infty g),  \quad f,g \in \dom |A_o|^\frac{1}{2}\oplus \mul A;\\
 (f,g)_{\infty,-1} &= ((I-P_\infty) f,g) + ((I+|A_o|)^{-1}P_\infty f,P_\infty g),  \quad f,g \in \sH,
\end{split}
\end{equation}
see \eqref{defabsval}. Then
$\sH_{+1}(A,\infty):=\{\dom|A_o|^\frac{1}{2}\oplus \mul
A,(\cdot,\cdot)_{\infty,+1}\}$ is a Hilbert space and also the
completion of $\sH$ with respect to $(\cdot,\cdot)_{\infty,-1}$,
denoted by $\sH_{-1}(A,\infty)$, is a Hilbert space; it is the dual
space of $\sH_{+1}(A,\infty)$. \emph{The rigging of $\sH$ with
respect to $A$ at $\infty$} means the space triplet
$\sH_{+1}(A,\infty) \subset \sH \subset \sH_{-1}(A,\infty)$; for the
case of operators, see \cite{Berezanskii}.

In the rigging $\sH_{+1}(A,\infty) \subset \sH \subset
\sH_{-1}(A,\infty)$ the topology is changed on the part of the space $\sH$
connected with the behavior of $A$ close to $\infty$. The
topology on the other parts of the space is not changed (though the
norm in general is).

\begin{proposition}\label{iner1}
Let $A$ be a selfadjoint relation in the Hilbert space
$\{\sH,(\cdot,\cdot)\}$ and let $\sH_{+1}(A,\infty) \subset \sH
\subset \sH_{-1}(A,\infty)$ be the rigging of $\sH$ with respect to
$A$ at $\infty$. Then $\ker (A-\xi)$, $\xi \in \dR$, and $\mul A$
are closed in $\sH_{+1}(A,\infty)$ and $\sH_{-1}(A,\infty)$.
\end{proposition}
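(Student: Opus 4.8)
The plan is to reduce the claim to an elementary fact about completeness. Namely, if $V$ is a linear subspace of $\sH$ that is closed in $\sH$, that is contained in $\sH_{+1}(A,\infty)$, and on which the norm $\|\cdot\|_{\infty,+1}$ induced by $(\cdot,\cdot)_{\infty,+1}$ (respectively the norm $\|\cdot\|_{\infty,-1}$ induced by $(\cdot,\cdot)_{\infty,-1}$) is equivalent to the original norm $\|\cdot\|$, then $V$ is closed in $\sH_{+1}(A,\infty)$ (respectively in $\sH_{-1}(A,\infty)$). Indeed, such a $V$ is complete in the norm $\|\cdot\|_{\infty,\pm1}$, being complete in the equivalent norm $\|\cdot\|$, and a complete subset of a normed space is closed; here $V\subset\sH_{+1}(A,\infty)$ by assumption in the $+1$-case, and $V\subset\sH\subset\sH_{-1}(A,\infty)$ in the $-1$-case. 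Thus it suffices to check that $\mul A$ and $\ker(A-\xi)$, $\xi\in\dR$, are subspaces of this type.

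The case $V=\mul A$ is immediate. Since $P_\infty$ vanishes on $\mul A$, the defining formulas \eqref{def+} collapse to $(f,g)_{\infty,+1}=(f,g)=(f,g)_{\infty,-1}$ for all $f,g\in\mul A$; moreover $\mul A$ is trivially contained in $\dom|A_o|^{1/2}\oplus\mul A=\sH_{+1}(A,\infty)$, and $\mul A$ is closed in $\sH$. So all three norms coincide on $\mul A$ and the reduction applies.

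For $V=\ker(A-\xi)$ with $\xi\in\dR$ the first step is to identify this eigenspace. By the decomposition \eqref{oppartred}, the pair $\{f,\xi f\}$ belongs to $A$ precisely when $f\in\dom A_o$ and $\xi f-A_of\in\mul A$; but $f$ and $A_of$ lie in $\cdom A$, which is orthogonal to $\mul A$, so necessarily $A_of=\xi f$. Hence $\ker(A-\xi)=\ker(A_o-\xi)$. This subspace is closed in $\sH$, it is contained in $\dom A_o\subset\dom|A_o|^{1/2}\subset\sH_{+1}(A,\infty)$, and it lies in $\cdom A$, so $P_\infty$ acts as the identity on it. Since $|A_o|$ acts as the scalar $|\xi|$ on $\ker(A_o-\xi)$, the formulas \eqref{def+} give
\[
 \|f\|_{\infty,+1}^2=(1+|\xi|)\,\|f\|^2 \quad\text{and}\quad \|f\|_{\infty,-1}^2=(1+|\xi|)^{-1}\,\|f\|^2 \qquad\text{for }f\in\ker(A_o-\xi),
\]
so both norms $\|\cdot\|_{\infty,\pm1}$ are equivalent to $\|\cdot\|$ on $\ker(A-\xi)$, and the reduction applies once more. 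Combining the two cases yields the assertion.

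The step requiring the most care is the identification $\ker(A-\xi)=\ker(A_o-\xi)$, where it is essential that $A$ is a \emph{relation} whose operator part $A_o$ acts in $\cdom A=\sH\ominus\mul A$. One must also keep in mind that a subspace being closed in $\sH$ does not by itself make it closed in $\sH_{-1}(A,\infty)$, since the inclusion $\sH\subset\sH_{-1}(A,\infty)$ is continuous but its range is not closed in general; the norm equivalence on the subspace (equivalently, its completeness in the weaker norm) is precisely what compensates for this. Everything else is a routine substitution into \eqref{def+}.
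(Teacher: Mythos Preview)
Your proof is correct and follows the same approach as the paper: both arguments compute that on $\mul A$ and on $\ker(A-\xi)$ the rigged norms $\|\cdot\|_{\infty,\pm1}$ are constant multiples of $\|\cdot\|$, and deduce closedness from this. Your version is simply more explicit---you spell out the completeness reduction, the identification $\ker(A-\xi)=\ker(A_o-\xi)$, and the caveat about why closedness in $\sH$ alone would not suffice for $\sH_{-1}$---whereas the paper states the norm formulas and concludes in one line.
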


\begin{proof}
Clearly, $\mul A$ is closed in all the topologies by definition of the rigging. If $f \in \ker (A-\xi)$, $\xi \in \dR$, then by \eqref{def+}
\[
 ||f||_{\infty,+1} = (1+|\xi|)||f|| \quad \textrm{and} \quad ||f||_{\infty,-1} = (1+|\xi|)^{-1}||f||.
\]
This shows that the statement holds.
\end{proof}

By definition,
\[ 
(f,f)_{\infty,+1}\ge (f,f) \ge (f,f)_{\infty,-1}, \quad f\in  \sH_{+1}(A,\infty).
\]
It is clear from \eqref{def+} that $\|f\|_{\infty,+1}=\|f\|$ holds
if and only if $P_\infty f\in \ker |A_o|^{\frac{1}{2}}=\ker A_o$.
Similarly, \eqref{def+} implies that $\|f\|_{\infty,-1}=\|f\|$ holds
if and only if $P_\infty f\in \ker A_o$. Hence the
decomposition $\sH=(\cdom A\ominus\ker A)\oplus\ker A \oplus \mul A$
implies corresponding decompositions for the rigged spaces $\sH_{\pm
1}(A,\infty)$:
\begin{equation}\label{Rdec01}
 \sH_{\pm 1}(A,\infty)= \sH_{\pm 1}(A_o\uphar_{\cran A_o},\infty)\oplus \ker A\oplus
 \mul A.
\end{equation}
Furthermore, all the norms $\|f\|_{\infty,+1}$, $\|f\|$, and
$\|f\|_{\infty,-1}$ coincide on $\ker A\oplus \mul A$. If the
operator part $A_o$ of $A$ is bounded, then the topologies on
$\sH_{+1}(A,\infty)$ and $\sH_{-1}(A,\infty)$ are equal to the
original topology on $\sH$, implying that the rigging
$\sH_{+1}(A,\infty) \subset \sH \subset \sH_{-1}(A,\infty)$
collapses to a single (topological) space, equipped in general with
different but equivalent norms.

Observe that $A_o$ and $A_o+\lambda I$, $\lambda\in\dC$, when
treated as operators from $\sH_{+1}(A,\infty)\ominus \mul A$ to
$\sH_{-1}(A,\infty)\ominus \mul A$, are bounded, and by continuity can be uniquely
extended to everywhere defined mappings $(A_o)_\infty$ and
$(A_o+\lambda I)_\infty=(A_o)_\infty+\lambda I$ from
$\sH_{+1}(A,\infty)\ominus \mul A$ to $\sH_{-1}(A,\infty)\ominus
\mul A$. Consequently, define
\begin{equation}\label{Rdec01b}
A_\infty = (A_o)_\infty \oplus \{\{0\}\times \mul A\}.
\end{equation}
For $\lambda\in\rho(A)$, the resolvent operator $(A -\lambda
I)^{-1}$ is bounded as a mapping from $\sH_{-1}(A,\infty)$ to
$\sH_{+1}(A,\infty)$ and, hence, it also admits a unique extension
by continuity:
\begin{equation}\label{reseq1}
 ((A-\lambda I)^{-1})_\infty
 =((A-\lambda I)_\infty)^{-1}
 =(A_\infty-\lambda I)^{-1}, \quad \lambda\in\rho(A).
\end{equation}
The resolvent identity implies that
\begin{equation}\label{reseq2}
(A_\infty-\lambda)^{-1} - (A_\infty-\mu)^{-1} = (\lambda-\mu)(A-\lambda)^{-1}(A_\infty-\mu)^{-1},
\quad \lambda,\mu \in \rho(A).
\end{equation}

Since $(I+|A_o|)^{-1}\oplus I_{\mul A}$ is an isometric operator
from $\sH_{-1}(A,\infty)$ to $\sH_{+1}(A,\infty)$, its closure,
$V_\infty:\sH_{-1}(A,\infty)\to \sH_{+1}(A,\infty)$, is everywhere
defined and unitary. This mapping is called the \emph{Riesz
operator} associated with the rigging. Clearly, the Riesz operator
decomposes with respect to \eqref{Rdec01}:
\begin{equation}\label{Vdec01}
 V_{\infty}=\clos(V_{\infty}\uphar_{\cran A_o}) \oplus I_{\ker A}\oplus I_{\mul A}.
\end{equation}
The Riesz operator can be used to introduce the duality
$(\cdot,\cdot)_{\infty}$ between $\sH_{+1}(A,\infty)$ and
$\sH_{-1}(A,\infty)$ by
\begin{equation}\label{reseq3}
 (f,g)_{\infty} = (f,V_{\infty}g)_{\infty,+1}=(V^{-1}_{\infty} f,g)_{\infty,-1}, \quad (g,f)_\infty = \overline{(f,g)_\infty},
\end{equation}
for $f \in \sH_{+1}(A,\infty)$ and $g \in \sH_{-1}(A,\infty)$.

\begin{remark}\label{inprred}
If $f \in \sH_{+1}(A,\infty)$ and $g \in \sH$, then as a consequence
of \eqref{def+} the duality reduces to the inner
product on $\sH$; i.e. $(f,g)_\infty = (f,g)$. Moreover, note that
the form $(\cdot,\cdot)_{\infty}$ is continuous on
$\sH_{+1}(A,\infty)\times\sH_{-1}(A,\infty)$; see \eqref{reseq3}.
\end{remark}

Using the duality introduce for a relation $H$ from
$\sH_{+1}(A,\infty)$ to $\sH_{-1}(A,\infty)$ the dual
relation, denoted by $H^+$, along the lines of \eqref{A*}:
\begin{equation}\label{dmap1}
 H^+=\{\, \{f,f'\}\in \sH_{+1}(A,\infty)\times\sH_{-1}(A,\infty):\,  (f,g')_\infty =(f',g)_\infty,\, \forall\{g,g'\}\in H\,\}.
\end{equation}
Then $H^+$ is a closed subspace of
$\sH_{+1}(A,\infty)\times\sH_{-1}(A,\infty)$; see
Remark~\ref{inprred}. The dual $H^+$ is connected with the Hilbert
space adjoint $H^*$ from $\sH_{-1}(A,\infty)$ to
$\sH_{+1}(A,\infty)$, see \eqref{A*}, by means of the Riesz
operator:
\begin{equation}\label{B+*}
 H^*=V_\infty H^+ V_\infty=\{\, \{f,V_\infty f'\}:\, \{V_\infty f,f'\}\in H^+
 \,\}.
\end{equation}
In particular, $H$ is a bounded operator from $\sH_{+1}(A,\infty)$

to $\sH_{-1}(A,\infty)$ if and only if its dual mapping
$H^+:\,\sH_{+1}(A,\infty)\to\sH_{-1}(A,\infty)$ is bounded and,
moreover, $\|H^+\|=\|H^*\|=\|H\|$. Finally, the relation $H$ is said
to be \textit{symmetric (with respect to the duality)} or
\textit{self-dual} if $H \subset H^+$ or $H = H^+$, respectively.

If $A$ is considered as a relation from $\sH_{+1}(A,\infty)$ to
$\sH_{-1}(A,\infty)$, then, as in the case where $A$ is an operator,
the dual relation $A^+$ coincides with $A_\infty$.

\begin{proposition}\label{connect+*cor}
Let $A$ be a selfadjoint relation in the Hilbert space
$\{\sH,(\cdot,\cdot)\}$. Then the dual relation $A^+$ (of $A$ as a
relation from $\sH_{+1}(A,\infty)$ to $\sH_{-1}(A,\infty)$)
coincides with $A_\infty$. Moreover, $A_\infty$ is self-dual, i.e.,
$(A_\infty)^+=A_\infty$.
\end{proposition}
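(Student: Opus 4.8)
The plan is to reduce the statement to the case where $A$ is a selfadjoint \emph{operator}, and then to treat that case by a density-plus-continuity argument (so it becomes the $\infty$-rigging analogue of the fact that a selfadjoint operator is its own dual relative to the standard rigging).

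For the reduction I would use the orthogonal decompositions \eqref{Rdec01}, \eqref{Rdec01b}, \eqref{Vdec01}: with respect to $\sH=\cdom A\oplus\mul A$ the rigged spaces split as $\sH_{\pm1}(A,\infty)=\sH_{\pm1}(A_o,\infty)\oplus\mul A$ (the second summand carrying the norm of $\sH$), the operator $A_\infty$ splits as $(A_o)_\infty\oplus(\{0\}\times\mul A)$, and the Riesz operator $V_\infty$ splits accordingly by \eqref{Vdec01}; hence, by \eqref{def+} and \eqref{reseq3}, the duality $(\cdot,\cdot)_\infty$ restricts to the duality associated with $A_o$ on the first summand and to the inner product of $\sH$ on $\mul A$ (cf.\ Remark~\ref{inprred}). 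Since $\{0\}\times\mul A\subseteq A$, every $\{f,f'\}\in A^+$ satisfies $(f,m)_\infty=(f,m)=0$ for all $m\in\mul A$, which forces $f\in\cdom A\cap\sH_{+1}(A,\infty)=\sH_{+1}(A_o,\infty)$; the remaining condition defining $A^+$ then, by the splitting of the duality, involves only $P_\infty f'$ and reads $\{f,P_\infty f'\}\in(A_o)^+$, where $(A_o)^+$ is formed relative to the duality between $\sH_{\pm1}(A_o,\infty)$. Thus $A^+=(A_o)^+\oplus(\{0\}\times\mul A)$, and the identical computation with $A$ replaced by $A_\infty$ gives $(A_\infty)^+=((A_o)_\infty)^+\oplus(\{0\}\times\mul A)$. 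Since $A_\infty=(A_o)_\infty\oplus(\{0\}\times\mul A)$ by \eqref{Rdec01b}, both assertions follow once they are proved for the selfadjoint operator $A_o$.

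So let $B$ be a selfadjoint operator and write $\sH_{\pm1}=\sH_{\pm1}(B,\infty)$, $V=V_\infty$. The elementary fact used throughout is that $\dom B$ is dense in $\sH_{+1}=\dom|B|^{1/2}$ (for $f\in\sH_{+1}$ one has $E([-n,n])f\in\dom B$ and $E([-n,n])f\to f$ in $\sH_{+1}$, the $\sH_{+1}$-norm of the error being the tail of a finite integral). To prove $B_\infty\subseteq B^+$, I would fix $f\in\sH_{+1}=\dom B_\infty$, pick $f_n\in\dom B$ with $f_n\to f$ in $\sH_{+1}$ (hence in $\sH$), and compute, using Remark~\ref{inprred} (the duality reduces to the inner product of $\sH$ as soon as one argument lies in $\sH$), the continuity of $(\cdot,\cdot)_\infty$, the fact that $B_\infty f=\lim_n Bf_n$ in $\sH_{-1}$, and the selfadjointness of $B$, that $(B_\infty f,g)_\infty=\lim_n(Bf_n,g)=\lim_n(f_n,Bg)=(f,Bg)=(f,Bg)_\infty$ for every $g\in\dom B$; this says $\{f,B_\infty f\}\in B^+$. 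For $B^+\subseteq B_\infty$, given $\{f,f'\}\in B^+$, I would subtract the element $\{f,B_\infty f\}$ of the subspace $B^+$ to reduce to showing that $\{0,h\}\in B^+$ implies $h=0$, i.e.\ that $(h,g)_\infty=0$ for all $g\in\dom B$ implies $h=0$; by density of $\dom B$ in $\sH_{+1}$ and continuity of the duality this holds for all $g\in\sH_{+1}$, and \eqref{reseq3} rewrites it as $(Vh,g)_{\infty,+1}=0$ for all $g\in\sH_{+1}$, whence $Vh=0$ and $h=0$ since $V$ is unitary. This gives $B^+=B_\infty$. Finally, from $B\subseteq B_\infty$ (as relations from $\sH_{+1}$ to $\sH_{-1}$) one gets $(B_\infty)^+\subseteq B^+=B_\infty$; for the reverse inclusion it suffices to verify that $B_\infty$ is symmetric with respect to the duality, i.e.\ $(f,B_\infty g)_\infty=(B_\infty f,g)_\infty$ for all $f,g\in\sH_{+1}$, which again follows by approximating $f,g$ by elements of $\dom B$ and using Remark~\ref{inprred}, the selfadjointness of $B$, and the joint continuity of $(\cdot,\cdot)_\infty$.

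I expect the difficulty to be organizational rather than conceptual: one must keep the three pairings — the duality $(\cdot,\cdot)_\infty$, the inner product of $\sH_{+1}$, and the inner product of $\sH$ — carefully apart, and in the reduction step one must check that $\mul A$ reappears as \emph{exactly} the multivalued part of both $A^+$ and $A_\infty$, which is precisely where the condition $(f,m)_\infty=0$ and the observation that $\dom A_o$ is dense in $\sH_{+1}(A_o,\infty)$ but not in $\sH_{+1}(A,\infty)$ enter. The analytic ingredients (density of $\dom B$ in $\sH_{+1}$, non-degeneracy of the duality, and the boundedness and everywhere-definedness of $B_\infty$) are routine spectral-calculus facts or already recorded in the text.
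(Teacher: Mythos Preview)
Your proof is correct and follows essentially the same approach as the paper's: reduce to the operator part $A_o$ via the orthogonal decompositions \eqref{Rdec01}, \eqref{Rdec01b}, \eqref{Vdec01}, and then use that $A_o$ is symmetric with respect to the duality on the dense set $\dom A_o$ (via Remark~\ref{inprred}) together with continuity. The paper compresses your two-inclusion argument for $B^+=B_\infty$ into the single observation that a bounded everywhere-defined operator which is symmetric with respect to the duality is automatically self-dual (since its dual is again bounded and everywhere defined, cf.\ the remark following \eqref{B+*}), and then obtains $A^+=A_\infty$ from $A^+=(\clos A)^+=(A_\infty)^+=A_\infty$; your version simply unpacks this, which is fine.
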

\begin{proof}
Since $A_o$ is a bounded operator from $\sH_{+1}(A,\infty)$ to
$\sH_{-1}(A,\infty)$ and by Remark~\ref{inprred} satisfies
$(f,A_og)_{\infty}=(A_of,g)_{\infty}$ for all $f,g\in\dom A_o$,
$(A_o)_\infty$ is self-dual as a mapping from
$\sH_{+1}(A,\infty)\ominus \mul A$ to $\sH_{-1}(A,\infty)\ominus
\mul A$. Now it follows from the decompositions \eqref{Rdec01},
\eqref{Rdec01b}, and \eqref{Vdec01} that $A^+=\clos A=A_\infty$.
This implies that $(A_\infty)^+=(A^+)^+=\clos A=A_\infty$.
\end{proof}

\subsection{Local versions of rigged spaces}\label{apprig2}
The rigging of the space $\sH$ presented in the previous subsection
is connected with the behavior of $A$ in the neighborhood of
$\infty$. Here riggings determined by the behavior of $A$
in a neighborhood of a point on the real line are considered.

\begin{definition}\label{rigdef}
Let $A$ be a selfadjoint relation in the Hilbert space $\{\sH,(\cdot,\cdot)\}$ and let
$\xi \in \dR$. Define the Hilbert spaces $\sH_{\pm 1}(A,\xi)$ by
$\sH_{\pm 1}(A,\xi)=\sH_{\pm1}((A-\xi)^{-1},\infty)$. Then
$\sH_{+1}(A,\xi) \subset \sH \subset \sH_{-1}(A,\xi)$ is called
\emph{the rigging of $\sH$ with respect to $A$ at $\xi$}.
\end{definition}

Note that the decompositions in \eqref{Rdec01} take for the rigged spaces $\sH_{\pm 1}(A,\xi)$, $\xi \in \dR$,
the following form:
\begin{equation}\label{Rdec02}
 \sH_{\pm 1}(A,\xi)= \sH_{\pm 1}(((A-\xi)^{-1})_o\uphar{ \cran (A_o-\xi)},\infty)
  \oplus \mul A \oplus  \ker (A-\xi).
\end{equation}
The rigging of $\sH$ with respect to $A$ at $\xi \in \dR$ gives rise
to the rigged space closure of the selfadjoint relation
$(A-\xi)^{-1}$:
\begin{equation}\label{Rdec02b}
 ((A-\xi)^{-1})_\xi:\, \sH_{+1}(A,\xi) \to \sH_{-1}(A,\xi);
\end{equation}
in what follows it will be denoted by $(A-\xi)^{-1}_\xi$ for short.
Note that its operator part is a bounded
everywhere defined operator on $\sH_{+1}(A,\xi)\ominus \ker
(A-\xi)$. Moreover, the resolvent $((A-\xi)^{-1}-\lambda)^{-1}$
extends to a bounded operator $R_{\xi}(\lambda): \sH_{-1}(A,\xi) \to
\sH_{+1}(A,\xi)$ with $\ker R_{\xi}(\lambda)=\ker (A-\xi)$ and it
satisfies the analog of \eqref{reseq2}:
\[ 
R_{\xi}(\lambda)-R_{\xi}(\mu) = (\lambda-\mu)((A-\xi)^{-1}-\lambda)^{-1}R_{\xi}(\mu), \quad \lambda,\mu \in \rho((A-\xi)^{-1}).
\]
Furthermore, the corresponding Riesz operator $V_\xi$ from $\sH_{-1}(A,\xi)$ onto
$\sH_{+1}(A,\xi)$ gives rise to a duality:
\[ 
 (f,g)_{\xi} =(f,V_{\xi} g)_{\xi,+1} = (V_{\xi}^{-1} f,g)_{\xi,-1}, \quad f \in \sH_{+1}(A,\xi),\, g \in \sH_{-1}(A,\xi).
\]

Let $\cK_\xi = \ker (A-\xi)$, $\xi \in \dR$, and let $\cK_\infty = \mul A$, then for $\xi \in \dR$ denote by
\begin{equation}\label{defnot}
 P_\xi:\, \sH\to \sH\ominus\cK_{\xi} \quad\text{and}\quad
 P_{\xi}^\pm:\,\sH_{\pm1}(A,\xi)\to \sH_{\pm1}(A,\xi)\ominus\cK_{\xi}
\end{equation}
the orthogonal projections from the rigged space members onto their
subspaces as indicated. Similarly, define the following orthogonal
projections for $\xi \in \dR \cup\{\infty\}$:
\begin{equation}\label{defnot2}
 P_{\infty}:\,\sH\to \sH\ominus\cK_{\infty}
  \quad\text{and}\quad
 P^\pm_{\infty}:\,\sH_{\pm1}(A,\xi) \to \sH_{\pm1}(A,\xi)\ominus\cK_{\infty}.
\end{equation}
Note that, $P_\xi = P_\xi^-\upharpoonright_{ \sH}$ and $P_\xi^+=P_\xi\upharpoonright_{ \sH_{+1}(A,\xi)}$.
Using the introduced projections, the Riesz operator $V_\xi$, and
the spectral family\footnote{For a selfadjoint relation $A$ the
spectral family $\{E_t\}_{t\in \dR}$ in $\sH$ is defined as
$\{E_{o,t}\oplus 0_{\mul A}\}_{t\in \dR}$, where $\{E_{o,t}\}_{t\in
\dR}$ is the spectral family of the operator part $A_o$ of $A$.}
$\{E_t\}_{t\in \dR}$ of $A$, the inner products
$(\cdot,\cdot)_{-1,\xi}$ and $(\cdot,\cdot)_{+1,\xi}$ can be made
explicit. From \eqref{def+} and \eqref{Rdec02} one obtains for $f,g
\in \sH_{+1}(A,\xi)$, $\xi \in \dR$,
\[ 
(f,g)_{+1,\xi} = \left((I-P_\infty)f,g\right) +\left((I-P_{\xi})f,g\right)
  +\int_{\dR\setminus\{\xi\}} \left( 1 + \frac{1}{|t-\xi|}\right)d\left(E_t P_\infty f,g \right),
\]
and a similar formula for $f,g \in \sH_{-1}(A,\xi)$ is obtained via
$(f,g)_{-1,\xi} = (V_\xi f,V_\xi g)_{+1,\xi}$. Finally, for
$R_{\xi}(\lambda)$ the following extended functional calculus result
holds:
\begin{equation}\label{spectralQ}
\begin{split}
 \left(R_{\xi}(\lambda)f,g\right)_\xi  &= -\frac{1}{\lambda}\left((I-P_\infty^-)f, (I-P_\infty^-)g\right)\\
 &\hspace{0cm}+ \int_{\dR\setminus\{\xi\}} \frac{\left(1+|t-\xi|\right)^2d\left(E_t P_\infty V_\xi f, V_\xi g\right)}{(1-\lambda(t-\xi))(t-\xi)},
\end{split}
\end{equation}
where $f,g \in \sH_{-1}(A,\xi)$ and $\lambda \in \rho((A-\xi)^{-1})$, $\xi \in \dR$.

\subsection{Realization of Kac-Donoghue classes}\label{sec52}
The following proposition gives some characterizations for functions in
the class $\cN(\xi,1)$; for $\xi =\infty$ the result has been
established in \cite[Theorem 3.1]{HKS97},
\cite[Proposition~2.2]{HLS95}; for $\xi =0$ see also \cite[Theorem
7.1]{HKS98}.

\begin{proposition}\label{charclass}
Let $Q$ be a Nevanlinna function and let $A$ be a selfadjoint
extension in the Hilbert space $\{\sH,(\cdot,\cdot)\}$ such that
\eqref{Qfd} holds for some $v \in \sH$. Moreover, let
$\gamma_\lambda$ and $S$ with $n_+(S)=n_-(S)=1$ as in
\eqref{defgamwithbt} and \eqref{recoversym}, respectively. Then the
following statements are equivalent for $\xi \in \dR$:
\begin{enumerate}
\def\labelenumi {\rm (\roman{enumi})}
\item $Q \in \cN(\xi,1)$;
\item $\gamma_\lambda \in \ran|A-\xi|^\frac{1}{2}$, $\lambda \in \rho(A)$;
\item there exists $\omega \in \sH_{-1}(A,\xi)$, $\omega \notin \ker (A-\xi)$, such that
\[ S = \{ \{f,f'\} \in A: (f'- \xi f,\omega)_\xi=0 \};\]
\item there exists $\omega \in \sH_{-1}(A,\xi)$, $\omega \notin \ker (A-\xi)$, and $\eta_\xi \in \dR$ such that $Q$ has the representation
\[ 
 Q(\lambda) =  \eta_\xi +
(\lambda-\xi)(\gamma_\lambda ,\omega)_\xi, \quad \lambda \in
\rho(A),
\]
where $\eta_\xi = \lim_{\lambda \widehat{\to} \xi} Q(\lambda)$ and
\[ 
\gamma_\lambda  = \left(I+(\xi-\lambda)(A-\xi)^{-1}_\xi \right)^{-1}\omega, \quad \lambda \in \rho(A).
\]
\end{enumerate}
The function $\gamma_\lambda$ in (iv) is the $\gamma$-field for $Q$.
\end{proposition}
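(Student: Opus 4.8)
The plan is to prove (i)$\Leftrightarrow$(ii) from the spectral picture of the realization, to manufacture the element $\omega$ of (iii)--(iv) out of $\gamma_\lambda$ by means of the Riesz operator $V_\xi$ and the rigged resolvent $(A-\xi)^{-1}_\xi$, and finally to close the loop (iv)$\Rightarrow$(iii) and back to (ii) via the recovery formula \eqref{recoversym}. For (i)$\Leftrightarrow$(ii): let $\{E_t\}$ be the spectral family of $A$. From \eqref{charM} one has $\IM Q(\lambda)=(\IM\lambda)\|\gamma_\lambda\|^2$, and comparing \eqref{Qfd} with \eqref{Nevfullr} through Stieltjes inversion identifies the spectral measure of $Q$ on $\dR$ as $d\sigma(t)=(1+t^2)\,d(E_tP_\infty v,P_\infty v)$ (with $\beta=\|(I-P_\infty)v\|^2$). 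Modulo its $\mul A$-component $\gamma_\lambda$ equals $\int_\dR\frac{t-\lambda_0}{t-\lambda}\,dE_tP_\infty v$ by \eqref{chargamma}, and since $|t-\lambda_0|^2/(|t-\xi|\,|t-\lambda|^2)$ is comparable to $|t-\xi|^{-1}$ near $t=\xi$ and to $|t|^{-1}$ for large $|t|$, the membership $\gamma_\lambda\in\ran|A-\xi|^{\frac12}$ is equivalent --- the far part being automatic, and $\ran|A-\xi|^{\frac12}$ being orthogonal to $\ker(A-\xi)$ --- to the conjunction of $\int_{|t-\xi|<1}|t-\xi|^{-1}\,d(E_tP_\infty v,P_\infty v)<\infty$ with $\gamma_\lambda\perp\ker(A-\xi)$. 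The first is exactly the criterion of Proposition~\ref{charclassspectral} after the identification of $d\sigma$; the second holds automatically once (i) is assumed, since then $\lim_{z\wh\to\xi}Q(z)\in\dR$, so $Q$ has no pole at $\xi$, which rules out a $\ker(A-\xi)$-component of $v$ (such a component would contribute a pole of $Q$ at $\xi$ with negative residue). This gives (i)$\Leftrightarrow$(ii).

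Assume (ii). By the decomposition \eqref{Rdec02} the estimate just obtained says precisely that $\gamma_\lambda\in\sH_{+1}(A,\xi)$, so $\omega_\lambda:=\bigl(I+(\xi-\lambda)(A-\xi)^{-1}_\xi\bigr)\gamma_\lambda$ is a well-defined vector of $\sH_{-1}(A,\xi)$; using \eqref{chargamma} and the resolvent identity for $R_\xi$ (the local analogue of \eqref{reseq2}) one checks that $\omega_\lambda$ does not depend on $\lambda\in\rho(A)$, and we set $\omega:=\omega_\lambda$ (when $\xi\in\rho(A)$ this is literally $(A-\xi)v$ with $v=\gamma_\xi\in\sH$). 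Inverting, $\gamma_\lambda=\bigl(I+(\xi-\lambda)(A-\xi)^{-1}_\xi\bigr)^{-1}\omega$, the last display of (iv), and $\omega\notin\ker(A-\xi)$ because $\gamma_\lambda\neq0$. The representation of $Q$ follows from \eqref{charM} rewritten through the duality $(\cdot,\cdot)_\xi$ --- equivalently from the extended functional calculus \eqref{spectralQ} applied to $R_\xi$ --- which gives $Q(\lambda)-Q(\mu)^*=(\lambda-\xi)(\gamma_\lambda,\omega)_\xi-(\mu-\xi)\overline{(\gamma_\mu,\omega)_\xi}$, hence $Q(\lambda)=\eta_\xi+(\lambda-\xi)(\gamma_\lambda,\omega)_\xi$ for a real constant $\eta_\xi$; letting $\lambda\wh\to\xi$ and invoking Proposition~\ref{charclassspectral} and Lemma~\ref{pointlimit} identifies $\eta_\xi=\lim_{\lambda\wh\to\xi}Q(\lambda)$. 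Thus (ii)$\Rightarrow$(iv), and this $\gamma_\lambda$ is by construction the $\gamma$-field \eqref{defgamwithbt} of $Q$, which is the final assertion.

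For (iv)$\Rightarrow$(iii) rewrite \eqref{recoversym}: for $\{f,f'\}\in A$ split $f'-\overline\lambda f=(f'-\xi f)+(\xi-\overline\lambda)f$ and use $\{f,f'-\xi f\}\in A-\xi$, the definition of $\omega$, and Remark~\ref{inprred} (the duality reduces to $(\cdot,\cdot)$ on $\sH$; note $f'-\xi f\in\sH_{+1}(A,\xi)$) to obtain $(f'-\overline\lambda f,\gamma_\lambda)=c_\lambda\,(f'-\xi f,\omega)_\xi$ with $c_\lambda\ne0$, so \eqref{recoversym} becomes $S=\{\{f,f'\}\in A:(f'-\xi f,\omega)_\xi=0\}$. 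Conversely, given $\omega$ as in (iii): the functionals $\{f,f'\}\mapsto(f'-\overline\lambda f,\gamma_\lambda)$ and $\{f,f'\}\mapsto(f'-\xi f,\omega)_\xi$ on $A$ are nonzero (the first since $\gamma_\lambda\neq0$, the second since $\omega\notin\ker(A-\xi)$) with common kernel $S$, which has codimension one in $A$ since $n_\pm(S)=1$; hence they are proportional, and solving the resulting identity for $\gamma_\lambda$ (using that $\{f'-\overline\lambda f:\{f,f'\}\in A\}=\sH$ for $\lambda\in\rho(A)$) displays $\gamma_\lambda$ as a constant multiple of $\omega$ mapped by the extended resolvent into $\sH_{+1}(A,\xi)$, i.e.\ (ii). Together with (i)$\Leftrightarrow$(ii) the equivalences are closed.

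The main obstacle is the rigged-space bookkeeping when $\xi\in\sigma(A)$: verifying that $\omega$ genuinely lies in $\sH_{-1}(A,\xi)$ and is $\lambda$-independent, that the pairing $(\gamma_\lambda,\omega)_\xi$ is legitimate and reproduces $Q$ through \eqref{spectralQ}, and keeping careful track of $\mul A$ and $\ker(A-\xi)$ in \eqref{Rdec02} --- in particular the point, needed throughout, that $\ran|A-\xi|^{\frac12}$ and $\sH_{+1}(A,\xi)$ differ exactly by $\ker(A-\xi)$, a component that $\gamma_\lambda$ and $\omega$ lack precisely when $Q\in\cN(\xi,1)$.
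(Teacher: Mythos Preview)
Your proof is correct and takes a genuinely different route from the paper. The paper does not argue directly: it invokes the transformation $Q_\infty(z):=-Q(\xi+1/z)$, uses \eqref{transsets} to reduce $\cN(\xi,1)$ to the Kac class $\cN(\infty,1)$, and then transports the already-known equivalences at $\infty$ (from \cite{HKS97,HLS95}) back to $\xi$ via Lemma~\ref{Lugerres} and the identity $(\lambda-\xi)\gamma_\lambda=\gamma^\infty_{(\lambda-\xi)^{-1}}$. In other words, the paper proves nothing at the finite point $\xi$ itself; everything is pulled back from the $\infty$-case and the translation $A\mapsto (A-\xi)^{-1}$. You, by contrast, work intrinsically at $\xi$: you read (i)$\Leftrightarrow$(ii) off the spectral measure directly, manufacture $\omega$ as $(I+(\xi-\lambda)(A-\xi)^{-1}_\xi)\gamma_\lambda$ and verify $\lambda$-independence from the resolvent identity, and close (iii)$\Leftrightarrow$(iv)$\Leftrightarrow$(ii) by duality and the codimension-one argument. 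Your approach is longer but self-contained (no appeal to the $\infty$-literature), and it makes the identification $\sH_{+1}(A,\xi)=\ran|A-\xi|^{1/2}\oplus\ker(A-\xi)$ explicit, which is exactly what the later sections (e.g.\ Lemmas~\ref{realizationprepN1} and~\ref{prepnod}) need. The paper's approach buys brevity and makes the structural parallel with $\xi=\infty$ transparent; yours buys independence from external references and exposes the rigged-space mechanics that are reused throughout Section~\ref{sec5}. One small point worth tightening in your (iii)$\Rightarrow$(ii) step: the assertion that the ``extended resolvent'' carries $\omega\in\sH_{-1}(A,\xi)$ into $\sH_{+1}(A,\xi)$ is precisely the boundedness of $T^*=(I+(\lambda-\xi)(A-\lambda)^{-1})$ from $\sH_{-1}(A,\xi)$ to $\sH_{+1}(A,\xi)$, which follows from the spectral estimate $\sup_t(1+|t-\xi|)^2/|t-\lambda|^2<\infty$ but is worth stating.
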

\begin{proof}
If $Q \in \cN(\xi,1)$, then $Q_\infty(z) := -Q(\xi + 1/z) \in
\cN(\infty,1)$; see \eqref{transsets}. Let
$\{B,\gamma^\infty_\lambda\}$ be a realization for $Q_\infty$, see
Remark~\ref{Qfunctions}. Then by Lemma~\ref{Lugerres}, and the
discussion following it, $\{A,\gamma_\lambda\}$ is a realization for
$Q$, where $A = B^{-1}+\xi$ and
\begin{equation}\label{congam}
(\lambda-\xi)\gamma_\lambda = \gamma^\infty_{(\lambda-\xi)^{-1}}.
\end{equation}
Consequently, the equivalence of (i) and (ii) follows from the fact that $Q_\infty \in \cN(\infty,1)$ if and only if $\gamma^\infty_\lambda \in \dom |B|^\frac{1}{2}$, see \cite[Proposition 2.2]{HLS95}. Furthermore, according to the characterizations for $\xi=\infty$ in \cite[Theorem~3.1]{HKS97} (see also \cite[Section~4]{HLS95}) one has the representations
$Q_\infty(\lambda)=\eta+((B_\infty-\lambda)^{-1}\omega,\omega)$ and
$\gamma_\lambda^\infty=(B_\infty-\lambda)^{-1}\omega$ for some
$\omega\in\sH_{-1}(B,\infty)=\sH_{-1}(A,\xi)$, under the additional minimality
assumption on $S$ (cf. \eqref{minimal}) or the assumption that
$(S-\xi)^{-1}$ is an operator. Combining this with the formula
\eqref{congam} gives
\[
 \gamma^\xi_\lambda=\frac{((A-\xi)^{-1}_\xi-(\lambda-\xi)^{-1})^{-1}}{\lambda-\xi}\, \omega
 =-(I-(\lambda-\xi)(A-\xi)^{-1}_\xi)^{-1}\omega,
\]
and Lemma~\ref{Lugerres} then yields
\[
 Q(\lambda)=-Q_\infty\left((\lambda-\xi)^{-1}\right)=-\eta-\left(\gamma^\infty_{(\lambda-\xi)^{-1}},\omega\right)
 =-\eta-(\lambda-\xi)(\gamma_\lambda,\omega)
\]
with $\omega\in\sH_{-1}(A,\xi)$; cf. Definition~\ref{rigdef}. It is
easy to see that in (iii) and (iv) the minimality assumption on $S$,
or the assumption that $(S-\xi)^{-1}$ is an operator, can be
replaced by the condition $\omega\not\in\mul(A-\xi)^{-1}$; in the
opposite case $S=A$ in (iii) and $Q$ is constant in (iv).
This yields all the desired conclusions.
\end{proof}

\begin{remark}\label{rem5.7}
The element $\omega$ in Proposition~\ref{charclass} is not
unique: if $\omega' \in \sH_{-1}(A,\xi)$ is such that
$\omega' - \omega \in \ker (A-\xi)$, then the Nevanlinna
functions associated to them in part (iv) are the same. This
non-uniqueness of $\omega$ can be used to express $\eta_\xi$
in terms of $\omega$ e.g. in the form $\eta_\xi=c_\xi\|(I-P_\xi^-)
\omega\|^2_{\xi,-1}$, $c_\xi=\pm 1$; see Theorem~\ref{eerstestap}.

Furthermore, by the assumptions in Proposition~\ref{charclass} $S$
has defect numbers $(1,1)$. However, all the statements (i)--(iv) in
Proposition~\ref{charclass} remain equivalent also if $S=A$: in this
case the function $Q$ in (i) is constant, $\gamma_\lambda= 0$ in
(ii), and $\omega\in\ker(A-\xi)$ in (iii) and (iv). In what follows,
such constant functions may occur.
\end{remark}

Next Proposition~\ref{charclass} is augmented by giving a structured realization
for the class $\cN(\xi,1)$, $\xi \in \dR \cup\{\infty\}$, as Weyl
functions of boundary triplets. First observe the following
lemma which is a consequence of Proposition~\ref{charclass} and \cite[Theorem 6.2]{DHS99}.

\begin{lemma}\label{realizationprepinfty}
Let $A$ be a selfadjoint relation in the Hilbert space
$\{\sH,(\cdot,\cdot)\}$ and let $\omega \in \sH_{-1}(A,\infty)$,
$\omega \notin \mul A$. Then
\[ S_\omega = \{\{f,f'\} \in A: (f,\omega)_\infty=0 \}
\]
is a closed symmetric relation with $n_+(S_\omega)=n_-(S_\omega)=1$
and its adjoint is
\[ \begin{split} S^*_\omega &= \{\{f ,f' - \omega c_f \} \in \sH\times \sH: \{f,f'\} \in A_\infty, \, c_f \in \dC \}. \end{split}\]
A boundary triplet for $S^*_\omega$ is given by $\{\dC,\Gamma_0^{\omega,\eta_\infty},\Gamma_1^{\omega,\eta_\infty}\}$, where
\[ 
\Gamma_0^{\omega,\eta_\infty} \widehat{f} = c_f \quad \textrm{and}
\quad \Gamma_1^{\omega,\eta_\infty}\widehat{f} = \eta_\infty c_f +
(f,\omega)_\infty
\]
for $\widehat{f}=\{f, f' - \omega c_f \}\in S^*_\omega$ and
$\eta_\infty \in \dR$. The corresponding $\gamma$-field and Weyl
function are given by
\[ 
 \gamma_\lambda^{\omega} = (A_\infty - \lambda)^{-1}\omega \quad \textrm{and}
 \quad M_\omega(\lambda) =  \eta_\infty +(\gamma_\lambda^\omega ,\omega)_\infty, \quad \lambda \in
\rho(A).
\]
In particular, the Weyl function $M_\omega$ belongs to $\cN(\infty,1)$ and
$\eta_\infty = \lim_{\lambda \widehat{\to} \infty} M_\omega(\lambda)$.
\end{lemma}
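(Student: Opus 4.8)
The plan is to derive the structural assertions about $S_\omega$, its adjoint, and the boundary triplet essentially from \cite[Theorem~6.2]{DHS99} applied to the pair $(A,\omega)$, and to obtain the Kac--Donoghue membership $M_\omega\in\cN(\infty,1)$ together with the identity $\eta_\infty=\lim_{\lambda\wh\to\infty}M_\omega(\lambda)$ from Proposition~\ref{charclass} in its $\xi=\infty$ form (i.e.\ from \cite[Proposition~2.2]{HLS95}, \cite[Theorem~3.1]{HKS97}). I would carry this out in three steps.

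First I would check that $S_\omega$ is a closed symmetric relation with $n_+(S_\omega)=n_-(S_\omega)=1$. For $\{f,f'\}\in A$ one has $f\in\dom A_o\subseteq\sH_{+1}(A,\infty)$, so $(f,\omega)_\infty$ is defined, and by \eqref{def+} together with $\||A_o|^{\frac12}f\|^2=(|A_o|f,f)\le\tfrac12(\|A_of\|^2+\|f\|^2)$ the functional $\{f,f'\}\mapsto(f,\omega)_\infty$ is bounded on the graph of $A$ with respect to the graph norm. Hence $S_\omega=A\cap\ker(\cdot,\omega)_\infty$ is closed, and it is symmetric since $S_\omega\subseteq A=A^*\subseteq S_\omega^*$. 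Were $S_\omega=A$, then $(f,\omega)_\infty=0$ for all $f\in\dom A_o$; since $\dom A_o$ is $\|\cdot\|_{\infty,+1}$-dense in $\sH_{+1}(A,\infty)\ominus\mul A$, continuity and nondegeneracy of the duality \eqref{reseq3} together with the splitting \eqref{Rdec01} would give $\omega\in\mul A$, against the hypothesis. Thus $\dim(A/S_\omega)=1$, and since a codimension-one closed symmetric restriction of a selfadjoint relation has deficiency indices $(1,1)$, the claim follows.

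Next I would obtain the description of $S_\omega^*$, the boundary triplet $\{\dC,\Gamma_0^{\omega,\eta_\infty},\Gamma_1^{\omega,\eta_\infty}\}$, the $\gamma$-field $\gamma_\lambda^\omega=(A_\infty-\lambda)^{-1}\omega$, and the Weyl function $M_\omega$ from \cite[Theorem~6.2]{DHS99}, using that $A_\infty$ is the dual $A^+$ and is self-dual (Proposition~\ref{connect+*cor}) and that $\gamma_\lambda^\omega$ satisfies \eqref{chargamma} by the resolvent identity \eqref{reseq2}. For a self-contained verification I would instead: identify $\dom S_\omega^*=\dom A\hplus\textrm{span}\{\gamma_\mu^\omega\}$ and read off the stated form of $S_\omega^*$ from \eqref{A*} and \eqref{B+*}; verify Green's identity by expanding, for $\widehat f=\{f,f'-\omega c_f\}$ and $\widehat g=\{g,g'-\omega c_g\}$ with $\{f,f'\},\{g,g'\}\in A_\infty$, the quantity $(f'-\omega c_f,g)-(f,g'-\omega c_g)$, rewriting the $\sH$-products through the duality (Remark~\ref{inprred}) so that the term $(A_\infty f,g)_\infty-(f,A_\infty g)_\infty$ cancels by self-duality, and observing that what remains, $\overline{c_g}(f,\omega)_\infty-c_f\overline{(g,\omega)_\infty}$, is precisely $\Gamma_1^{\omega,\eta_\infty}\widehat f\,\overline{\Gamma_0^{\omega,\eta_\infty}\widehat g}-\Gamma_0^{\omega,\eta_\infty}\widehat f\,\overline{\Gamma_1^{\omega,\eta_\infty}\widehat g}$; check surjectivity of $\Gamma=(\Gamma_0^{\omega,\eta_\infty},\Gamma_1^{\omega,\eta_\infty})$ from the freedom in $c_f$ and the fact that $(f,\omega)_\infty$ can be prescribed on $\dom S_\omega^*$ (as $\omega\ne0$); and compute $\pi_1(\Gamma_0^{\omega,\eta_\infty}\uphar\widehat\sN_\lambda(S_\omega^*))^{-1}$ and $\Gamma_1^{\omega,\eta_\infty}(\Gamma_0^{\omega,\eta_\infty}\uphar\widehat\sN_\lambda(S_\omega^*))^{-1}$ on the defect pair $\{\gamma_\lambda^\omega,\lambda\gamma_\lambda^\omega\}$, which returns $\gamma_\lambda^\omega$ and $M_\omega(\lambda)=\eta_\infty+(\gamma_\lambda^\omega,\omega)_\infty$.

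Finally I would settle the analytic claims: $M_\omega$ is a Nevanlinna function by the second part of Theorem~\ref{unifstreal}; it belongs to $\cN(\infty,1)$ by Proposition~\ref{charclass} with $\xi=\infty$, since its $\gamma$-field $\gamma_\lambda^\omega=(A_\infty-\lambda)^{-1}\omega$ lies in $\sH_{+1}(A,\infty)=\dom|A_o|^{\frac12}\oplus\mul A$ for $\lambda\in\rho(A)$; and to get $\eta_\infty=\lim_{\lambda\wh\to\infty}M_\omega(\lambda)$ I would write $M_\omega(\lambda)=\eta_\infty+\int_\dR(t-\lambda)^{-1}\,d\rho(t)$ with $\int_\dR d\rho(t)/(1+|t|)<\infty$ (Proposition~\ref{charclassspectral}), note by dominated convergence that $\int_\dR(t-iy)^{-1}\,d\rho(t)\to0$ as $y\to\infty$, and invoke that the nontangential limit at $\infty$ exists in $\dR$ (again Proposition~\ref{charclassspectral}), so it must equal $\eta_\infty$. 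I expect the genuine work to lie in the self-contained part of the second step --- pinning down $\dom S_\omega^*$ and tracking which brackets denote the $\sH$-inner product and which the $(\cdot,\cdot)_\infty$-duality, so that all the cancellations are legitimate; with the rigged-space machinery of Section~\ref{sec5} and \cite[Theorem~6.2]{DHS99} available, the remaining verifications are routine.
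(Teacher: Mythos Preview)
Your approach matches the paper's: the lemma is stated there as a direct consequence of Proposition~\ref{charclass} and \cite[Theorem~6.2]{DHS99}, which is exactly what you invoke for the structural assertions and the $\cN(\infty,1)$-membership respectively. Your additional self-contained verification in step two is more detailed than anything the paper provides, but it is correct and simply fleshes out what those citations deliver.
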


The Weyl function associated to the boundary triplet in
Lemma~\ref{realizationprepinfty} has, see \cite[(2.7)]{HKS97},
the integral representation
\begin{equation}\label{fullevptinfty}
M_\omega(\lambda) = \eta_\infty + \int_{\dR}\frac{(1 + |t|)^2}{t-\lambda}d\left(E_tP_\infty V_\infty\omega,V_\infty\omega\right).
\end{equation}

In Lemma~\ref{realizationprepinfty} the formulas for $S_\omega$,
$S^*_\omega$, and the boundary mappings
$\Gamma_0^{\omega,\eta_\infty}$, $\Gamma_1^{\omega,\eta_\infty}$
remain the same when $\omega \in \sH_{-1}(A,\xi)$ is replaced by
$\omega' \in \sH_{-1}(A,\xi)$ with $\omega' - \omega \in \mul A$;
cf. Remark~\ref{rem5.7}. If $\omega \in \mul A$, then $S_\omega = A =
S_\omega^*$, $\Gamma = \Gamma_0 \times \Gamma_1$
becomes a so-called boundary relation and its Weyl function is $\eta
\in \dR$, see \cite[Example~6.1]{DHMS06} or Section~\ref{secext}
below.

The next result is the analog of Lemma~\ref{realizationprepinfty}
for $\cN(\xi,1)$, $\xi\in\dR$. Recall, that
$(A-\xi)_\xi:=((A-\xi)_\xi^{-1})^{-1}$ denotes the closure of
$(A-\xi)$ in $\sH_{-1}(A,\xi)\times\sH_{+1}(A,\xi)$.

\begin{lemma}\label{realizationprepN1}
Let $A$ be a selfadjoint relation in the Hilbert space
$\{\sH,(\cdot,\cdot)\}$ and let $\omega \in \sH_{-1}(A,\xi)$, $\xi
\in \dR$, $\omega \notin \ker (A-\xi)$. Then
\[
S_\omega = \{\{f,f'\} \in A: (f'-\xi f,\omega)_\xi=0 \}
\]
is a closed symmetric relation with $n_+(S_\omega)=n_-(S_\omega)=1$
and its adjoint is
\[ \begin{split}
S^*_\omega &= \{\{f + \omega c_f,f'+ \xi f  + \xi \omega c_f \} \in \sH^2:
 \{f,f'\} \in (A-\xi)_\xi, \, c_f \in \dC \}.
 \end{split}
\]
A boundary triplet for $S^*_\omega$ is given by
$\{\dC,\Gamma_0^{\omega,\eta_\xi},\Gamma_1^{\omega,\eta_\xi}\}$,
where
\[
\Gamma_0^{\omega,\eta_\xi} \widehat{f} = c_f \quad \textrm{and} \quad \Gamma_1^{\omega,\eta_\xi}\widehat{f}
= \eta_\xi c_f + (f',\omega)_\xi
\]
for $\widehat{f}=\{f + \omega c_f,f'+ \xi f  + \xi \omega c_f \}\in
S^*_\omega$ and $\eta_\xi \in \dR$. The corresponding $\gamma$-field
and Weyl function are given by
\[ 
\gamma_\lambda^{\omega} = (I+(\xi - \lambda)(A- \xi)^{-1}_\xi )^{-1}\omega,
 \quad
M_\omega(\lambda) = \eta_\xi + (\lambda-\xi)(\gamma_\lambda^\omega,\omega)_\xi, \quad \lambda \in
\rho(A).
\]
In particular, the Weyl function $M_\omega$ belongs to $\cN(\xi,1)$ and $\eta_\xi = \lim_{\lambda
\widehat{\to} \xi} M_\omega(\lambda)$.
\end{lemma}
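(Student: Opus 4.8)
The plan is to reduce everything to the already settled case $\xi=\infty$ of Lemma~\ref{realizationprepinfty} by passing to the auxiliary selfadjoint relation $B:=(A-\xi)^{-1}$. By Definition~\ref{rigdef} the riggings coincide, $\sH_{\pm1}(A,\xi)=\sH_{\pm1}(B,\infty)$, and hence so do the associated dualities, $(\cdot,\cdot)_\xi=(\cdot,\cdot)_\infty$; moreover $\ker(A-\xi)=\mul B$, so the hypotheses on $\omega$ read exactly $\omega\in\sH_{-1}(B,\infty)$, $\omega\notin\mul B$. Applying Lemma~\ref{realizationprepinfty} to the pair $(B,\omega)$ yields a closed symmetric relation $S^B_\omega=\{\{g,g'\}\in B:\,(g,\omega)_\infty=0\}$ with $n_+(S^B_\omega)=n_-(S^B_\omega)=1$, its adjoint, a boundary triplet, and the associated $\gamma$-field $(B_\infty-\lambda)^{-1}\omega$ and Weyl function $M^B_\omega\in\cN(\infty,1)$. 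The first step is then the purely algebraic identity
\[S^B_\omega=(S_\omega-\xi)^{-1},\qquad\text{equivalently}\qquad S_\omega=(S^B_\omega)^{-1}+\xi,\]
valid because for $\{f,f'\}\in A$ one has $\{f'-\xi f,f\}\in B$ and $(f'-\xi f,\omega)_\xi=(f'-\xi f,\omega)_\infty$; thus $S_\omega$ is the image of $S^B_\omega$ under inversion followed by the real translation $t\mapsto t+\xi$.

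Since inversion and a real translation of a relation preserve closedness, symmetry, and equality of the deficiency indices, the first two assertions follow at once: $S_\omega$ is closed symmetric with $n_+(S_\omega)=n_-(S_\omega)=1$. For the adjoint I would invoke the elementary relation identities $(T^{-1})^*=(T^*)^{-1}$ and $(T-\xi)^*=T^*-\xi$ for $\xi\in\dR$, which together give $S_\omega^*=\bigl((S^B_\omega)^*\bigr)^{-1}+\xi$. Inserting the description of $(S^B_\omega)^*$ from Lemma~\ref{realizationprepinfty} and using that $(A-\xi)_\xi=(B_\infty)^{-1}$ — which holds by the very definitions of $(A-\xi)^{-1}_\xi$, $(A-\xi)_\xi$ and $B_\infty$ — produces exactly the stated formula for $S^*_\omega$ after a harmless renaming of the free parameter $c_g\mapsto-c_f$.

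It remains to exhibit the boundary triplet and to identify the $\gamma$-field and Weyl function. Here one must keep in mind that a symmetric relation admits many boundary triplets, so one cannot merely push the triplet of Lemma~\ref{realizationprepinfty} through the transform and expect the stated one; the clean route is to \emph{take} $\Gamma_0^{\omega,\eta_\xi},\Gamma_1^{\omega,\eta_\xi}$ as in the statement and to verify directly, from the explicit description of $S^*_\omega$ just obtained, the abstract Green identity — which, after inserting $\widehat f=\{f+\omega c_f,f'+\xi f+\xi\omega c_f\}$, collapses to the known Green identity for the pairs $\{g,g'-\omega c_g\}$ of $(S^B_\omega)^*$ together with the cancellation $(\xi u,v)-(u,\xi v)=0$ — and the surjectivity of $\widehat f\mapsto(c_f,\eta_\xi c_f+(f',\omega)_\xi)$, which is immediate since $c_f$ and $(f',\omega)_\xi$ vary over $\dC$ independently. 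Computing $A_0=\ker\Gamma_0^{\omega,\eta_\xi}$ gives $A_0=A$, hence $\rho(A_0)=\rho(A)$, and solving for the defect elements of $S^*_\omega$ in the formulas $\gamma_\lambda=\pi_1(\Gamma_0\uphar\widehat\sN_\lambda(S^*_\omega))^{-1}$, $M_\omega=\Gamma_1(\Gamma_0\uphar\widehat\sN_\lambda(S^*_\omega))^{-1}$, together with the transform relation $(\lambda-\xi)\gamma_\lambda=(B_\infty-(\lambda-\xi)^{-1})^{-1}\omega$ of Lemma~\ref{Lugerres}, yields $\gamma^\omega_\lambda=(I+(\xi-\lambda)(A-\xi)^{-1}_\xi)^{-1}\omega$ and $M_\omega(\lambda)=\eta_\xi+(\lambda-\xi)(\gamma^\omega_\lambda,\omega)_\xi$ — precisely the expressions already recorded in Proposition~\ref{charclass}~(iv). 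Finally $M_\omega\in\cN(\xi,1)$ and $\eta_\xi=\lim_{\lambda \wh\to \xi}M_\omega(\lambda)$ follow from the equivalence \eqref{transsets} between $\cN(\xi,1)$ and $\cN(\infty,1)$ applied to $M^B_\omega\in\cN(\infty,1)$. The one genuinely delicate point is the bookkeeping of signs and normalizations across the transform (the $-Q$ in the definition of $Q_\xi$, the factor $(\lambda_0-\xi)^{-1}$ in Lemma~\ref{Lugerres}, and the sign freedom in boundary triplets), which has to be pinned down so that the final formulas carry the precise signs stated; beyond that the argument is a straightforward translation of the $\xi=\infty$ case.
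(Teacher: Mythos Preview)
Your proof is correct and follows essentially the same route as the paper: both reduce to the $\xi=\infty$ case of Lemma~\ref{realizationprepinfty} via the selfadjoint relation $B=(A-\xi)^{-1}$, use the identification $\sH_{\pm1}(A,\xi)=\sH_{\pm1}(B,\infty)$ from Definition~\ref{rigdef}, and read off $S_\omega$, $S_\omega^*$, the $\gamma$-field, and the Weyl function from their counterparts for $B$. The only notable difference is in how the boundary triplet is established: the paper packages the transform $\{f,f'\}\mapsto\{f'-\xi f,f\}$ as a $J_\sH$-unitary map $U_\xi$ and invokes \cite[Theorem~2.10]{DHMS09} to conclude that $\Gamma^{\omega,\eta_\infty}\circ U_\xi$ is a boundary triplet for $S_\omega^*$, then computes the $\gamma$-field and Weyl function by applying the boundary mappings to an explicit decomposition of the defect element $\{\gamma_\lambda^\omega,\lambda\gamma_\lambda^\omega\}$; you instead verify Green's identity and surjectivity for the stated $\Gamma_j^{\omega,\eta_\xi}$ directly and then appeal to Proposition~\ref{charclass}~(iv) and Lemma~\ref{Lugerres} for the $\gamma$-field and Weyl function. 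Both arguments are short and amount to the same transformation.
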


\begin{proof}
The formulas for $S_\omega$ and $S^*_\omega$ are obtained from
Lemma~\ref{realizationprepinfty} by means of the transformation
$H \mapsto (H-\xi)^{-1}$. Next observe that the mapping $U_\xi:\sH^2\to \sH^2$
\[
U_\xi \{f,f'\} = \{f'-\xi f, f\}, \quad f,f' \in \sH,
\]
is $J_\sH$-unitary where $J_\sH\{f,f'\} = \{-if',if\}$. Therefore the composition
$\Gamma^{\omega,\eta_\infty}\circ U_\xi$, where
$\Gamma^{\omega,\eta_\infty}=\{\Gamma^{\omega,\eta_\infty}_0,\Gamma^{\omega,\eta_\infty}_1\}$ is the boundary
triplet from Lemma~\ref{realizationprepinfty}, defines a
boundary triplet for $S^*_\omega$, cf. \cite[Theorem~2.10]{DHMS09}, it coincides with $\{\Gamma^{\omega,\eta_\xi}_0,\Gamma^{\omega,\eta_\xi}_1\}$.
By Proposition~\ref{charclass} $\gamma_\lambda^\omega$ spans the
defect space $\ker(S_\omega^*-\lambda)$. A direct calculation shows
that $\{\gamma_\lambda^\omega,\lambda \gamma_\lambda^\omega \}$ can be written as
\[  \{(\lambda-\xi)P^-_\xi(A-\xi)^{-1}_\xi \gamma_\lambda^\omega + P^-_\xi\omega,
(\lambda-\xi)\gamma_\lambda^\omega + \xi(\lambda-\xi)P^-_\xi(A-\xi)^{-1}_\xi\gamma_\lambda^\omega + \xi
P^-_\xi\omega\}.
\]
Since
$\{(I-P^-_\xi)\omega,\xi(I-P^-_\xi)\omega\}\in\ker\Gamma^{\omega,\eta_\infty}$,
applying the boundary mappings to
$\{\gamma_\lambda^\omega,\lambda \gamma_\lambda^\omega \}$ yields the stated
$\gamma$-field and Weyl function.
\end{proof}
Note that the resolvents of $S_\omega$ and its adjoint $S_\omega^*$
at the point $\xi$ are given by
\begin{equation}\label{extendev}
\begin{split}
 (S_\omega-\xi)^{-1} &= \{\{f',f\} \in (A-\xi)_\xi :\, (f',\omega)_\xi=0 \},\\
 (S_\omega-\xi)^{-*} &= \{\{f',f + \omega c_f \} \in \sH^2:
 \{f,f'\} \in (A-\xi)_\xi, \, c_f \in \dC \}.
 \end{split}
\end{equation}
These formulas are analogous to those appearing in
Lemma~\ref{realizationprepinfty} and this gives an alternative
method to derive various facts appearing e.g. in
Lemma~\ref{realizationprepinfty} from the riggings of $A$ at
$\infty$ to the riggings of $A$ at a finite point $\xi\in\dR$.

As a consequence of \eqref{spectralQ} the Weyl function associated
to the boundary triplet in Lemma~\ref{realizationprepN1} has the
integral representation
\begin{equation}\label{fullevpt}
\begin{split}
 M_\omega(\lambda) = &\,\, \eta_\xi + (\lambda-\xi)((I-P^-_\infty)\omega,(I-P^-_\infty)\omega) \\
 &+
(\lambda-\xi)\int_{\dR\setminus \{\xi\}}\frac{\left(1+|t-\xi|\right)^2d\left(E_tP_\infty V_\xi\omega,V_\xi\omega\right)}{(t-\lambda)(t-\xi)}.
\end{split}
\end{equation}

\begin{remark}\label{int2}
To see the connection to the integral representation for $M_\omega$
in \eqref{Nevfullr}, recall from \cite[Proposition~2.1]{HLS95} that
$d\sigma(t)=(t^2+1)d(E_tP_\infty \gamma_i,P_\infty \gamma_i)$. Incorporating the representation of $\gamma_\lambda$ in
Lemmas~\ref{realizationprepinfty} or \ref{realizationprepN1} at
$\lambda=i$ and using functional calculus it is easy to check that
\[ 
d\sigma(t)= \left\{ \begin{array}{rl}
  (1+|t-\xi|)^2 d\left(E_tP_\xi P_\infty V_\xi\omega,V_\xi\omega\right),
    & \xi \in \dR, \\
  (1+|t|)^2 d\left(E_tP_\infty V_\infty\omega,V_\infty\omega\right),
    & \xi = \infty. \end{array} \right.
\]
\end{remark}
Combining Lemmas~\ref{realizationprepinfty} and
\ref{realizationprepN1} with Proposition~\ref{charclass} the
following realization result for $\cN(\xi,1)$ is obtained. Note that
a realization for the case $\xi = \infty$ was given in
\cite[Theorem~4.4 $\&$ 6.2]{DHS99}; in fact, there also the
Pontryagin space case was allowed.

\begin{theorem}\label{realizationN1}
For each nonconstant $Q \in \cN(\xi,1)$, $\xi \in \dR \cup\{\infty\}$, there exist a closed symmetric
relation $S$ in a Hilbert space $\{\sH,(\cdot,\cdot)\}$ and a
boundary triplet $\{\dC,\Gamma_0,\Gamma_1\}$ for
$S^*$ such that $Q$ is its associated Weyl function.

In fact, there exist a selfadjoint relation $A$ in $\{\sH,(\cdot,\cdot)\}$ and an element $\omega \in \sH_{-1}(A,\xi)$ such
that the symmetry and boundary triplet can be taken to be $S_\omega$ and $\{\dC,\Gamma_0^{\omega,\eta_\xi},\Gamma_1^{\omega,\eta_\xi}\}$ as in Lemma~\ref{realizationprepinfty} or \ref{realizationprepN1}, respectively.
Here $\eta_\xi = \lim_{\lambda \widehat{\to} \xi} Q(\lambda)$.
\end{theorem}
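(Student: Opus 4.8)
The plan is to obtain Theorem~\ref{realizationN1} by stitching together the analytic characterization of the Kac--Donoghue classes in Proposition~\ref{charclass} (for $\xi\in\dR$), its $\xi=\infty$ counterpart from \cite[Theorem~3.1]{HKS97}, \cite[Proposition~2.2]{HLS95}, and the concrete model construction of Lemmas~\ref{realizationprepinfty} and~\ref{realizationprepN1}. The second (``In fact'') assertion of the theorem is the substantive one; the first is its existential shadow.

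First I would fix $\xi\in\dR\cup\{\infty\}$ and a nonconstant $Q\in\cN(\xi,1)$. Since $Q$ is a nonconstant scalar Nevanlinna function, Remark~\ref{Qfunctions} furnishes an $\{A,\gamma_\lambda\}$-realization of $Q$: a selfadjoint relation $A$ in a Hilbert space $\{\sH,(\cdot,\cdot)\}$ and a $\gamma$-field satisfying \eqref{defgamwithbt} for which \eqref{Qfd} holds, and the symmetric relation $S$ defined from $A$ via \eqref{recoversym} then has defect numbers $n_+(S)=n_-(S)=1$ because $\gamma_\lambda\not\equiv0$. For $\xi\in\dR$ I would next apply the equivalences (i)$\Leftrightarrow$(iii)$\Leftrightarrow$(iv) in Proposition~\ref{charclass}: from $Q\in\cN(\xi,1)$ one extracts an element $\omega\in\sH_{-1}(A,\xi)$ with $\omega\notin\ker(A-\xi)$ (nonconstancy of $Q$; cf. Remark~\ref{rem5.7}) and $\eta_\xi=\lim_{\lambda\wh\to\xi}Q(\lambda)\in\dR$ such that $Q(\lambda)=\eta_\xi+(\lambda-\xi)(\gamma_\lambda,\omega)_\xi$ with $\gamma_\lambda=(I+(\xi-\lambda)(A-\xi)^{-1}_\xi)^{-1}\omega$, and moreover $S=S_\omega$. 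Feeding this very triple $A,\omega,\eta_\xi$ into Lemma~\ref{realizationprepN1} produces a boundary triplet $\{\dC,\Gamma_0^{\omega,\eta_\xi},\Gamma_1^{\omega,\eta_\xi}\}$ for $S_\omega^*=S^*$ whose $\gamma$-field is precisely this $\gamma_\lambda$ and whose Weyl function is $M_\omega(\lambda)=\eta_\xi+(\lambda-\xi)(\gamma_\lambda^\omega,\omega)_\xi$; comparing the two expressions gives $M_\omega=Q$, which is the claim. For $\xi=\infty$ I would run the identical argument, using the $\xi=\infty$ analog of Proposition~\ref{charclass} (from \cite[Theorem~3.1]{HKS97}, \cite[Proposition~2.2]{HLS95}) to obtain $\omega\in\sH_{-1}(A,\infty)$ with $\omega\notin\mul A$ and $\eta_\infty\in\dR$ satisfying $Q(\lambda)=\eta_\infty+(\gamma_\lambda,\omega)_\infty$, $\gamma_\lambda=(A_\infty-\lambda)^{-1}\omega$, and then invoking Lemma~\ref{realizationprepinfty} in place of Lemma~\ref{realizationprepN1}.

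I do not anticipate a genuine obstacle here; the proof is an assembly of facts already established. The step requiring the most care is the bookkeeping that matches the $\gamma$-field and Weyl function delivered abstractly by Proposition~\ref{charclass}(iv) (or its $\xi=\infty$ version) with those attached to the concrete boundary triplet in Lemma~\ref{realizationprepN1} (or~\ref{realizationprepinfty}), and the verification that the hypothesis $\omega\notin\ker(A-\xi)$ (resp. $\omega\notin\mul A$) required by those lemmas is guaranteed by the nonconstancy of $Q$ --- the degenerate alternative, in which $S=A$ and $Q$ is constant, being excluded from the start.
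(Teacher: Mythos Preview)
Your proposal is correct and follows exactly the route the paper indicates: the paper does not give a separate proof of Theorem~\ref{realizationN1} but simply states that it is obtained by combining Lemmas~\ref{realizationprepinfty} and~\ref{realizationprepN1} with Proposition~\ref{charclass}, which is precisely the assembly you carry out (including the citation to \cite{HKS97,HLS95} for the case $\xi=\infty$).
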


\subsection{Realizations for intersections of Kac-Donoghue classes}\label{secext}
Let $Q$ be a Nevanlinna function from the Kac-Donoghue class
$\cN(\xi,1)$. By Theorem~\ref{realizationN1} $Q$ can be realized
with a selfadjoint relation $A$ in a Hilbert space
$\{\sH,(\cdot,\cdot)\}$ and an element $\omega \in \sH_{-1}(A,\xi)$.
If $Q$ is also contained in another Kac-Donoghue class, say
$\cN(\xi')$, then the following results shows how this is reflected
by $\omega$.

\begin{lemma}\label{prepnod}
Let $Q \in \cN(\xi,1)$, $\xi \in \dR \cup\{\infty\}$, and let $\omega
\in \sH_{-1}(A,\xi)$ for a selfadjoint relation $A$ in the Hilbert
space $\{\sH,(\cdot,\cdot)\}$ be such that $Q$ is realized via
Lemma~\ref{realizationprepinfty} or \ref{realizationprepN1}. Then $Q
\in \cN(\xi,1) \cap \cN(\xi',1)$, $\xi \neq \xi' \in
\dR \cup\{\infty\}$, if and only if $(I-P^{-}_{\xi'}) \omega = 0$ and
$V_\xi \omega \in \sH_{+1}(A,\xi) \cap \sH_{+1}(A,\xi')$.
\end{lemma}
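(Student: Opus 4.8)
The plan is to express both conditions in the conclusion through the spectral measure $d\sigma$ of $Q$, using the explicit formula for $d\sigma$ in Remark~\ref{int2} and the characterization of the Kac--Donoghue classes by $d\sigma$ in Proposition~\ref{charclassspectral}. First I would reduce to the case $\xi=\infty$. If $\xi\in\dR$, replace $A$ by $B=(A-\xi)^{-1}$ and $Q$ by $\widehat Q(z)=-Q(\xi+1/z)$, which by Lemma~\ref{Lugerres} is realized via Lemma~\ref{realizationprepinfty} with the same element $\omega\in\sH_{-1}(A,\xi)=\sH_{-1}(B,\infty)$ and $V_\xi=V^B_\infty$; moreover $\widehat Q\in\cN(\infty,1)$ iff $Q\in\cN(\xi,1)$ by \eqref{transsets}, and $\widehat Q\in\cN(\xi'',1)$ iff $Q\in\cN(\xi',1)$ with $\xi''=(\xi'-\xi)^{-1}\in\dR$ (so $\xi''=0$ when $\xi'=\infty$). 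Since $\ker(A-\xi')=\ker(B-\xi'')=:\cK$, and the operator parts of $(A-\xi')^{-1}$ and $(B-\xi'')^{-1}$ differ by an affine map with nonzero leading coefficient and hence have the same form domain, one has $\sH_{\pm1}(A,\xi')=\sH_{\pm1}(B,\xi'')$ as sets (equivalent norms), and the condition $(I-P^-_{\xi'})\omega=0$, which asserts that the $\cK$-component of $\omega$ vanishes, is unaffected. Thus one may assume $\xi=\infty$ and $\xi'\in\dR$; note $Q\in\cN(\infty,1)$ holds automatically by Lemma~\ref{realizationprepinfty}, so only membership in $\cN(\xi',1)$ is at issue.

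Put $g=V_\infty\omega\in\sH_{+1}(A,\infty)$. By Remark~\ref{int2} (and $E_tP_\infty=E_t$) the spectral measure of $Q$ is $d\sigma(t)=(1+|t|)^2\,d(E_tg,g)$. On a neighbourhood of $\xi'$ the weight $(1+|t|)^2$ is bounded above and bounded away from $0$, so Proposition~\ref{charclassspectral} gives that $Q\in\cN(\xi',1)$ is equivalent to $\int_{|t-\xi'|<1}|t-\xi'|^{-1}\,d(E_tg,g)<\infty$. Write $g=g_0+g_\kappa$ with $g_\kappa=(E_{\xi'}-E_{\xi'-0})g\in\cK$ the spectral point mass at $\xi'$ and $g_0\perp\cK$. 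The jump of the scalar measure $d(E_tg,g)$ at $\xi'$ equals $\|g_\kappa\|^2$, while away from $\xi'$ it coincides with $d(E_tg_0,g_0)$; hence the last integral is finite if and only if (a) $g_\kappa=0$ and (b) $\int_{0<|t-\xi'|<1}|t-\xi'|^{-1}\,d(E_tg_0,g_0)<\infty$. The contribution of $|t-\xi'|\ge1$ being trivially bounded by $\|g\|^2<\infty$, condition (b) is precisely the statement that $g_0\in\dom|((A-\xi')^{-1})_o|^{1/2}$, i.e.\ (since $g\in\sH$) that $g\in\sH_{+1}(A,\xi')$; and since $g=V_\infty\omega$ lies in $\sH_{+1}(A,\infty)$ automatically, (b) is exactly $V_\infty\omega\in\sH_{+1}(A,\infty)\cap\sH_{+1}(A,\xi')$.

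It remains to match (a) with $(I-P^-_{\xi'})\omega=0$. The Riesz operator $V_\infty$ commutes with the spectral projections of $A$, so it leaves the spectral subspace $\cK=\ker(A-\xi')$ invariant, acting there as multiplication by the positive scalar $(1+|\xi'|)^{-1}$; hence $g_\kappa=0$ if and only if the $\cK$-component of $\omega$ vanishes, and since $P^-_{\xi'}$ is the orthogonal projection in $\sH_{-1}(A,\xi')$ onto $\sH_{-1}(A,\xi')\ominus\cK$, this is exactly $(I-P^-_{\xi'})\omega=0$. Combining (a) and (b) yields the asserted equivalence. The step I expect to require the most care is the bookkeeping across the various riggings in the reduction: verifying that $\cK$ embeds (up to a fixed positive scalar) as a closed subspace of each of $\sH_{\pm1}(A,\xi)$ and $\sH_{\pm1}(A,\xi')$, that the $\cK$-component of $\omega$ is the same element of $\sH$ regardless of which rigging is used, and consequently that the statement $(I-P^-_{\xi'})\omega=0$ is genuinely intrinsic and survives the transformation to $\xi=\infty$.
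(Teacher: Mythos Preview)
Your proof is correct and follows essentially the same approach as the paper: both translate the condition $Q\in\cN(\xi',1)$ into an integrability condition on the spectral measure via Proposition~\ref{charclassspectral} and then use the formula for $d\sigma$ from Remark~\ref{int2} to rewrite this as a condition on $V_\xi\omega$, splitting off the point mass at $\xi'$ to obtain the two parts of the conclusion. The only difference is organizational: you first reduce to $\xi=\infty$ via $B=(A-\xi)^{-1}$, whereas the paper works directly with general $\xi$ (treating the cases $\xi'\in\dR$ and $\xi'=\infty$ in parallel); your reduction is sound and the core argument is the same.
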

\begin{proof}
The assumption $Q \in \cN(\xi,1)$ implies the integral
representation \eqref{fullevpt} for $Q$. On the other hand, by
Proposition~\ref{charclassspectral} $Q\in\cN(\xi',1)$ if and only if
the measure $d\sigma(t)$ satisfies the integrability condition in
Proposition~\ref{charclassspectral}~(ii) with $\xi'$, where
$\beta=0$ if $\xi'=\infty$. In view of Remark~\ref{int2} this means
that $P_\infty V_\xi\omega\in\ran |A-\xi'|^\frac{1}{2}$ if
$\xi'\in\dR$, and $P_\infty V_\xi\omega\in\dom |A|^\frac{1}{2}$ and
$(I-P^-_\infty)\omega=0$ (see \eqref{fullevpt}) if $\xi'=\infty$.
Due to \eqref{Rdec02} this is equivalent to
$V_\xi\omega\in\sH_{+1}(A,\xi')\ominus\ker(A-\xi')$ if $\xi'\in\dR$,
and to $V_\xi\omega\in\sH_{+1}(A,\infty)\ominus\mul A$ if
$\xi'=\infty$. This implies the statement of the lemma.
\end{proof}

Let $Q$ be a Nevanlinna function which is in the intersection of two
Kac-Donoghue classes $\cN(\xi,1)$ and $\cN(\xi',1)$, and let
$\{A,\gamma_\lambda\}$ be a realization for $Q$, see
Remark~\ref{Qfunctions}. Then Proposition~\ref{charclass} implies
that $\gamma_\lambda \in \sH_{+1}(A,\xi) \cap \sH_{+1}(A,\xi')$.
Hence there exists a $\omega \in \sH_{-1}(A,\xi)$ and a $\omega' \in
\sH_{-1}(A,\xi')$ such that $Q$ is realized by the model associated
to $A$ and $\omega$, and $A$ and $\omega'$ as in
Lemma~\ref{realizationprepinfty} or \ref{realizationprepN1}.

To connect these two realization of $Q$ a connection between $\omega$ and $\omega'$ is needed. Therefore observe that the $\gamma$-field, which is unique for a Nevanlinna function, can be expressed by means of $\omega$ and $\omega'$:
\[ \gamma_\lambda  = \left(I+(\xi-\lambda)(A-\xi)^{-1}_\xi \right)^{-1}\omega = \left(I+(\xi'-\lambda)(A'-\xi')^{-1}_{\xi'} \right)^{-1}\omega',\]
if $\xi,\xi' \in \dR$, see Lemma~\ref{realizationprepN1}. This
discussion motivates the following lemma in which the notation
$\gamma_{\lambda,\xi} (A)$, $\lambda \in \rho(A)$, is used for the
mapping on $\sH_{-1}(A,\xi)$ defined as
\begin{equation}\label{gAmap}
 \gamma_{\lambda,\xi} (A)f
 = \left\{ \begin{array}{rl}(I+(\xi-\lambda)(A-\xi)_\xi^{-1})^{-1}f,& \xi \in \dR;\\
 (A_\infty-\lambda)^{-1}f,& \xi = \infty, \end{array} \right.
\end{equation}
i.e. $\gamma_{\lambda,\xi}f$ is the $\gamma$-field of a function in $\cN(\xi,1)$. Note that $\ker\gamma_{\lambda,\xi}(A)=\ker (A-\xi)$.
\begin{lemma}\label{corgamma}
Let $\gamma_{\lambda,\xi} (A)$ and $\gamma_{\lambda,\xi'}(A)$,
$\xi,\xi'\in\dR \cup\{\infty\}$, $\xi\neq\xi'$, be given by
\eqref{gAmap} and let
\begin{equation*}\label{defrhoab}
\rho_{\xi,\xi'}(A)= P^-_{\xi'}\left(\gamma_{\lambda,\xi'}(A)\right)^{-1}\gamma_{\lambda,\xi}(A),
\quad \lambda \in \rho(A).
\end{equation*}
Then $\rho_{\xi,\xi'}(A)$ defines a bounded operator from
$\sH_{-1}(A,\xi)\cap P^-_{\xi'}V_\xi^{-1}\sH_{+1}(A,\xi')$ onto
$P^-_\xi V_{\xi'}^{-1}\sH_{+1}(A,\xi) \cap
P^-_{\xi'}\sH_{-1}(A,\xi')$, which does not depend on
$\lambda\in\rho(A)$. Moreover, $\rho_{\xi,\xi'}$ is a bounded
extension of the mapping $P^-_{\xi'}(A-\xi')^{-1}_{\xi'}(A-\xi)_\xi$, where $(A-\xi)$, $(A-\xi')$ is to be
interpreted as $I$ if $\xi=\infty$ or $\xi'=\infty$, respectively.
\end{lemma}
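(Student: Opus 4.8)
The plan is to regard $\rho_{\xi,\xi'}(A)$ as the composition $P^-_{\xi'}\circ(\gamma_{\lambda,\xi'}(A))^{-1}\circ\gamma_{\lambda,\xi}(A)$ and to read off each of its three factors from the material already in place; the argument is written for $\xi,\xi'\in\dR$, the case where one of them equals $\infty$ being entirely analogous (replacing $(A-\xi)_\xi$ by $I$ and $\gamma_{\lambda,\xi}(A)$ by $(A_\infty-\lambda)^{-1}$, see \eqref{gAmap} and Lemma~\ref{realizationprepinfty}). First I would record the mapping properties of $\gamma_{\lambda,\xi}(A)=(I+(\xi-\lambda)(A-\xi)^{-1}_\xi)^{-1}$: by Lemma~\ref{realizationprepN1} it maps $\sH_{-1}(A,\xi)$ boundedly onto $\sH_{+1}(A,\xi)\ominus\ker(A-\xi)$ with kernel exactly $\ker(A-\xi)=\cK_\xi$, and its restriction to $\sH_{-1}(A,\xi)\ominus\cK_\xi$ is a homeomorphism onto $\sH_{+1}(A,\xi)\ominus\cK_\xi$ whose inverse (the rigged closure of $I+(\xi-\lambda)(A-\xi)^{-1}$) is again bounded; moreover on $\dom A$ one has the classical identity $\gamma_{\lambda,\xi}(A)\uphar_{\dom A}=(A-\lambda)^{-1}(A-\xi)$, equivalently $\gamma_{\lambda,\xi}(A)\uphar_{\sH}=I+(\lambda-\xi)(A-\lambda)^{-1}$, coming from \eqref{defgamwithbt}. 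Consequently $P^-_{\xi'}(\gamma_{\lambda,\xi'}(A))^{-1}$ is a well-defined closed operator with domain $\sH_{+1}(A,\xi')$ (the projection selecting the representative orthogonal to $\cK_{\xi'}$), and $\rho_{\xi,\xi'}(A)$ is defined precisely on those $f\in\sH_{-1}(A,\xi)$ for which $\gamma_{\lambda,\xi}(A)f$ lies in $\sH_{+1}(A,\xi')$.

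The independence of $\lambda$ is the quickest step. The $\gamma$-fields of the boundary triplets in Lemmas~\ref{realizationprepinfty} and \ref{realizationprepN1} are attached to the selfadjoint extension $A_0=A$, hence satisfy \eqref{chargamma}; letting $\omega$ run through all of $\sH_{-1}(A,\xi)$ this yields the operator identity $\gamma_{\lambda,\xi}(A)=C_{\lambda\mu}\,\gamma_{\mu,\xi}(A)$ on $\sH_{-1}(A,\xi)$, where $C_{\lambda\mu}:=I+(\lambda-\mu)(A-\lambda)^{-1}$ is a bounded bijection of $\sH$ that does not depend on $\xi$. The same factor occurs with $\xi$ replaced by $\xi'$, so $(\gamma_{\lambda,\xi'}(A))^{-1}\gamma_{\lambda,\xi}(A)=(\gamma_{\mu,\xi'}(A))^{-1}C_{\lambda\mu}^{-1}C_{\lambda\mu}\,\gamma_{\mu,\xi}(A)=(\gamma_{\mu,\xi'}(A))^{-1}\gamma_{\mu,\xi}(A)$, and applying $P^-_{\xi'}$ gives independence of $\lambda\in\rho(A)$.

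Next I would identify the domain and range and deduce boundedness. Since $\gamma_{\lambda,\xi}(A)$ is a homeomorphism of $\sH_{-1}(A,\xi)\ominus\cK_\xi$ onto $\sH_{+1}(A,\xi)\ominus\cK_\xi$, the condition $\gamma_{\lambda,\xi}(A)f\in\sH_{+1}(A,\xi')$ can be pulled back; inserting $V_\xi V_\xi^{-1}$ and using that $V_\xi$ is the isometry of $\sH_{-1}(A,\xi)$ onto $\sH_{+1}(A,\xi)$, this condition becomes $f\in P^-_{\xi'}V_\xi^{-1}\sH_{+1}(A,\xi')$. The hard part here is to make rigorous the passage between the scales at $\xi$ and at $\xi'$: this is precisely the spectral computation performed in the proof of Lemma~\ref{prepnod} --- by Remark~\ref{int2} the measure $d\sigma$ attached to $f$ is $(1+|t-\xi|)^2\,d(E_tP_\xi P_\infty V_\xi f,V_\xi f)$, and membership of $\gamma_{\lambda,\xi}(A)f$ in $\sH_{+1}(A,\xi')$, via \eqref{Rdec02} and the integral representations \eqref{fullevptinfty}, \eqref{fullevpt}, amounts to the very integrability condition that by Proposition~\ref{charclassspectral} characterises the Kac--Donoghue class $\cN(\xi',1)$. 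This gives $\dom\rho_{\xi,\xi'}(A)=\sH_{-1}(A,\xi)\cap P^-_{\xi'}V_\xi^{-1}\sH_{+1}(A,\xi')$, and the symmetric analysis applied to $\rho_{\xi',\xi}(A)$ both identifies the range as $P^-_\xi V_{\xi'}^{-1}\sH_{+1}(A,\xi)\cap P^-_{\xi'}\sH_{-1}(A,\xi')$ and, since $\rho_{\xi,\xi'}(A)$ and $\rho_{\xi',\xi}(A)$ are then mutually inverse on these subspaces, shows $\rho_{\xi,\xi'}(A)$ is onto it. Boundedness between these two Hilbert spaces follows from the closed graph theorem: $\rho_{\xi,\xi'}(A)$ is the composition of the bounded map $\gamma_{\lambda,\xi}(A)$, the closed map $(\gamma_{\lambda,\xi'}(A))^{-1}$ (closed as the inverse of a bounded injective operator), and the bounded projection $P^-_{\xi'}$, and it is everywhere defined on the Hilbert space $\sH_{-1}(A,\xi)\cap P^-_{\xi'}V_\xi^{-1}\sH_{+1}(A,\xi')$.

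Finally, for the ``moreover'' statement I would check on the core $\dom A$ that $\rho_{\xi,\xi'}(A)$ extends $P^-_{\xi'}(A-\xi')^{-1}_{\xi'}(A-\xi)_\xi$. If $f\in\dom A$ satisfies $(A-\xi)f\in\sH_{+1}(A,\xi')$, then $\gamma_{\lambda,\xi}(A)f=(A-\lambda)^{-1}(A-\xi)f$ lies in $\sH_{+1}(A,\xi')$ (as $(A-\lambda)^{-1}$ preserves that space), and since $\gamma_{\lambda,\xi'}(A)\uphar_{\dom A}=(A-\lambda)^{-1}(A-\xi')$, applying $(\gamma_{\lambda,\xi'}(A))^{-1}$ the two resolvents at $\lambda$ cancel and leave $(A-\xi')^{-1}_{\xi'}(A-\xi)_\xi f$ up to an element of $\cK_{\xi'}$, which is annihilated by $P^-_{\xi'}$; hence $\rho_{\xi,\xi'}(A)f=P^-_{\xi'}(A-\xi')^{-1}_{\xi'}(A-\xi)_\xi f$ on $\dom A$, and $\rho_{\xi,\xi'}(A)$ is the bounded extension of this to the domain found in the previous step. (When $\xi=\infty$ one reads $(A-\xi)_\xi=I$ and $\gamma_{\lambda,\infty}(A)\uphar_\sH=(A-\lambda)^{-1}$, and the same cancellation applies.) The main obstacle is the domain/range identification in the third step, i.e.\ relating the three riggings --- at $\xi$, at $\xi'$, and the ambient $\sH$ --- through the Riesz operators $V_\xi,V_{\xi'}$ and the spectral calculus; once this is settled, the remaining assertions are formal.
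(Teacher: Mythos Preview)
Your proof is correct and follows essentially the same route as the paper: both arguments rest on the mapping properties of $\gamma_{\lambda,\xi}(A)$ (bounded, range $P_\xi\sH_{+1}(A,\xi)$, kernel $\cK_\xi$) together with Lemma~\ref{prepnod} to pin down the domain and range, and both reduce the ``moreover'' clause to a direct computation. The only differences are in presentation: you establish independence of $\lambda$ via the $\gamma$-field identity \eqref{chargamma} and the cancellation of $C_{\lambda\mu}=I+(\lambda-\mu)(A-\lambda)^{-1}$, whereas the paper simply appeals to functional calculus; and you invoke the closed graph theorem for boundedness, while the paper reads it off directly from the fact that $\gamma_{\lambda,\xi}(A)$ restricted to the stated domain is a homeomorphism onto $P_\xi^+\sH_{+1}(A,\xi)\cap P_{\xi'}^+\sH_{+1}(A,\xi')$ and $P^-_{\xi'}(\gamma_{\lambda,\xi'}(A))^{-1}$ is bounded on $\sH_{+1}(A,\xi')$. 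One small caution: in your closed graph step, the claim that $(\gamma_{\lambda,\xi'}(A))^{-1}$ is ``closed as the inverse of a bounded injective operator'' is not quite right since $\gamma_{\lambda,\xi'}(A)$ has kernel $\cK_{\xi'}$; the clean way is to note that $P^-_{\xi'}(\gamma_{\lambda,\xi'}(A))^{-1}$ equals the bounded inverse of the homeomorphism $\gamma_{\lambda,\xi'}(A)\!\upharpoonright_{\sH_{-1}(A,\xi')\ominus\cK_{\xi'}}$ precomposed with the bounded projection $P^+_{\xi'}$, so boundedness is immediate without closed graph.
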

\begin{proof}
From Section~\ref{apprig2} and Proposition~\ref{charclass} it is
known that $\gamma_{\lambda,\xi}(A)$ with $\lambda \in \rho(A)$ is a
bounded operator, which maps $\sH_{-1}(A,\xi)$ onto $P_{\xi}
\sH_{+1}(A,\xi)$ with $\ker\gamma_{\lambda,\xi}(A)=\ker P^-_\xi$.
Furthermore, it follows from Lemma~\ref{prepnod} that
$\gamma_{\lambda,\xi}(A)$ maps $\sH_{-1}(A,\xi)\cap
P^-_{\xi'}V_{\xi}^{-1}\sH_{+1}(A,\xi')$ onto
$P_{\xi}^+\sH_{+1}(A,\xi)\cap P_{\xi'}^+\sH_{+1}(A,\xi')$. With
$\lambda\in\rho(A)$ this implies that $\rho_{\xi,\xi'}(A)$ is a bounded operator from $\sH_{-1}(A,\xi)\cap
P^-_{\xi'}V_\xi^{-1}\sH_{+1}(A,\xi')$ onto $P^-_\xi V_{\xi'}^{-1}\sH_{+1}(A,\xi) \cap P^-_{\xi'}\sH_{-1}(A,\xi')$.
Finally, by means of functional calculus it is straightforward to
check that $\rho_{\xi,\xi'}(A)$ is independent of
$\lambda\in\rho(A)$ and extends $P^-_{\xi'}(A-\xi')^{-1}_{\xi'}(A-\xi)_\xi$.
\end{proof}

\begin{proposition}\label{inductionstepprop}
Let $Q \in \cN(\xi,1) \cap \cN(\xi',1)$, $\xi,\xi' \in
\dR \cup\{\infty\}$ with $\xi \neq \xi'$ and let $A$ be a selfadjoint
relation in a Hilbert space $\{\sH,(\cdot,\cdot)\}$ associated to a
realization of $Q$. Moreover, let $\omega \in \sH_{-1}(A,\xi)$ and
$\eta_\xi$ be such that the boundary triplet
$\{\dC,\Gamma_0^{\omega,\eta_\xi},\Gamma_1^{\omega,\eta_\xi}\}$
associated to $S_\omega^*$ as in Lemma~\ref{realizationprepinfty} or
\ref{realizationprepN1} realizes $Q$.

Then with $\omega' := \rho_{\xi,\xi'}(A)\omega \in
\sH_{-1}(A,\xi')$ and $\eta_{\xi'}:=\lim_{\lambda \wh \to
\xi'}Q(\lambda)$ also the boundary triplet $\{\dC,\Gamma_0^{
\omega',\eta_\xi'},\Gamma_1^{\omega',\eta_\xi'}\}$
associated to $S_{\omega'}^*$ as in
Lemma~\ref{realizationprepinfty} or \ref{realizationprepN1} realizes
$Q$. In particular, $\Gamma_j^{ \omega, \eta_\xi}\wh f =
\Gamma_j^{\omega',\eta_\xi'}\wh f$ for $\wh f \in S_\omega^* =
S_{\omega'}^*$ and $j=1,2$.
\end{proposition}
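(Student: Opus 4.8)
The plan is to show that the two boundary triplets describe the \emph{same} pair $(S,\,\{\Gamma_0,\Gamma_1\})$ and hence have the same Weyl function, which is already known to be $Q$ for the first triplet. First I would verify that the two symmetric relations coincide: $S_\omega=S_{\omega'}$. By Proposition~\ref{charclass}~(iii), $S_\omega$ is determined by $Q$ (and $A$) via $S_\omega=\{\{f,f'\}\in A:\,(f'-\xi f,\omega)_\xi=0\}$ (or its $\xi=\infty$ analogue), and likewise for $S_{\omega'}$ with $\xi'$; since both equal the $S$ recovered from $Q$ by \eqref{recoversym}, they must agree. Consequently $S_\omega^*=S_{\omega'}^*$ as well. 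The crucial point here is that $\omega'=\rho_{\xi,\xi'}(A)\omega$ is exactly the element of $\sH_{-1}(A,\xi')$ whose $\gamma$-field agrees with that of $\omega$: by the very definition of $\rho_{\xi,\xi'}(A)$ in Lemma~\ref{corgamma}, $P^-_{\xi'}\gamma_{\lambda,\xi}(A)\omega=\gamma_{\lambda,\xi'}(A)\omega'$, and since $\ker\gamma_{\lambda,\xi'}(A)=\ker(A-\xi')$ while $P^-_{\xi'}$ projects off precisely $\ker(A-\xi')$, one gets $\gamma_{\lambda,\xi'}(A)\omega'=\gamma_{\lambda,\xi}(A)\omega=:\gamma_\lambda$, i.e. the $\gamma$-fields of the two realizations literally coincide. (One should also check $\omega'\notin\ker(A-\xi')$, equivalently $\gamma_\lambda\neq0$, which holds since $Q$ is nonconstant; in the constant case $S_\omega=A=S_{\omega'}$ and everything is trivial, cf. Remark~\ref{rem5.7}.)

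Next I would show the boundary mappings agree on $S_\omega^*=S_{\omega'}^*$. Take $\wh f\in S_\omega^*$; by Lemma~\ref{realizationprepinfty}/\ref{realizationprepN1} it has a representation $\wh f=\{f+\omega c_f,\ f'+\xi f+\xi\omega c_f\}$ with $\{f,f'\}\in(A-\xi)_\xi$ and $c_f\in\dC$ (the $\xi=\infty$ case being analogous), and simultaneously a representation with $\omega'$, $\xi'$, some $\{g,g'\}\in(A-\xi')_{\xi'}$ and $c'_f\in\dC$. The key claim is $c_f=c'_f$: both equal the unique coefficient in the von Neumann-type decomposition $\wh f=\wh f_0\hplus c_f\{\gamma_\lambda,\lambda\gamma_\lambda\}$ relative to $A$ (cf. \eqref{vonneu2}), and since the $\gamma$-field is the same, the coefficient is the same. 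Then $\Gamma_0^{\omega,\eta_\xi}\wh f=c_f=c'_f=\Gamma_0^{\omega',\eta_{\xi'}}\wh f$ immediately. For $\Gamma_1$, I would evaluate both sides on the decomposition $\wh f=\wh f_0+c_f\{\gamma_\lambda,\lambda\gamma_\lambda\}$: on the ``$A$-part'' $\wh f_0\in A$ both $\Gamma_1$'s reduce to the appropriate duality pairing $(\,\cdot\,,\omega)_\xi$ resp.\ $(\,\cdot\,,\omega')_{\xi'}$, and these coincide because $(h,\omega)_\xi=(h,\omega')_{\xi'}$ for $h\in\ran(A-\xi)\cap\ran(A-\xi')$ — this is where one uses that both $\omega,\omega'$ restrict on $\sH$ to the same functional (Remark~\ref{inprred}) determined by the spectral measure $d\sigma$ of $Q$ (Remark~\ref{int2}); on the ``$\gamma$-part'' the contribution is $c_f M_\omega(\lambda)=c_f Q(\lambda)=c_f M_{\omega'}(\lambda)$, which again matches. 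Finally $\eta_\xi c_f$ versus $\eta_{\xi'}c'_f$: after subtracting the common $\gamma$-part contribution one is comparing the two constants against the common value $\lim_{\lambda\wh\to\xi}Q(\lambda)$ resp.\ $\lim_{\lambda\wh\to\xi'}Q(\lambda)$, which by the very choice $\eta_{\xi'}=\lim_{\lambda\wh\to\xi'}Q(\lambda)$ are the correct normalizations making both $\Gamma_1$'s equal.

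An alternative, more economical route — and the one I would actually present — avoids re-deriving $\Gamma_1$ directly. Since $S_\omega^*=S_{\omega'}^*$, the boundary triplet $\{\dC,\Gamma_0^{\omega',\eta_{\xi'}},\Gamma_1^{\omega',\eta_{\xi'}}\}$ and $\{\dC,\Gamma_0^{\omega,\eta_\xi},\Gamma_1^{\omega,\eta_\xi}\}$ are two boundary triplets for the same adjoint, and by the transformation theory of boundary triplets they differ by a $J$-unitary (here, real $2\times2$) transformation; their Weyl functions are then Möbius transforms of one another. But we have just shown the $\gamma$-fields coincide, which forces $\Gamma_0^{\omega'}=\Gamma_0^{\omega}$ (both invert $\Gamma_0\uphar\wh\sN_\lambda$ to the same $\gamma_\lambda$), hence the transformation is lower-triangular unipotent up to the $\Gamma_1$-normalization, and the Weyl functions can differ only by an additive real constant; that constant is pinned down to $0$ by the limit values $\eta_\xi=\lim_{\lambda\wh\to\xi}Q(\lambda)$ and $\eta_{\xi'}=\lim_{\lambda\wh\to\xi'}Q(\lambda)$ both being the correct constant terms of the (same) function $Q$ in the two representations \eqref{fullevpt} at $\xi$ and $\xi'$. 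The main obstacle I anticipate is the bookkeeping in identifying the $\gamma$-fields across the two riggings — i.e. making the identity $\gamma_{\lambda,\xi'}(A)\rho_{\xi,\xi'}(A)\omega=\gamma_{\lambda,\xi}(A)\omega$ rigorous on the correct subspace $\sH_{-1}(A,\xi)\cap P^-_{\xi'}V_\xi^{-1}\sH_{+1}(A,\xi')$, which is precisely where Lemmas~\ref{prepnod} and~\ref{corgamma} do the heavy lifting; once the $\gamma$-fields are identified, the equality of the boundary mappings on $S_\omega^*=S_{\omega'}^*$ is a short computation.
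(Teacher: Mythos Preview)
Your proposal is correct and, in its ``alternative route,'' follows essentially the same argument as the paper: use Lemmas~\ref{prepnod} and~\ref{corgamma} to see that $\omega'=\rho_{\xi,\xi'}(A)\omega$ produces the same $\gamma$-field, deduce $S_\omega^*=S_{\omega'}^*$ via the von Neumann decomposition \eqref{vonneu2}, conclude that the two Weyl functions differ by a real constant, and fix that constant to zero via the common non-tangential limit at $\xi'$. Your first, more computational route is also valid but unnecessary; the paper goes directly to the Weyl-function argument you present second.
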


\begin{proof}
Lemma~\ref{prepnod} shows that $\omega\in\sH_{-1}(A,\xi)\cap
P^-_{\xi'}V_\xi^{-1}\sH_{+1}(A,\xi')$. Thus, by Lemma~\ref{corgamma}
$\omega' \in P^-_{\xi'}\sH_{-1}(A,\xi')$. Moreover, by definition of
$\rho_{\xi,\xi'}(A)$ the $\gamma$-fields $\gamma_\lambda^{\omega}$
and $\gamma_\lambda^{\omega'}$ associated to
$\{\dC,\Gamma_0^{\omega,\eta_\xi},\Gamma_1^{ \omega,\eta_\xi}\}$ and
$\{\dC,\Gamma_0^{\omega',
\eta_\xi'},\Gamma_1^{\omega',\eta_\xi'}\}$, respectively, satisfy
$\gamma_\lambda^{\omega}=\gamma_\lambda^{\omega'}$ for all
$\lambda\in\rho(A)$. This fact together with the observation that $A
\subset S_\omega^*$ and $A \subset S_{\omega'}^*$ shows that
$S_\omega^*=S_{\omega'}^*$, see \eqref{vonneu2}.

Furthermore, if $M(\lambda)$ is the Weyl function associated to
$\{\dC,\Gamma_0^{\omega',\eta_\xi'},\Gamma_1^{\omega',
\eta_\xi'}\}$, then $M(\lambda)-Q(\lambda) \in \dR$, $\lambda \in
\cmr$, because both functions have the same $\gamma$-field. Since
both function have by definition the same non-tangential limit
$n_{\xi'}$ at $\xi'$ they must coincide and, hence, so do the
boundary mappings.
\end{proof}

Proposition~\ref{inductionstepprop} contains, in operator language,
the information about how to rewrite the integral representation of
the form \eqref{fullevptinfty} or \eqref{fullevpt} for a function $Q
\in \cN(\xi,1) \cap \cN(\xi',1)$ from a point $\xi\in\dR
\cup\{\infty\}$ to the point $\xi' \in \dR \cup\{\infty\}$.

\begin{remark}\label{rem5.16}
If $\omega,\wt \omega \in \sH_{-1}(A,\xi)$
are such that the boundary triplets associated to them (and
$\eta_{\xi}$) via Lemma~\ref{realizationprepinfty} or
\ref{realizationprepN1} have the same Weyl function, then $\wt \omega -
\omega \in \ker (A-\xi)$. Namely, in this case also the
corresponding $\gamma$-fields $\gamma_{\lambda,\xi}(A)\omega$ and
$\gamma_{\lambda,\xi}(A)\wt \omega$ coincide, and hence
$\wt \omega-\omega\in\ker\gamma_{\lambda,\xi}(A)=\ker (A-\xi)$; see
\eqref{gAmap}.
\end{remark}

\subsection{Boundary triplets in rigged spaces}
The relations $S_\omega$ and $S_\omega^*$ in Lemma~\ref{realizationprepinfty} can be extended to relations from
$\sH_{+1}(A,\infty)$ to $\sH_{-1}(A,\infty)$ by the formulas:
\begin{equation}\label{rigSS*}
\begin{split}
\wt S_\omega &= \{\{f,f'\} \in A_\infty: (f,\omega)_\infty=0 \},\\
\widetilde{S^*_\omega} &= \{\{f ,f' - \omega c_f \} \in
\sH_{+1}(A,\xi)\times \sH_{-1}(A,\infty): \{f,f'\} \in A_\infty, \,
c_f \in \dC \},
\end{split}
\end{equation}
where $\wt S_\omega=A_\infty=\widetilde{S^*_\omega}$ if $\omega\in\mul A$. It follows from Proposition~\ref{connect+*cor} that $\wt S_\omega^+ = \widetilde{S^*_\omega}$, since $A_\infty^+=A_\infty$; cf. \eqref{dmap1},~\eqref{B+*}. Also the boundary mappings in Lemma~\ref{realizationprepinfty} can be extended by the same formulas to mappings on $\wt S_\omega^+$. For this reason the definition of boundary triplets as given in
\eqref{defbndscalles} is extended to the situation of riggings of
Hilbert spaces. Here this definition is stated just in the case of
the rigging $\sH_{+1}(A,\infty)\times\sH_{-1}(A,\infty)$.

\begin{definition}\label{DualBT}
Let $S$ be a symmetric relation in
$\sH_{+1}(A,\infty)\times\sH_{-1}(A,\infty)$ with the dual relation
$S^+$. Then $\{\cH,\wt \Gamma_0,\wt \Gamma_1\}$ is a boundary triplet for
$S^+$ if
\begin{enumerate}
\def\labelenumi{\rm (\roman{enumi})}
\item for all $\wh f=\{f,f'\}, \wh g=\{g,g'\}\in S^+$ the following Green's identity holds:
\[
(f',g)_\infty-(f,g')_\infty =
 (\wt \Gamma_1\wh f, \wt \Gamma_0\wh g)-(\wt \Gamma_0\wh f,\wt \Gamma_1\wh g);
\]
\item the mapping $\wt\Gamma: S^+ \to \cH\times\cH$,
$\wh f \mapsto \{\wt \Gamma_0 \wh f, \wt \Gamma_1 \wh f\}$, is
surjective.
\end{enumerate}
\end{definition}

As a consequence of the above definition
$\wt \Gamma(\wt A_\Theta^+)=(\wt \Gamma(\wt A_\Theta))^*$; see \eqref{dmap1}. Hence
similar to the case of ordinary boundary triplets the formula
\begin{equation}\label{rigAtau}
 \wt A_\Theta=\bigl\{\wh f\in \wt S_\omega^+:\, \wt\Gamma\wh f\in\Theta\,\}
 =\ker(\wt \Gamma_1-\Theta\wt \Gamma_0)
\end{equation}
establishes a bijective correspondence between all self-dual
extensions $\wt A_\Theta$ of $S$ in
$\sH_{+1}(A,\infty)\times\sH_{-1}(A,\infty)$ and all selfadjoint
relations $\Theta$ on $\cH$.

It is also possible to extend the notion of boundary relation
introduced in \cite{DHMS06} to the present setting;  cf.
\cite[Definition~3.1]{DHMS06}. With a boundary triplet (or relation)
for $S^+$ one can associate the $\gamma$-field and the Weyl function
as in Definition~\ref{defbndscalles} or as in
\cite[Definition~3.3,~Section~4.2]{DHMS06}. The only difference in
these definitions is the change of the state space: $\sH\times\sH$
is replaced by $\sH_{+1}(A,\infty)\times\sH_{-1}(A,\infty)$. This
offers a different, and also simpler, realization for functions
belonging to the classes  $\cN(\xi,1)$, $\xi\in\dR \cup\{\infty\}$,
since in rigged spaces the self-dual extensions $\wt A_\Theta$ of
$S$ appear as perturbations. This is made explicit in the next
proposition for the class $\cN(\infty,1)$.

\begin{proposition}
Let $A$ be a selfadjoint relation in the Hilbert space
$\{\sH,(\cdot,\cdot)\}$, let $\omega \in \sH_{-1}(A,\infty)$, and
let $\wt S_\omega$ be given by \eqref{rigSS*}. Then
$\{\dC,\wt\Gamma_0^{\omega,\eta_\infty},\wt\Gamma_1^{\omega,\eta_\infty}\}$
with
\begin{equation}\label{rigBTinfty}
 \wt\Gamma_0^{\omega,\eta_\infty} \widehat{f} = c_f \quad \textrm{and} \quad
 \wt\Gamma_1^{\omega,\eta_\infty}\widehat{f} = \eta_\infty c_f +
(f,\omega)_\infty,
\end{equation}
$\wh f=\{f, f' - \omega c_f \}, \wh g=\{g, g' - \omega c_g \}\in \wt
S_\omega^+$, is a boundary triplet for $\wt S_\omega^+$, if $\omega
\not\in \mul A$, and a boundary relation for $\wt S_\omega^+$ with
$\mul\wt\Gamma^{\omega,\eta_\infty}=
\ran\wt\Gamma^{\omega,\eta_\infty}= \{\{c,\eta_\infty
c\}:\,c\in\dC\}$, if $\omega\in\mul A$. Moreover, \eqref{rigAtau}
with $\omega\not\in\mul A$ gives a bijective correspondence between
$\tau\in{\dR}\cup\{\infty\}$ and the self-dual extensions $\wt
A_\tau$ of $\wt S_\omega$ of the form:
\begin{equation}\label{rigAtau2}
\begin{array}{ll}
& \wt A_\tau f=
 \left\{\left\{f,f'+\dfrac{(f,\omega)_\infty}{\eta_\infty-\tau}f\right\}: \{f,f'\}\in A_\infty\right\},
 \quad \tau\neq\eta_\infty; \\[3mm]
& \wt A_{\eta_\infty}=\wt S_\omega\wh{+} \,{\rm span\,}(\{0\}\times\{\omega\}).
\end{array}
\end{equation}
The corresponding $\gamma$-field and Weyl function coincide with the
$\gamma$-field and Weyl function given in
Lemma~\ref{realizationprepinfty}.
\end{proposition}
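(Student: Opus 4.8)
The plan is to establish the three assertions of the proposition in order: that $\{\dC,\wt\Gamma_0^{\omega,\eta_\infty},\wt\Gamma_1^{\omega,\eta_\infty}\}$ is a boundary triplet (or, in the degenerate case, a boundary relation) for $\wt S_\omega^+$; that \eqref{rigAtau} produces exactly the self-dual extensions listed in \eqref{rigAtau2}; and that the associated $\gamma$-field and Weyl function coincide with those in Lemma~\ref{realizationprepinfty}. The common starting point is the identity $\wt S_\omega^+=\widetilde{S^*_\omega}$, valid by Proposition~\ref{connect+*cor} because $A_\infty^+=A_\infty$; thus every $\wh f\in\wt S_\omega^+$ has the form $\{f,f'-\omega c_f\}$ with $\{f,f'\}\in A_\infty$ and $c_f\in\dC$, cf.\ \eqref{rigSS*}.

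First I would treat the generic case $\omega\notin\mul A$. The mappings in \eqref{rigBTinfty} are then well defined: if $\{f,f'-\omega c_f\}=\{f,f''-\omega c_f'\}$ with $\{f,f'\},\{f,f''\}\in A_\infty$, then $(c_f-c_f')\omega=f'-f''\in\mul A_\infty=\mul A$, forcing $c_f=c_f'$, so $c_f$ and hence $f$ are determined by $\wh f$. The Green identity of Definition~\ref{DualBT}(i) is then the only real computation: using that $A_\infty$ is self-dual, that the duality satisfies $(g,f)_\infty=\overline{(f,g)_\infty}$, and that $\eta_\infty\in\dR$, both sides reduce to $(f,\omega)_\infty\overline{c_g}-c_f\overline{(g,\omega)_\infty}$. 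For surjectivity of $\wt\Gamma^{\omega,\eta_\infty}$ I would check directly that its kernel equals $\wt S_\omega$, that the functional $\{f,f'\}\mapsto(f,\omega)_\infty$ on $A_\infty$ is nonzero (otherwise the $\sH_{-1}(A,\infty)\ominus\mul A$-component of $\omega$ would vanish, i.e.\ $\omega\in\mul A$), and that $\{0,-\omega\}\in\wt S_\omega^+\setminus A_\infty$; hence $\wt S_\omega^+/\wt S_\omega$ is two-dimensional and $\wt\Gamma^{\omega,\eta_\infty}$, being injective modulo $\wt S_\omega$, maps it onto $\dC^2$. This proves the boundary-triplet assertion, with $\ker\wt\Gamma_0^{\omega,\eta_\infty}=A_\infty$ and $\rho(A_\infty)=\rho(A)$ by \eqref{reseq1}. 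When $\omega\in\mul A$ one has $(f,\omega)_\infty=(f,\omega)=0$ for all $f\in\dom A_\infty=\sH_{+1}(A,\infty)\ominus\mul A$ (Remark~\ref{inprred}), so $\wt S_\omega=A_\infty=\widetilde{S^*_\omega}$; now $c_f$ is no longer determined by $\wh f$, and $\wt\Gamma^{\omega,\eta_\infty}$ becomes the multivalued relation sending each $\wh f$ to all $\{c,\eta_\infty c\}$, $c\in\dC$. One checks this satisfies the Green identity in relational form, so by \cite[Definition~3.1]{DHMS06} (cf.\ \cite[Example~6.1]{DHMS06}) it is a boundary relation with $\mul\wt\Gamma^{\omega,\eta_\infty}=\ran\wt\Gamma^{\omega,\eta_\infty}=\{\{c,\eta_\infty c\}:c\in\dC\}$ and constant Weyl function $\eta_\infty$.

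For the second assertion, with $\omega\notin\mul A$ I would substitute each selfadjoint parameter $\Theta=\tau\in\dR\cup\{\infty\}$ into \eqref{rigAtau}. The defining equation $\wt\Gamma_1^{\omega,\eta_\infty}\wh f=\tau\,\wt\Gamma_0^{\omega,\eta_\infty}\wh f$ reads $\eta_\infty c_f+(f,\omega)_\infty=\tau c_f$. For $\tau\neq\eta_\infty$ it determines $c_f=(f,\omega)_\infty/(\tau-\eta_\infty)$; inserting this into $\{f,f'-\omega c_f\}$, $\{f,f'\}\in A_\infty$, gives the first line of \eqref{rigAtau2}, the case $\tau=\infty$ being the limit $c_f\to0$, i.e.\ $\wt A_\infty=A_\infty$. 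For $\tau=\eta_\infty$ the equation becomes $(f,\omega)_\infty=0$, whence $\wt A_{\eta_\infty}=\wt S_\omega\hplus\mathrm{span}(\{0\}\times\{\omega\})$, the second line of \eqref{rigAtau2}. Bijectivity of $\tau\mapsto\wt A_\tau$ is the general correspondence between self-dual extensions and selfadjoint parameters recorded after \eqref{rigAtau}, together with the fact that selfadjoint relations on $\dC$ are precisely the points of $\dR\cup\{\infty\}$.

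Finally, for $\lambda\in\rho(A)=\rho(A_\infty)$ I would solve $\wt\Gamma_0^{\omega,\eta_\infty}\wh f_\lambda=1$ with $\wh f_\lambda=\{f_\lambda,\lambda f_\lambda\}\in\wt S_\omega^+$: writing $\wh f_\lambda=\{f_\lambda,f_\lambda'-\omega\}$ with $\{f_\lambda,f_\lambda'\}\in A_\infty$ and $f_\lambda'-\omega=\lambda f_\lambda$ gives $\{f_\lambda,\lambda f_\lambda+\omega\}\in A_\infty$, hence $f_\lambda=(A_\infty-\lambda)^{-1}\omega$ (this also shows $\ker(\wt S_\omega^+-\lambda)$ is spanned by $f_\lambda$, so the $\gamma$-field is well defined). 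Therefore the $\gamma$-field is $\wt\gamma_\lambda=(A_\infty-\lambda)^{-1}\omega=\gamma_\lambda^\omega$ and the Weyl function is $\wt M(\lambda)=\wt\Gamma_1^{\omega,\eta_\infty}\wh f_\lambda=\eta_\infty+(f_\lambda,\omega)_\infty=\eta_\infty+(\gamma_\lambda^\omega,\omega)_\infty=M_\omega(\lambda)$, exactly as in Lemma~\ref{realizationprepinfty}. The step I expect to require most care is the $\omega\in\mul A$ case: one must verify that the loss of single-valuedness of $c_f$ produces precisely a boundary relation with the stated multivalued part and range, and fit it to the formalism of \cite{DHMS06}; everything else rests on the self-duality of $A_\infty$ and the explicit parametrization of $\wt S_\omega^+$.
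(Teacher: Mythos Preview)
Your proof is correct and follows essentially the same route as the paper: verify Green's identity using $(A_\infty)^+=A_\infty$, check surjectivity for $\omega\notin\mul A$, handle the degenerate case $\omega\in\mul A$ via \cite[Example~6.1]{DHMS06}, read off $\wt A_\tau$ from \eqref{rigAtau}, and identify the $\gamma$-field and Weyl function with those of Lemma~\ref{realizationprepinfty}.

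The only differences are stylistic. For surjectivity the paper simply observes that $S_\omega^*\subset\wt S_\omega^+$ and that $\Gamma^{\omega,\eta_\infty}$ is already surjective on $S_\omega^*$ by Lemma~\ref{realizationprepinfty}, whereas you do a direct codimension count (and, in passing, you explicitly verify that $c_f$ is well defined, which the paper takes for granted). For the $\gamma$-field and Weyl function the paper takes the closure of the von~Neumann decomposition $S_\omega^*=A\hplus\{\gamma_\lambda,\lambda\gamma_\lambda\}$ in $\sH_{+1}(A,\infty)\times\sH_{-1}(A,\infty)$ and then applies the boundary mappings, while you solve the defect equation $\{f_\lambda,\lambda f_\lambda+\omega\}\in A_\infty$ directly. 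Both arguments arrive at the same formulas; yours is a touch more self-contained, the paper's leans on the non-rigged result already established.
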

\begin{proof}
A straightforward calculation using $(A_\infty)^+=A_\infty$ (see
Proposition~\ref{connect+*cor}) shows that the Green's identity (i)
in Definition~\ref{DualBT} with the mappings in \eqref{rigBTinfty}
holds for all $\wh f, \wh g\in\wt S_\omega^+$:
\[ 
(f' - \omega c_f,g)_\infty-(f,g' - \omega c_g)_\infty =
 (\wt\Gamma_1^{\omega,\eta_\infty}\wh f, \wt\Gamma_0^{\omega,\eta_\infty}\wh g)
  -(\wt\Gamma_0^{\omega,\eta_\infty}\wh f,\wt\Gamma_1^{\omega,\eta_\infty}\wh g).
\]
Surjectivity with $\omega\not\in\mul A$ is clear, since by
Lemma~\ref{realizationprepinfty} $\Gamma^{\omega,\eta_\infty}$ is
surjective on $S_\omega^*$ and $S_\omega^*\subset\wt S_\omega^+$.
Since $(f,\omega)_\infty=0$ when $\omega\in\mul A$, the
stated formula for $\mul\wt\Gamma^{\omega,\eta_\infty}$, as well as
for $\ran\wt\Gamma^{\omega,\eta_\infty}$, is obtained from
\eqref{rigBTinfty}. In particular, if $\omega\not\in\mul A$ then the
mapping $\wt\Gamma^{\omega,\eta_\infty}:\wt S_\omega^+ \to
\dC\times\dC$ is bounded, and if $\omega\in\mul A$, then $\wt
S_\omega=A_\infty=\wt S_\omega^+$ and
$\wt\Gamma^{\omega,\eta_\infty}$ is a boundary relation of the form
discussed in \cite[Example~6.1]{DHMS06}.

The formulas for $\wt A_\tau$ in \eqref{rigAtau2} are obtained by incorporating the definitions
\eqref{rigBTinfty} into \eqref{rigAtau}. Finally, by taking closure
of $S_\omega^*=A\wh{+}\{\gamma_\lambda,\lambda\gamma_\lambda\}$,
$\lambda\in\rho(A)$, in $\sH_{+1}(A,\infty)\times\sH_{-1}(A,\infty)$
gives $\wt
S_\omega^+=A_\infty\wh{+}\{\gamma_\lambda,\lambda\gamma_\lambda\}$.
Hence, by applying the usual definitions of the $\gamma$-field and
Weyl function (see Definition~\ref{defbndscalles}) with the mappings
$\wt\Gamma^{\omega,\eta_\infty}_0$ and
$\wt\Gamma^{\omega,\eta_\infty}_1$ gives the formulas for $\wt\gamma_\lambda^\omega$ and $\wt M_\omega$ as in Lemma~\ref{realizationprepinfty}.
\end{proof}

Note that $\wt A_\tau$ is the closure (in
$\sH_{+1}(A,\infty)\times\sH_{-1}(A,\infty)$) of the corresponding
selfadjoint extensions $A_\tau$ of $S_\omega$, in particular, $\wt
A_\infty=A_\infty$. Equivalently, $\wt A_\tau \cap \sH^2=A_\tau$ and
similarly $\wt S_\omega \cap \sH^2 = S_\omega$ and $\wt S_\omega^+
\cap \sH^2 = S_\omega^*$.

\section{Realization of $\cN_\kappa^{\wt \kappa}(r)$-functions}\label{sec6}
Let $Q$ be a generalized Nevanlinna function in the class
$\cN_\kappa^{\wt \kappa}(r)$. In this section the realizations for
the functions $Q$ and $rQ$ as Weyl function are studied and, in
particular, and the connection between their realizations is
established. Recall that if $\phi Q_0$ and $\wt \phi \wt Q_0$ are
the canonical factorizations of $Q$ and $rQ$, then $Q_0 \in
\cN_0^0(r \phi/\wt \phi)$; see \eqref{cnkkcn00}. Therefore, first
the realizations and their connection is established for the class
$\cN_0^0(r)$. From these results the general case can then be
derived.

To establish the connection between the realizations of $Q_0$ and
$rQ_0$ in the case that $Q_0 \in \cN_0^0(r)$ some further facts are
needed. An important observation here is the result stated in
Proposition~\ref{vorige}: if $Q_0 \in \cN_0^0(r)$ then $Q_0 \in
\bigcap_{b_i} \cN(b_i,1)$, while $rQ_0 \in \bigcap_{a_i}
\cN(a_i,1)$, where $b_i \in \dR \cup\{\infty\}$ and $a_i \in \dR
\cup\{\infty\}$ are the poles and zeros of $r$. Hence the
realization of the subclasses $\cN(\xi,1)$, $\xi \in \dR
\cup\{\infty\}$, given in Section~\ref{sec52}, will naturally appear
here.

\subsection{Rational transformations of selfadjoint relations}
For $a,b \in \dR \cup\{\infty\}$, $a \neq b$, and $\gamma \in
\dR\setminus \{0\}$, let $r_{a,b}^\gamma$ be the symmetric rational
function given by
\begin{equation}\label{defrabgamma}
r_{a,b}^\gamma(\lambda)=\gamma \frac{\lambda-a}{\lambda-b},
\end{equation}
where, for notational convenience, $\lambda-\infty$ should be
interpreted as being $1$. With the notations introduced in
\eqref{defnot} and \eqref{defnot2} associate with the selfadjoint
relation $A$ the operator $r_{a,b}^\gamma(A): \sH \to \sH$ by
setting
\begin{equation}\label{defRa}
r_{a,b}^\gamma(A)f = P_{\cK_a + \cK_b}f + \left\{ \begin{array}{rl}  \gamma(I + P_b (b-a)(A-b)^{-1})P_bf,& a,b \in \dR; \\
\gamma P_b (A-a) P_bf,& a \in \dR, b=\infty;\\
\gamma P_b(A-b)^{-1}P_b f,& a =\infty, b \in \dR.
\end{array} \right.
\end{equation}
Furthermore, define its extension
$\wt r_{a,b}^\gamma(A):\sH_{+1}(A,b) \to \sH_{-1}(A,b)$ by
\begin{equation}\label{rhoRa}
\wt r_{a,b}^\gamma(A)f = P_{\cK_a + \cK_b}f + \left\{ \begin{array}{rl}  \gamma(I + P_b^- (b-a)(A-b)^{-1}_b)P_bf,& a,b \in \dR; \\
\gamma P_b^- (A_\infty-a) P_b f,& a \in \dR, b=\infty;\\
\gamma P_b^-(A-b)^{-1}_bP_b f,& a =\infty, b \in \dR,
\end{array} \right.
\end{equation}
where $A_\infty$ and $(A-b)^{-1}_b$ are as defined in
\eqref{Rdec01b} and \eqref{Rdec02b}. Observe, that
\begin{equation}\label{projprop1}
\begin{array}{ll}
(I-P_a)r_{a,b}^\gamma(A) = I-P_a,& \quad (I-P_b)r_{a,b}^\gamma(A) = I-P_b,\\
(I-P_a^-)\wt r_{a,b}^\gamma(A) = I-P_a^-,& \quad (I-P_b^-)\wt r_{a,b}^\gamma(A) = I-P_b^-.
\end{array}
\end{equation}
If $r_{a,b}^\gamma(A)$ is a nonnegative operator, then the square
root of $r_{a,b}^\gamma(A)$, denoted by
$(r_{a,b}^\gamma)^\frac{1}{2}(A)$, is an everywhere defined bounded
operator from $\sH_{+1}(A,b)$ onto $\sH_{+1}(A,a)$. Moreover, in
this case $(I+ \wt r_{a,b}^\gamma(A))$ is also an everywhere defined
bounded operator from $\sH_{+1}(A,b)$ onto $\sH_{-1}(A,b)$. The next
lemma identifies an isomorphism between the rigged spaces
$\sH_{-1}(A,b)$ and $\sH_{-1}(A,a)$.

\begin{lemma}\label{trsqlemma}
Assume that the operator $r_{a,b}^\gamma(A)$ defined in
\eqref{defRa} is nonnegative. Then
$(\rho_{b,a}^\gamma)^{\frac{1}{2}}(A):\sH_{-1}(A,b) \to
\sH_{-1}(A,a)$, defined by
\begin{equation}\label{trsqroot}
(\rho_{b,a}^\gamma)^{\frac{1}{2}}(A)f = |\gamma| \left(I+\wt r_{b,a}^{1/\gamma}(A)\right)
 (r_{a,b}^\gamma)^\frac{1}{2}(A)\left(I+\wt r_{a,b}^\gamma(A)\right)^{-1}f,
\end{equation}
the dual mapping of $(r_{b,a}^{\gamma})^\frac{1}{2}(A)$, which is an
isomorphism. Furthermore, the following formula holds
\begin{equation}\label{connectselfadext}
P_a^-(A-a)_a^{-1}P_aP_b =
 \sgn(\gamma)(\rho_{b,a}^\gamma)^\frac{1}{2}(A)P_b^-(A-b)_b^{-1}
  P_bP_a\left((r_{a,b}^{\gamma})^\frac{1}{2}(A)\right)^{-1}.
\end{equation}
\end{lemma}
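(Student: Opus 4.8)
The plan is to reduce both assertions to elementary identities for functions of a selfadjoint operator. Since $a\neq b$, the eigenspaces $\cK_a=\ker(A-a)$ and $\cK_b=\ker(A-b)$ are mutually orthogonal and orthogonal to $\mul A$, so that $\sH=\sH_0\oplus\cK_a\oplus\cK_b\oplus\mul A$, where $A_0:=A\uphar\sH_0$ is a selfadjoint operator with $a,b\in\dR\setminus\sigma_p(A_0)$; by \eqref{Rdec02} the rigged spaces $\sH_{\pm1}(A,a)$ and $\sH_{\pm1}(A,b)$ all split along this same decomposition. On each of the three atomic summands every operator occurring in \eqref{defRa}, \eqref{rhoRa} and \eqref{trsqroot} acts as a (positive, by the hypothesis on $r_{a,b}^\gamma(A)$) real scalar times the identity, while in \eqref{connectselfadext} one of the projections $P_a$, $P_b$, $P_a^-$ annihilates the summand and $(A-a)^{-1}_a$, $(A-b)^{-1}_b$ have vanishing operator part there; hence on these summands both statements reduce to trivial scalar identities, and it suffices to argue on $\sH_0$.

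Over $\sH_0$ I would pass to the spectral representation of $A_0$. Then $r_{a,b}^\gamma(A_0)=\gamma(A_0-a)(A_0-b)^{-1}$, its square root, $\wt r_{a,b}^\gamma(A_0)$, $(A_0-b)^{-1}_b$, the associated Riesz operators, etc.\ all become operators of multiplication by the obvious rational functions of the spectral variable $t$, while (see \eqref{def+}, \eqref{Rdec02}) $\sH_{+1}(A,b)$ and $\sH_{-1}(A,b)$ become the spectral $L^2$-spaces with mutually reciprocal weights $1+|t-b|^{-1}$ and $(1+|t-b|^{-1})^{-1}$, and similarly at $a$; in all cases the duality $(\cdot,\cdot)_b$ or $(\cdot,\cdot)_a$ is integration of $f\overline{g}$ against the scalar spectral measure, cf.\ Remark~\ref{inprred}. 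The hypothesis becomes $\gamma\,\tfrac{t-a}{t-b}\ge0$ on $\sigma(A_0)$, and $(r_{a,b}^\gamma)^\frac{1}{2}(A_0)$ is multiplication by $(r_{a,b}^\gamma(t))^\frac{1}{2}=|\gamma|^\frac{1}{2}|t-a|^\frac{1}{2}|t-b|^{-\frac{1}{2}}$. Using $r_{a,b}^\gamma\,r_{b,a}^\gamma\equiv\gamma^2$ one checks that $(r_{b,a}^\gamma)^\frac{1}{2}(A_0)\colon\sH_{+1}(A,a)\to\sH_{+1}(A,b)$ has the bounded two-sided inverse $|\gamma|^{-1}(r_{a,b}^\gamma)^\frac{1}{2}(A_0)$, hence is an isomorphism; its Banach dual $\sH_{-1}(A,b)\to\sH_{-1}(A,a)$ is therefore also an isomorphism, and since the $(-1)$-weights are the reciprocals of the $(+1)$-weights it is again multiplication by the real function $(r_{b,a}^\gamma(t))^\frac{1}{2}$. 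To finish the first assertion it remains to note that $I+\wt r_{a,b}^\gamma(A_0)$, being multiplication by $1+r_{a,b}^\gamma(t)\ge1$, is a bounded bijection $\sH_{+1}(A,b)\to\sH_{-1}(A,b)$ (so that \eqref{trsqroot} is meaningful) and that
\[
 |\gamma|\bigl(1+r_{b,a}^{1/\gamma}(t)\bigr)\bigl(r_{a,b}^\gamma(t)\bigr)^{\frac{1}{2}}\bigl(1+r_{a,b}^\gamma(t)\bigr)^{-1}
 =\sgn(\gamma)\,\tfrac{t-b}{t-a}\,\bigl(r_{a,b}^\gamma(t)\bigr)^{\frac{1}{2}}=\bigl(r_{b,a}^\gamma(t)\bigr)^{\frac{1}{2}},
\]
where $\gamma\,\tfrac{t-a}{t-b}\ge0$ is used in the last step to replace $\sgn(\gamma)\tfrac{t-b}{t-a}$ by $\bigl|\tfrac{t-b}{t-a}\bigr|$.

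For \eqref{connectselfadext} I would insert $\bigl((r_{a,b}^\gamma)^\frac{1}{2}(A_0)\bigr)^{-1}=|\gamma|^{-1}(r_{b,a}^\gamma)^\frac{1}{2}(A_0)$ and, since on $\sH_0$ the projections $P_a$, $P_b$, $P_a^-$ are identities, read both sides as multiplication operators; the right-hand side becomes multiplication by
\[
 \sgn(\gamma)\,|\gamma|^{-1}\bigl(r_{b,a}^\gamma(t)\bigr)^{\frac{1}{2}}\,\frac{1}{t-b}\,\bigl(r_{b,a}^\gamma(t)\bigr)^{\frac{1}{2}}
 =\sgn(\gamma)\,|\gamma|^{-1}\gamma\,\frac{t-b}{t-a}\,\frac{1}{t-b}=\frac{1}{t-a},
\]
which is exactly $(A_0-a)^{-1}_a$, i.e.\ the left-hand side. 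The conceptual content is therefore elementary once the model is in place; the main obstacle I expect is organizational — to read off precisely from \eqref{def+}--\eqref{Rdec02b} how each of $r_{a,b}^\gamma(A)$, $\wt r_{a,b}^\gamma(A)$, $(r_{a,b}^\gamma)^\frac{1}{2}(A)$, $(A-\xi)^{-1}_\xi$ and the Riesz operators acts in the spectral picture and on the atomic summands $\cK_a$, $\cK_b$, $\mul A$, and to keep careful track at every stage of which rigged space is the domain and which the range, so that the compositions in \eqref{trsqroot} and \eqref{connectselfadext} are genuinely well posed.
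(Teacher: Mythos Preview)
Your proposal is correct and follows essentially the same route as the paper: both arguments reduce the two assertions to functional-calculus identities for $A$ and then extend by density/continuity. The only difference is presentational --- you pass explicitly to a spectral model and read everything off as multiplication operators on weighted $L^2$-spaces, whereas the paper stays intrinsic, verifying the duality identity $(g,(\rho_{b,a}^\gamma)^{\frac{1}{2}}(A)h)_a=((r_{b,a}^\gamma)^{\frac{1}{2}}(A)g,h)_b$ for $g\in\ran(A-a)$, $h\in\sH$ by a short functional-calculus computation and then invoking boundedness; for \eqref{connectselfadext} the paper likewise just observes that both sides are continuous on $\sH_{+1}(A,a)$ and agree on the dense set $\ran(A-a)\ominus\ker(A-b)$.
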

\begin{proof}
Let the operator $r_{a,b}^\gamma(A)$ be nonnegative. Note in
particular that, if $a,b\in\dR$ and $\mul A\neq \{0\}$ then
necessarily $\gamma>0$ (and the whole spectrum of $A$ lies outside
the open interval with endpoints $a$ and $b$); see \eqref{defRa}.
Since $(r_{a,b}^\gamma(A))^{-1}=r_{b,a}^{1/\gamma}(A)$, the
operators $r_{b,a}^{\gamma}(A)=\gamma^2 r_{b,a}^{1/\gamma}(A)$ and
$(r_{b,a}^\gamma)^{\frac{1}{2}}(A)$ are also nonnegative.

Next observe that for all $f\in\sH_{+1}(A,a)$ and $g\in\ran(A-a)$
one has
\[
 \left(g,\wt r_{b,a}^{1/\gamma}(A)f\right)_a
=\left(r_{b,a}^{1/\gamma}(A)g,f\right),
\]
since $\wt r_{b,a}^{1/\gamma}(A)$ is obtained as a continuous
extension of the selfadjoint operator $r_{b,a}^{1/\gamma}(A)$ and
the above identity clearly holds for all $f,g\in\ran(A-a)$; see
Remark~\ref{inprred}. Now for all $h\in\sH$ and $g\in\ran(A-a)$ one
obtains
\[
\begin{split}
 \left(g,(\rho_{b,a}^\gamma)^{\frac{1}{2}}(A)h\right)_a
 &=|\gamma| \left(\left(I+r_{b,a}^{1/\gamma}(A)\right)g,
  (r_{a,b}^\gamma)^\frac{1}{2}(A)\left(I+r_{a,b}^\gamma(A)\right)^{-1}h\right) \\
 &=\left((r_{b,a}^\gamma)^\frac{1}{2}(A)g,h\right)
  =\left((r_{b,a}^\gamma)^\frac{1}{2}(A)g,h\right)_b,
\end{split}
\]
where the second equality holds by functional calculus. The first
assertion of the lemma now follows from the above identity by
boundedness of the mappings
$(r_{b,a}^\gamma)^\frac{1}{2}(A):\sH_{+1}(A,a)\to\sH_{+1}(A,b)$ and
$(\rho_{b,a}^\gamma)^{\frac{1}{2}}(A):\sH_{-1}(A,b) \to
\sH_{-1}(A,a)$; see also Remark~\ref{inprred}.

The identity \eqref{connectselfadext} follows from the fact that the
left- and righthand side are continuous operators on $\sH_{+1}(A,a)$
which coincide on the dense subset $\ran (A-a)\ominus \ker (A-b)$ of
$\sH_{+1}(A,a)\ominus (\ker (A-a)+ \ker (A-b))$, cf.
Proposition~\ref{iner1}; see also \eqref{projprop1}.
\end{proof}

\subsection{Realizations in the case of a rational function of degree one}
Let the rational function $r^\gamma_{a,b}$ be given by
\eqref{defrabgamma} and assume that $Q \in \cN_0^0(r^\gamma_{a,b})$.
Then Theorem~\ref{productinNg} implies that $Q$ is holomorphic at
$x\in\dR$ if $r(x)<0$ and, moreover, Proposition~\ref{vorige} shows
that $Q \in \cN(b,1)$. Hence, there exist a selfadjoint relation $A$
in a Hilbert space $\{\sH,(\cdot,\cdot)\}$ and an element $\omega
\in \sH_{-1}(A,b)$ such that $Q$ can be realized as the Weyl
function associated to $A$ and $\omega$; see
Theorem~\ref{realizationN1}.

If this realization is minimal, then $\rho(A)=\rho(Q)$ and in this
case the transform $r^\gamma_{a,b}(A)$ is automatically a
nonnegative operator. As noted in Remark~\ref{rem5.7} it is always
possible to add to the selfadjoint relation $A$ a nontrivial point
spectrum at $b$ and replace $\omega$ by $\omega'=\omega+e_b$, where
$e_b\in\ker (A-b)$, to produce another (necessarily non-minimal)
realization for $Q$. The transform $r_{a,b}^\gamma(A)$ of the
extended $A$ still remains a nonnegative operator; cf.
\eqref{projprop1}. The addition of a nontrivial eigenspace to $A$ at
$b$ simplifies the expressions connecting the realizations of $Q$
and $r^\gamma_{a,b}Q$; see Remark~\ref{rem6.3trsbymeansofrhor}
below.

\begin{theorem}\label{eerstestap}
Let $r^\gamma_{a,b}$ be given by \eqref{defrabgamma} and assume that
$Q_{b} \in \cN_0^0(r^\gamma_{a,b})$. Let $A$ be a selfadjoint
relation in the Hilbert space $\{\sH,(\cdot,\cdot)\}$ such that
$b\in\sigma_p(A)$\footnote{For a selfadjoint relation $A$ with operator part $A_0$ (see \eqref{oppartred}) $\sigma(A)=\sigma(A_0)$ and $\sigma_p(A)=\sigma(A_0)$ if $\mul A = \{0\}$, and $\sigma(A)=\sigma(A_0) \cup \{\infty\}$ and $\sigma_p(A)=\sigma(A_0)\cup\{\infty\}$ if $\mul A \neq \{0\}$.} and $r^\gamma_{a,b}(A)\ge 0$. Moreover, let
$\omega_b \in \sH_{-1}(A,b)$ be such that the model in
Lemma~\ref{realizationprepinfty} or \ref{realizationprepN1} realizes
$Q_{b}$ with
\begin{equation}\label{pos2}
Q_{b}(b) = \left\{ \begin{array}{rl}
\gamma(a-b)((I-P_b^-)\omega_b,(I-P_b^-)\omega_b),& a,b \in \dR;\\
\gamma((I-P_b^-)\omega_b,(I-P_b^-)\omega_b) ,& a \in \dR, b =\infty;\\
-\gamma ((I-P_b^-)\omega_b,(I-P_b^-)\omega_b),& a=\infty,b \in \dR.
\end{array} \right.
\end{equation}
Then $\omega_{a} := (\rho_{b,a}^\gamma)^\frac{1}{2}(A)\omega_b \in
\sH_{-1}(A,a)$ and the Weyl function $Q_{a}$ associated
with $A$, $\omega_{a}$, and
\[ \eta_{a} = \left\{ \begin{array}{rl}
\frac{b-a}{\gamma}((I-P_a^-)\omega_{a},(I-P_a^-)\omega_{a}),& a,b \in \dR;\\
-\frac{1}{\gamma}((I-P_a^-)\omega_{a},(I-P_a^-)\omega_{a}) ,& a \in \dR, b =\infty;\\
\frac{1}{\gamma}((I-P_a^-)\omega_{a},(I-P_a^-)\omega_{a}),& a=\infty,b \in \dR,
\end{array} \right.\]
via Lemma~\ref{realizationprepinfty} or \ref{realizationprepN1}
coincides with  $r_{a,b}^\gamma Q_{b}$. In particular,
$Q_{a}=r_{a,b}^\gamma Q_{b}\in \cN(a,1)$.
\end{theorem}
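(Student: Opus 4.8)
The plan is to transport the realization of $Q_b$ to one of $r_{a,b}^\gamma Q_b$ by means of the isomorphism $(\rho_{b,a}^\gamma)^{\frac12}(A)$ of Lemma~\ref{trsqlemma}, and then to verify directly that the Weyl function produced by Lemma~\ref{realizationprepinfty} or \ref{realizationprepN1} from the transformed data equals $r_{a,b}^\gamma Q_b$. I carry this out for $a,b\in\dR$; the cases $a=\infty$ or $b=\infty$ reduce to this one by composing $Q_b$ with a M\"obius transform $\tau(\mu)=c-\mu^{-1}$ and transporting the rigging data by Lemma~\ref{Lugerres} (cf.\ the proof of Lemma~\ref{ResM1}), the formulas \eqref{defRa}--\eqref{trsqroot} being stated so as to cover all three cases at once. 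Since $r_{a,b}^\gamma(A)\ge 0$, Lemma~\ref{trsqlemma} shows that $(\rho_{b,a}^\gamma)^{\frac12}(A)$ maps $\sH_{-1}(A,b)$ isomorphically onto $\sH_{-1}(A,a)$, so $\omega_a\in\sH_{-1}(A,a)$. Decompose $\omega_b=u_b+p_b+q_b$ with $p_b=(I-P_b^-)\omega_b\in\cK_b$, $q_b=(I-P_a^-)\omega_b\in\cK_a$ and $u_b$ orthogonal to $\cK_a+\cK_b$. All operators entering \eqref{defRa}, \eqref{rhoRa}, \eqref{trsqroot} are functions of $A$, hence leave the reducing subspaces $\cK_a$, $\cK_b$ invariant; evaluating \eqref{trsqroot} on them, using $(A-a)^{-1}f=(b-a)^{-1}f$ for $f\in\cK_b$ and $(A-b)^{-1}f=(a-b)^{-1}f$ for $f\in\cK_a$, yields $(\rho_{b,a}^\gamma)^{\frac12}(A)p_b=|\gamma|p_b$ and $(\rho_{b,a}^\gamma)^{\frac12}(A)q_b=|\gamma|q_b$. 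In particular $(I-P_a^-)\omega_a=|\gamma|q_b$, so that the prescribed values are $\eta_a=\gamma(b-a)\|q_b\|^2$ and $\eta_b=\gamma(a-b)\|p_b\|^2$.

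For the $\gamma$-field of Lemma~\ref{realizationprepN1} the decisive observation is that $(I+(\xi-\lambda)(A-\xi)^{-1}_\xi)^{-1}$ annihilates $\mul((A-\xi)^{-1})=\ker(A-\xi)=\cK_\xi$. Thus $\gamma_\lambda^{\omega_b}$ has no $\cK_b$-component and $\gamma_\lambda^{\omega_a}$ has no $\cK_a$-component, while the action of $(A-\xi)^{-1}_\xi$ on the remaining reducing subspaces gives $\gamma_\lambda^{\omega_b}=\tfrac{a-b}{a-\lambda}\,q_b$ on $\cK_a$ and $\gamma_\lambda^{\omega_a}=\tfrac{b-a}{b-\lambda}\,|\gamma|\,p_b$ on $\cK_b$. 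On the orthogonal complement of $\cK_a+\cK_b$ the identity \eqref{connectselfadext} reads $(A-a)^{-1}_a=\sgn(\gamma)(\rho_{b,a}^\gamma)^{\frac12}(A)(A-b)^{-1}_b\big((r_{a,b}^\gamma)^{\frac12}(A)\big)^{-1}$; substituting this together with $\omega_a=(\rho_{b,a}^\gamma)^{\frac12}(A)\omega_b$ into the defining formula $\gamma_\lambda^{\omega_a}=(I+(a-\lambda)(A-a)^{-1}_a)^{-1}\omega_a$ shows that there $\gamma_\lambda^{\omega_a}=\sgn(\gamma)(r_{a,b}^\gamma)^{\frac12}(A)\gamma_\lambda^{\omega_b}$. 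Altogether,
\[
 \gamma_\lambda^{\omega_a}=\sgn(\gamma)\,(r_{a,b}^\gamma)^{\frac12}(A)\,\gamma_\lambda^{\omega_b}
  -\sgn(\gamma)\,\tfrac{a-b}{a-\lambda}\,q_b+|\gamma|\,\tfrac{b-a}{b-\lambda}\,p_b .
\]

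Now $(\rho_{b,a}^\gamma)^{\frac12}(A)$ is the dual of $(r_{b,a}^\gamma)^{\frac12}(A)$ (Lemma~\ref{trsqlemma}) and, by \eqref{defRa} and functional calculus, $(r_{b,a}^\gamma)^{\frac12}(A)(r_{a,b}^\gamma)^{\frac12}(A)=|\gamma|I$; together these give $\big((r_{a,b}^\gamma)^{\frac12}(A)f,(\rho_{b,a}^\gamma)^{\frac12}(A)g\big)_a=|\gamma|(f,g)_b$ for $f\in\sH_{+1}(A,b)$, $g\in\sH_{-1}(A,b)$. Pairing the displayed formula with $\omega_a$ and using this identity, $\sgn(\gamma)|\gamma|=\gamma$, $(q_b,\omega_a)_a=|\gamma|\|q_b\|^2$ and $(p_b,\omega_a)_a=|\gamma|\|p_b\|^2$, one obtains
\[
 (\gamma_\lambda^{\omega_a},\omega_a)_a=\gamma\,(\gamma_\lambda^{\omega_b},\omega_b)_b
  -\gamma\,\tfrac{a-b}{a-\lambda}\,\|q_b\|^2+\gamma^2\,\tfrac{b-a}{b-\lambda}\,\|p_b\|^2 .
\]
By Lemma~\ref{realizationprepN1} the Weyl function in question is $Q_a(\lambda)=\eta_a+(\lambda-a)(\gamma_\lambda^{\omega_a},\omega_a)_a$, while $Q_b(\lambda)=\eta_b+(\lambda-b)(\gamma_\lambda^{\omega_b},\omega_b)_b$. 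Substituting the last display, inserting $\eta_a=\gamma(b-a)\|q_b\|^2$ and $\eta_b=\gamma(a-b)\|p_b\|^2$, and using $(\lambda-a)\tfrac{a-b}{a-\lambda}=b-a$ and $(\lambda-a)\tfrac{b-a}{b-\lambda}=-(b-a)\tfrac{\lambda-a}{\lambda-b}$, all terms involving $\|p_b\|^2$ and $\|q_b\|^2$ cancel and what remains is $Q_a(\lambda)=\gamma\tfrac{\lambda-a}{\lambda-b}Q_b(\lambda)=r_{a,b}^\gamma(\lambda)Q_b(\lambda)$. Being a Weyl function furnished by Lemma~\ref{realizationprepN1}, $Q_a$ lies in $\cN(a,1)$, which is the last assertion.

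The step I expect to be the main obstacle is the second one: the careful bookkeeping around the exceptional eigenspaces $\cK_a=\ker(A-a)$ and $\cK_b=\ker(A-b)$. One must keep track both of the non-functional-calculus action of the rigged resolvents $(A-\xi)^{-1}_\xi$ and of the square-root transforms on these subspaces, and of the fact that the $\gamma$-field annihilates $\cK_\xi$; it is exactly the prescribed choices of $\eta_a$ and $\eta_b$, expressed through the $\cK_a$- and $\cK_b$-components of $\omega_a$ and $\omega_b$, that cause the correction terms to disappear.
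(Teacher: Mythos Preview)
Your argument is correct and follows a genuinely different route from the paper's own proof. The paper works with the spectral integral representation \eqref{fullevpt}: it writes $Q_a(\lambda)$ out as an integral with respect to the spectral family of $A$, splits the integration over $\{a\}$, $\{b\}$, the multivalued part, and the remainder, and then transforms each piece separately via extended functional calculus into the corresponding piece of $r_{a,b}^\gamma(\lambda)Q_b(\lambda)$. You, by contrast, stay at the abstract level of the $\gamma$-field and the duality: after decomposing $\omega_b$ along $\cK_a\oplus\cK_b\oplus(\cK_a+\cK_b)^\perp$, you use \eqref{connectselfadext} and the fact that $(\rho_{b,a}^\gamma)^{\frac12}(A)$ is dual to $(r_{b,a}^\gamma)^{\frac12}(A)$ (together with $(r_{b,a}^\gamma)^{\frac12}(A)(r_{a,b}^\gamma)^{\frac12}(A)=|\gamma|I$) to relate $\gamma_\lambda^{\omega_a}$ to $\gamma_\lambda^{\omega_b}$ and then pair with $\omega_a$ directly. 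This makes the role of the eigenspace normalizations \eqref{pos2} more transparent: the corrections from $\cK_a$ and $\cK_b$ are precisely what the prescribed $\eta_a$ and $\eta_b$ absorb. One place that deserves a line more of justification is the passage from \eqref{connectselfadext} to the identity for the full operators $(I+(a-\lambda)(A-a)^{-1}_a)^{-1}$; the cleanest way is to note that on $(\cK_a+\cK_b)^\perp$ (including $\mul A$) all operators in sight are functions of $A$ and hence commute, so the claimed intertwining relation reduces to a scalar identity which one checks directly (the needed additional ingredient being $I+(b-a)(A-b)^{-1}_b=\gamma^{-1}r_{a,b}^\gamma(A)$ on that subspace).
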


\begin{proof}
Only the case that $a,b \in \dR$ is proved in detail; the other
cases can be treated by similar arguments. Using the assumption
$b\in\sigma_p(A)$ and Remark~\ref{rem5.7} one can assume that
$\omega_b$ is such that \eqref{pos2} holds, because
\[ \frac{Q_{b}(b)}{\gamma(a-b)} = \frac{\lim_{z \wh \to b} (b-z)r^\gamma_{a,b}(z)Q_{b}(z)}{\gamma^2(a-b)^2} \ge 0, \]
see Theorem~\ref{productinNg} (iv)(b). Since $r_{a,b}^\gamma(A)$ is a nonnegative
operator, the transformation \eqref{trsqroot} in
Lemma~\ref{trsqlemma} is well defined and it maps $\omega_b\in
\sH_{-1}(A,b)$ to an element
$\omega_{a}=(\rho_{b,a}^\gamma)^\frac{1}{2}(A)\omega_b \in
\sH_{-1}(A,a)$. The equation \eqref{fullevpt} with $\xi=a$ implies
that $Q_{a}$ can be written as
\begin{equation}\label{fullevpt2pr}
\begin{split} Q_{a}(\lambda)
&= \frac{b-a}{\gamma}((I-P_a^-)\omega_{a},(I-P_a^-)\omega_{a}) \\
&+ (\lambda-a)((I-P_\infty^-)\omega_{a},(I-P_\infty^-)\omega_{a}) \\
&+ (\lambda-a)\int_{\{b\}}\frac{\left(1+|t-a|\right)^2d\left(E(t) P_\infty V_a\omega_{a},V_a\omega_{a}\right)}{(t-\lambda)(t-a)} \\
&+ (\lambda-a)\int_{\dR\setminus\{a,b\}}\frac{\left(1+|t-a|\right)^2d\left(E(t) P_\infty V_a\omega_{a},V_a\omega_{a}\right)}{(t-\lambda)(t-a)}. \\
\end{split}
\end{equation}
The formulas \eqref{projprop1} and \eqref{trsqroot} imply the
identities $(I-P_a^-)\omega_{a} = |\gamma| (I-P_a^-)\omega_b$ and
$(I-P_b^-)\omega_{a} = |\gamma| (I-P_b^-)\omega_b$. This gives
\[
\begin{split}
Q_{a}(a) &= \frac{b-a}{\gamma}((I-P_a^-)\omega_{a},(I-P_a^-)\omega_{a}) = \gamma(b-a)((I-P_a^-)\omega_b,(I-P_a^-)\omega_b) \\ &=
r_{a,b}^\gamma(\lambda) (\lambda - b)\int_{\{a\}} \frac{t-b}{t-\lambda}d(E(t)P_\infty^-\omega_b,P_\infty^-\omega_b)\\ &=r_{a,b}^\gamma(\lambda) (\lambda-b)\int_{\{a\}}\frac{\left(1+|t-b|\right)^2d\left(E(t) P_\infty V_b\omega_b,V_b\omega_b\right)}{(t-\lambda)(t-b)}
\end{split}
\]
and by a similar calculation one obtains
\[ \begin{split} (\lambda-a)\int_{\{b\}}\frac{\left(1+|t-a|\right)^2d\left(E(t)
P_\infty V_a\omega_{a},V_a\omega_{a}\right)}{(t-\lambda)(t-a)}= &
r_{a,b}^\gamma(\lambda) Q_{b}(b).
\end{split}
\]
Moreover, if $\mul A\neq \{0\}$ then $r_{a,b}^\gamma(A)\ge 0$
implies that $\gamma>0$ (see \eqref{defRa}) and a direct calculation
shows that $(I-P_\infty^-)\omega_{a} = \gamma^\frac{1}{2}(I-P_\infty^-)\omega_b$ and
\[
(\lambda-a)((I-P_\infty^-)\omega_{a},(I-P_\infty^-)\omega_{a})
= r_{a,b}^\gamma (\lambda)(\lambda-b)((I-P_\infty^-)\omega_b,(I-P_\infty^-)\omega_b).
\]
Finally, using extended functional calculus one obtains
\[
\begin{split}
&\int_{\dR \setminus \{a,b\}}\frac{t-a}{t-\lambda}\frac{d\left(E_t P_\infty V_a(\rho_{b,a}^\gamma)^\frac{1}{2}(A)V_b^{-1}V_b \omega_b,V_a(\rho_{b,a}^\gamma)^\frac{1}{2}(A)V_b^{-1}V_b \omega_b\right)}{\left(1+|t-a|^{-1}\right)^{-2}} \\
&=\int_{\dR \setminus \{a,b\}}\frac{t-a}{t-\lambda}\phi(t)
\left(1+|t-b|^{-1}\right)^2 d\left(E_t P_\infty V_b \omega_b,V_b \omega_b\right),
\end{split}
\]
where for $t \neq a,b$
\[ \begin{split}
\phi(t) &= \gamma^2\left(1+ \frac{1}{\gamma} \frac{t-b}{t-a}\right)^2\left(\gamma \frac{t-a}{t-b}\right)\left( 1 + \gamma \frac{t-a}{t-b}\right)^{-2} = \gamma \frac{t-b}{t-a},
\end{split}
\]
cf. Lemma~\ref{trsqlemma}. Hence, the last term in
\eqref{fullevpt2pr} can be rewritten in the form
\[ \begin{split}
&(\lambda -a) \int_{\dR \setminus \{a,b\}}
 \frac{t-a}{t-\lambda}\phi(t) \left(1+|t-b|^{-1}\right)^2d\left(E_t P_\infty V_b \omega_b,V_b \omega_b\right) \\
& =  r_{a,b}^\gamma(\lambda)(\lambda-b) \int_{\dR \setminus \{a,b\}}
 \frac{(1+|t-b|)^2\, d\left(E_t P_\infty V_b \omega_b,V_b \omega_b\right)}{(t-\lambda)(t-b)}. \end{split}
\]
Combining all the above calculations yields the desired identity
$Q_{a}=r_{a,b}^\gamma Q_{b}$; see \eqref{fullevpt} with $\xi=b$.
Since $\omega_{a}\in \sH_{-1}(A,a)$, the inclusion $Q_{a}\in
\cN(a,1)$ is clear; cf. Proposition~\ref{vorige}.
\end{proof}

The formulas for the limit value $Q_{b}(b)$ in
Theorem~\ref{eerstestap} indicate the use of a (non-minimal)
selfadjoint relation $A$ with $b\in\sigma_p(A)$; note also the
similar formulas for $Q_{a}(a)$. Since $Q_{b}\in
\cN(b,1)$ this function does not have a pole at $b$, see Lemma~\ref{limvalue}.
Thus in a minimal realization of $Q_{b}$ the selfadjoint relation,
say $A_{min}(Q_{b})$ (see Remark~\ref{rem6.3} below), does not have $b$
in it's point spectrum. However, in general the product
$r_{a,b}^\gamma Q_{b}$ has a pole at $b$, unless $\lim_{z\wh\to
b} Q_{b}(z)=0$. Therefore, in a minimal realization for
$r_{a,b}^\gamma Q_{b}$ one typically has $b\in
\sigma_p(A_{min}(r_{a,b}^\gamma Q_{b}))$. By symmetry, similar
situation holds for $a$: $a\not\in \sigma_p(A_{min}(r_{a,b}^\gamma
Q_{b}))$, but it is possible that
$a\in\sigma_p(A_{min}(Q_{b}))$. Minimal realizations for
$Q_{b}$ and $r_{a,b}^\gamma Q_{b}$ include the information
about the possible point masses at $b$ and $a$; consequently the
explicit formulas connecting minimal realizations would get longer
and less readable, especially when the degree of $r$ increases. On
the other hand, it is easy to produce minimal realization for
$Q_{b}$ and $r_{a,b}^\gamma Q_{b}$ from the general
connection established in Theorem~\ref{eerstestap}; this is
explained in the next remark.

Furthermore, in Section~\ref{secL2} this connection between the minimal
realizations will be made explicit in specific model spaces; see
Theorem~\ref{l2finfin}.

\begin{remark}\label{rem6.3}
For $Q_{b}$ the minimal realization is obtained via the
corresponding $\gamma$-field using $\sH_{min}:=\cspan
\{\gamma^{\omega_b}_\lambda:\,\lambda\in\cmr \}$, cf.
\eqref{minimal}. The subspace $\sH_{min}$ reduces $A$ and
consequently, the rigged spaces $\sH_{\pm 1}(A,b)$ and $\sH_{\pm
1}(A,b)$ get decomposed accordingly. Automatically, $\omega_b\in
\sH_{min,-1}(A_{min},b)$. The transforms $r_{b,a}^\gamma(A)$ and
$(\rho_{b,a}^\gamma)^\frac{1}{2}(A)$ decompose accordingly.
\end{remark}

\begin{remark}\label{rem6.3trsbymeansofrhor}
Let $S_{b}$ and $S_{a}$ be the closed symmetric
relations associated to $A$ and $\omega_b$, and for $A$ and
$\omega_{a}:= (\rho_{b,a}^\gamma)^\frac{1}{2}(A)\omega_b$,
respectively, via Lemma~\ref{realizationprepinfty} and
\ref{realizationprepN1}. Then these symmetries can be extended to
$\wt S_{b}$ and $\wt S_{{a}}$ in the corresponding
rigged spaces by means of the transforms in \eqref{extendev}: for instance, if $a,b \in \dR$, then
\[
\begin{split}
(\wt S_{b}-b)^{-1} &= \{\{f,f'\} \in (A-b)^{-1}_b : (f,\omega_b)_b=0 \};\\
(\wt S_{{a}}-a)^{-1} &= \{\{f,f'\} \in (A-a)^{-1}_a : (f,\omega_{a})_a=0 \}.
\end{split}
\]
Taking into account the connection \eqref{connectselfadext} between
$(A-a)^{-1}_a$ and $(A-b)^{-1}_b$ in Lemma~\ref{trsqlemma} it
follows that
\[\begin{split}
P_a^-(\wt S_{{a}}-a)^{-1}P_aP_b &= \sgn(\gamma)(\rho_{b,a}^\gamma)^\frac{1}{2}(A)P_b^-(\wt S_{b}-b)^{-1}P_aP_b \left((r_{a,b}^{\gamma})^\frac{1}{2}(A)\right)^{-1};\\
P_a^-(\wt S_{{a}}^*-a)^{-1}P_aP_b &= \sgn(\gamma)(\rho_{b,a}^\gamma)^\frac{1}{2}(A)P_b^-(\wt S_{b}^*-b)^{-1}P_aP_b \left((r_{a,b}^{\gamma})^\frac{1}{2}(A)\right)^{-1}.
\end{split}\]
\end{remark}

\subsection{Realizations in the case of symmetric rational functions}
Assume that $Q \in \cN_0^0(r)$. Then according to
Proposition~\ref{CorN00a} the symmetric rational function $r$ can be
factorized as $r = \prod_{i=1}^n r_{a_i,b_i}^{\gamma_i}$, see
\eqref{defrabgamma}, where
\begin{equation}\label{condonfac3}
\left(\prod_{i=1}^j r_{a_i,b_i}^{\gamma_i}\right)Q  \in \cN, \quad j=1,\dots,n.
\end{equation}
The realizations for the functions $Q$ and $rQ$ are constructed in
this section by applying Theorem~\ref{eerstestap} combined with
Proposition~\ref{inductionstepprop} inductively to the product
functions $\left(\prod_{i=1}^j r_{a_i,b_i}^{\gamma_i}\right)Q  \in
\cN$ for $j=1,\dots,n$.

Recall that $\rho_{a,c}(A)$ with $a,c\in\dR
\cup\{\infty\}$, $a\neq c$, defined in Lemma~\ref{corgamma} maps
$\sH_{-1}(A,a)$ into $\sH_{-1}(A,c)\ominus\ker(A-c)$. In particular,
if $Q_{{a}}\in \cN(a,1)$ corresponds to $\omega_{a}\in
\sH_{-1}(A,a)$ (cf. Theorem~\ref{eerstestap}) and if, in addition,
$Q_{a}\in \cN(c,1)$ then the limit value $Q_{a}(c)\in\dR$ exists
and, as noted in Remark~\ref{rem5.16}, one can replace the vector
$\omega_{c}=\rho_{a,c}(A)\omega_{a}\in\sH_{-1}(A,c)$ by $\wt
\omega_c=\omega_c+e_c$, where $e_c\in\ker(A-c)$, in the realization
of $Q_{a}\in \cN(c,1)$. In the main theorem of this section the
vector $\wt \omega_c$ is selected such that the limit value
$Q_{a}(c)$ satisfies an analog of \eqref{pos2} with $c$ instead of
$b$; such a selection of $\wt\omega_{c}$ is expressed shortly by
using the notation
\begin{equation}\label{wtrho}
 \wt\omega_{c}=\wt\rho_{a,c}\omega_{a}\in\sH_{-1}(A,c),
\end{equation}
cf. Lemma~\ref{corgamma}.

\begin{theorem}\label{laatstestap}
Assume that $Q\in \cN_0^0(r)$ and let $\prod_{i=1}^n
r_{a_i,b_i}^{\gamma_i}$ be a factorization of $r$ such that
\eqref{condonfac3} holds. Let $A$ be a selfadjoint relation in the
Hilbert space $\{\sH,(\cdot,\cdot)\}$ such that
$\sigma(A) = \sigma(Q) \cup \{b_i \in \dR \cup \{\infty\}:
b_i\in \sigma_p(A), i=1,\dots, n\}$ and let $\omega \in
\sH_{-1}(A,b_1)$ be such that the model in
Lemma~\ref{realizationprepinfty} or \ref{realizationprepN1} realizes
$Q$.

Then with a vector $\omega_e$ of the form
\begin{equation}\label{omega-e}
 \omega_e = (\rho_{b_n,a_n}^{\gamma_n})^\frac{1}{2}(A)
 \prod_{i=1}^{n-1}\left(\wt\rho_{a_{i},b_{i+1}}(A)(\rho_{b_i,a_i}^{\gamma_i})^\frac{1}{2}(A)\right)\omega,
\end{equation}
where $\wt\rho_{a_{i},b_{i+1}}(A)$ and
$(\rho_{b_i,a_i}^{\gamma_i})^\frac{1}{2}(A)$ are defined as in
\eqref{wtrho} and \eqref{trsqroot}, the model in
Lemma~\ref{realizationprepinfty} or \ref{realizationprepN1}
associated to $A$, $\omega_e \in \sH_{-1}(A,a_n)$, and $\eta_e =
\lim_{\lambda \wh \to a_n} r(\lambda)Q(\lambda)$ realizes the
function $rQ$. In particular, $rQ\in\cap_{i=1}^n\cN(a_i,1)$.
\end{theorem}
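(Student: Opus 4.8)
The plan is to construct the vector $\omega_e$ of \eqref{omega-e} stepwise along the factorization $r=\prod_{i=1}^n r_{a_i,b_i}^{\gamma_i}$, at each stage combining the transform of Theorem~\ref{eerstestap} with a change of base point supplied by Proposition~\ref{inductionstepprop}. Put $P_j=\prod_{i=1}^j r_{a_i,b_i}^{\gamma_i}$, $0\le j\le n$, so $P_0\equiv1$ and $P_n=r$; by \eqref{condonfac3} each $P_jQ$ belongs to $\cN$, whence $P_{j-1}Q\in\cN_0^0(r_{a_j,b_j}^{\gamma_j})$ for every $j$ (both $P_{j-1}Q$ and $r_{a_j,b_j}^{\gamma_j}P_{j-1}Q=P_jQ$ lie in $\cN$). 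Since the pole of $r_{a_j,b_j}^{\gamma_j}$ is $b_j$ and its zero $a_j$, Proposition~\ref{vorige} applied to $P_{j-1}Q\in\cN_0^0(r_{a_j,b_j}^{\gamma_j})$ gives $P_{j-1}Q\in\cN(b_j,1)$ and $P_jQ\in\cN(a_j,1)$ for $j=1,\dots,n$; these are the Kac--Donoghue memberships needed below.

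The induction is on $j$. The claim at level $j$ is that some $\omega_j\in\sH_{-1}(A,a_j)$ (with the conventions $a_0:=b_1$, $\omega_0:=\omega$) together with $\eta_{a_j}:=\lim_{\lambda\wh \to a_j}(P_jQ)(\lambda)$ realizes $P_jQ$ through the model of Lemma~\ref{realizationprepinfty} or \ref{realizationprepN1}; level $0$ is the hypothesis of the theorem. For the step $j-1\mapsto j$ ($j\ge1$) I would first re-base $P_{j-1}Q$ from $a_{j-1}$ to $b_j$: since $P_{j-1}Q\in\cN(a_{j-1},1)\cap\cN(b_j,1)$, Proposition~\ref{inductionstepprop} shows that $\rho_{a_{j-1},b_j}(A)\omega_{j-1}$ realizes $P_{j-1}Q$ from $b_j$, and---using that $b_j\in\sigma_p(A)$---one may add an element of $\ker(A-b_j)$, which does not change the realized function (Remarks~\ref{rem5.7},~\ref{rem5.16}), so as to enforce the normalization \eqref{pos2} at $b_j$; this is possible because the limit value there has the correct sign by Theorem~\ref{productinNg}~(iv)(b). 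Denote the outcome $\wt\omega_{j-1}=\wt\rho_{a_{j-1},b_j}(A)\omega_{j-1}$, cf.\ \eqref{wtrho} (for $j=1$ this is vacuous: $\wt\omega_0=\omega$). Then Theorem~\ref{eerstestap}, applied to $P_{j-1}Q\in\cN_0^0(r_{a_j,b_j}^{\gamma_j})$, to $A$ (with $b_j\in\sigma_p(A)$ and $r_{a_j,b_j}^{\gamma_j}(A)\ge 0$), and to $\wt\omega_{j-1}$, delivers $\omega_j:=(\rho_{b_j,a_j}^{\gamma_j})^{\frac{1}{2}}(A)\wt\omega_{j-1}\in\sH_{-1}(A,a_j)$, which with $\eta_{a_j}$ realizes $r_{a_j,b_j}^{\gamma_j}P_{j-1}Q=P_jQ$. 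Unwinding the recursion gives $\omega_n=\omega_e$ as in \eqref{omega-e} and $\eta_{a_n}=\lim_{\lambda\wh \to a_n}r(\lambda)Q(\lambda)=\eta_e$, so the model attached to $A$, $\omega_e$, $\eta_e$ realizes $rQ=P_nQ$.

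Two points must be checked inside the step. That $b_j\in\sigma_p(A)$ is built into the hypothesis on $\sigma(A)$ (adjoin the point mass if it is absent, cf.\ Remark~\ref{rem5.7}). The requirement $r_{a_j,b_j}^{\gamma_j}(A)\ge 0$ has to be extracted from the data: by \eqref{defRa} it amounts to $r_{a_j,b_j}^{\gamma_j}\ge 0$ on $\sigma(A)$ away from $a_j,b_j$ (with $\gamma_j>0$ if $\mul A\neq\{0\}$ and $a_j,b_j$ are finite), and since $\sigma(A)$ is $\sigma(Q)$ together with adjoined point masses at some of the $b_i$, while $P_{j-1}Q\in\cN_0^0(r_{a_j,b_j}^{\gamma_j})$, this can be read off from Theorem~\ref{productinNg}~(iv)(a) together with the particular choice of the factors in Proposition~\ref{CorN00a}. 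I expect this to be the main obstacle---controlling the sign of each elementary transform on all of $\sigma(A)$, not just on the spectrum of a minimal realization; it is exactly what makes the \emph{ordered} factorization of Proposition~\ref{CorN00a} indispensable and rules out an arbitrary splitting of $r$ into degree-one factors.

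Finally, $rQ\in\bigcap_{i=1}^n\cN(a_i,1)$ needs no further effort: $rQ\in\cN$ by \eqref{condonfac3} and $(1/r)(rQ)=Q\in\cN$, hence $rQ\in\cN_0^0(1/r)$; the $a_i$ are precisely the poles of $1/r$, so Proposition~\ref{vorige} applied to $rQ\in\cN_0^0(1/r)$ yields $rQ\in\bigcap_{i=1}^n\cN(a_i,1)$.
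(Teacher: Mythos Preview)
Your proof is correct and follows essentially the same approach as the paper's: both argue by induction along the ordered factorization \eqref{condonfac3}, at each step applying Theorem~\ref{eerstestap} to pass from $P_{j-1}Q$ to $P_jQ$, with the rebasing $\wt\rho_{a_{j-1},b_j}(A)$ (via Proposition~\ref{inductionstepprop} and Remarks~\ref{rem5.7},~\ref{rem5.16}) in between, and both conclude $rQ\in\bigcap_i\cN(a_i,1)$ from Proposition~\ref{vorige}. Your treatment is in fact more explicit than the paper's in two places: you spell out why the normalization \eqref{pos2} can be enforced at each $b_j$, and you correctly flag the verification of $r_{a_j,b_j}^{\gamma_j}(A)\ge 0$ as the point where the specific ordered factorization of Proposition~\ref{CorN00a} is essential (the paper simply asserts that the hypothesis on $\sigma(A)$ makes the relevant transforms nonnegative). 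Your detour through $rQ\in\cN_0^0(1/r)$ for the final Kac--Donoghue inclusions is unnecessary, since Proposition~\ref{vorige} already gives $rQ\in\bigcap_i\cN(a_i,1)$ directly from $Q\in\cN_0^0(r)$, but it is not wrong.
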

\begin{proof}
By the assumption on $A$ all the products $\prod_{i=1}^j
r_{a_i,b_i}^{\gamma_i}(A)$ are nonnegative operators. An application
of Theorem~\ref{eerstestap} with
$\omega_{a_1}=(\rho_{b_1,a_1}^{\gamma_i})^\frac{1}{2}(A)\omega$
gives a realization for the function $r_{a_1,b_1}^{\gamma_1}Q\in
\cN(a_1,1)$ and, by the selection of the factorization for $r$, one
actually has $r_{a_1,b_1}^{\gamma_1}Q\in
\cN_0^0(r_{a_2,b_2}^{\gamma_2})$. In particular,
$r_{a_1,b_1}^{\gamma_1}Q\in \cN(1,b_2)$ and one can define
$\omega_{b_2}=\wt\rho_{a_1,b_2}\omega_{a_1}\in\sH_{-1}(A,b_2)$, such
that $r_{a_1,b_1}(b_2)Q(b_2)$ satisfies the formula \eqref{pos2}
with $r_{a,b}^\gamma$ replaced by $r_{a_2,b_2}^{\gamma_2}$; see
\eqref{wtrho}. Now, one can apply Theorem~\ref{eerstestap} to the
product $r_{a_2,b_2}^{\gamma_2}\left(r_{a_1,b_1}^{\gamma_1}Q\right)$
to get a desired realization for this product. Now by proceeding
inductively one gets the stated realization for the product function
$rQ\in\cN(a_n,1)$ with $\omega_e \in \sH_{-1}(A,a_n)$ of the form
\eqref{omega-e} and $\eta_e = \lim_{\lambda \wh \to a_n}
r(\lambda)Q(\lambda)\in\dR$. Finally, the fact that
$rQ\in\cap_{i=1}^n\cN(a_i,1)$ is a direct consequence of
Proposition~\ref{vorige}.
\end{proof}

The integral representation of the functions $Q$
and $rQ$ can be obtained by means of $\omega$ and $\omega_e$
due to their connection to the underlying spectral measures
$d\sigma(t)$ and $d\sigma_{e}(t)$ stated in
Remark~\ref{int2}. Therefore note that in case that $b_i$ and $a_i$ are finite the spectral measure
$d\sigma_e$ of $Q_{e}$ is given by
\[ d\sigma_e(t) = (1+|t-a_n|^2)d(E_tP_{a_n} P_\infty V_{a_n}\omega_e,V_{a_n}\omega_e),\]
see Remark~\ref{int2}. Using extended functional calculus this can be rewritten as
\[ \begin{split}
d\sigma_e(t) &= |r(t)|(1+|t-b_1|^2)d(E_tP_{b_1}\cdots P_{b_n} P_{a_n} V_{b_1}\omega,V_{b_1}\omega) + \sum_{i=1}^n \zeta_i \delta_{b_i}(t)\\
&= r(t)d\sigma(t) + \sum_{i=1}^n \zeta_i \delta_{b_i}(t),
\end{split} \]
where $d\sigma$ is the spectral measure of $Q$ and $\zeta_i \in
\dR_+$, $1\leq i\leq n$. See also Section~\ref{secL2}
where $L^2(d\sigma)$-realizations of the functions $Q$ and $rQ$ are considered.

Theorem~\ref{laatstestap} contains the main result for describing
the realization for the functions belonging to the class
$\cN_0^{0}(r)$ with some symmetric rational function $r$. This
result can be used to describe for $Q \in \cN_\kappa^{\wt
\kappa}(r)$ the connection between the realizations for the
functions $Q$ and $rQ$. Therefore let the canonical factorizations
of $Q\in\cN_\kappa$ and $rQ\in\cN_{\wt\kappa}$ be written in the
form
\begin{equation}\label{cfact}
 Q=\varphi \varphi^\sharp Q_0,\quad rQ=\wt\varphi\wt\varphi^\sharp \wt
 Q_0;
 \qquad \varphi^\sharp(\lambda)=\overline{\varphi(\bar{\lambda})},
 \quad \wt\varphi^\sharp(\lambda)=\overline{\wt\varphi(\bar{\lambda})}.
\end{equation}
Then
\begin{equation}\label{QQ0}
 \wt Q_0=\wt r Q_0,\quad \wt r=\frac{r\varphi
 \varphi^\sharp}{\wt\varphi\wt\varphi^\sharp},
\end{equation}
and consequently $Q_0\in \cN_0^0(\wt r)$; cf. \eqref{cnkkcn00}. Now
Theorem~\ref{laatstestap} guaranties that the functions $Q_0$ and
$\wt Q_0=\wt r Q_0$ can be realized by means of the elements
$\omega\in\sH_{-1}(A,b)$ and $\wt \omega\in\sH_{-1}(A,a)$ for some
$a,b\in\dR\cup\{\infty\}$ using the same selfadjoint relation $A$ in
a Hilbert space $\{\sH,(\cdot,\cdot)\}$. The realizations for the
original functions $Q$ and $rQ$ can be obtained using the model
based on their canonical factorizations in \eqref{cfact}. To
describe this consider the matrix functions
\begin{equation}\label{RR}
  \Phi(\lambda)=\begin{pmatrix} 0 & \varphi(\lambda) \\ \varphi^\sharp(\lambda) & 0 \end{pmatrix},
  \quad
  \wt \Phi(\lambda)=\begin{pmatrix} 0 & \wt\varphi(\lambda) \\ \wt\varphi^\sharp(\lambda) & 0
  \end{pmatrix}.
\end{equation}
Then $\Phi\in\cN_\kappa$ with $\kappa=\deg \varphi$ and $\wt
\Phi\in\cN_{\wt\kappa}$ with $\wt\kappa=\deg \wt\varphi$. Let
$A_\Phi$ and $A_{\wt \Phi}$ be selfadjoint relations in the
reproducing kernel Pontryagin spaces $\sH_\Phi$ and $\sH_{\wt \Phi}$,
which realize the functions $\Phi$ and $\wt \Phi$ with the $\gamma$-fields
$\gamma_\lambda^\Phi$ and $\wt\gamma_\lambda^{\wt \Phi}$,
respectively. Now the realization for the functions $Q$ and $\wt
Q=rQ$ can be obtained via the next result, which is formulated by
means of $\{A,\gamma_\lambda\}$-realizations, see Remark~\ref{Qfunctions}, using extensions of the
orthogonal sums $S_0\oplus S_\Phi$ and $\wt S_0\oplus \wt S_{\wt
\Phi}$ of the corresponding symmetric restrictions determined by
\eqref{recoversym}.

\begin{proposition}\label{DHmoldel}
Let $Q \in \cN_\kappa^{\wt\kappa}(r)$, let the canonical
factorizations of $Q$ and $rQ$ be given by \eqref{cfact}, and let
$Q_0$ and $\wt Q_0=\wt r Q_0$ be realized as in
Theorem~\ref{laatstestap} with the $\gamma$-fields
$\gamma^0_\lambda$ and $\wt\gamma^0_\lambda$, respectively.
Moreover, let $\Phi$ and $\wt \Phi$ be realized via the pairs
$\{A_\Phi,\gamma_\lambda^\Phi\}$ and $\{A_{\wt
\Phi},\wt\gamma_\lambda^{\wt \Phi}\}$ in the Pontryagin spaces
$\sH_\Phi$ and $\sH_{\wt \Phi}$, respectively.

Then the functions $Q$ and $\wt Q=rQ$ can be realized in the Pontryagin spaces $\sH\oplus
\sH_\Phi$ and $\sH\oplus \sH_{\wt \Phi}$ via the pairs $\{A_0,\gamma_\lambda\}$
and $\{\wt A_0,\wt\gamma_\lambda\}$, where the $\gamma$-fields
$\gamma_\lambda$ and $\wt\gamma_\lambda$ are given by
\begin{equation}\label{gg0}
\gamma_\lambda
 =\begin{pmatrix} \gamma^0_\lambda \varphi \\
 \gamma_\lambda^\Phi \binom{Q_0\varphi}{1}
 \end{pmatrix}
 \quad \text{and} \quad
\wt\gamma_\lambda
 =\begin{pmatrix} \wt\gamma^0_\lambda \wt\varphi \\
  \wt\gamma_\lambda^{\wt \Phi} \binom{\wt Q_0\wt \varphi}{1}
  \end{pmatrix},
\end{equation}
and where $A_0$ and $\wt A_0$ are selfadjoint extensions of
$S_0\oplus S_\Phi$ and $\wt S_0\oplus \wt S_{\wt \Phi}$ satisfying
the associated $\gamma$-field formulas in \eqref{chargamma},
respectively.
\end{proposition}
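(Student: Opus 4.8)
The plan is to show that $\{A_0,\gamma_\lambda\}$ is an $\{A,\gamma_\lambda\}$-realization of $Q$ in the sense of Remark~\ref{Qfunctions}, the assertion for $\wt Q=rQ$, $\wt A_0$, $\wt\gamma_\lambda$ being obtained by the same argument with tildes and \eqref{QQ0} in place of \eqref{cfact}. Two things must be settled: (a) the reproducing-kernel identity $Q(\lambda)-Q(\mu)^*=(\lambda-\bar\mu)\gamma_\mu^*\gamma_\lambda$ for $\gamma_\lambda$ as in \eqref{gg0}, where $\gamma_\mu^*$ is the (indefinite) adjoint; and (b) the existence of a selfadjoint extension $A_0$ of $S_0\oplus S_\Phi$ in $\sH\oplus\sH_\Phi$ whose $\gamma$-field in the sense of \eqref{chargamma} is $\gamma_\lambda$. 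Granted (a) and (b), the relation $(I+(\lambda-\lambda_0)(A_0-\lambda)^{-1})\gamma_{\lambda_0}=\gamma_\lambda$ from \eqref{chargamma} yields
\[
 Q(\lambda_0)^*+(\lambda-\overline{\lambda_0})\bigl((I+(\lambda-\lambda_0)(A_0-\lambda)^{-1})\gamma_{\lambda_0},\gamma_{\lambda_0}\bigr)=Q(\lambda_0)^*+(\lambda-\overline{\lambda_0})\gamma_{\lambda_0}^*\gamma_\lambda=Q(\lambda),
\]
which is exactly \eqref{Qfd} with $A=A_0$ and $v=\gamma_{\lambda_0}$, i.e.\ the asserted realization.

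Item (a) is elementary scalar algebra. Since $\gamma^0_\lambda$ and $\gamma^\Phi_\lambda$ are the $\gamma$-fields of the given realizations of $Q_0$ and $\Phi$, \eqref{charM} (and its operator-valued counterpart in the reproducing-kernel Pontryagin space $\sH_\Phi$) gives $Q_0(\lambda)-Q_0(\mu)^*=(\lambda-\bar\mu)(\gamma^0_\mu)^*\gamma^0_\lambda$ and $\Phi(\lambda)-\Phi(\mu)^*=(\lambda-\bar\mu)(\gamma^\Phi_\mu)^*\gamma^\Phi_\lambda$. Writing $\gamma_\lambda=(x_\lambda,y_\lambda)$ with $x_\lambda=\varphi(\lambda)\gamma^0_\lambda$ and $y_\lambda=\gamma^\Phi_\lambda\binom{Q_0(\lambda)\varphi(\lambda)}{1}$, the quantity $(\lambda-\bar\mu)\gamma_\mu^*\gamma_\lambda$ equals the sum of the first-summand contribution $\varphi(\lambda)\overline{\varphi(\mu)}\,(Q_0(\lambda)-Q_0(\mu)^*)$ and the second-summand contribution $\langle(\Phi(\lambda)-\Phi(\mu)^*)\binom{Q_0(\lambda)\varphi(\lambda)}{1},\binom{Q_0(\mu)\varphi(\mu)}{1}\rangle$. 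Inserting the off-diagonal block form of $\Phi$ from \eqref{RR} and using $\overline{\varphi(\mu)}=\varphi^\sharp(\bar\mu)$, $\varphi(\bar\mu)=\overline{\varphi^\sharp(\mu)}$, $\overline{Q_0(\mu)}=Q_0(\bar\mu)$, these contributions expand into six scalar terms; four of them cancel in pairs (the scalar factors commute), and the remaining two are $\varphi^\sharp(\lambda)\varphi(\lambda)Q_0(\lambda)-\varphi(\bar\mu)\varphi^\sharp(\bar\mu)Q_0(\bar\mu)$, which is $Q(\lambda)-Q(\mu)^*$ by the factorization $Q=\varphi\varphi^\sharp Q_0$ of \eqref{cfact}. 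This proves (a).

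Item (b) is the substantial point; it is here that $A_0$ is constructed. By \eqref{recoversym}, let $S_0\subset A$ (defect numbers $(1,1)$) and $S_\Phi\subset A_\Phi$ (defect numbers $(2,2)$; indeed $N_\Phi(\lambda,\mu)$ has no null direction when $\varphi$ is nonconstant, i.e.\ when $\kappa=\deg\varphi\ge1$, the case $\kappa=0$ being trivial, $\sH_\Phi=\{0\}$ and $Q$ a positive constant times $Q_0$) be the symmetric restrictions associated with $\gamma^0$ and $\gamma^\Phi$. Fix boundary triplets $\{\dC,\Gamma^0_0,\Gamma^0_1\}$ for $S_0^*$ and $\{\dC^2,\Gamma^\Phi_0,\Gamma^\Phi_1\}$ for $S_\Phi^*$ realizing $Q_0$ and $\Phi$ with these $\gamma$-fields (Theorem~\ref{unifstreal}; for $\Phi$ its Pontryagin-space/matrix analogue, cf.\ \cite{DHMS06,DHMS09}). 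Couple these triplets along one of the two channels of $\Phi$: on $S_0^*\oplus S_\Phi^*\subset(S_0\oplus S_\Phi)^*$ impose the two linking conditions $\Gamma^0_0\wh f=(\Gamma^\Phi_1\wh g)_1$ and $\Gamma^0_1\wh f=-(\Gamma^\Phi_0\wh g)_1$, where $(\cdot)_1$ denotes the first $\dC$-component of a vector in $\dC^2$ and the signs are chosen so that the channel-one boundary contributions cancel; retain the remaining channel $\{(\Gamma^\Phi_0\wh g)_2,(\Gamma^\Phi_1\wh g)_2\}$, with the real constant $\eta$ ($=\lim_{\lambda\wh\to a_n}r(\lambda)Q(\lambda)$ in the $\wt Q$-case, and the analogous limit for $Q$) added to the second component, as the new pair $\{\dC,\Gamma_0,\Gamma_1\}$, and set $A_0:=\ker\Gamma_0$. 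Adding the Green's identities for $S_0^*$ and $S_\Phi^*$ and using the linking conditions shows that $\{\dC,\Gamma_0,\Gamma_1\}$ satisfies Green's identity, and surjectivity is inherited from that of $\Gamma^0$ and $\Gamma^\Phi$, so this is a boundary triplet for the adjoint of the coupled symmetry. Its Weyl function is obtained by the standard coupling (Schur-complement) formula with $\Phi_{11}=\Phi_{22}=0$, $\Phi_{12}=\varphi$, $\Phi_{21}=\varphi^\sharp$ and the feedback determined by $Q_0$ on the coupled channel, and equals $\varphi\varphi^\sharp Q_0+\eta=Q$; bookkeeping of the defect elements through the coupling shows the associated $\gamma$-field is precisely the $\gamma_\lambda$ of \eqref{gg0}, so that $\gamma_\lambda$ satisfies \eqref{chargamma} with respect to $A_0$. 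Finally $S_0\oplus S_\Phi\subset A_0$, because on $S_0\oplus S_\Phi$ all components of $\Gamma^0$ and $\Gamma^\Phi$ vanish, hence so do the linking conditions and the maps $\Gamma_0,\Gamma_1$, whence $S_0\oplus S_\Phi\subset\ker\Gamma\subset\ker\Gamma_0=A_0$.

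The hard part is (b): running the coupling in the indefinite (Pontryagin) setting — verifying that the coupled boundary triplet is well defined, that $A_0$ is genuinely self-dual (with $\rho(A_0)\ne\emptyset$ automatic for a selfadjoint relation in a Pontryagin space, so that the negative index $\deg\varphi=\kappa$ is exactly attained, not exceeded), and, above all, carrying through the Schur-complement computation so that both the Weyl function and the $\gamma$-field of the coupled triplet match $Q$ and \eqref{gg0} verbatim, the additive constant $\eta$ included — $\eta$ being fixed by the normalizations built into the realizations of $Q_0$ and $\Phi$ through Theorem~\ref{laatstestap} and Lemmas~\ref{realizationprepinfty},~\ref{realizationprepN1}. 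Once this is settled, the case $\wt Q=rQ$ is identical, using $\wt Q_0=\wt rQ_0\in\cN_0^0(\wt r)$ realized as in Theorem~\ref{laatstestap} with $\gamma$-field $\wt\gamma^0_\lambda$, the matrix function $\wt\Phi$ from \eqref{RR} realized by $\{A_{\wt \Phi},\wt\gamma_\lambda^{\wt \Phi}\}$, and the second formula in \eqref{gg0}.
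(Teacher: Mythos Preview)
Your part (a) is correct and is exactly what the paper does: the kernel identity
\[
(\lambda-\bar\mu)(\gamma_\mu,\gamma_\lambda)
=\overline{\varphi(\mu)}\bigl(Q_0(\lambda)-\overline{Q_0(\mu)}\bigr)\varphi(\lambda)
+\Bigl\langle\bigl(\Phi(\lambda)-\Phi(\mu)^*\bigr)\tbinom{Q_0(\lambda)\varphi(\lambda)}{1},
\tbinom{Q_0(\mu)\varphi(\mu)}{1}\Bigr\rangle
=Q(\lambda)-\overline{Q(\mu)}
\]
is verified by the same direct computation.

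For part (b) the paper does not carry out the coupling by hand; it simply invokes \cite[Theorem~3.3]{DH03}, which states precisely that the orthogonal sum construction with the $\gamma$-field \eqref{gg0} comes from a selfadjoint extension $A_0$ of $S_0\oplus S_\Phi$ satisfying \eqref{chargamma}. The explicit boundary-triplet description of $A_0$ is then recorded \emph{after} the proof as a remark, again citing \cite{DH03}. So your instinct to construct $A_0$ via coupling is right, but you are reproving that theorem rather than using it.

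Two concrete slips in your sketch of (b). First, your linking condition $\Gamma^0_1\wh f=-(\Gamma^\Phi_0\wh g)_1$ has the wrong sign: with that choice the channel-one contributions in the two Green's identities add rather than cancel. The paper's (and \cite{DH03}'s) conditions are $\Gamma^0_0\wh f_0=(\Gamma^\Phi_1\wh F)_1$, $\Gamma^0_1\wh f_0=(\Gamma^\Phi_0\wh F)_1$, and then $A_0=\ker(\Gamma^\Phi_0)_2$. Second, there is no additive constant $\eta$: with the correct signs one solves $c_0=\varphi(\lambda)C_2$, $C_1=Q_0(\lambda)\varphi(\lambda)C_2$ on the defect space, so the $\gamma$-field is exactly \eqref{gg0} and the Weyl function is $(\Gamma^\Phi_1\wh F)_2/C_2=\varphi^\sharp(\lambda)C_1/C_2=\varphi^\sharp(\lambda)Q_0(\lambda)\varphi(\lambda)=Q(\lambda)$ on the nose. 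The constant $\eta_\xi$ from Lemmas~\ref{realizationprepinfty}--\ref{realizationprepN1} is already absorbed into the realization of $Q_0$; it does not reappear at the level of $Q$.
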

\begin{proof}
By symmetry it suffices to prove the statement for $Q$. It follows
from the orthogonal sum construction that $\gamma_\lambda$ satisfies
the formula \eqref{chargamma} with some selfadjoint relation $\wt
A_0$, which is a selfadjoint extension of $S_0\oplus S_\Phi$; see
\cite[Theorem~3.3]{DH03}. On the other hand, a straightforward
calculation using the matrix formula for $\gamma_\lambda$ in
\eqref{gg0} and the definition of $\Phi(\lambda)$ in \eqref{RR}
yields the identity
\[
\begin{split}
 (\gamma_\lambda,\gamma_\mu) &=  \overline{\varphi(\mu)}\,\frac{Q_0(\lambda)-Q_0(\overline{\mu})}{\lambda -\overline{\mu}}\, \varphi(\lambda) +
 \binom{Q_0(\mu)\varphi(\mu)}{1}
 \frac{\Phi(\lambda)-\Phi(\overline{\mu})}{\lambda -\overline{\mu}}
 \binom{Q_0(\lambda) \varphi(\lambda)}{1}
 \\ &=\frac{\varphi^\sharp(\lambda)Q_0(\lambda) \varphi(\lambda)-\overline{\varphi^\sharp(\mu)Q_0(\mu)
 \varphi(\mu)}}
 {\lambda-\bar\mu}
 \end{split}
\]
for $\lambda,\mu\in \rho(A\oplus A_\Phi)$. Therefore, $\{A_\Phi,\gamma_\lambda^\Phi\}$ realizes the function
$Q=\varphi^\sharp Q_0 \varphi$.
\end{proof}
The realizations used in Proposition~\ref{DHmoldel} rely on the
canonical factorizations in \eqref{cfact}. The idea of factorization
models and the above orthogonal sum approach via matrix functions as
in \eqref{RR} goes back to \cite{DH03}. In particular, the
construction of the $\gamma$-fields in \eqref{gg0} is based on
\cite[Theorem~3.3]{DH03}. Also the selfadjoint realizations $A_0$
and $\wt A_0$ can be made explicit by means of boundary triplets:
If, for instance, $\{\dC,\Gamma^0_0,\Gamma^0_1\}$ and
$\{\dC^2,\Gamma^\Phi_0,\Gamma^\Phi_1\}$ are boundary triplets for
$S_0^*$ and $S_\Phi^*$ with Weyl-functions $Q_0$ and $\Phi$, then
$A_0$ is given by the following boundary conditions:
\[
 A_0=\{ \wh f_0\oplus \wt F\in S_0^*\oplus S_\Phi^*:\,
 \Gamma^0_0\wh f_0=(\Gamma^\Phi_1\wh F)_1,\,
 \Gamma^0_1\wh f_0=(\Gamma^\Phi_0\wh F)_1,\,
 (\Gamma^\Phi_0\wh F)_2=0 \},
\]
where e.g. $(\Gamma^\Phi_0\wh F)_1$ and $(\Gamma^\Phi_0\wh F)_2$
stand for the components of $\Gamma^\Phi_0\wh F\in\dC^2$; cf.
\cite[Theorem~3.3]{DH03}.

The non-minimality of the realizations in
Proposition~\ref{DHmoldel} can only be due to a finite number of
possible point masses in the underlying spectral measure of the
functions (or, equivalently, eigenvalues of the associate symmetric
relations determined by \eqref{recoversym}), which are canceled due
to multiplications by rational functions appearing in various
factorizations. This means that minimality of models can be easily
obtained by checking if the functions $Q$ and $rQ$ actually have
poles or zeros at points, where the rational factors admit poles and
zeros.

\subsection{$L^2(d\sigma)$-realizations}\label{secL2}
The abstract realizations from the previous section will be
supplemented by explicit (and minimal) realizations in the form of
$L^2(d\sigma)$-models for Nevanlinna functions. With the help of
these models the connection between the realizations for $Q$ and
$rQ$, $Q \in \cN_0^{0}(r)$, as given in Theorem~\ref{laatstestap} is
made more accessible. The reason is that the associated riggings as
well as the mappings $(\rho_{\xi,\xi'})(A)$ and
$(\rho_{b,a}^\gamma)^\frac{1}{2}(A)$ appearing in
Lemma~\ref{corgamma} and \ref{trsqlemma}, are easily expressed in
$L^2(d\sigma)$-spaces (without regularizations). For instance, the
formula for $(\rho_{\xi,\xi'})(A)$ ($\xi, \xi' \in \dR
\cup\{\infty\}$, $\xi \neq \xi'$) in the $L^2(d\sigma)$ setting is
given by the expression
\[ 
 \rho_{\xi,\xi'}(t)f(t) = \mathbf{1}^c_{\{\xi'\}} \frac{t-\xi}{t-\xi'}f(t),
\]
with domain being described now by proper integrability conditions.
Here, and in what follows, for a subset $E$ of $\dR \cup \{\infty\}$
the notations $\mathbf{1}_E$ and $\mathbf{1}^c_E$ stand for the characteristic functions
of the sets $E\cap \dR$ and $\dR \setminus \{E \cap \dR\}$, respectively.

Let $\beta \geq 0 $ and let $d\sigma$ be a nonnegative measure
satisfying $\int_\dR d\sigma(t)/(1+t^2) < \infty$. Associate with
$\beta$ and $\sigma$ the Hilbert space $\sH_{\sigma,\beta}:=
(L^2(d\sigma)\times \dC)/(\cdot,\cdot)$, where
\[ 
(f\times f_\infty,g\times g_\infty) = g_\infty^*\beta f_\infty + \int_\dR g(t)^*f(t)d\sigma(t), \quad f\times f_\infty, g\times g_\infty \in L^2(d\sigma)\times \dC.
\]
Thus $\sH_{\{\sigma,\beta\}}$ is $L^2(d\sigma) \times \dC$ if $\beta
\neq 0$ and $\sH_{\{\sigma,\beta\}}$ is equivalent to $L^2(d\sigma)$
if $\beta=0$. In this space the multiplication operator with the
independent variable $A_{\sigma,\beta}$,
\begin{equation}\label{Aasmult}
A_{\sigma,\beta} = \{ \{f(t)\times 0,tf(t)\times f_\infty\} \in \sH^2_{\sigma,\beta}\},
\end{equation}
is a selfadjoint relation. Note that
$\sigma(A_{\sigma,\beta}) = \supp (\sigma)$ if $\beta =
0$ and $\sigma(A_{\sigma,\beta}) = \supp (\sigma) +
\{\infty\}$ if $\beta \neq 0$. Let $\sH_{+1}(A_{\sigma,\beta},\xi)
\subset \sH_{\sigma,\beta} \subset \sH_{-1}(A_{\sigma,\beta},\xi)$
be the rigging of $\sH_{\sigma,\beta}$ with respect to
$A_{\sigma,\beta}$ at $\xi$ as defined in Definition~\ref{rigdef},
see also \cite{HS97} ($\xi = \infty$) and \cite{HKS98} ($\xi=0$).
Then the spaces $\sH_{+1}(A_{\sigma,\beta},\xi)$ and
$\sH_{-1}(A_{\sigma,\beta},\xi)$ are weighted $L^2(d\sigma)$ spaces
and the duality $(\cdot,\cdot)_\xi$ associated with the above
rigging takes for $f\times f_\infty \in
\sH_{+1}(A_{\sigma,\beta},\xi)$ and $g \times g_\infty \in
\sH_{-1}(A_{\sigma,\beta},\xi)$ the value
\begin{equation}\label{l2duality}
(f\times f_\infty, g\times g_\infty)_\xi = g_\infty^*\beta f_\infty + \int_\dR g(t)^*f(t)d\sigma(t).
\end{equation}
The introduced rigging can be used to give explicit realizations for
the subclass $\cN(\xi,1)$, $\xi \in \dR \cup \{\infty\}$ by
interpreting Lemma~\ref{realizationprepN1} and
\ref{realizationprepinfty}. In particular, if for a Nevanlinna
function $d\sigma$ is taken to be its spectral measure, $\beta$ its
non-tangential limit at $\infty$ as in Lemma~\ref{pointlimit} and
$\omega$ is taken to be $1/(t-\xi)\times 1$, if $\xi \in \dR$, and
$1\times 1$, if $\xi =\infty$, then the corresponding realization is
minimal. Note that the case $\xi=\infty$ has been studied in the
$L^2(d\sigma)$ setting in \cite{HKS97} and \cite{HS97}, and that in
\cite{DM95,HLS95} a realization by means of $L^2(d\sigma)$-models
for all Nevanlinna functions is given.

\begin{theorem}\label{l2finfin}
Let $Q \in \cN_0^0(r)$ and denote the zeros and poles of $r$ by
$a_i$ and $b_i$, $1\leq i \leq n$, where $a_n=\infty$ or
$b_n=\infty$ if $\infty$ is a zero or pole of $r$.

Let $Q$ be realized by the boundary triplet
$\{\dC,\Gamma_0^{\omega,\eta_{b_1}},\Gamma_1^{\omega,\eta_{b_i}}\}$
associated with $S_\omega^*$ in the Hilbert space
$\sH_{\sigma,\beta}$, where $\omega \times 1 = 1/(t-b_1) \times 1\in
\sH_{-1}(A_{\sigma,\beta}, b_1)$ if $b_1\in\dR$ and $\omega \times 1
= 1 \times 1\in \sH_{-1}(A_{\sigma,\beta}, \infty)$ if $b_1=\infty$;
cf. Theorem~\ref{realizationN1}. With $\zeta_{b_i}= \lim_{\lambda
\wh \to b_i} (b_i-\lambda)r(\lambda)Q(\lambda)$ and
$\zeta_\infty=\lim_{\lambda \wh \to \infty}
r(\lambda)Q(\lambda)/\lambda$, define $d\sigma_e$, $\beta_e$ and
$\omega_e$ as
\[ \left\{ \begin{array}{lll}
d\sigma_e(t)= \mathbf{1}^c_{\{a_1,\ldots,a_n\}}d\sigma(t) + \sum_{i=1}^n  \delta_{b_i}, & \beta_e = \gamma \beta,& b_n,a_n \in \dR;\\
d\sigma_e(t)= \mathbf{1}^c_{\{a_1,\ldots, a_n\}}d\sigma(t) + \sum_{i=1}^{n-1}  \delta_{b_i}, &\beta_e = \zeta_{b_n} ,& b_n = \infty, a_n \in \dR;\\
d\sigma_e(t)= \mathbf{1}^c_{\{a_{1},\ldots, a_{n-1}\}}d\sigma(t) + \sum_{i=1}^n  \delta_{b_i},& \beta_e = 0,& b_n \in \dR, a_n =\infty;
\end{array} \right. \]
and
\[ \omega_e(t) = \left\{ \begin{array}{rl}
\dfrac{\mathbf{1}^c_{\{b_1,\ldots, b_n\}}}{t-a_n}\sqrt{|r(t)|} + \sum_{i=1}^n \dfrac{\mathbf{1}_{\{b_i\}}\sqrt{\zeta_{b_i}}}{b_i-a_n},& b_n,a_n \in \dR;\\
\dfrac{\mathbf{1}^c_{\{b_1,\ldots, b_{n-1}\}}}{t-a_n}\sqrt{|r(t)|}+ \sum_{i=1}^{n-1} \dfrac{\mathbf{1}_{\{b_i\}}\sqrt{\zeta_{b_i}}}{b_i-a_n},& b_n = \infty,a_n \in \dR;\\
\mathbf{1}^c_{\{b_1,\ldots,b_n\}}\sqrt{|r(t)|}  + \sum_{i=1}^n \sqrt{\mathbf{1}_{\{b_i\}}\zeta_{b_i}},& b_n \in \dR, a_n =\infty.
\end{array} \right. \]
Then $\omega_e \times 1 \in \sH_{-1}(A_{\sigma_e,\beta_e}, a_n)$
and, with $\eta_{a_n} = \lim_{\lambda \wh \to a_n}
r(\lambda)Q(\lambda)$, the Nevanlinna function $rQ$ is realized by
the boundary triplet
$\{\dC,\Gamma_0^{\omega_e,\eta_{a_n}},\Gamma_1^{\omega_e,\eta_{a_n}}\}$
associated with $S_{\omega_e}^*$ in the Hilbert space
$\sH_{\sigma_e,\beta_e}$, cf. Theorem~\ref{realizationN1}
\end{theorem}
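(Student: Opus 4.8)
The strategy is to specialize the abstract connection of Theorem~\ref{laatstestap} to the concrete $L^2(d\sigma)$-model. Since $Q\in\cN_0^0(r)$, Theorem~\ref{Newth} gives $r\ge0$ on $\sigma(Q)=\supp\sigma$, while $Q\in\bigcap_i\cN(b_i,1)$ by Proposition~\ref{vorige}, so by Proposition~\ref{charclassspectral} the poles $b_i$ of $r$ are not atoms of $\sigma$; moreover Proposition~\ref{CorN00a} supplies a factorization $r=\prod_{i=1}^n r_{a_i,b_i}^{\gamma_i}$ with the partial products again in $\cN$, i.e.\ \eqref{condonfac3} holds. First I would take $A=A_{\sigma,\beta}$ of \eqref{Aasmult}, enlarged if necessary by finite-dimensional eigenspaces at the $b_i$ (which does not change the realized function $Q$, as $\omega$ has no component there), so that the hypotheses $\sigma(A)=\sigma(Q)\cup\{b_i\in\sigma_p(A)\}$ and $\prod_{i=1}^j r_{a_i,b_i}^{\gamma_i}(A)\ge0$ of Theorem~\ref{laatstestap} are met. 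That theorem then yields $rQ$ realized by the vector $\omega_e$ of the form \eqref{omega-e} together with $\eta_e=\lim_{\lambda\wh\to a_n}r(\lambda)Q(\lambda)$, and $rQ\in\bigcap_i\cN(a_i,1)$ by Proposition~\ref{vorige}. It remains to evaluate \eqref{omega-e} in the $L^2$-picture and match it with the stated $\omega_e$, $d\sigma_e$, $\beta_e$.

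The heart of the matter is the identification of the operators occurring in \eqref{omega-e} as weighted multiplication operators. By \eqref{defRa}, in the $L^2$-model $r_{a,b}^\gamma(A)$ acts as multiplication by $\gamma\frac{t-a}{t-b}$ together with the projection corrections at $a$, $b$, $\infty$ recorded in \eqref{projprop1}; hence by functional calculus $(r_{a,b}^\gamma)^{1/2}(A)$ is multiplication by $\sqrt{\gamma\frac{t-a}{t-b}}$ (well defined on $\supp\sigma$ by nonnegativity) and $(I+\wt r_{a,b}^\gamma(A))^{\pm1}$ is multiplication by $\left(\frac{(t-b)+\gamma(t-a)}{t-b}\right)^{\pm1}$. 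Substituting these into the formula \eqref{trsqroot} of Lemma~\ref{trsqlemma} and simplifying, $(\rho_{b,a}^\gamma)^{1/2}(A)$ reduces to multiplication by $\sqrt{|\gamma|\,\frac{|t-b|}{|t-a|}}$ (up to an overall sign, with the point-mass behaviour forced by \eqref{projprop1}), while $\wt\rho_{a,c}(A)$ is, by Lemma~\ref{corgamma} and the displayed formula for $\rho_{\xi,\xi'}$ preceding the theorem, multiplication by $\mathbf{1}^c_{\{c\}}\frac{t-a}{t-c}$ followed by the addition of an element of $\ker(A-c)$ selected so that the intermediate constant obeys \eqref{pos2}, which amounts to depositing a point mass of weight $\sqrt{\zeta_c}$ at $c$. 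Running the $n$-fold chain \eqref{omega-e} on $\omega=\frac1{t-b_1}\times1$, the continuous parts multiply telescopically: using $|r(t)|=\prod_i|\gamma_i|\frac{|t-a_i|}{|t-b_i|}$ one checks that the numerator $\sqrt{|t-a_i|}$ produced at cycle $i$ is cancelled against the factor $t-a_i$ introduced by $\wt\rho_{a_i,b_{i+1}}$, so that the continuous factor collapses to $\frac{\sqrt{|r(t)|}}{t-a_n}\mathbf{1}^c_{\{b_1,\dots,b_n\}}$, while each $\wt\rho$-step deposits a point mass at $b_{i+1}$ which the subsequent square-root factors carry over to produce the terms $\sum_i\frac{\mathbf{1}_{\{b_i\}}\sqrt{\zeta_{b_i}}}{b_i-a_n}$; this is precisely the stated $\omega_e$. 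The model measure transforms correspondingly: away from $\{a_i,b_i\}$ it equals $\mathbf{1}^c_{\{a_i\}}d\sigma$ (each atom of $\sigma$ at an $a_i$ drops out, since the numerator factor $t-a_i$ kills the component of $\omega_e$ there), each $b_i$ acquires unit model-mass so that $|b_i-a_n|^2|\omega_e(b_i)|^2=\zeta_{b_i}$ reproduces the spectral mass of $rQ$ at $b_i$, and $\beta_e$ is the spectral mass of $rQ$ at $\infty$; all three are read off from Lemma~\ref{pointlimit}, giving exactly the three listed cases.

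Finally, the additive constant $\eta_{a_n}$ of the realization built from $\omega_e$ via Lemma~\ref{realizationprepN1} (or Lemma~\ref{realizationprepinfty}) must be checked to equal $\lim_{\lambda\wh\to a_n}r(\lambda)Q(\lambda)$; this follows by tracking through \eqref{pos2} the choices of eigenelements hidden in the $\wt\rho$-maps, and since a Nevanlinna function is determined by its $\gamma$-field up to a real additive constant, fixing this limit forces the realization to coincide with $rQ$. The placements $b_n=\infty$ (use Lemma~\ref{realizationprepinfty} at the last step; $\beta_e=\zeta_{b_n}$ is the mass at $\infty$ and no $\delta_{b_n}$ appears) and $a_n=\infty$ (the target realization uses Lemma~\ref{realizationprepinfty}; $\beta_e=0$ and the continuous weight is $\sqrt{|r(t)|}$ with no $\frac1{t-a_n}$ factor) are handled by the $\infty$-versions of \eqref{defRa}, \eqref{rhoRa} and Lemma~\ref{trsqlemma} and cause only notational changes. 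I expect the main obstacle to be exactly this bookkeeping: the continuous part of \eqref{omega-e} telescopes transparently, but verifying that the discrete contributions at the $b_i$ and the additive constant $\eta_{a_n}$ emerge precisely as stated — uniformly over the location of $\infty$ among the $a_i$ and $b_i$, and keeping track of where atoms of $\sigma$ are created or destroyed — is the delicate part.
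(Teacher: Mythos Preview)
Your approach is sound but takes a genuinely different route from the paper's. You propose to \emph{derive} the formulas for $\omega_e$, $d\sigma_e$, $\beta_e$ by specializing the abstract chain \eqref{omega-e} of Theorem~\ref{laatstestap} to the $L^2(d\sigma)$-model, computing each factor $(\rho_{b_i,a_i}^{\gamma_i})^{1/2}(A)$ and $\wt\rho_{a_i,b_{i+1}}(A)$ as a concrete multiplication operator and tracking the telescoping of continuous parts and the deposition of point masses at the $b_i$. The paper instead \emph{verifies} the formulas directly: it observes that in the $L^2$-model the spectral measure of the Weyl function realized by $A_{\sigma,\beta}$ and a vector $\omega\times 1$ at $\xi$ is $(t-\xi)^2|\omega(t)|^2\,d\sigma(t)$ (or $|\omega(t)|^2\,d\sigma(t)$ if $\xi=\infty$), and then checks that plugging in the stated $\omega_e$, $d\sigma_e$, $\beta_e$ yields $|r(t)|\,d\sigma(t)+\sum_i\zeta_{b_i}\delta_{b_i}$, which by \eqref{genStieltjesinv} and Lemma~\ref{pointlimit} is exactly the spectral measure of $rQ$. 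Since a Nevanlinna function in $\cN(a_n,1)$ is determined by its spectral measure, its mass at $\infty$, and the constant $\eta_{a_n}$, this suffices.

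Your route has the advantage of explaining \emph{where} the formulas come from (they are the $L^2$-incarnation of \eqref{omega-e}), and it makes the connection to Theorem~\ref{laatstestap} transparent; the bookkeeping you flag is real but manageable. The paper's route is considerably shorter---it bypasses Theorem~\ref{laatstestap} entirely and reduces the proof to a single spectral-measure computation---at the price of presenting the formulas as an ansatz to be checked rather than derived. Either argument is complete; yours is more constructive, the paper's more economical.
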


\begin{proof}[Sketch of the proof]
Let $d\sigma$ be a nonnegative measure satisfying $\int_\dR
d\sigma(t)/(1+t^2) < \infty$, let $\beta \geq 0$ and let $\omega
\times 1 \in \sH_{-1}(A_{\sigma,\beta}, \xi)$, where
$A_{\sigma,\beta}$ is as in \eqref{Aasmult} and $\xi \in \dR \cup
\{\infty\}$. Then the spectral measure $d\sigma_\omega$ of the Weyl
function realized by $A_{\sigma,\beta}$ and $\omega \times 1$ via
Lemma~\ref{realizationprepinfty} ($\xi =\infty$) or
Lemma~\ref{realizationprepN1} ($\xi \in \dR$) is given by
\[d\sigma_\omega(t)= \left\{ \begin{array}{rl}  (t-\xi)^2|\omega(t)|^2d\sigma(t),& \xi \in \dR, \\  |\omega(t)|^2 d\sigma(t),& \xi = \infty,
  \end{array} \right. \quad (t\neq \xi).
 \]
By means of this observation, it can be easily seen that the
spectral measure of the Weyl function associated with the boundary
triplet or boundary relation corresponding to $\omega_e \times 1$
and $A_{\sigma_e,\beta_e}$ (at $a_n$) is the spectral measure of
$rQ$, which outside the poles of $r$ is given by
$r(t)d\sigma(t)=|r(t)|d\sigma(t)$; see \eqref{genStieltjesinv}.
\end{proof}

Note that the zeros $a_i$ and the poles $b_i$ of $r$ in
Theorem~\ref{l2finfin} can be either of order one or of order two,
cf. Theorem~\ref{Newth}.

The models in Theorem~\ref{l2finfin} are minimal, see the
discussion following \eqref{l2duality}, and, hence, different from
Theorem~\ref{laatstestap} the spaces used in the realizations of $Q$
and $rQ$ will, in general, differ from each other. The equivalent of
the above minimal model could also be constructed in the abstract case.
In that case the coupling method should be used to
properly treat point masses; here this is reflected by the fact that
the measure changes due to the point masses as indicated in
Theorem~\ref{l2finfin}.

\end{document}